      \newcommand{\href}[2]{#2}
\DeclareMathOperator{\cPE}{\rm{c}_{\mc{E}}}
\DeclareMathOperator{\bdPE}{\rm{b}_{\mc{E}}}
\DeclareMathOperator{\pr}{\rm{pr}}
\DeclareMathOperator{\inter}{\rm{int}}
\DeclareMathOperator{\bd}{\partial}
\DeclareMathOperator{\diam}{\rm{diam}}
\DeclareMathOperator{\fix}{\rm{Fix}}
\newcommand{\mc}{\mathcal}
\newcommand{\ol}{\overline}
\newcommand{\R}{\mathbb{R}}\newcommand{\N}{\mathbb{N}}
\newcommand{\Z}{\mathbb{Z}}
\newcommand{\T}{\mathbb{T}}
\newcommand{\D}{\mathbb{D}}\newcommand{\A}{\mathbb{A}}
\renewcommand{\SS}{\mathbb{S}}
\newcommand{\sm}{\setminus}
\newcommand{\id}{\mathrm{Id}}
\newcommand{\ie}{i.e.\ }
\newcommand{\cB}{\mathcal{B}}
\newtheorem{theorem}{Theorem}[section] %[section]
\newtheorem{corollary}[theorem]{Corollary}
\newtheorem{lemma}[theorem]{Lemma}
\newtheorem{proposition}[theorem]{Proposition}
\newtheorem*{proposition*}{Proposition}
\newtheorem{claim}{Claim}
\newtheorem*{question*}{Question}
\newtheorem*{theorem*}{Theorem}
\newtheorem*{claim*}{Claim}
\newtheorem{theoremain}{Theorem}
\newtheorem{corollarymain}[theoremain]{Corollary}
\theoremstyle{definition}
\theoremstyle{remark}
\newtheorem{remark}[theorem]{Remark}
\numberwithin{equation}{section}
\newcommand{\Fomega}{\til{\omega}}
\newcommand{\Falpha}{\til{\alpha}}
\newcommand{\til}{\tilde} %override
\newcommand{\pe}{\mathfrak}
\newcommand{\priPE}{\Pi}
\newcommand{\imPE}{\mathcal{I}}
\title[A Poincar\'e-Bendixson theorem for translation lines]{A Poincar\'e-Bendixson theorem for translation lines and applications to prime ends}
\author{Andres Koropecki}
\address{Universidade Federal Fluminense, Instituto de Matem\'atica e Estat\'\i stica, Rua Prof. Marcos Waldemar de Freitas Reis, s/n, Bloco H - Campus do Gragoat\'a, S\~ao Domingos, 24210-201, Niteroi, RJ, Brasil}
\email{ak@id.uff.br}
\author{Alejandro Passeggi}
\address{Universidad de la Rep\'ublica, CMAT, Igu\'a 4225, Montevideo, Uruguay} 
\email{alepasseggi@gmail.com}
\thanks{The first author was partially supported by FAPERJ-Brasil and CNPq-Brasil. The second author was supported by the group ``Sistemas din\'amicos'', No 618, C.S.I.C. UdelaR.}
\begin{document}
%\begin{abstract}
%\end{abstract}
\begin{abstract} For an orientation-preserving homeomorphism of the sphere, we prove that if a translation line does not accumulate in a fixed point, then it necessarily spirals towards a topological attractor. This is in analogy with the description of flow lines given by Poincar\'e-Bendixson theorem. We then apply this result to the study of invariant continua without fixed points, in particular to circloids and boundaries of simply connected open sets. Among the applications, we show that if the prime ends rotation number of such an open set $U$ vanishes, then either there is a fixed point in the boundary, or the boundary of $U$ is contained in the basin of a finite family of topological ``rotational'' attractors. This description strongly improves a previous result by Cartwright and Littlewood, by passing from the prime ends compactification to the ambient space. Moreover, the dynamics in a neighborhood of the boundary is semiconjugate to a very simple model dynamics on a planar graph. Other applications involve the decomposability of invariant continua, and realization of rotation numbers by periodic points on circloids.
\end{abstract}
\maketitle

\section{Introduction}

\subsection{A Poincar\'e-Bendixson theorem for translation lines}

The Poincar\'e-Bendixson theorem states that the $\omega$-limit of a regular orbit $\Gamma$ of a flow in $\SS^2$ either contains a fixed point or is a closed orbit which is attracting from the side that is accumulated by the given orbit. One of our main results states that a similar property holds if $\Gamma$ is only assumed to be a translation line, \ie the image of an injective continuous map $\gamma\colon \R\to \SS^2$ such that $f(\Gamma)=\Gamma$ and $\Gamma$ has no fixed points (which implies that $f|_\Gamma$ is topologically conjugate to the translation $x\mapsto x+1$ of $\R$ if $\Gamma$ is endowed with the linear topology induced by $\gamma$).  We always assume that $\gamma$ is parametrized so that the orientation matches the one induced by the dynamics. 
Translation lines appear naturally in the basins of connected topological attractors or repellors, or as stable and unstable branches of hyperbolic saddles.

The $\omega$-limit of $\Gamma$ is defined as $\omega(\Gamma)=\bigcap_{t>0} \ol{\gamma([t,\infty))}$, and if $\Gamma$ is disjoint from $\omega(\Gamma)$ its \emph{filled $\omega$-limit} $\Fomega(\Gamma)$ is the complement of the connected component of $\R^2\sm \omega(\Gamma)$ which contains $\Gamma$. The set $\Fomega(\Gamma)$ is a non-separating continuum, and we say that it is a \emph{rotational attractor} if it is a topological attractor and its external prime ends rotation number is nonzero (see \S\ref{sec:continua-attractors} for details). 

\begin{theoremain}\label{th:pb} Suppose that $\Gamma$ is an translation line for an orientation-preserving homeomorphism of $\SS^2$. Then $\omega(\Gamma)$ either contains a fixed point, or $\Gamma$ is a topologically embedded line and $\til{\omega}(\Gamma)$ is a rotational attractor disjoint from $\Gamma$.
\end{theoremain}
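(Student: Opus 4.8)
The plan is to dispose of the fixed-point alternative at once and then, under the standing assumption that $\omega(\Gamma)$ avoids $\fix(f)$, to prove the remaining assertions in three steps: that $\Gamma$ is properly embedded, that its filled limit has nonzero prime ends rotation number, and that it is an attractor. \textit{Setup.} Since $f$ is orientation-preserving it is homotopic to the identity, so $\fix(f)\neq\emptyset$ by the Lefschetz fixed point theorem; hence if $\omega(\Gamma)$ meets $\fix(f)$ the first alternative holds and we are done. Assume from now on that $\omega(\Gamma)\cap\fix(f)=\emptyset$. As $\SS^2$ is compact and $\omega(\Gamma)=\bigcap_{t>0}\ol{\gamma([t,\infty))}$ is a nested intersection of continua, $\omega(\Gamma)$ is a nonempty continuum; it is nondegenerate (a one-point $\omega$-limit would be a fixed point) and $\operatorname{dist}(\gamma(t),\omega(\Gamma))\to0$ as $t\to+\infty$, so every neighbourhood of $\omega(\Gamma)$ contains a tail $\gamma([T,\infty))$. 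Let $\Omega$ be the connected component of $\SS^2\sm\fix(f)$ meeting $\omega(\Gamma)$; as $\omega(\Gamma)$ is connected and invariant, $f(\Omega)=\Omega$, so $\Omega$ is a fixed-point-free $f$-invariant open surface, and the convergence just noted together with connectedness of $\Gamma$ gives $\Gamma\subset\Omega$.

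\textit{Step 1 (the crux): $\Gamma$ is a topologically embedded line.} Equivalently, $\Gamma$ does not accumulate on any of its own points, i.e. $\Gamma\cap\bigl(\omega(\Gamma)\cup\alpha(\Gamma)\bigr)=\emptyset$, and I would establish this through Brouwer's plane translation theory. Suppose $q=\gamma(s)\in\omega(\Gamma)$; then $\gamma(t_k)\to q$ for some $t_k\to+\infty$, while the translation arcs $\gamma([t_k,t_k+1])$ converge to the translation arc $\gamma([s,s+1])$. Inside the fixed-point-free invariant surface $\Omega$, this recurrence of $\Gamma$ toward itself should force a periodic chain of free topological discs based near $q$ and hence, by Franks' free-disc-chain lemma (a form of Brouwer's translation arc lemma) applied on the universal cover of $\Omega$, which is homeomorphic to $\R^2$, a fixed point in $\Omega$, which is absurd. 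The same argument applied to $f^{-1}$, for which $\Gamma$ is again a translation line with $\alpha(\Gamma)$ as its $\omega$-limit, rules out $q\in\alpha(\Gamma)$. The delicate point, and the one I expect to be the main obstacle, is that $\Omega$ need not be simply connected: one has to prevent the self-recurrence of $\Gamma$ downstairs from becoming, upstairs, a mere drift between distinct lifts, and here the simple connectedness of $\Gamma$ (so that its lifts are homeomorphic copies with trivial stabiliser in the deck group) together with the linear order that $\gamma$ puts on $\Gamma$ are what one must exploit.

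\textit{Step 2: $\Fomega(\Gamma)$ is a non-separating continuum with nonzero external prime ends rotation number.} By Step 1 the tail $\gamma([T,\infty))$ lies in a single component $V$ of $\SS^2\sm\omega(\Gamma)$; since $f$ permutes these components and carries $\gamma([T,\infty))$ into itself, $f(V)=V$, and pulling back by $f^{-1}$ along $\gamma$ gives $\Gamma\subset V$. Thus $V$ is the component of $\SS^2\sm\omega(\Gamma)$ containing $\Gamma$; it is simply connected, $\partial V\subset\omega(\Gamma)$, and $\Fomega(\Gamma)=\SS^2\sm V$ is the $f$-invariant non-separating continuum of the Introduction, disjoint from $\Gamma$. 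Let $\hat f$ be the extension of $f|_V$ to the prime ends compactification $\hat V$, with induced circle homeomorphism on the circle of prime ends $\mathbb P$ and rotation number $\rho$. If $\rho=0$ then $\hat f|_{\mathbb P}$ fixes a prime end $\xi$, whose impression $Z(\xi)$ is an $f$-invariant subcontinuum of $\partial V\subset\omega(\Gamma)$, hence non-separating (being a subcontinuum of the non-separating continuum $\Fomega(\Gamma)$). Deleting a point of the nonempty set $\fix(f)\subset\SS^2\sm\omega(\Gamma)$ and applying the Cartwright--Littlewood fixed point theorem to the invariant non-separating plane continuum $Z(\xi)$ produces a fixed point of $f$ in $Z(\xi)\subset\omega(\Gamma)$, a contradiction. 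Hence $\rho\neq0$.

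\textit{Step 3: $\Fomega(\Gamma)$ is an attractor.} Carry the picture into $\hat V$: $\gamma$ gives an embedding $\hat\gamma\colon\R\to V\subset\hat V$ with $\hat f(\hat\gamma(t))=\hat\gamma(t+1)$, and since $\gamma(t)$ leaves every compact subset of $V$ (it converges to $\partial V$) the points $\hat\gamma(t)$ accumulate on $\mathbb P$. The limit set $L=\bigcap_t\ol{\hat\gamma([t,\infty))}\subset\mathbb P$ is an $\hat f$-invariant continuum in $\mathbb P$, so it is a point or a subarc; but $\rho\neq0$ forbids a fixed point of $\hat f|_{\mathbb P}$, hence both cases, so $L=\mathbb P$. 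Thus the points $\hat\gamma(t)$, $t\ge T$, come near every prime end, and — using short cross-cuts of $\hat V$ through them, which for $T$ large cut off a collar of $\mathbb P$ that $\hat f$ pushes strictly inward (because $\hat f$ sends $\hat\gamma(t)$ to $\hat\gamma(t+1)$, even closer to $\mathbb P$) — one obtains a decreasing sequence $A_T\supset A_{T+1}\supset\cdots$ of closed collar neighbourhoods of $\mathbb P$ with $\hat f(A_n)\subset\inter A_{n+1}$ and $\bigcap_nA_n=\mathbb P$. Back in $\SS^2$ this yields a forward-trapping open neighbourhood $N$ of $\Fomega(\Gamma)$ with $\bigcap_{n\ge0}f^n(N)=\Fomega(\Gamma)$, so $\Fomega(\Gamma)$ is a topological attractor; combined with $\rho\neq0$ it is a rotational attractor, and by Step 1 it is disjoint from $\Gamma$. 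Some care with the prime-ends topology is needed in this last construction, but the underlying mechanism is the classical one: a spiralling translation line traps an annular neighbourhood of the set it spirals toward.
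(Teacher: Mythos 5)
Your Step 2 is where the argument breaks. From a fixed prime end $\xi$ of $V$ you cannot conclude that $f$ has a fixed point in $\omega(\Gamma)$: the impression $Z(\xi)$ is indeed an invariant subcontinuum of $\bd V$, but it need not be non-separating --- a subcontinuum of a non-separating continuum can perfectly well separate $\SS^2$ (the boundary circle of a closed disk already does) --- so the Cartwright--Littlewood theorem does not apply to it; and if you instead fill $Z(\xi)$ and apply Cartwright--Littlewood to the filled set, the fixed point you obtain may lie in a complementary component of $\omega(\Gamma)$ rather than in $\omega(\Gamma)$ itself, so no contradiction results. A fixed prime end with no fixed point in its impression is exactly the phenomenon this paper studies: Lemma \ref{lem:C-L} only yields that such a prime end is attracting or repelling, and the configurations of Figure \ref{fig:examples-pulpo} have $\rho=0$, fixed prime ends, and a fixed-point-free boundary. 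Thus the case $\rho=0$, which is the real heart of the theorem, is untouched by your proposal; in the paper it consumes most of the proof (the annulus Lemma \ref{lem:omegama}, the surgery/special-case Lemma \ref{lem:nonegindex}, and the maximal-disk and 4-branches arguments of Lemma \ref{lem:embedded}), and the contradiction there is reached by dynamical means (stable/unstable branches of disks, rotational repellors, Lemmas \ref{lem:attractor-intersect} and \ref{lem:spiral}), not by a topological fixed point theorem applied to an impression.

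Step 1 is also a gap rather than a proof, and you say so yourself. Lifting to the universal cover of the fixed-point-free component $\Omega$ does give a Brouwer homeomorphism for which the lifted line is embedded (this is Corollary \ref{coro:guillou}), but self-accumulation of $\Gamma$ downstairs can come entirely from distinct lifts $T\til\Gamma$, so the returns of $\Gamma$ near $q$ produce no free disk chain for the chosen lift; you name this obstacle (``drift between distinct lifts'') and do not overcome it. The paper never proves embeddedness by such a direct argument: even in the annulus, showing $\alpha(\Gamma)\cap\omega(\Gamma)=\emptyset$ for a fixed-point-free map is the delicate Lemma \ref{lem:omegama}, whose proof requires the deck-transformation bookkeeping ($I^\pm(\cB)$, Lemmas \ref{lem:oneside} and \ref{lem:trans-fix}) and a bi-recurrent point, and it is invoked only after modifying $f$ (Lemma \ref{lem:model}) so that exactly two fixed points remain; the general, possibly non-embedded case is then dispatched by a separate contradiction (an embedded line $\Gamma'$ in a negative-index component whose $\Falpha(\Gamma')$ is a rotational repellor meeting $\omega(\Gamma)$), with embeddedness of $\Gamma$ emerging as a by-product, not as a first step. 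Your Step 3 is essentially the paper's Rotational Attractor Lemma \ref{lem:rotational} and is fine as a sketch, but with Steps 1 and 2 unproved the proposal does not establish the theorem.
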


In the case that $\til{\omega}(\Gamma)$ is a rotational attractor, one can show that $\omega(\Gamma)$ has exactly two invariant complementary components (see Corollary \ref{coro:rotational-index}), which allow us to define two ``sides'' of $\omega(\Gamma)$. The theorem implies that $\omega(\Gamma)$ is topologically attracting from the side that is accumulated by $\Gamma$, and $\Gamma$ ``spirals'' towards $\omega(\Gamma)$; see Figure \ref{fig:horseshoe_rotat}. This is exactly what happens in the setting of the classical Poincar\'e-Bendixson theorem for flows. The usual proof of the Poincar\'e-Bendixson theorem is not easily adapted to this setting, due to the absence of flow-boxes. In fact, we emphasize that the line $\Gamma$ could accumulate (or self-accumulate) in a very intricate way. For instance, $\Gamma$ could be a branch of the unstable manifold of a hyperbolic saddle point with a homoclinic intersection (see Figure \ref{fig:horseshoe_rotat}). Even when $\Gamma$ is emebedded, its $\omega$-limit could be a \emph{wada lakes} continuum (a continuum with three complementary components and which is equal to the boundary of each component).

\begin{figure}[ht!]
\includegraphics[height=5cm]{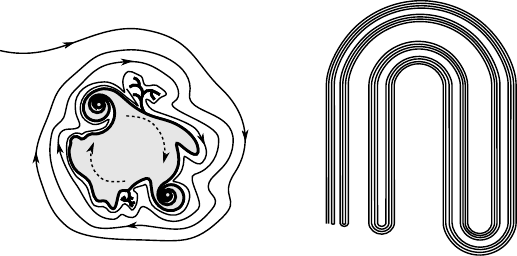} 
\caption{A rotational attractor, and a complicated translation line.}
\label{fig:horseshoe_rotat}
\end{figure}

Let us give a simple application. We say that a fixed point $p$ of a homeomorphism $f$ is a \emph{saddle} if $f$ is locally conjugate to $(x,y)\mapsto (\lambda_1 x, \lambda_2 y)$ at $p$, where $0<\lambda_1<1<\lambda_2$ (which is for instance the case whenever $p$ is a fixed point of diffeomorphism $f$ such that $Df(p)$ has eigenvalues $\lambda_1, \lambda_2$). An unstable branch of $p$ is a connected component of $W^u(p)\sm \{p\}$.

\begin{corollarymain} If $\Gamma$ is an unstable branch of a saddle fixed point of the homeomorphism $f\colon \SS^2\to \SS^2$, then either the $\omega$-limit of $\Gamma$ contains a fixed point or its filled $\omega$-limit is a rotational attractor disjoint from $\Gamma$.
\end{corollarymain}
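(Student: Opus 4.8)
The plan is to reduce the corollary to Theorem~\ref{th:pb} by observing that an unstable branch of a saddle fixed point is always a translation line. Let $p$ be the saddle, and fix a chart around $p$ in which $f$ is conjugate to $(x,y)\mapsto(\lambda_1 x,\lambda_2 y)$ with $0<\lambda_1<1<\lambda_2$. In this chart $W^u_{\mathrm{loc}}(p)$ is (a segment of) the $y$-axis, and the two branches of $W^u(p)\sm\{p\}$ near $p$ are its two half-segments; since $\lambda_2>0$, the map $f$ preserves each of them rather than interchanging them, and $W^u(p)=\bigcup_{n\ge 0}f^n(W^u_{\mathrm{loc}}(p))$. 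Thus, if $\Gamma$ denotes the branch in question, then $f(\Gamma)=\Gamma$.

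Next I would verify that $\Gamma$ meets the three requirements in the definition of a translation line. Fix a fundamental arc $J\subset\Gamma$ for the action of $f$ — the image in $\Gamma$ of a half-open segment $[q,f(q))$ of $W^u_{\mathrm{loc}}(p)$, with $q$ close to $p$ so that $f(q)$ still lies in the chart — and a parametrization $\gamma_0\colon[0,1)\to J$ with $\gamma_0(0)=q$ and $\gamma_0(s)\to f(q)$ as $s\to 1^-$. Gluing the iterates, set $\gamma(t)=f^{\floor{t}}\bigl(\gamma_0(t-\floor{t})\bigr)$ for $t\in\R$; this is continuous (continuity at integers follows from $\gamma_0(s)\to f(q)=f(\gamma_0(0))$ together with $f$-equivariance), its image is $\bigcup_{n\in\Z}f^n(J)=\Gamma$, and $\gamma(t)\to p$ as $t\to-\infty$ because backward iterates converge to $p$ on $W^u_{\mathrm{loc}}(p)$. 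It is injective: if $\gamma(s)=\gamma(t)$ with $s<t$, then applying $f^{-N}$ with $N$ so large that both points land in $W^u_{\mathrm{loc}}(p)$ yields two coinciding points $\gamma(s-N)=\gamma(t-N)$ of the embedded arc $W^u_{\mathrm{loc}}(p)$, forcing $s=t$. And it has no fixed points: a fixed $\gamma(t_0)$ would give $\gamma(t_0+1)=f(\gamma(t_0))=\gamma(t_0)$, contradicting injectivity (equivalently, every point of $W^u(p)\sm\{p\}$ has backward orbit tending to $p$, hence cannot be fixed). Finally, $\gamma$ is oriented so that increasing $t$ corresponds to applying $f$, which matches the dynamical orientation. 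Hence $\Gamma$ is a translation line.

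Applying Theorem~\ref{th:pb} to $\Gamma$ then yields the dichotomy: either $\omega(\Gamma)$ contains a fixed point, or $\Gamma$ is a topologically embedded line and its filled $\omega$-limit $\Fomega(\Gamma)$ is a rotational attractor disjoint from $\Gamma$ — which is exactly the assertion of the corollary (the extra information that $\Gamma$ is embedded in the second case being a free byproduct). There is no essential difficulty beyond Theorem~\ref{th:pb} itself; the only point requiring a little care is the gluing construction of $\gamma$ and the verification that it is a continuous injection from $\R$ onto $\Gamma$ converging to $p$ at $-\infty$, which is the routine direct-limit argument sketched above. It may be cleanest to record, as a preliminary lemma, the statement that \emph{every stable or unstable branch of a topological saddle fixed point is a translation line}.
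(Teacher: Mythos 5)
Your proposal is correct and is exactly the paper's (implicit) argument: the paper treats the corollary as an immediate consequence of Theorem~\ref{th:pb}, noting that stable/unstable branches of saddles are translation lines, which is precisely the reduction you carry out. Your verification that the branch is $f$-invariant, injectively parametrized, and fixed-point free via the local linear model is the routine check the paper leaves to the reader, and it is done correctly.
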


A similar statement can be made for stable branches, using $f^{-1}$ instead of $f$.

\subsection{Invariant disks and prime ends}
Fix an orientation-preserving homeomorphism $f\colon \SS^2\to \SS^2$. If $U$ is an open invariant topological disk, the Carath\'eodory prime ends compactification provides a way of regularizing the boundary of $U$ by embedding $U$ in a closed unit disk, so that its boundary becomes a circle. This ``ideal'' circle is called the circle of prime ends of $U$. The map $f$ induces a homeomorphism $\hat f$ of the circle of prime ends, and understanding its relation with the dynamics of $f$ in the real boundary of $U$ is a subtle problem that has been studied in a number of works (for example \cite{cartwright-littlewood, walker,  barge-gillette, alligood-yorke, ortega-portal, corbato-portal, kln}). The rotation number of $\hat f$ is called the prime ends rotation number of $f$ in $U$, denoted $\rho(f, U)$, and one particularly relevant question is to what extent the Poincar\'e theory for circle homeomorphisms extends to this new invariant: what can one say about the dynamics of $f$ in $\bd U$, knowing its rotation number?

Unlike the classical case of the circle, it is possible to have $\rho(f,U)=0$ but no fixed point in $\bd U$ (see Figure \ref{fig:examples-pulpo}); however, the known examples where no such point exist are very particular. There are several known restrictions that such a map must satisfy \cite{cartwright-littlewood, alligood-yorke, kln}; for instance it cannot be area-preserving. In fact, it is known that any such example must have a very specific dynamics in the prime ends compactifications: namely, in a neighborhood of the circle of prime ends, it consists of the basins of a finite number of attracting and repelling prime ends \cite{cartwright-littlewood, matsumoto-nakayama}. Although this provides relevant information about the dynamics of $f|_U$ near the $\bd U$, it gives no information about the dynamics outside of $U$ (in particular, in $\bd U$). From our main results we are able to provide a description of the dynamics in an actual neighborhood of $\bd U$ in $\SS^2$:

To give a precise statement, let us say that a translation strip $T$ is the image of a continuous injective map $\phi\colon \R\times [0,1] \to \SS^2$ such that $\phi^{-1} f \phi(x,y) = (x+1,y)$. One may define the limit sets of translation strips and the notion of ``spiraling'' similarly to how it is done with translation lines (see Sections \ref{sec:translation-lines} and \ref{sec:spiral} for details).

\begin{theoremain}\label{th:disks} Suppose that $U\subset \SS^2$ is an open invariant topological disk such that its prime ends rotation number is $0$ but there are no fixed points in $\bd U$. Then $\bd U$ is contained in the union of the basins of a finite family of pairwise disjoint rotational attractors and repellors (at least one of each), disjoint from $U$ and with boundary in $\bd U$. 

More precisely, there exists $k\geq 0$ such that the fixed point index of $f$ in $U$ is $-k$ and finite families $\mc{A}, \mc{R}, \mc{T}$ of rotational attractors, rotational repellors, and translation strips (respectively) such that elements of $\mc{A}\cup \mc{R}\cup \mc{T}$ are pairwise disjoint and:
\begin{itemize}
\item Each element of $\mc{A}\cup \mc{R}$ is disjoint from $U$ and has boundary in $\bd U$;
\item Both $\mc{A}$ and $\mc{R}$ are nonempty and have at most $k+1$ elements;
\item $\mc T$ is nonempty and has at least $k+1$ and at most $2k+2$ elements;
\item Each element of $\mc{T}$ spirals from an element of $\mc{R}$ to an element of $\mc{A}$, and every element of $\mc{A}\cup \mc{R}$ appears in this way;
\item  The interiors of elements of $\mc T$ cover the set $\bd U \sm \bigcup_{K\in \mc{A}\cup \mc{R}} K$.
\end{itemize}
Moreover, the boundaries of elements of $\mc A \cup \mc R$ are the principal sets of fixed prime ends of $U$.
\end{theoremain}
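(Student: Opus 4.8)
The plan is to reduce the statement to a finite collection of applications of Theorem A (the Poincaré–Bendixson theorem for translation lines), organized by the prime ends compactification. First I would invoke the Cartwright–Littlewood / Matsumoto–Nakayama description: since $\rho(f,U)=0$ and $\bd U$ contains no fixed point, the induced circle homeomorphism $\hat f$ on the circle of prime ends $\bd\hat U$ has fixed points but no interval of fixed points is forced; in fact the hypothesis that $\bd U$ has no fixed point forces $\hat f$ to have finitely many fixed prime ends, alternately attracting and repelling (if there were infinitely many, or a whole arc of fixed prime ends, a standard prime-ends-to-boundary argument produces a fixed point in $\bd U$, contradiction). Let $a_1,r_1,a_2,r_2,\dots$ be the cyclic list of attracting and repelling fixed prime ends, with $2m$ of them, $m=k+1$ where $k\ge 0$ is determined below. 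For each attracting fixed prime end $a_i$, its principal set $P(a_i)\subset \bd U$ is a non-separating continuum without fixed points; I would show it is a topological attractor whose external prime ends rotation number is nonzero — i.e.\ a rotational attractor — by constructing a translation line $\Gamma_i$ in $U$ landing on $P(a_i)$ (obtained by pushing a radial segment in the prime ends disk under the dynamics) and applying Theorem A: since $\Gamma_i$ has no fixed point in its $\omega$-limit (that $\omega$-limit lies in $P(a_i)\subset \bd U$), the conclusion of Theorem A gives that $\Falpha(\Gamma_i)=\Fomega(\Gamma_i)$ is a rotational attractor, and one identifies it with the filled principal set $\fill P(a_i)$. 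The repellors come symmetrically from $f^{-1}$.

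Next I would assemble the translation strips. For each arc of the circle of prime ends between a consecutive repelling prime end $r$ and attracting prime end $a$, the union of the radial segments over that arc, suitably thickened and pushed forward, gives a translation strip $T$ in a neighborhood of the corresponding arc of $\bd U$; the two boundary lines of $T$ are translation lines spiraling out of $P(r)$ and into $P(a)$, and by Theorem A applied to each boundary line (and a connectedness argument sweeping across the strip) the whole strip spirals from the rotational repellor $\fill P(r)$ to the rotational attractor $\fill P(a)$. Because $\hat f$ has $2m$ fixed points, there are $2m=2k+2$ such arcs, but several may have the same pair of endpoints in the boundary; collapsing strips that are homotopic rel their spiraling data and discarding redundancies leaves between $k+1$ and $2k+2$ strips, and between $1$ and $k+1$ distinct rotational attractors (resp.\ repellors). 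The covering property — that the interiors of the strips cover $\bd U\sm\bigcup_{K\in\mc A\cup\mc R}K$ — follows because every point of $\bd U$ not in some principal set $P(a_i)$ or $P(r_j)$ is accessible from $U$ (by a Mather-type or prime-ends argument) via a ray whose landing point lies in the interior of exactly one of the constructed strips; pairwise disjointness of the chosen strips and of the continua is arranged by shrinking.

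Finally, the index computation: the fixed point index of $f$ in $U$ equals, by the Lefschetz–Nielsen argument in the prime ends disk (Poincaré–Hopf for the induced circle map together with additivity of the index and the fact that $U$ itself contains no fixed points near $\bd U$ after the attractors/repellors are removed), $1 - m$ where $2m$ is the number of fixed prime ends of alternating type; writing $k=m-1\ge 0$ gives index $-k$, which simultaneously fixes all the cardinality bounds: $|\mc A|,|\mc R|\in\{1,\dots,k+1\}$ and $|\mc T|\in\{k+1,\dots,2k+2\}$. The main obstacle I expect is the passage from the prime ends compactification back to the ambient sphere in a way that is uniform enough to produce genuine translation strips (not merely translation lines) with the stated spiraling behavior: one must control a full one-parameter family of rays landing on $\bd U$ and verify that the dynamics on this family is conjugate to $(x,y)\mapsto(x+1,y)$, which requires combining the prime ends convergence theorems with the trapping-region structure coming from Theorem A rather than a single application of it. Everything else — the index count, the finiteness of fixed prime ends, and the identification of filled principal sets with rotational attractors — is then a bookkeeping consequence of Theorem A and Corollary~\ref{coro:rotational-index}.
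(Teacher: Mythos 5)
There is a genuine gap, and it sits exactly where you flag your ``main obstacle'': the construction of the translation strips and the covering property. The strips you build---thickened families of radial rays in the prime ends disk of $U$, pushed by the dynamics---are contained in $\ol U$, so their interiors are subsets of $U$ and can never cover $\bd U \sm \bigcup_{K\in\mc A\cup\mc R} K$, which is what the theorem asserts. The strips in the statement must be two-sided neighborhoods of pieces of $\bd U$ in $\SS^2$, with one boundary line in $U$ and the other boundary line in a \emph{different} complementary component of $\bd U$; no amount of control over a one-parameter family of rays landing from inside $U$ produces this, and the accessibility argument you sketch does not help, since points of $\bd U$ off the principal sets need not be accessible from $U$, and even accessible points would lie on the boundary of such a strip rather than in its interior.

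The paper's proof is structurally different at precisely this point. It does not work only in $U$: it takes \emph{all} invariant connected components $U_0=U, U_1,\dots,U_m$ of $\SS^2\sm\bd U$ of nonpositive index containing fixed points, shows via Lemma \ref{lem:A-R-same} that their rotational attractors and repellors coincide with those of $U$, and inside each $U_i$ uses Lemma \ref{lem:flower} to produce $2k_i+2$ embedded translation lines bounding a ``model'' region. It then collapses the attractors and repellors to points, obtaining an attractor--repellor graph $G$ whose edges are these lines. The faces of $G$ meeting $\bd U$ off the attractors and repellors are shown to be fixed point free, hence of index $0$; Lemma \ref{lem:index-LC} then forces each such face to be bounded by exactly two edges and two vertices, and Lemma \ref{lem:nofix-trans} (the model lemma for a fixed-point-free invariant disk with one attracting and one repelling boundary fixed point) identifies its closure as a genuine translation strip. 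These strips straddle $\bd U$, which is what makes the covering statement true, and the cardinality bounds on $\mc T$ come from the observation that each strip has at least one boundary line in $U$ and no line in $U$ bounds two strips. Your outline captures correctly the identification of the filled principal sets as rotational attractors/repellors (via Theorem \ref{th:prime-end}) and the index count $i(f,U)=-k$ with $2k+2$ fixed prime ends, but without the graph/collapsing step and the two lemmas just cited, the existence of the strips, their spiraling behavior, their count, and the covering of $\bd U\sm\bigcup_{K\in\mc A\cup\mc R}K$ remain unproved.
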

See Figure \ref{fig:examples-pulpo} for two examples of possible situations where $k=1$.

The proof relies in a more local version of this result, which states that if an invariant open topological disk has a fixed prime end $\pe p$, then either there is a fixed point in the impression of $\pe p$, or the principal set of $\pe p$ is the boundary of a rotational attractor or repellor disjoint from $U$ (see Section \ref{sec:prime-ends} for precise definitions and Theorem \ref{th:prime-end} for a statement of the result).

\begin{figure}[ht!]
\includegraphics[height=4.5cm]{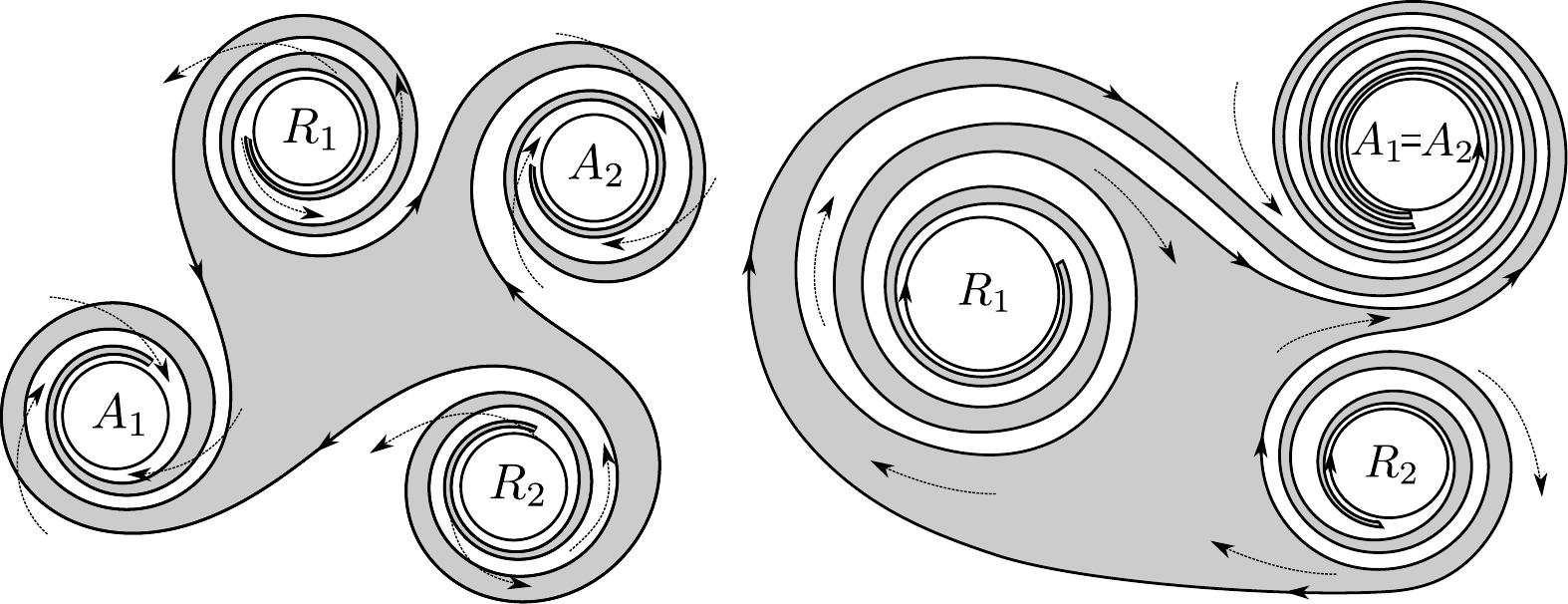} 
\caption{Two disks as described in Theorem \ref{th:disks}}
\label{fig:examples-pulpo}
\end{figure}

The condition of having a rotation number $0$ in Theorem \ref{th:disks} may be replaced by the equivalent condition of the disk having a fixed point index different from $1$ (see Corollary \ref{coro:disk-index}). Moreover, in that case the index is nonpositive.

One of the implications of the previous result is that the dynamics in $\bd U$ is topologically semi-conjugate to a very simple dynamics in an invariant graph: one may collapse the attractors/repellors to points (see Figure \ref{fig:collapse} ahead) and the translation strips to translation lines, obtaining a finite simple graph where  every point flows along an edge from a repellor to an attractor. This is detailed in Section \ref{sec:graph}.

As a simple application, noting that the basin of an attractor or repellor cannot intersect the chain recurrent set,  Theorem \ref{th:disks} implies the following:
\begin{theoremain}\label{coro:continua} If an invariant continuum in the chain recurrent set of $f$ has no fixed points, then its complement has exactly two invariant connected components, each with fixed point index $1$.
\end{theoremain}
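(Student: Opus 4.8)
The plan is to cut $\SS^2$ open along the invariant continuum, feed each complementary piece into Theorem~\ref{th:disks}, and then read off the number of invariant pieces from the Lefschetz fixed point formula.

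\emph{Step 1 (elementary topology of the complement).} Write $X$ for the continuum in question, so $f(X)=X$, $X$ has no fixed points, and $X\subseteq\mathrm{CR}(f)$, the chain recurrent set. I would first record that every connected component $U$ of $\SS^2\sm X$ is an open topological disk: its complement $\SS^2\sm U$ is the union of $X$ with the closures of the remaining components (each of which meets $X$ along its frontier), hence connected, so $U$ is simply connected, and $U\neq\SS^2$ since $X\neq\emptyset$. Its frontier $\partial U$ is contained in $X$, hence in $\mathrm{CR}(f)$, and is connected (the frontier of a simply connected domain of $\SS^2$ is a continuum --- a classical fact). Any fixed point of $f$ lies in an invariant component of $\SS^2\sm X$; since $f$ is orientation preserving it is homotopic to the identity, so by the Lefschetz fixed point theorem $\mathrm{Fix}(f)\neq\emptyset$ and there is at least one invariant component. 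The target is that there are exactly two.

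\emph{Step 2 (each invariant component has nonzero prime ends rotation number).} Fix an invariant component $U$. Since $\partial U\subseteq X$ has no fixed points, I would argue by contradiction that $\rho(f,U)\neq0$. If $\rho(f,U)=0$, Theorem~\ref{th:disks} supplies finite families $\mc A,\mc R,\mc T$ of rotational attractors, rotational repellors and translation strips with the listed properties. For $K\in\mc A\cup\mc R$ we have $K$ disjoint from $U$ and $\partial K\subseteq\partial U$, so $\inter(K)\cap\ol U=\emptyset$ and therefore $\ol U\cap K=\partial K$; consequently, if $\partial U$ were contained in $\bigcup_{K\in\mc A\cup\mc R}K$ it would equal the disjoint union $\bigcup_K\partial K$ of at least two nonempty continua (there is at least one attractor and one repellor, pairwise disjoint), contradicting connectedness of $\partial U$. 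Hence there exists $p\in\partial U\sm\bigcup_K K$, and by Theorem~\ref{th:disks} this $p$ lies in the interior of some $T\in\mc T$. Since $T$ spirals from a rotational repellor to a rotational attractor $A$, the point $p$ lies in the basin of $A$, and $p\notin A$ as $p\notin\bigcup_K K$; but the basin of an attractor meets $\mathrm{CR}(f)$ only in the attractor itself, so $p\notin\mathrm{CR}(f)$, contradicting $p\in\partial U\subseteq\mathrm{CR}(f)$. This proves the claim.

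\emph{Step 3 (index count).} Applying Corollary~\ref{coro:disk-index} to each invariant component $U$ (legitimate since $\partial U$ has no fixed points), the fact that $\rho(f,U)\neq0$ forces the fixed point index of $f$ in $U$ to be $1$; in particular $\mathrm{Fix}(f)\cap U\neq\emptyset$. Thus every invariant component carries a fixed point, and since $\mathrm{Fix}(f)$ is compact and the components are pairwise disjoint open sets, there are only finitely many invariant components. Writing $\mathrm{Fix}(f)$ as the finite disjoint union of the compact sets $\mathrm{Fix}(f)\cap U$, each contained in the open set $U$, additivity of the fixed point index together with $L(f)=\chi(\SS^2)=2$ gives $2=L(f)=\sum_U\ind(f,U)=\card\{\text{invariant components of }\SS^2\sm X\}$. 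Hence there are exactly two invariant complementary components, each of index $1$ (and, as a byproduct, $X$ separates $\SS^2$).

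I expect Step~2 to be the only delicate point, and within it the need to exclude the degenerate configuration in which $\partial U$ is entirely absorbed into the rotational attractors and repellors themselves, so that no point of $\partial U$ lies in a genuinely wandering part of a basin; this is precisely where the connectedness of $\partial U$, combined with the fact that Theorem~\ref{th:disks} always yields at least one attractor, at least one repellor and at least one translation strip, does the work. Everything after that is routine fixed-point-index bookkeeping on the sphere.
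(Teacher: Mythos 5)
Your proof is correct and follows essentially the same route the paper intends: apply Theorem~\ref{th:disks} to any invariant complementary component with vanishing prime ends rotation number, use the standard fact that the basin of an attractor or repellor meets the chain recurrent set only in the attractor/repellor itself to rule this out, and then conclude by Corollary~\ref{coro:disk-index} together with Lefschetz--Hopf and additivity of the index. Your Step~2, using connectedness of $\bd U$ (unicoherence of $\SS^2$) to exclude the degenerate possibility $\bd U\subset\bigcup_{K\in\mc A\cup\mc R}K$, is a correct and worthwhile detail that the paper's one-line justification leaves implicit.
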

This improves the main result of \cite{matsumoto-nakayama}, which states that a minimal continuum on the sphere has exactly two invariant complementary components. 
Note that in the particular case that $f$ has no periodic points, one may conclude that every non-invariant connected component in the complement of the continuum is wandering.

We also note that Theorem \ref{th:disks} can be used to describe the dynamics of arbitrary invariant continua without fixed points by studying the invariant connected components of its complement. To avoid more technical statements we will not further pursue this matter.

\subsection{Fixed and periodic points on circloids}

If $f\colon \A=\T^1\times \R\to \A$ is a homeomorphism isotopic to the identity and $K\subset \A$ is an essential invariant continuum, one may define the rotation number of a point $z\in K$ associated to a lift $\til{f}\colon \til{A}\simeq \R^2\to \til{A}$ of $f$ as $\rho(\til{f},z) =\lim_{n\to \infty} (\til{f}^n(\til{z})-\til{z})_1/n$, where $\til{z}\in \pi^{-1}(z)$, if the limit exists. The rotation interval of $K$ is $$\ol{\rho}(\til{f},K) = [\inf_{z\in K} \rho(\til{f},z), \sup_{z\in K} \rho(\til{f}, z)].$$ 
%\xxx{maybe state on the sphere}

A natural question, inspired by the  Poincar\'e-Birkhoff theorem, is whether $0$ being in the rotation interval implies that $\til{f}$ has a fixed point. This is generally not the case, as simple examples with $K$ equal to a closed annulus show. However, the answer is positive if one adds some dynamical restriction (like area-preservation, or a curve intersection property); see for instance \cite{franks-gen}. A different approach is to add a topological restriction on $K$ that guarantees a positive answer; for instance Barge and Gillette showed that the result holds if $K$ is a \emph{cofrontier} (\ie it separates the annulus into exactly two components and $K$ is the common boundary of these components) \cite{barge-gillette}. This was generalized in \cite{koro-circloid} to the case where $K$ is a \emph{circloid}, which means that $K$ is annular and essential (\ie it is essential in $\A$ and $A\sm K$ has exactly two components) and no proper subcontinuum of $K$ shares the same property.  Unlike cofrontiers, circloids may have nonempty interior. For example, the boundary of the grey disk on the left side of Figure \ref{fig:examples-pulpo}, together with the disks $A_1, A_2, R_1, R_2$ is a circloid with nonempty interior (we may remove a point form the grey disk so that the example lies in the annulus); however the boundary of the disk on the right side (joined with the three disks $A_1, R_1, R_2$) is not a circloid.

Circloids are particularly relevant because any annular continuum contains a circloid (and an invariant one if the given continuum is invariant). An arbitrary essential continuum needs not contain a circloid, but it always contains the boundary of a circloid. Hence it is relevant to know whether, given an invariant circloid $K$ which contains $0$ in its rotation interval, there exists a fixed point in the boundary of $K$. The answer is generally no; for instance this is what happens on the left side of Figure \ref{fig:examples-pulpo}). However, using our main results we can show that the only counterexamples are essentially as the aforementioned one:

\begin{theoremain}\label{th:circloid} Suppose that $K\subset \A$ is an $f$-invariant circloid with a fixed point. If there is no fixed point in $\bd K$, then there exists $k\geq 1$ such that there are $k$ rotational attractors and $k$ rotational repellors in $K$ which are pairwise disjoint and whose basins cover a neigbhorhood of $K$. Moreover, $f|_K$ is topologically semiconjugate to a Morse-Smale dynamics in the circle in the following way: there exists a monotone\footnote{\ie preimages of points are connected; in fact in this case they are non-separating continua.} map $h\colon \SS^2\to \SS^2$ and a homeomorphism $F\colon \SS^2\to \SS^2$ such that $hf = Fh$ and $h(K)$ is an $F$-invariant circle consisting of $k$ attracting and $k$ repelling (in $\SS^2$) fixed points.
\end{theoremain}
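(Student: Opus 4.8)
The plan is to reduce Theorem~\ref{th:circloid} to Theorem~\ref{th:disks} applied to one of the two complementary components of $K$ in the annulus $\A$, and then translate the conclusions from the ambient sphere (after one-point compactification) back to the annular setting. First I would pass to $\SS^2$ by adding the two ends $N,S$ of $\A$ as fixed points; since $K$ is annular and essential, it separates $\SS^2$ into exactly two components $U_N\ni N$ and $U_S\ni S$, both open invariant topological disks with $\bd U_N, \bd U_S \subset K$. The hypothesis that $K$ has a fixed point but no fixed point in $\bd K$ is the key leverage: one of $U_N, U_S$ — say $U_N$ — must contain that fixed point in its interior (not merely the point at infinity $N$), hence $U_N$ is not a periodic disk with the ``rotational'' dynamics one would get from an irrational prime ends rotation number. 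The first real task is to argue that the prime ends rotation number $\rho(f,U_N)$ (equivalently $\rho(f,U_S)$, up to sign, since they share the boundary circloid) is $0$: here I would invoke the hypothesis that $0$ lies in the rotation interval of $K$ together with the results of \cite{koro-circloid, barge-gillette} relating the prime ends rotation number of a circloid's complementary disks to its rotation interval, so that $0\in\ol\rho(\til f,K)$ forces $\rho(f,U_N)=0$. Then Theorem~\ref{th:disks} applies to $U_N$ (and symmetrically to $U_S$): since there is no fixed point in $\bd U_N\subset\bd K$, we get finite families $\mc A_N,\mc R_N,\mc T_N$ of rotational attractors, repellors and translation strips with the stated spiraling structure, and $\bd U_N$ covered by the basins together with the attractor/repellor boundaries.

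Next I would assemble the global picture on $K$. Applying the theorem to both $U_N$ and $U_S$ yields two such decompositions; the rotational attractors/repellors obtained are subsets of $\SS^2$ disjoint from the respective disk, hence lie in $K$ (indeed in the ``other side''). The core combinatorial step is to check that the attractors coming from $U_N$ and the attractors coming from $U_S$, together with the repellors from both, can be organized into a single cyclically-ordered family of $k$ attractors alternating with $k$ repellors whose basins cover a neighborhood of all of $K$ (not just of $\bd U_N$ or $\bd U_S$ separately). Since $K = \bd U_N \cup \bd U_S \cup (\inter K)$ and any point of $\inter K$ not in one of the attractor/repellor basins would have to lie in a translation strip from both sides, a connectedness/essentiality argument on the annulus pins down that the strips from the two sides match up along $K$, giving the alternating cyclic structure and the count $\#\mc A = \#\mc R = k$ with $k\ge 1$ (the lower bound $k\ge1$ because $\mc A,\mc R$ are nonempty by Theorem~\ref{th:disks}). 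That the basins cover a neighborhood of $K$ in $\A$ follows because they cover neighborhoods of $\bd U_N$ and $\bd U_S$ from the respective outside, and the rotational attractors/repellors, being topological attractors/repellors in $\SS^2$, have basins that are ambient neighborhoods of themselves.

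For the semiconjugacy, the plan is to build $h\colon\SS^2\to\SS^2$ by collapsing each of the $k$ rotational attractors and each of the $k$ rotational repellors to a point (these are non-separating continua, so the quotient is again $\SS^2$ by Moore's theorem), and simultaneously collapsing each translation strip in $\mc T$ to a translation line; this is exactly the graph-collapsing construction of Section~\ref{sec:graph}, adapted to the circloid so that the resulting invariant graph is a single circle. The induced map $F$ on the quotient sphere is a homeomorphism (the decomposition being $f$-invariant, upper semicontinuous with non-separating elements), $hf=Fh$ by construction, and $h(K)$ is the image circle, on which $F$ acts with $2k$ fixed points — $k$ attracting, $k$ repelling in $\SS^2$ — which is precisely a Morse–Smale circle dynamics. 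The main obstacle I anticipate is the gluing step: ensuring that the two decompositions furnished independently by Theorem~\ref{th:disks} on $U_N$ and $U_S$ genuinely refine to a common one on $K$ — in particular that a rotational attractor seen from $U_N$ is not subdivided or duplicated when seen from $U_S$, and that translation strips from the two sides have matching interiors along $\inter K$. Resolving this cleanly will likely require the local/prime-end version (Theorem~\ref{th:prime-end}) applied to prime ends of both disks sharing the same principal set, to identify the attractor/repellor continua canonically and independently of which disk one views them from.
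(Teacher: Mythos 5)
There is a genuine gap, and it sits exactly at the step you call ``the first real task.'' Two linked problems. First, the fixed point supplied by the hypothesis lies in $K$ itself, hence (since $\bd K$ is fixed-point free) in a connected component of the \emph{interior} of $K$; it does not lie in $U_N$ or $U_S$, which are components of the complement of $K$. So your claim that ``one of $U_N, U_S$ must contain that fixed point'' is false, and the leverage you wanted from it evaporates. Second, and more seriously, the implication ``$0\in\ol{\rho}(\til f,K)$ forces $\rho(f,U_N)=0$'' cannot be quoted from \cite{barge-gillette} or \cite{koro-circloid}: those results run in the opposite direction (rotation-number hypotheses produce fixed or periodic points in $K$), and nothing there excludes $\rho(f,U_N)\neq 0$ while $0$ lies in a nondegenerate rotation interval. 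Index bookkeeping alone does not help either: it is a priori consistent with the Lefschetz formula that $U_N$ and $U_S$ both have index $1$ (hence nonzero prime ends rotation number, by Corollary \ref{coro:disk-index}), with the invariant components of $\inter K$ accounting for total index $0$. Establishing that $\rho(f,U_\pm)=0$ is in fact the crux of the theorem, not a citation.

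The paper's route is as follows, and your reduction cannot start without it or a substitute. The interior fixed point gives, via Lefschetz and Corollary \ref{coro:disk-index}, at least one invariant component of $\SS^2\sm\bd K$ of nonpositive index --- but possibly one lying inside $K$, not $U_N$ or $U_S$. To every such component one attaches, through Theorem \ref{th:prime-end} and Lemma \ref{lem:flower}, rotational attractors, repellors and translation lines; collapsing the attractors and repellors yields an attractor-repellor graph $G$, and Lemma \ref{lem:graph-index} (every face of such a graph has nonpositive index) together with the claims about faces forces $U_N$ and $U_S$ themselves to have nonpositive index: if, say, $U_N$ had index $1$, the face of $G$ containing it would have positive index, a contradiction. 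Only after this can Theorem \ref{th:disks} be applied to $U_N$ and $U_S$; from there, your outline (one boundary line of each strip in each of $U_N$ and $U_S$, the matching of the two decompositions along $\bd U_N=\bd U_S=\bd K$, the Euler-type count, and the collapse to a Morse--Smale circle) is broadly the argument the paper carries out. As written, though, the proposal does not get off the ground, because it assumes at the outset precisely the nonvanishing-index exclusion that has to be proved.
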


In fact, as in Theorem \ref{th:disks}, one may show that $K$ is contained in the union of the rotational attractors and repellors together with $2k$ translation strips, each of which spirals from a repellor to an attractor.

It is known that if the rotation set of a circloid has more than one element, then the boundary of the circloid is  indecomposable \cite{jager-koro} (see also \cite{barge-gillette}). In contrast, the previous theorem implies the following:

\begin{corollarymain}\label{coro:circloid-decomposable} If a circloid with indecomposable boundary has a fixed point in its interior, then its boundary has a fixed point.
\end{corollarymain}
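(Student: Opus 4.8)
The plan is to argue by contradiction. Assume that $\bd K$ contains no fixed point of $f$. Since $K$ is an $f$-invariant circloid possessing a fixed point (the one given in its interior), Theorem \ref{th:circloid} applies, together with the refinement stated right after it. This provides an integer $k\geq 1$, pairwise disjoint rotational attractors $A_1,\dots,A_k\subset K$ and rotational repellors $R_1,\dots,R_k\subset K$ with $\bd A_i,\bd R_j\subset\bd K$, and pairwise disjoint translation strips $T_1,\dots,T_{2k}$, each $T_\ell$ spiraling from some $R_{j(\ell)}$ to some $A_{i(\ell)}$, such that $\bd K\subset\bigcup_iA_i\cup\bigcup_jR_j\cup\bigcup_\ell T_\ell$, the interiors of the $T_\ell$ cover $\bd K\sm\bigcup_{i,j}(A_i\cup R_j)$ (and we may assume each $\inter T_\ell$ meets $\bd K$, discarding any superfluous strip), and every $A_i$ and $R_j$ occurs as the target resp.\ source of some strip. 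Recall also that each $A_i,R_j$, being a non‑separating continuum distinct from $\SS^2$, has nonempty \emph{connected} boundary, and that $\bd K\cap A_i=\bd A_i$, $\bd K\cap R_j=\bd R_j$ (since $\inter A_i\subset\inter K$, etc.).

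From here I would derive that $\bd K$ is \emph{decomposable}, contradicting the hypothesis: in an indecomposable continuum every proper subcontinuum is nowhere dense, so $\bd K$ cannot be a finite union of proper subcontinua. For each $\ell$ set
\[
C_\ell=\cl\bigl(\bd K\cap\inter T_\ell\bigr)\cup\bd A_{i(\ell)}\cup\bd R_{j(\ell)}.
\]
First, $\bd K=\bigcup_\ell C_\ell$: indeed $\bd K\sm\bigcup_{i,j}(A_i\cup R_j)\subset\bigcup_\ell\inter T_\ell$, while $\bd K\cap A_i=\bd A_i$ and $\bd K\cap R_j=\bd R_j$, and every attractor and repellor appears as some $A_{i(\ell)},R_{j(\ell)}$. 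Second, each $C_\ell$ is \emph{proper}: one has $C_\ell\subset\cl T_\ell\cup\bd A_{i(\ell)}\cup\bd R_{j(\ell)}$, and if this equalled $\bd K$ it would contain $\bd K\cap\inter T_m$ for every other strip $T_m$; but $\inter T_m$ is disjoint from $T_\ell$ (the strips being pairwise disjoint) and from the attractors and repellors, so $\bd K\cap\inter T_m=\emptyset$, contradicting the assumption that every strip interior meets $\bd K$ (and the fact that $\bd K\sm\bigcup(A_i\cup R_j)\neq\emptyset$, which holds since otherwise $\bd K=\bigsqcup\bd A_i\sqcup\bigsqcup\bd R_j$ would be disconnected). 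Third—and this is the delicate point—each $C_\ell$ is \emph{connected}: the trace $\bd K\cap\inter T_\ell$ lies inside the translation strip $T_\ell$, which spirals from $R_{j(\ell)}$ to $A_{i(\ell)}$, so the closure of every connected component of the trace accumulates both on the connected set $\bd R_{j(\ell)}$ and on the connected set $\bd A_{i(\ell)}$; adjoining these two sets thus glues all the components of $\cl(\bd K\cap\inter T_\ell)$ into a single continuum. Granting this, $\bd K=\bigcup_\ell C_\ell$ is a finite union of proper subcontinua, hence decomposable—contradiction.

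The main obstacle is the last claim: that each connected component of $\bd K\cap\inter T_\ell$ accumulates, in its closure, onto \emph{both} $\bd A_{i(\ell)}$ and $\bd R_{j(\ell)}$. Here one must use the way $\bd K$ sits inside the translation strips, as analyzed in the proof of Theorem \ref{th:circloid}: in straightened coordinates $\phi_\ell^{-1}f\phi_\ell(x,y)=(x+1,y)$ the trace is a closed, $(x\mapsto x+1)$–invariant subset of $\R\times(0,1)$ whose forward (resp.\ backward) iterates converge to $\bd A_{i(\ell)}$ (resp.\ $\bd R_{j(\ell)}$), and one must rule out components that remain bounded in the $x$–direction, which is impossible because a nonempty compact invariant set for a translation is empty. (A more streamlined approach would be to restrict the monotone semiconjugacy $h$ of Theorem \ref{th:circloid} to a monotone surjection $\bd K\to h(K)\cong\SS^1$, forcing $\SS^1$—hence $\bd K$—to be indecomposable; but this fails in general, since the fibers $h^{-1}(c)\cap\bd K$ over non–fixed points $c$ of the circle dynamics may be disconnected, and the finite subcontinuum cover above is the robust replacement.)
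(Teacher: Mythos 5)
Your overall strategy is the paper's: the sets $C_\ell$ you build are exactly the ``basic blocks'' $K_T=(T\cap\bd K)\cup\bd A_T\cup\bd R_T$ of Theorem \ref{th:blocks}, and the final step (an indecomposable continuum cannot be a finite union of proper subcontinua) is a correct replacement for the paper's one-line conclusion. The properness argument and the Baire-type argument are fine. The genuine gap is exactly where you flagged it: the connectedness of $C_\ell$. Your justification of the key claim is invalid. The trace $\bd K\cap\inter T_\ell$ is $f$-invariant as a whole, but $f$ (the translation in strip coordinates) only \emph{permutes} its connected components; an individual component need not be invariant, so ``a nonempty compact translation-invariant set is empty'' does not rule out components that are bounded in the $x$-direction. (Those are in fact excluded, but for a different reason: a component of the open subset $\bd K\cap\inter T_\ell$ of the continuum $\bd K$ must, by boundary bumping, have closure meeting $\bd_{\SS^2}(\inter T_\ell)\cap\bd K\subset\bd A_{i(\ell)}\cup\bd R_{j(\ell)}$.) More seriously, even granting that, nothing in your argument excludes components that are unbounded in only \emph{one} direction, i.e.\ whose closure meets $\bd A_{i(\ell)}$ but not $\bd R_{j(\ell)}$ (or vice versa). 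Since $\bd A_{i(\ell)}$ and $\bd R_{j(\ell)}$ are disjoint, if every component attached to only one of them, $C_\ell$ could split into an ``$A$-side'' and an ``$R$-side'' and your gluing argument collapses. What you actually need is weaker than your claim: that \emph{some} component accumulates on both — and that requires an argument you do not supply.

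The paper closes exactly this hole as follows. By (the circloid analogue of) Remark \ref{rem:separa}, the two boundary lines of $T_\ell$ lie one in $U_-$ and one in $U_+$, so $\bd K\cap T_\ell$ separates them in $T_\ell$; by Newman's theorem (\cite[Theorem 14.3]{newman}) some \emph{component} $C$ of $\bd K\cap T_\ell$ already separates the two lines, and a separating component cannot be bounded in either $x$-direction, hence its closure meets both $\bd R_{j(\ell)}$ and $\bd A_{i(\ell)}$. The paper then finishes not by controlling every other component individually, but globally: if $C_\ell=C_1\sqcup C_2$ were a disconnection with $C\subset C_1$, then $\bd A_{i(\ell)}\cup\bd R_{j(\ell)}\subset C_1$, and $C_1\cup(\bd K\sm\inter T_\ell)$ together with $C_2$ would disconnect the continuum $\bd K$ — a contradiction. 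So your proof can be repaired either by this separation/Newman argument plus the connectedness of $\bd K$, or by combining the separating component with boundary bumping for the remaining components; as written, the ``delicate point'' is a real gap, not a routine verification.
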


We recall that for a rational $p/q$ in the rotation interval of $\til f$ is realized by a periodic point if there exists $x\in \R^2$ such that $\til f^q(x) = x + (p, 0)$ (which implies that $x$ projects to a period $q$ periodic point with rotation number $p/q$).
As a consequence of the previous result and the main result of \cite{jager-koro} we obtain the following:
\begin{theoremain}\label{th:reali} If the rotation interval of an invariant circloid is nonsingular, then every rational element of the rotation interval is realized by a periodic point in the boundary of the circloid.
\end{theoremain}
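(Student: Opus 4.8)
The plan is to reduce the realization of a rational to the existence of fixed points, and then combine the main result of \cite{jager-koro} with Corollary~\ref{coro:circloid-decomposable}. Write $[\rho_-,\rho_+]=\ol\rho(\til f,K)$ and assume $\rho_-<\rho_+$; fix a rational $p/q$ in this interval. Since the rotation set of $K$ has more than one element, the realization result of \cite{jager-koro} provides a periodic point $z_0\in K$ realizing $p/q$, that is, a lift $\til z_0$ with $\til f^{q}(\til z_0)=\til z_0+(p,0)$. If $z_0\in\bd K$ there is nothing more to do, so assume from now on that $z_0\in\inter K$.

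To push $z_0$ to the boundary I would pass to $g=f^{q}$ with the lift $\til g$ defined by $\til g(x)=\til f^{q}(x)-(p,0)$. Then $g$ is a homeomorphism of $\A$ isotopic to the identity, $K$ is a $g$-invariant circloid, $z_0$ is a fixed point of $g$ lying in $\inter K$, and $\ol\rho(\til g,K)=q[\rho_-,\rho_+]-p$ is still nondegenerate and contains $0$. Moreover $\bd K$ is an indecomposable continuum: this follows from the fact recalled just before Corollary~\ref{coro:circloid-decomposable} (from \cite{jager-koro}; see also \cite{barge-gillette}), since the rotation set of $K$ has more than one element, and it is a property of the set $\bd K$ that does not depend on the map. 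Thus $g$ has a fixed point in the interior of a circloid whose boundary is indecomposable, and Corollary~\ref{coro:circloid-decomposable} yields a fixed point of $g$ in $\bd K$.

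The delicate point is that this boundary fixed point $w$ of $g=f^{q}$ must realize the prescribed rotation number: a priori it only satisfies $\til f^{q}(\til w)=\til w+(m,0)$ for some integer $m$, and we need $m=p$, i.e. a genuine $\til g$-fixed point. To obtain this I would not use Corollary~\ref{coro:circloid-decomposable} as a black box but argue through Theorem~\ref{th:circloid}: its ``bad'' alternative would force $g|_K$ to be semiconjugate, by a monotone map homotopic to the identity in the annulus, to a Morse--Smale circle homeomorphism with $2k\geq 2$ fixed points; since on such a model every point converges to a fixed point of the circle map and all of those fixed points share the same rotation number, the rotation set of the model on its invariant circle is a single integer. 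As a monotone semiconjugacy homotopic to the identity preserves rotation numbers, this contradicts the nondegeneracy of $\ol\rho(\til g,K)$. Hence the bad alternative cannot occur for $g$, and Theorem~\ref{th:circloid} then produces a \emph{$\til g$}-fixed point in $\bd K$, which is the sought periodic point realizing $p/q$.

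I expect this lift-bookkeeping to be the main obstacle: one must make sure that the mechanism creating a fixed point in $\bd K$ respects the covering translations, so that the boundary periodic point has rotation number exactly $p/q$ and not some other multiple of $1/q$. A secondary issue is the endpoint case $p/q\in\{\rho_-,\rho_+\}$, should the realization input of \cite{jager-koro} only cover the interior of the rotation interval; the endpoints can then be treated by realizing them on the invariant boundary continua of the complementary components of $K$ (at least one of which coincides with $\bd K$ once $\bd K$ is indecomposable), or by reducing to the cofrontier case via \cite{barge-gillette}.
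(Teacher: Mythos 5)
Your overall route --- realize $p/q$ by a periodic point somewhere in $K$, get indecomposability of $\bd K$ from \cite{jager-koro}, then push the periodic point to the boundary via Corollary~\ref{coro:circloid-decomposable} (or, more carefully, via the dichotomy of Theorem~\ref{th:circloid}) --- is the same combination the paper itself invokes, since the theorem is stated there as a direct consequence of Corollary~\ref{coro:circloid-decomposable} and the main result of \cite{jager-koro}. (A minor attribution point: the realization of a periodic point of rotation number $p/q$ in $K$ comes from applying the circloid Poincar\'e--Birkhoff theorem of \cite{koro-circloid} to the lift $\til g = T^{-p}\til f^{q}$, whose rotation interval on $K$ contains $0$; this also disposes of your endpoint worry, because $0$ lies in the closed interval $q[\rho_-,\rho_+]-p$ even when $p/q$ is an endpoint, so that caveat is not where the difficulty lies.) Your observation that the Morse--Smale alternative of Theorem~\ref{th:circloid} is incompatible with a nondegenerate rotation interval --- because the monotone degree-one semiconjugacy onto an essential invariant circle of attracting and repelling fixed points forces the rotation set of $\til g$ on $K$ to be a single integer --- is correct (modulo the routine check that $h$ lifts to a map commuting with $T$, which holds since it only collapses cellular continua), and gives a clean rederivation of the conclusion of Corollary~\ref{coro:circloid-decomposable} in this setting.

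The genuine gap is exactly the point you flag and then do not close. Theorem~\ref{th:circloid} and Corollary~\ref{coro:circloid-decomposable} are statements about fixed points of the annulus (or sphere) map; they see $g=f^{q}$ but not the lift. Ruling out the Morse--Smale alternative therefore only yields a point $w\in\bd K$ with $g(w)=w$, i.e.\ $\til f^{q}(\til w)=\til w+(m,0)$ for \emph{some} integer $m$, so $w$ realizes some rotation number $m/q$ in the interval; nothing in your argument forces $m=p$. Your closing sentence ``Theorem~\ref{th:circloid} then produces a $\til g$-fixed point in $\bd K$'' is a non sequitur: the scenario in which $\bd K$ carries $g$-fixed points but all of them have the wrong translation integer is not excluded by the map-level dichotomy, and in that scenario Theorem~\ref{th:circloid} does not even apply to give structural information (its hypothesis is that $\bd K$ is free of $g$-fixed points altogether). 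Excluding that scenario --- i.e.\ making the boundary periodic point respect the chosen lift --- is precisely the content of the theorem, and it requires a lift-aware input (in the paper's derivation this load is carried by the main result of \cite{jager-koro}, not by Corollary~\ref{coro:circloid-decomposable} alone); your proposal does not supply it.
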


The result above is used in \cite{rata-pelado} in the proof that an attracting circloid with a nontrivial rotation interval has positive topological entropy.

\section{Notation and preliminaries}

Throughout this section $f\colon \SS^2\to \SS^2$ will always denote an orientation-preserving homeomorphism of the sphere.

\subsection{Disks, continua}
An open (closed) topological disk is a set homeomorphic to the open unit disk $\D$ (closed unit disk $\ol{\D}$) of $\R^2$. Any connected and simply connected open set in a surface is an open topological disk.
A \emph{continuum} is a compact connected set. A \emph{cellular continuum} is a continuum $K$ for which there exists a sequence of closed topological disks $(D_i)_{i\in \N}$ such that $D_{i+1}\subset \inter D_i$ and $K=\bigcap_{i\in \N} D_i$. In the sphere $\SS^2$ or the plane $\R^2$, cellular continua are precisely the non-separating continua.
A continuum $K$ is said to be \emph{decomposable} if it can be written as the union of two proper nonempty subcontinua.

\subsection{Lines, rays, limit sets}\label{sec:lines}

An (oriented) \emph{line} on a surface $S$ is the image $\Gamma$ of a continuous injective map $\gamma\colon \R\to S$, with the orientation induced by $\gamma$. Its $\omega$-limit set is $\omega(\Gamma) = \bigcap_{t\in \R} \ol{\gamma([t,\infty))}$, and its $\alpha$-limit set is $\alpha(\Gamma) = \bigcap_{t\in \R} \ol{\gamma((-\infty,t])}$. If $\Gamma$ is disjoint from $\omega(\Gamma)$, we say that $\Gamma$ is \emph{positively embedded}. Similarly, if $\Gamma$ is disjoint from $\alpha(\Gamma)$ we say that $\Gamma$ is \emph{negatively embedded}. An \emph{embedded} line is one which is both positively and negatively embedded (which is equivalent to saying that the map $\gamma$ is an embedding).

A \emph{ray} is a simple arc $\Gamma$ which is the image of a continuous injective map $\gamma\colon [0,\infty)\to S$. The point $\gamma(0)$ is called the initial point $\Gamma$. The $\omega$-limit set of $\Gamma$ is the set $\omega(\Gamma)= \bigcap_{t>0} \ol{\gamma([t,\infty))}$.
Note that $\omega(\Gamma)$ is always a continuum if $S$ is compact (or if $\Gamma$ is bounded). If $\Gamma$ is disjoint from $\omega(\Gamma)$, then $\Gamma$ is an \emph{embedded ray}, and in this case one has $\omega(\Gamma) = \ol{\Gamma}\sm \Gamma$.

If $\Gamma$ is an embedded ray in $\SS^2$, we may define its \emph{filled} $\omega$-limit as the set $\Fomega(\Gamma)$ which is the complement of the connected component of $\SS^2\sm \omega(\Gamma)$ containing $\Gamma$. Note that $\Fomega(\Gamma)$ is a cellular continuum and $\omega(\Gamma)= \bd\Fomega(\Gamma)$.
If $\Gamma$ is an embedded line in $\SS^2$, we may define the filled $\omega$-limit and $\alpha$-limit sets $\Fomega(\Gamma)$ and $\Falpha(\Gamma)$ similarly.

\subsection{Translation lines, rays and strips} \label{sec:translation-lines}
We say that a ray $\Gamma$ is a \emph{translation ray} for $f$ if $f(\Gamma)\subset \Gamma$ and there are no fixed points in $\Gamma$. Note that this implies that $\bigcap_{n\geq 0} f^n(\Gamma)=\emptyset$, so every $x\in \Gamma$ satisfies $\omega(x,f)\subset \omega(\Gamma)$. Similarly, a \emph{negative translation ray} is a translation ray for $f^{-1}$.
Note that $\omega(\Gamma)$ is $f$-invariant, and so is $\Fomega(\Gamma)$ if $\Gamma$ is embedded.

A translation line is an oriented line $\Gamma$ such that $f(\Gamma)=\Gamma$ and there are no fixed points in $\Gamma$. This is equivalent to saying that the dynamics induced by $f$ on $\Gamma$ is topologically conjugate to a translation $x\mapsto x+1$ of $\R$ (using in $\Gamma$ the topology induced by the immersion of $\R$).
A fundamental domain of the translation line $\Gamma$ is a simple subarc $\Gamma_0$ of $\Gamma$ joining a point $x\in \Gamma$ to $f(x)$. This implies that $f(\Gamma_0)\cap \Gamma_0=\{f(x)\}$ and $f^k(\Gamma_0)\cap \Gamma_0=\emptyset$ if $k>1$. Moreover, $\Gamma=\bigcup_{k\in \Z} f^k(\Gamma_0)$. 

Given a reference point $x\in \Gamma$, there are two rays $\Gamma^-_x$ and $\Gamma^+_x$ with initial point $x$ such that $\Gamma^+$ is a positive translation ray, $\Gamma^-$ is a negative translation ray, $\Gamma = \Gamma^-_x\cup \Gamma^+_x$ and $\Gamma^-_x\cap \Gamma^+_x=\{x_0\}$. 
We define the $\omega$-limit set of the translation line $\Gamma$ as $\omega(\Gamma)=\omega(\Gamma^+_x)$, and the $\alpha$-limit set as $\alpha(\Gamma)=\omega(\Gamma^-_x)$. This does not depend of the choice of $x$. 
If $\Gamma$ is disjoint from $\omega(\Gamma)$ (or $\alpha(\Gamma)$), we may also define the filled $\omega$-limit (or $\alpha$-limit) set $\Fomega(\Gamma) = \Fomega(\Gamma^+)$ (or $\Falpha(\Gamma)=\Fomega(\Gamma^-)$).

The translation line $\Gamma$ is embedded if and only if it is disjoint from $\omega(\Gamma)\cup \alpha(\Gamma)$. Note however that translation lines need not be embedded in general; for instance consider a branch of the stable manifold of a hyperbolic saddle exhibiting a transverse homoclinic intersection.

A parametrized \emph{strip} in a surface $S$ is a continuous injective map $\phi \colon \R\times [0,1]\to S$. % with the property that the interior of its image, $\phi(\R\times (0,1))$, is an arcwise connected component of the complement of $\phi(\R\times \{0\}) \cup \phi(\R\times \{1\})$. 
Two parametrized strips $\phi, \psi$ are equivalent if $\phi = \psi\circ h$ for some homeomorphism $h\colon \R\times[0,1]\to \R\times[0,1]$ preserving topological ends. An (oriented) \emph{strip} $T$ is an equivalence class of parametrized strips. We abuse the notation and use $T$ to refer to both a strip and its image (\ie the image of any parametrization of $T$). 

We say that $T$ is an \emph{embedded strip} if some parametrization of $T$ is a topological embedding. The $\omega$ and $\alpha$ limits of $T$ are defined as 
$$\omega(T) = \bigcap_{t\in \R} \phi([t, \infty)\times [0,1]),\quad  \alpha(T) = \bigcap_{t\in \R} \phi((-\infty, t])\times [0,1]).$$  
Note that if $T$ is embedded in $\SS^2$, then it is disjoint from $\alpha(T)$ and $\omega(T)$, and we may define the filled $\omega$ and $\alpha$ limit sets $\Fomega(T)$ and $\Falpha(T)$ as it was done with lines.

Finally, we say that $T$ is a (closed) \emph{translation strip} for $f$ if $T$ is $f$-invariant and admits a parametrization $\phi$ such that $\phi^{-1}f\phi(x,y) = (x+1, y)$ for all $(x,y)\in \R\times [0,1]$.

\subsection{Fixed point index} \label{sec:index}
Let us recall some facts about the fixed point index. See \cite{dold} for more details.
If $V\subset \R^2$ is an open set, $h\colon V\to \R^2$ is an orientation-preserving homeomorphism, and $D\subset V$ is a closed topological disk such that $\bd D\cap \fix(h) =\emptyset$, the \emph{fixed point index} $i(h, D)$ is the degree of the map $$(h-\id)|_{\bd D}\colon \bd D \to \R^2\sm \{0\},$$ where $\bd D$ is positively oriented. Note that if $D'\subset V$ is another closed topological disk whose boundary is homotopic to $\bd D$ in $V\sm \fix(h)$, then $i(h, D) = i(h, D')$.
If $p$ is an isolated fixed point of $h$, its index $i(f,p)$ is defined as $i(f, D)$ where $D$ is a sufficiently small disk containing $p$.

If $D\subset \SS^2$ is a closed topological disk such that $\bd D\cap \fix(f) =\emptyset$ and $f(D)\cup D\neq \SS^2$, we may define its fixed point index by choosing a point $\infty\in \SS^2\sm (D\cup f(D))$ and letting $i(f,D)$ be the fixed point index of $D\subset \SS^2\sm \{\infty\}$ for $f|_D\colon D\to \SS^2\sm \{\infty\} \simeq \R^2$.

If $V\subset \SS^2$ is an open set such that $\fix(f|_V)$ is compact, the fixed point index $i(f,V)$ is defined as follows: let $D_1,\dots, D_k$ be pairwise disjoint closed topological disks contained in $V$ such that $\bd D_i\cap \fix(f)=\emptyset$, $f(D_i)\cup D_i\neq \SS^2$, and $\fix(f|_V)\subset \bigcup_{i=1}^k D_i$. 
Then $$i(f,V) = \sum_{i=1}^k i(f, D_i).$$ 
This is independent of the choice of the disks $D_i$.
If $K\subset \SS^2$ is a compact set which has some neighborhood $V$ such that $\fix(f|_K) = \fix(f|_V)$, we may define its fixed point index as $i(f,K) = i(f,V)$, which is independent of the choice of $V$.

The fixed point index is additive in the following sense: if $V_1, V_2$ are open subsets of $\SS^2$ and $\fix(f|_{V_1})\cap \fix(f|_{V_2})=\emptyset$, then $i(f, V_1\cup V_2) = i(f, V_1) + i(f, V_2)$. A similar property holds for pairwise disjoint compact invariant sets.

We will use frequently the Lefschetz-Hopf theorem, which tells us that $i(f,\SS^2) = 2$ (recalling that $f$ preserves orientation).

\subsection{Prime ends}\label{sec:prime-ends}
We briefly introduce some notions about prime ends. For further details, see \cite{mather-cara, kln}.
Let $S$ be a surface and $U\subset S$ an relatively compact open topological disk. A \emph{cross-cut} of $U$ is an arc $\gamma\subset U$ joining two different points of $\bd U$ (but not including them). In this case $U\sm \gamma$ consists of exactly two connected components, called \emph{cross-sections} of $U$. A \emph{chain} of $U$ is a sequence $\mc{C} = (\gamma_i)_{i\in \N}$ of cross-cuts of $U$ such that $\gamma_{i+1}$ separates $\gamma_i$ from $\gamma_{i+2}$ in $U$ for all $i\in \N$. 
We say that a cross-cut $\eta$ \emph{divides} the chain $\mc{C}$ if there exists $i_0$ such that the cross-cuts $\{\gamma_i : i\geq i_0\}$ all belong to the same connected component of $U\sm \eta$. A chain $\mc{C}' = \mc(\gamma_j')_{j\in \N}$ divides the chain $\mc{C}$ if $\gamma_j'$ divides $\mc{C}$ for each $j\in \N$. The two chains $\mc{C}'$ and $\mc{C}$ are said to be equivalent if $\mc{C}$ divides $\mc{C}'$ and vice-versa. This is an equivalence relation in the family of all chains of $U$. 

We say that the chain $\mc{C}$ is a \emph{prime chain} if whenever a chain $\mc{C}'$ divides $\mc{C}$ one also has that $\mc{C}$ divides $\mc{C}'$. The family of all prime chains of $U$ modulo equivalence is denoted by $\bdPE(U)$ and its elements are called \emph{prime ends} of $U$. The set $U\sqcup \bdPE(U)$ is denoted by $\cPE(U)$ and one may topologize it so that $\cPE(U)$ becomes homeomorphic to the closed unit disk $\ol{\D}$, by means of the topology generated by open subsets of $U$ together with subsets of $\cPE(U)$ consisting of all prime chains divided by a given cross-cut (modulo equivalence). The space $\cPE(U)\simeq \ol{\D}$ is called the \emph{prime ends compactification} of $U$ and $\bdPE(U)\simeq \SS^1$ is the \emph{circle of prime ends}.

Every prime end $\mathfrak p\in \bdPE(U)$ can be represented by some chain $(\gamma_i)_{i\in \N}$ such that $\diam(\gamma_i)\to 0$ as $i\to \infty$. A \emph{principal point} of $\mathfrak p$, is a point $x\in S$ for which there exists such a representative chain with the property that $\gamma_i\to x$ in $S$ (\ie $\gamma_i$ is contained in any neighborhood of $x$ if $i$ is large enough). The set of all principal points is the \emph{principal set} of $\mathfrak{p}$, denoted by $\priPE(\mathfrak{p})$.

The \emph{impression} of $\mathfrak{p}$ the set $\imPE(\mathfrak p)$ of all points of $S$ which are accumulated by some sequence of points $(x_i)_{i\in \N}$ of $U$ such that $x_i\to \mathfrak{p}$ in $\cPE(U)$ as $i\to \infty$. If $(\gamma_i)_{i\in \N}$ is any prime chain representing $\mathfrak{p}$ and $D_i$ denotes the component of $U\sm \gamma_i$ containing $\gamma_j$ for all $j>i$, then one has $\imPE(\mathfrak p) = \bigcap_{i\in \N} \ol{D}_j \cap \bd U$.
The following result will be useful (see \cite{mather-cara}):

\begin{proposition}\label{pro:pri-im} If $\Gamma\subset U$ is a ray whose $\omega$-limit in $\cPE(U)$ consists of a single prime end $\mathfrak p\in \bdPE(U)$, then 
$$\priPE(\mathfrak p) \subset \omega(\Gamma) \subset \imPE(\mathfrak{p}),$$
where $\omega(\Gamma)$ denotes the $\omega$-limit in $S$ of $\Gamma$.
Moreover, there exists one such ray for which $\priPE(\mathfrak p) = \omega(\Gamma)$, and another one for which  $\imPE(\mathfrak p) = \omega(\Gamma)$.
In particular, the principal set and the impression of every prime end are subcontinua of $\bd U$.
\end{proposition}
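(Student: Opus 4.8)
The plan is to prove the two inclusions by elementary arguments, then exhibit two auxiliary rays realizing the extreme cases, and finally deduce the ``in particular'' clause from the observation that the $\omega$-limit of any ray contained in the relatively compact set $U$ is automatically a continuum. We freely use the standard facts (see \cite{mather-cara}) that $\mathfrak p$ admits a representative chain $(\gamma_i)_{i\in\N}$ of cross-cuts with $\diam(\gamma_i)\to 0$, and that for any such chain the open sets $\widehat{D}_i\subset\cPE(U)$ formed by $D_i$ together with all prime ends divided by $\gamma_i$ on the side of $D_i$ form a neighborhood basis of $\mathfrak p$ in $\cPE(U)$; in particular $\bigcap_i\widehat{D}_i=\{\mathfrak p\}$, so $\bigcap_i D_i=\emptyset$. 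We also note the elementary inclusions $\priPE(\mathfrak p)\subset\imPE(\mathfrak p)\subset\bd U$: the second is the definition of the impression, and the first holds because a chain $(\eta_j)$ witnessing a principal point $x$ yields, on choosing any $x_j\in\eta_j$, a sequence in $U$ converging to $x$ in $S$ and to $\mathfrak p$ in $\cPE(U)$.

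For $\omega(\Gamma)\subset\imPE(\mathfrak p)$: if $x\in\omega(\Gamma)$, pick $t_n\to\infty$ with $\gamma(t_n)\to x$ in $S$; since $\gamma(t)\to\mathfrak p$ in $\cPE(U)$ by hypothesis, the sequence $(\gamma(t_n))$ lies in $U$ and converges both to $x$ in $S$ and to $\mathfrak p$ in $\cPE(U)$, so $x\in\imPE(\mathfrak p)$ by definition. For $\priPE(\mathfrak p)\subset\omega(\Gamma)$: let $x\in\priPE(\mathfrak p)$ be witnessed by a chain $(\eta_j)$ with $\diam(\eta_j)\to 0$, and let $E_j$ be the cross-section of $\eta_j$ on the side of $\mathfrak p$, so that (as above) the associated sets $\widehat{E}_j$ form a neighborhood basis of $\mathfrak p$, the $E_j$ decrease, and $\bigcap_j E_j=\emptyset$. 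Because $\gamma(t)\to\mathfrak p$, the ray $\Gamma$ eventually lies in $E_j$ for every $j$; since $\Gamma\subset U$ is nonempty and the $E_j$ decrease to $\emptyset$, we have $\Gamma\not\subset E_j$ for infinitely many $j$, and for each such $j$ connectedness of $\Gamma$ (which meets both $E_j$ and its complement in $U$) forces $\Gamma$ to meet the cross-cut $\eta_j$. Choosing $\gamma(t_j)\in\eta_j$ over this infinite set, we get $\gamma(t_j)\to x$ because $\eta_j\to x$ in $S$, and $t_j\to\infty$ since otherwise $x\in\gamma([0,M])\subset U$ for some $M$, contradicting $x\in\bd U$; hence $x\in\omega(\Gamma)$.

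To realize the impression, fix a representative chain $(\gamma_i)$ of $\mathfrak p$ with $\diam(\gamma_i)\to 0$ and construct inductively a ray $\Gamma'$ which enters $D_i$, makes an excursion coming within $2^{-i}$ of every point of the compact set $\ol{D_i}\cap\bd U$, and then passes into $D_{i+1}$ (any self-intersections so introduced are removed by a routine general-position argument in the surface). Then $\Gamma'$ converges to $\mathfrak p$, and since $\ol{D_i}\cap\bd U$ decreases to $\imPE(\mathfrak p)=\bigcap_i\ol{D_i}\cap\bd U$, every point of $\imPE(\mathfrak p)$ is approached by $\Gamma'$ along parameters tending to infinity, so $\imPE(\mathfrak p)\subset\omega(\Gamma')$; the reverse inclusion is the one just proved, whence $\omega(\Gamma')=\imPE(\mathfrak p)$. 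To realize the principal set, one chooses instead a representative chain whose cross-cuts satisfy $\operatorname{dist}(\gamma_i,\priPE(\mathfrak p))\to 0$, picks $p_i\in\gamma_i$, and joins $p_i$ to $p_{i+1}$ by a simple arc $\alpha_i$ contained in $\ol{D_i\sm\ol{D_{i+1}}}\cap U$ and staying within distance $\varepsilon_i$ of $\priPE(\mathfrak p)$, where $\varepsilon_i\to 0$; the concatenation $\Gamma=\bigcup_i\alpha_i$ is then a ray converging to $\mathfrak p$ with $\omega(\Gamma)\subset\priPE(\mathfrak p)$, and combined once more with the inclusion already established this gives $\omega(\Gamma)=\priPE(\mathfrak p)$. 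Finally, $\omega(\Gamma)$ and $\omega(\Gamma')$ are continua, being $\omega$-limits of rays contained in the relatively compact disk $U$, and they are contained in $\bd U$ because both rays converge to a prime end; this yields the ``in particular'' clause.

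The step I expect to be the main obstacle is the construction of the principal ray: unlike the impression, the ``rings'' $D_i\sm\ol{D_{i+1}}$ need not shrink toward $\priPE(\mathfrak p)$ even when $\diam(\gamma_i)\to 0$, since the impression can be strictly larger than the principal set. One must therefore choose the defining chain so that its cross-cuts cluster precisely onto $\priPE(\mathfrak p)$ and so that consecutive cross-cuts can be joined through the intervening ring without straying away from $\priPE(\mathfrak p)$; this is exactly Mather's analysis of principal rays in \cite{mather-cara}, which we follow. A reader content to take the existence of the two rays above for granted may simply cite that reference for the ``moreover'' and ``in particular'' clauses, the two inclusions being the only part that genuinely needs a direct argument here.
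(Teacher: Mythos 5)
Your two inclusions are proved correctly, but be aware that the paper itself offers no proof of this proposition at all: it is stated as a classical fact with a pointer to Mather's paper \cite{mather-cara}, so there is no ``paper proof'' to match. Compared with that, your route is more self-contained on the easy half and identical on the hard half. The argument for $\omega(\Gamma)\subset\imPE(\pe p)$ is immediate from the definition of the impression (plus the small observation, which you essentially make, that an accumulation point in $U$ would force convergence to a point of $U$ rather than to $\pe p$, so the impression sits in $\bd U$), and the argument for $\priPE(\pe p)\subset\omega(\Gamma)$ is correct as written: it rests only on the standard facts that for a representative chain with diameters tending to $0$ the cross-sections $E_j$ decrease, that $E_j$ together with the prime ends it divides is a neighborhood basis of $\pe p$ (whence $\bigcap_j E_j=\emptyset$), and then connectedness of $\Gamma$ forces intersections with the cross-cuts $\eta_j$, giving points $\gamma(t_j)\to x$ with $t_j\to\infty$. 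For the ``moreover'' clause you construct the impression-realizing ray by excursions in the nested cross-sections and defer the principal-set-realizing ray to Mather; since the paper cites the same reference for the entire proposition, this division of labor is perfectly acceptable, and you correctly identify the principal ray as the genuinely delicate point (the rings $D_i\sm\ol{D_{i+1}}$ need not cluster on the principal set).

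One small caveat on the part you do construct: the ``routine general-position'' removal of self-intersections in the impression ray is not quite as automatic as stated, because the stage-$i$ excursion may be forced to enter $D_{i+1}, D_{i+2},\dots$ in order to approach points of $\imPE(\pe p)$, so disjointness from the later excursions does not come for free; one has to organize the inductive choice of simple subarcs (or accept a slightly weaker ``within $2^{-i}$'' bookkeeping after perturbation) to keep the concatenation injective while preserving that the tail lies in each $D_i$. This is fixable and classical, but if you want the written argument to stand on its own rather than lean on \cite{mather-cara}, that is the step to spell out; otherwise, citing Mather for both realizations, as the paper does, is the cleanest option.
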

Note that this means that the principal set is the smallest possible $\omega$-limit of such a ray $\Gamma$, and the impression is the largest one.

Let us also state a result for future reference:
\begin{lemma}\label{lem:ac-PE-bd} If $U\subset \SS^2$ is an open topological disk $\Sigma$ is a ray in $U$ which accumulates in every prime end in $\cPE(U)$, then it accumulates in every point of $\bd U$ in $\SS^2$, \ie $\bd(U)\subset \omega(\Sigma)$.
\end{lemma}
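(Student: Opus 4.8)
The plan is to deduce the statement from two facts: (i) $\Sigma$ must meet every cross-cut of $U$, and (ii) every point of $\bd U$ is contained in arbitrarily small cross-cuts of $U$. First I would dispose of the trivial case: if $\bd U$ has a single point $z_0$, then $U=\SS^2\sm\{z_0\}$ and a ray accumulating at every prime end cannot remain in a compact subset of $U$, so $z_0\in\omega(\Sigma)$. From now on assume $\bd U$ has at least two points; then $\bd U$ has no isolated points, since an isolated boundary point of a simply connected domain in $\SS^2$ would force $\bd U$ to be a single point. I also record the elementary observation that every $z\in\bd U$ lies in the impression of some prime end: picking $u_i\in U$ with $u_i\to z$, the sequence leaves every compact subset of $U$, so a subsequence converges in $\cPE(U)$ to some $\pe p\in\bdPE(U)$, and then $z\in\imPE(\pe p)$.

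For (i): given a cross-cut $\gamma$ with $U\sm\gamma=D\sqcup D'$, I would show there is a prime end $\pe q$ whose representative chains are eventually contained in $D$ — take $u_i\in D$ converging in $\SS^2$ to one of the endpoints of $\gamma$ (these lie in $\bd U$); as above a subsequence converges in $\cPE(U)$ to a prime end $\pe q$ ``reached through $D$'', so $\pe q$ has a neighborhood basis $\{N\}$ in $\cPE(U)$ with $N\cap U\subseteq D$; likewise for $D'$. Since $\Sigma$ accumulates at every prime end, its closure in $\cPE(U)$ contains $\bdPE(U)$, hence $\Sigma$ meets such a neighborhood of $\pe q$, giving $\Sigma\cap D\neq\emptyset$, and similarly $\Sigma\cap D'\neq\emptyset$. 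As $\Sigma$ is connected and contained in $D\sqcup\gamma\sqcup D'$, it must meet $\gamma$.

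Fact (ii) is the heart of the matter. \textit{Claim:} for every $z\in\bd U$ and $\varepsilon>0$ there is a cross-cut of $U$ contained in $B_\varepsilon(z)$. I plan to use that accessible boundary points are dense, via maximal inscribed disks: for $w\in U$ the number $r=d(w,\bd U)$ gives $B_r(w)\subseteq U$ with $\bd B_r(w)\cap\bd U\neq\emptyset$, and the radial segment from $w$ to a nearest point $a\in\bd U$ is an arc in $U$ of diameter $\leq 2r$ landing at $a$. Given $z$ and $\varepsilon$, pick $w\in U$ with $d(w,z)<\varepsilon/3$, so $r<\varepsilon/3$; the landing arc $\alpha$ at some $a\in\bd U$ then lies in $B_\varepsilon(z)$ and in the component $V$ of $U\cap B_\varepsilon(z)$ containing $w$, hence $a\in\bd V\cap\bd U$. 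Next I would produce a second point $w'\in V$, again within $\varepsilon/3$ of $z$, whose nearest boundary point $a'$ differs from $a$: such $w'$ exists among points of $V$ arbitrarily close to $a$, because the nearest-point projection onto $\bd U$ cannot be constantly equal to $a$ on a neighborhood of $a$ in $U$ (this is where $\bd U$ having no isolated points is used). Its radial landing arc $\alpha'$ again lies in $B_\varepsilon(z)\cap V$; joining $\alpha$ and $\alpha'$ by an arc inside the open connected set $V$ and extracting a simple subarc from $a$ to $a'$ produces a simple arc in $U\cap B_\varepsilon(z)$ with distinct endpoints on $\bd U$, i.e.\ a cross-cut inside $B_\varepsilon(z)$. (An exceptional configuration has to be handled: if $a$ is itself the nearest boundary point to an entire neighborhood of $a$ in $U$, then $\bd U$ contains accessible arcs issuing from $a$, and one instead forms a short cross-cut cutting off a neighborhood of such an arc.)

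With (i) and (ii) in hand the conclusion is immediate. Fix $z\in\bd U$ and $\varepsilon>0$; by (ii) there is a cross-cut $\gamma\subseteq B_\varepsilon(z)$ and by (i) $\emptyset\neq\Sigma\cap\gamma\subseteq\Sigma\cap B_\varepsilon(z)$, so $z$ lies in the closure of $\Sigma$ in $\SS^2$. If $\sigma\colon[0,\infty)\to U$ parametrizes $\Sigma$, then $z\notin\sigma([0,T])$ for every $T$ (since $\sigma([0,T])\subseteq U$), and as $\sigma([0,T])$ is closed it follows that $z$ lies in the closure of $\sigma([T,\infty))$ for every $T$, i.e.\ $z\in\omega(\Sigma)$; since $z$ was arbitrary, $\bd U\subseteq\omega(\Sigma)$. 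The soft parts are (i) and this last passage from closure to $\omega$-limit. The genuine obstacle is (ii): the subtlety is that $z\in\bd U$ need not be a principal point of any prime end (it may be reachable only through oscillation), so there is in general no shrinking chain converging to $z$; the maximal-inscribed-disk device is precisely what localizes the construction of a nearby cross-cut, and getting the ``pinch point'' case right is the fiddly part.
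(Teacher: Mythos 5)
Your plan is exactly the paper's (the paper's proof is a two-line sketch of the same two facts: the ray meets all sufficiently small cross-cuts, and every boundary point is accumulated by arbitrarily small cross-cuts), and your reduction of the lemma to (i) and (ii), as well as the final passage from ``$z\in\ol\Sigma$'' to ``$z\in\omega(\Sigma)$'', are fine. However, two of the specific justifications you supply do not work as written. In (i), the claim that a sequence $u_i\in D$ converging in $\SS^2$ to an endpoint $a$ of the cross-cut $\gamma$ has a limit prime end $\pe q$ admitting a neighborhood basis $N$ with $N\cap U\subseteq D$ is false: already for $U=\D$ and $\gamma$ a diameter, points of the upper half-disk tending to the endpoint $1$ converge in $\cPE(U)=\ol{\D}$ to the prime end at $1$, and every neighborhood of that prime end meets both components of $U\sm\gamma$. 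Aiming at an endpoint of $\gamma$ is precisely the wrong choice, since it tends to produce the prime end determined by that end of $\gamma$, whose neighborhoods straddle $\gamma$; without a prime end that is divided by $\gamma$ into $D$ (e.g.\ one obtained from an endcut inside $D$ landing at a point of $\bd U$ far from the two endpoints of $\gamma$, or from the standard fact that each side of a cross-cut carries a nonempty open arc of prime ends), accumulation of $\Sigma$ at $\pe q$ does not yield $\Sigma\cap D\neq\emptyset$.

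In (ii), which you rightly call the heart of the matter, the step producing $w'$ is not justified: non-constancy of the nearest-point projection on a neighborhood of $a$ \emph{in $U$} gives a point $u\in U$ near $a$ (close to some other boundary point $b$) whose nearest boundary point is not $a$, but nothing guarantees $u$ lies in the component $V$ of $U\cap B_\varepsilon(z)$ containing your access arc $\alpha$ --- and for the intricate boundaries this lemma is needed for, $U\cap B_\varepsilon(z)$ typically has many components near $a$, so the witness may be unreachable inside $B_\varepsilon(z)$ and the joining step collapses. The statement you need about $V$ is in fact true, but it requires an argument; for instance, one can show $\bd V\cap B_\varepsilon(z)$ (which is contained in $\bd U$) must contain a point $b\neq a$ --- otherwise $V$ would be open and closed in $B_\varepsilon(z)\sm\{a\}$, forcing $U\supseteq B_\varepsilon(z)\sm\{a\}$ and making $a$ an isolated boundary point --- and then run the inscribed-disk trick near $b$ \emph{inside} $V$ to get a second accessible point $a'\neq a$ accessible from $V$. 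Finally, the ``exceptional configuration'' you set aside (the projection constantly equal to $a$ on a $U$-neighborhood of $a$) cannot occur at all once $\bd U$ has no isolated points (choose $b\in\bd U$ near $a$ and $u\in U$ much closer to $b$ than $d(a,b)$), so the parenthetical treats a vacuous case while the genuine difficulty --- locality with respect to the component $V$ --- is the one left unaddressed.
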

\begin{proof} It suffices to note that such a ray $\Sigma$ must intersect every cross-cut of $U$ whose diameter is small enough, and as one may easily verify every point of $\bd U$ is accumulated by cross-cuts of arbitrarily small diameter.
\end{proof}

\subsection{Spiraling lines and strips}\label{sec:spiral}
We say that a ray $\Gamma \subset \SS^2$ \emph{spirals} towards a continuum $K$ if $K$ has more than one point, $\Gamma$ is disjoint from $K$ and the following property holds: if $U$ is the connected component of $\SS^2\sm K$ containing $\Gamma$, identifying its prime ends compactification $\cPE(U)$ with $\ol{\D}$ by a homeomorphism $\phi\colon \cPE(U)\to \ol\D$, and letting $t\mapsto (r(t), \theta(t))$ represent a parametrization of $\Gamma$ in polar coordinates, one has 
$$\theta(t)\to\infty \text{ or } -\infty, \text{ and } r(t)\to 1$$
as $t\to \infty$. In other words, $\Gamma$ spirals towards the circle of prime ends in $U$.

When this happens, $\Gamma$ is necessarily an embedded line, and it is easy to show that $\omega(\Gamma) = \bd U$. Indeed, the definition implies that $\Gamma$ intersects every cross-cut of $U$, from which one deduces that $\Gamma$ accumulates in every point of $\bd U$ (for instance using the fact that accessible points are dense in $\bd U$; see \cite{mather-cara}).

As a consequence, if $\Gamma$ spirals towards $K$, then $\Gamma$ also spirals towards $\omega(\Gamma)$, hence saying that the ray $\Gamma$ disjoint from $K$ spirals towards a continuum $K$ is equivalent to saying that $\Gamma$ spirals towards its own $\omega$-limit and $\omega(\Gamma)\subset K\subset \Fomega(\Gamma)$.

We remark that two rays converging towards a single point are topologically indistinguishable (\ie there is a global homeomorphism mapping the closure of one to the closure of the other), which is why the definition of spiraling is only meaningful if the $\omega$-limit of the ray has more than one point.

Finally, similar definitions can be made for strips. We say that the strip $T$ disjoint from the continuum $K$ \emph{spirals towards} $K$ if $K$ has more than one point and the following property holds: letting $U$ be the connected component of $\SS^2\sm K$ containing $T$, given a parametrization $[0,\infty)\times [0,1] \to \ol{\D}$, $(x,y) \mapsto (r(x,y), \theta(x,y))$ of $\Gamma$ in polar coordinates, one has 
$$\theta(x,y) \to \infty \text{ or $-\infty$} \text{ and } r(x,y) \to 0 \text{ as $x\to \infty$ }$$
uniformly on $y$.
When this happens, one may verify that the $\omega$-limit and filled $\omega$-limit of $T$ coincide with corresponding limits of its two boundary lines (and of any other line properly embedded in $T$).

The following simple lemmas will be useful in our proofs.
\begin{lemma}\label{lem:spiral-omega} If $\Gamma$ is an embedded ray that spirals towards $\omega(\Gamma)$ and $\Gamma'$ is a ray disjoint from $\Gamma\cup \Fomega(\Gamma)$ such that $\omega(\Gamma')\cap \omega(\Gamma)\neq \emptyset$, then $\omega(\Gamma)\subset \omega(\Gamma')$. If in addition $\Gamma'$ is embedded, then $\Fomega(\Gamma)\subset \Fomega(\Gamma')$.
\end{lemma}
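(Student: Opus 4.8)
The plan is to reduce the statement, via the theory of prime ends, to the assertion that $\Gamma'$ must wind around infinitely often inside $U:=\SS^2\sm\Fomega(\Gamma)$, which is forced because $\Gamma$ already does so and $\Gamma'$ cannot cross $\Gamma$.

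\textbf{Setup and reduction.} Since $\Gamma$ spirals it is an embedded ray, so $\Fomega(\Gamma)$ is defined, $U:=\SS^2\sm\Fomega(\Gamma)$ is the connected component of $\SS^2\sm\omega(\Gamma)$ containing $\Gamma$, and $\bd U=\omega(\Gamma)$. The hypotheses on $\Gamma'$ say precisely that $\Gamma'\subset U$ and $\Gamma'\cap\Gamma=\emptyset$. I claim it suffices to prove that $\Gamma'$ accumulates at every prime end of $U$ (equivalently, meets every cross-cut of $U$ of small enough diameter). Indeed, Lemma~\ref{lem:ac-PE-bd} then gives $\omega(\Gamma)=\bd U\subset\omega(\Gamma')$, which is the first assertion; and the second is then immediate: if $\Gamma'$ is embedded, $\Fomega(\Gamma')$ is defined and, as $\omega(\Gamma')\supset\omega(\Gamma)$, every connected component of $\SS^2\sm\omega(\Gamma')$ lies in a single component of $\SS^2\sm\omega(\Gamma)$; the one containing $\Gamma'$ thus lies in $U$, so $\Fomega(\Gamma')\supset\SS^2\sm U=\Fomega(\Gamma)$.

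\textbf{The winding argument.} To prove the claim I would identify $\cPE(U)$ with $\ol{\D}$ so that $\Gamma$ becomes a ray in $\D$ spiraling towards $\bd\D$: in polar coordinates $t\mapsto(r(t),\theta(t))$ one has $r(t)\to1$ and $\theta(t)\to+\infty$. Deleting an initial subray one may assume $\Gamma\subset A:=\{\tfrac12<|z|<1\}$, and pass to the universal covering $\pi\colon\widetilde A=(\tfrac12,1)\times\R\to A$. Then $\pi^{-1}(\Gamma)$ is a disjoint union of properly embedded rays, each tending to the single end of the strip $\widetilde A$ and cyclically permuted by the deck transformation $(\rho,\vartheta)\mapsto(\rho,\vartheta+2\pi)$; these rays cut $\widetilde A$ into ``lanes'', each mapped homeomorphically by $\pi$ onto $A\sm\Gamma$ (which is connected), and each funneling into the end of $\widetilde A$, in the sense that its points of radius close to $1$ have angular coordinate tending to $+\infty$. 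Now, since $\omega(\Gamma')\cap\bd U\neq\emptyset$ and $\Gamma'\subset U$, the ray $\Gamma'$ has points of radius arbitrarily close to $1$; every connected subarc of $\Gamma'\cap A$ is disjoint from $\Gamma$, hence lifts into a single lane, and one of its endpoints lies on $\{\rho=\tfrac12\}$ (a subarc of $\Gamma'$ cannot end on $\bd\D$). Such a subarc attaining radius close to $1$ must therefore run from the bottom of a lane up into its funnel; applying the intermediate value theorem to its angular coordinate shows it realizes every angle, and — since within a lane large radius forces large angular coordinate — it realizes every angle at radius arbitrarily close to $1$, quantitatively as the maximal radius of the subarc tends to $1$. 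Letting subarcs of $\Gamma'$ with maximal radius tending to $1$ do this, we conclude that $\Gamma'$ accumulates at every point of $\bd\D$, i.e. at every prime end of $U$, as wanted.

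\textbf{Main obstacle.} The step I expect to be delicate is the last one: making the ``funnel'' picture and the winding estimate rigorous without assuming the spiral $\Gamma$ has a monotone radial profile, and separating the cases according to whether $\Gamma'$ eventually stays in $A$ or returns infinitely often to $\{|z|\le\tfrac12\}$ (in the latter case one argues with the infinitely many \emph{finite} subarcs of $\Gamma'\cap A$ of larger and larger maximal radius). The purely topological inputs — that $\pi^{-1}(\Gamma)$ cuts $\widetilde A$ into lanes on which $\pi$ restricts to homeomorphisms onto $A\sm\Gamma$, that $A\sm\Gamma$ is connected, and that $\Gamma'\cap A$ breaks into subarcs with an endpoint on $\{\rho=\tfrac12\}$ — are elementary and should be dispatched quickly, using only that $\Gamma$ is an embedded ray accumulating on all of $\bd U$.
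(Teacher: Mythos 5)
Your reduction is exactly the paper's: prove that $\Gamma'$ accumulates at every prime end of $U=\SS^2\sm\Fomega(\Gamma)$, invoke Lemma~\ref{lem:ac-PE-bd}, and deduce the second assertion from $\omega(\Gamma)\subset\omega(\Gamma')$ by comparing complementary components; those parts are fine. The gap is in the winding argument, i.e.\ precisely the step the paper dismisses as ``easy to verify''. Your structural claim that the lifts of $\Gamma$ cut $\widetilde A=(\tfrac12,1)\times\R$ into ``lanes'', each mapped homeomorphically by $\pi$ onto $A\sm\Gamma$, is false. After deleting the initial subray, the remaining tail of $\Gamma$ has radius bounded away from $\tfrac12$ (its radius function is continuous, $>\tfrac12$, and tends to $1$, so its infimum is some $\rho_1>\tfrac12$). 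Hence the band $\{\tfrac12<\rho<\rho_1\}$ lifts to a deck-invariant horizontal band contained in the complement of $\pi^{-1}(\Gamma)$, so the component of that complement containing the band is invariant under the deck transformation and $\pi$ is not injective on it; moreover $A\sm\Gamma$ contains essential circles (any circle of radius in $(\tfrac12,\rho_1)$), so no component of the preimage can map homeomorphically onto $A\sm\Gamma$. Worse, the subarcs of $\Gamma'\cap A$ you work with are exactly those with an endpoint on $\{\rho=\tfrac12\}$, so their lifts land in this deck-invariant component, where your ``funnel'' property fails: being $T$-invariant, that component contains, for every $N$, points of radius arbitrarily close to $1$ whose angular coordinate is less than $N$ (translate backwards any of its points that is close to the outer circle). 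So ``large radius forces large angular coordinate'' breaks down exactly where you use it, and the intermediate value theorem no longer yields ``every angle realized at radius close to $1$''; the accumulation of $\Gamma'$ at every prime end is not established. This is not the monotonicity issue you flagged, but a failure of the lane decomposition itself.

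The statement you are reducing to is true, but proving it needs a different device, essentially the one the paper uses in the proofs of Lemmas~\ref{lem:rotational} and~\ref{lem:im}: work with a \emph{single} lift $\widetilde\Gamma$ of a tail of $\Gamma$, cut it at its \emph{last} intersection with a horizontal line $\{\rho=\rho_0\}$ (such a last intersection exists because the radius tends to $1$), and consider the region $D$ bounded by the resulting subray together with a half-line of $\{\rho=\rho_0\}$. This region has the key property that every point of the strip with radius in $(\rho_0,1)$ has all sufficiently large forward deck-translates inside $D$, while $\bd D$ meets $\pi^{-1}(\Gamma)$ only along the chosen subray. Since lifts of subarcs of $\Gamma'$ are connected and disjoint from $\pi^{-1}(\Gamma)$, this converts ``$\Gamma'$ has points of radius arbitrarily close to $1$'' into unbounded angular displacement of those subarcs, which is the winding you want; the bookkeeping is a little fiddly but it avoids any global decomposition of the complement of the spiral. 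As written, your proposal has a genuine gap at this point.
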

\begin{proof}
If $U = \SS^2\sm \Fomega(\Gamma)$, then $\Gamma'$ is a ray in $U$ which has accumulation points in $\bd U$. Considering $\Gamma$ and $\Gamma'$ as subsets of $\cPE(U)$, which we identify with $\ol{\D}$, we have that $\Gamma'$ has accumulation points in $\SS^1$. Using the facts that $\Gamma$ spirals towards $\SS^1=\bd \ol\D$ and is disjoint from $\Gamma'$, it is easy to verify that $\Gamma'$ must accumulate in all of $\SS^1$. We conclude from Lemma \ref{lem:ac-PE-bd} that, as a subset of $\SS^2$, the ray $\Gamma'$ accumulates in all of $\bd U=\omega(\Gamma)$.

If $\Gamma'$ is embedded, then since $\SS^2\sm \omega(\Gamma') \subset \SS^2\sm \omega(\Gamma)$ and $U$ is a connected component of the latter set containing $\Gamma'$, the connected component of $\SS^2\sm \omega(\Gamma')$ containing $\Gamma'$ is a subset of $U$. Thus $\SS^2\sm \Fomega(\Gamma')\subset \SS^2\sm \Fomega(\Gamma)$ and our claim follows.
\end{proof}

\begin{lemma}\label{lem:spiral} If $\Gamma$ is an embedded ray that spirals towards $\omega(\Gamma)$, then:
\begin{itemize}
\item[(1)] Every compact arc intersecting both $\SS^2\sm \Fomega(\Gamma)$ and $\Fomega(\Gamma)$ also intersects $\Gamma$;
\item[(2)] If an embedded ray $\Gamma'$ is disjoint from $\Gamma$, then one of the following holds:
\begin{itemize}
\item[(a)] $\Gamma'\subset \Fomega(\Gamma)$ or $\Gamma\subset \Fomega(\Gamma')$;
\item[(b)] $\Fomega(\Gamma)$ is disjoint from $\Fomega(\Gamma')$;
\item[(c)] $\Fomega(\Gamma) \subset \Fomega(\Gamma')$ and $\omega(\Gamma)\subset \omega(\Gamma')$.
\end{itemize}
\end{itemize}
\end{lemma}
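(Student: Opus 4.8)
The plan is to reduce both parts to a single topological fact about spirals and then bootstrap. Throughout write $U=\SS^2\sm\Fomega(\Gamma)$, the connected component of $\SS^2\sm\omega(\Gamma)$ containing $\Gamma$; it is an open topological disk with $\bd U=\omega(\Gamma)$, and $\Gamma$ spirals towards its circle of prime ends. Fix a homeomorphism $\phi\colon\cPE(U)\to\ol\D$, so that $\phi(\Gamma)$ is a ray in $\D$ with $\theta\to\pm\infty$ and $r\to1$.

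\emph{Part (1).} Suppose $\beta$ is a compact arc meeting both $U$ and $\Fomega(\Gamma)=\SS^2\sm U$ with $\beta\cap\Gamma=\emptyset$, and seek a contradiction. Since $\beta$ is connected it meets $\bd U$, so it contains a sub-arc $J$ with $J\sm\bd U\subset U$ and at least one endpoint of $J$ on $\bd U$. If both endpoints of $J$ lie on $\bd U$ then $J$ is a cross-cut of $U$, and since $\Gamma$ meets every cross-cut of $U$ (observed in \S\ref{sec:spiral}) this already contradicts $\beta\cap\Gamma=\emptyset$. Otherwise $J$ is a ray in $U$ landing at a point of $\bd U$, hence converging to a single prime end $\pe p$; transporting to $\ol\D$, $\phi(J)$ is an arc in $\D$ converging to a point $z_0\in\SS^1$ while $\phi(\Gamma)$ spirals towards $\SS^1$. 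Here the key point is the planar fact that an arc converging to $z_0$ must cross a ray spiraling towards $\SS^1$: since $\theta\to\pm\infty$ and $r\to1$ along the spiral, inside every disk neighbourhood $N$ of $z_0$ the spiral cuts off a ``horn'' adhering to $\SS^1$ exactly at $z_0$, and $\phi(J)$ — which eventually enters this horn yet also meets the complement of $N$ — must meet $\phi(\Gamma)$. Thus $J$, and hence $\beta$, meets $\Gamma$, a contradiction. Making this horn argument precise (equivalently: no point of $\bd U$ is accessible in $U\sm\Gamma$) is the main technical point of the lemma, and the only place the spiraling hypothesis is used directly.

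\emph{Part (2).} Let $\Gamma'$ be an embedded ray disjoint from $\Gamma$. By (1) no compact sub-arc of $\Gamma'$ meets both $U$ and $\Fomega(\Gamma)$, so $\Gamma'\subset U$ or $\Gamma'\subset\Fomega(\Gamma)$; in the latter case (a) holds. Assume $\Gamma'\subset U$, so $\omega(\Gamma')\subset\ol U=U\cup\omega(\Gamma)$. If $\omega(\Gamma')\cap\omega(\Gamma)\ne\emptyset$, then $\Gamma'$ is a ray disjoint from $\Gamma\cup\Fomega(\Gamma)$, so Lemma \ref{lem:spiral-omega} gives $\omega(\Gamma)\subset\omega(\Gamma')$ and, $\Gamma'$ being embedded, $\Fomega(\Gamma)\subset\Fomega(\Gamma')$ — case (c). So assume in addition $\omega(\Gamma')\cap\omega(\Gamma)=\emptyset$; then $\omega(\Gamma')\subset U$ is a continuum disjoint from the cellular continuum $\Fomega(\Gamma)$. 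Let $V'$ be the component of $\SS^2\sm\omega(\Gamma')$ containing $\Gamma'$, so $\Fomega(\Gamma')=\SS^2\sm V'$; since $\omega(\Gamma')\subset\ol{\Gamma'}\subset\ol{V'}$ and $\omega(\Gamma')\cap V'=\emptyset$ while always $\bd V'\subset\omega(\Gamma')$, we get $\bd V'=\omega(\Gamma')$. Being connected and disjoint from $\omega(\Gamma')=\bd V'$, the set $\Fomega(\Gamma)$ lies in a single component $V_0$ of $\SS^2\sm\omega(\Gamma')$. If $V_0=V'$ then $\Fomega(\Gamma)\cap\Fomega(\Gamma')=\Fomega(\Gamma)\cap(\SS^2\sm V')=\emptyset$, which is (b). If $V_0\ne V'$ then $\omega(\Gamma')$ separates $\SS^2$, $V'$ is an open disk, and $\Fomega(\Gamma)\subset V_0\subset\SS^2\sm V'=\Fomega(\Gamma')$; since also $\omega(\Gamma)\not\subset\omega(\Gamma')$, neither (b) nor (c) can hold, and it remains to prove (a), i.e. $\Gamma\cap V'=\emptyset$.

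Suppose $\Gamma$ meets $V'$. The tail of $\Gamma$ enters every neighbourhood of $\bd U=\omega(\Gamma)\subset V_0$, hence eventually lies in the open set $V_0$, which is disjoint from $V'$; so $\{t:\gamma(t)\in V'\}$ is bounded above. Each component $(a,b)$ of this set with $a>0$ has $\gamma(a),\gamma(b)\in\bd V'$ distinct, so $\gamma|_{(a,b)}$ is a cross-cut of $V'$ disjoint from the connected set $\Gamma'\subset V'$; then $\Gamma'$ lies in one cross-section $D$ it determines, so $\omega(\Gamma')\subset\ol D$ — impossible, since $\ol D\cap\bd V'$ is a proper closed subset of $\bd V'=\omega(\Gamma')$. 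Hence $\{t:\gamma(t)\in V'\}=[0,t_1)$ for some $t_1>0$, with $\gamma(0)\in V'$ and $\gamma(t_1)\in\bd V'$; thus $\gamma|_{[0,t_1)}$ is a ray in $V'$, disjoint from $\Gamma'$, landing on $\bd V'=\omega(\Gamma')$. Since $\omega(\Gamma')$ is the entire boundary of the disk $V'$, the ray $\Gamma'$ meets every cross-cut of $V'$ (same argument as above), and the horn argument of part (1), adapted to $(V',\Gamma')$ in place of $(U,\Gamma)$ — using ``$\Gamma'$ meets every cross-cut of $V'$'' in place of spiraling — shows this landing ray must meet $\Gamma'$, contradicting $\Gamma\cap\Gamma'=\emptyset$. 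This rules out $\Gamma\cap V'\ne\emptyset$ and establishes (a). The delicate step here is exactly that of (1): a ray in a disk that meets every cross-cut forces every arc landing on the boundary to cross it, and this is where I expect the proof to require the most care.
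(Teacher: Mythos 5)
Your part (1) and the opening cases of part (2) are fine and essentially follow the paper: the reduction via (1) to $\Gamma'\subset U$, the use of Lemma \ref{lem:spiral-omega} to get (c), and the component analysis of $\SS^2\sm\omega(\Gamma')$ giving (b) when $V_0=V'$ all match, and the landing-arc/horn fact for the genuinely spiraling ray $\Gamma$ is the part the paper itself leaves to the reader. The genuine gap is in the last case ($V_0\neq V'$, so $\Fomega(\Gamma)\subset\Fomega(\Gamma')$), where you try to prove $\Gamma\cap V'=\emptyset$. Both of your steps there rest on the claim that $\Gamma'$ ``blocks'' access to $\bd V'$ from inside $V'$: you assert that a cross-section $D$ of a cross-cut of $V'$ missing $\Gamma'$ satisfies $\ol{D}\cap\bd V'\subsetneq\bd V'$, and that $\omega(\Gamma')=\bd V'$ forces $\Gamma'$ to meet every cross-cut of $V'$. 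Neither is true for a general embedded ray $\Gamma'$. Since $\ol{D}$ contains the impression of every prime end on its side of the cross-cut, and a single impression can be all of $\bd V'$ (indecomposable or Wada-type boundaries are exactly the $\omega$-limits this paper must allow), the first claim fails, so $\omega(\Gamma')\subset\ol{D}$ is no contradiction. For the second, Proposition \ref{pro:pri-im} produces rays converging to a single prime end whose $\omega$-limit equals that prime end's impression, which can be all of $\bd V'$; such a ray misses every cross-cut chosen inside a small prime-end neighborhood of any other prime end. Moreover, even if $\Gamma'$ did meet every cross-cut, the horn argument does not transfer: a ray that merely accumulates on every prime end of a disk need not separate a boundary point from the interior (a ray making ever deeper excursions toward a dense set of boundary points of $\D$, returning near the center between excursions, meets every cross-cut, yet an arc can land on the boundary by crossing the angular position of each excursion just beyond its tip). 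It is the monotone winding $\theta\to\pm\infty$ that makes the horn argument work, and you have no spiraling hypothesis on $\Gamma'$.

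The paper settles this residual case without any prime-end analysis of $V'$: since $\omega(\Gamma)\subset\inter\Fomega(\Gamma')$ and (in its setup) $\Gamma$ is disjoint from $\bd\Fomega(\Gamma')=\omega(\Gamma')$, the continuum $\ol{\Gamma}=\Gamma\cup\omega(\Gamma)$ meets $\Fomega(\Gamma')$ and avoids its boundary, hence $\Gamma\subset\Fomega(\Gamma')$ and (a) holds. So your target ``$\Gamma\cap V'=\emptyset$'' is the right one, but the cross-cut/horn machinery applied to $(V',\Gamma')$ cannot deliver it; you need either a connectedness argument of this kind, or an independent proof that in this configuration $\Gamma$ cannot cross $\omega(\Gamma')$, before the case is closed.
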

\begin{proof}
The first item is straightforward and is left to the reader. 
To prove (2), note first that if $\Gamma'$ intersects $\Fomega(\Gamma)$ then (1) implies $\Gamma'\subset \Fomega(\Gamma)$, so (a) holds. Thus we may assume that $\Gamma'$ is disjoint from $\Fomega(\Gamma)$, and therefore $\Gamma'$ is disjoint from $\Gamma\cup \Fomega(\Gamma)$. If $\omega(\Gamma')$ intersects $\omega(\Gamma)$, we conclude from Lemma \ref{lem:spiral-omega} that (c) holds.
On the other hand if $\omega(\Gamma')$ is disjoint from $\omega(\Gamma)$, then $\bd \Fomega(\Gamma)$ and $\bd \Fomega(\Gamma')$ are two disjoint connected sets, and this easily implies that either $\bd \Fomega(\Gamma')\subset \inter \Fomega(\Gamma)$, or $\bd \Fomega(\Gamma)\subset \inter \Fomega(\Gamma')$, or $\Fomega(\Gamma)\cap \Fomega(\Gamma')=\emptyset$. The latter case means that (b) holds, while the first case is not possible since it implies that $\Gamma'$ intersects $\Fomega(\Gamma)$ contradicting our assumption. In the remaining case, $\bd \Fomega(\Gamma) \subset \inter \Fomega(\Gamma')$, noting that $\Gamma$ is disjoint from $\bd \Fomega(\Gamma') = \omega(\Gamma')$ (due to our assumption that $\omega(\Gamma)\cap \omega(\Gamma')=\emptyset$) we have that $\Gamma\cup \bd \Fomega(\Gamma) = \Gamma\cup\omega(\Gamma)$ is a continuum intersecting $\Fomega(\Gamma')$ and disjoint from $\bd\Fomega(\Gamma')$; therefore $\Gamma\cup\bd \Fomega(\Gamma)\subset \Fomega(\Gamma')$ and (a) holds.
\end{proof}

\subsection{Prime ends dynamics} \label{sec:prime-ends-dyn}
If $h\colon S\to S$ is an orientation-preserving homeomorphism and $U$ is an open $h$-invariant topological disk, then $h|_U$ extends to a homeomorphism $\hat{h}\colon \cPE(U)\to \cPE(U)$. Identifying $\cPE(U)$ with $\ol{\D} = \D\sqcup \SS^1$, we may define the \emph{prime ends rotation number of $h$ in $U$} as
$$\rho(h,U) = \rho(\hat{h}|_{\SS^1}) \in \SS^1,$$
which is the usual Poincar\'e rotation number of an orientation-preserving homeomorphism of the circle. 

If $K\subset \SS^2$ is an $f$-invariant cellular continuum, then we define its \emph{(exterior) prime ends rotation number} as $\rho_e(h,K) = \rho(h, \SS^2\sm K)$, noting that $\SS^2\sm K$ is an open topological disk.

\begin{lemma}\label{lem:rot-index} If $U\subset \SS^2$ is an open invariant topological disk and $\rho(f,U)\neq 0$, then the fixed point index of $f$ in $U$ is $1$. 
\end{lemma}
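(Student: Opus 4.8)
The plan is to promote $\hat f$ to an honest orientation-preserving homeomorphism of a sphere and read off $i(f,U)$ from the Lefschetz--Hopf theorem. First I would use the hypothesis to see that $i(f,U)$ is even defined: since $\rho(f,U)\neq 0$, the circle homeomorphism $\hat f|_{\bdPE(U)}$ has no fixed point, so $\fix(\hat f)$ is a compact subset of $\cPE(U)\sm\bdPE(U)$; fixing an orientation-preserving identification $\cPE(U)\simeq\ol{\D}$ (so that $U\simeq\D$ and $\hat f|_U$ becomes $f|_U$), this says $\fix(f|_U)=\fix(\hat f)$ is a compact subset of $U$, so the index $i(f,U)$ makes sense. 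This is the only place the hypothesis is used.

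Next I would glue a second closed disk $\Delta$ to $\cPE(U)\simeq\ol{\D}$ along the circle of prime ends $\bdPE(U)\simeq\SS^1$, obtaining a sphere $\Sigma\simeq\SS^2$, and extend $\hat f$ to $G\colon\Sigma\to\Sigma$ by coning on $\Delta$: regarding $\Delta$ as the unit disk with center $c$, set $G(z)=\abs{z}\,\psi(z/\abs{z})$ for $z\in\Delta\sm\{c\}$ and $G(c)=c$, where $\psi=\hat f|_{\bdPE(U)}$ (Alexander's trick). Then $G$ is an orientation-preserving homeomorphism of $\Sigma$, it has no fixed point on $\bdPE(U)$, its unique fixed point in $\Delta$ is $c$, and hence $\fix(G)=\fix(f|_U)\sqcup\{c\}$ is a disjoint union of two compact sets.

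The main point is the computation $i(G,c)=1$. On a small circle $\abs{z}=\varepsilon$ around $c$, after dividing by $\varepsilon$ the map $G-\id$ becomes $u\mapsto\psi(u)-u$ on $\SS^1$; since $\psi$ has no fixed points, $(u,t)\mapsto\psi(u)-tu$ is a homotopy in $\C\sm\{0\}$ from $u\mapsto\psi(u)$ to $u\mapsto\psi(u)-u$, so the latter has degree $\deg\psi=1$, \ie $i(G,c)=1$. (Sanity check: if $\psi$ is a rotation then $G|_\Delta$ is a rotation and $c$ is an elliptic fixed point of index $1$.) I expect the only real friction in the whole argument to be keeping the orientation conventions consistent in this step and in the identification used below.

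Finally, the Lefschetz--Hopf theorem gives $i(G,\Sigma)=2$, and additivity of the index over the disjoint compact invariant sets $\{c\}$ and $\fix(f|_U)$ yields
\[
2 = i(G,c)+i(G,\inter\ol{\D}) = 1+i(f,U).
\]
Here $i(G,\inter\ol{\D})=i(f,U)$ because the orientation-preserving identification $\cPE(U)\simeq\ol{\D}$ conjugates $f|_U$ to $G|_{\inter\ol{\D}}$ and sends each closed disk $D\subset U$ with $D\cup f(D)\neq\SS^2$ to a closed disk in $\inter\ol{\D}$ whose image under $G$ remains in $\inter\ol{\D}$ (in particular avoids $c$), so the degrees defining the two indices coincide. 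Hence $i(f,U)=1$, as claimed.
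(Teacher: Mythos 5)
Your argument is correct, and it reaches the conclusion by a different mechanism than the paper's (very short) proof, although the two share the same core computation. The common core is the observation that $\rho(f,U)\neq 0$ forces $\hat f|_{\bdPE(U)}$ to be fixed point free, so that $\fix(f|_U)$ is compact and the radial homotopy $(u,t)\mapsto \hat f(u)-tu$, which never vanishes because $|\hat f(u)|=1>t$ for $t<1$ and $\hat f$ has no fixed point at $t=1$, shows the relevant winding number equals $\deg(\hat f|_{\SS^1})=1$. The paper stops essentially there: identifying $\cPE(U)$ with $\ol{\D}$, the degree of $(\hat f-\id)$ along the circle of prime ends is $1$, and this is read off directly as $i(\hat f,\D)=i(f,U)$. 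You instead cap $\cPE(U)$ off with a second disk by Alexander's trick, use the same homotopy to show the cone point has index $1$, and then invoke Lefschetz--Hopf together with additivity to get $i(f,U)=2-1=1$. What your route buys: you never have to equate the boundary degree on the big circle with the sum of indices over the small disks in the paper's definition of $i(f,U)$ for an open set --- Lefschetz--Hopf and additivity do that bookkeeping --- and the doubling device is the same one the paper itself uses in the proof of Lemma \ref{lem:index-LC}, so no tools outside the paper's toolkit are needed. What it costs is the extra construction and the orientation bookkeeping you flag, which is harmless because the index of an isolated fixed point is invariant under topological conjugation (your rotation sanity check makes this point). The steps you leave implicit --- that $i(G,\inter\ol{\D})=i(f,U)$ under the conjugation given by the identification, and that $i(G,\Sigma)$ splits as the sum over the two open hemispheres --- are exactly the conjugacy-invariance and excision properties of the index that the paper also takes for granted, so the proof is complete.
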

\begin{proof}
We can embed the prime ends compactification of $U$ into the closed unit disk of $\R^2$. Since the map induced on the circle of prime ends by $f|_U$ has nonzero rotation number, it has no fixed points, which implies that its degree is $1$. From this it follows easily that $i(f,U)=1$.
\end{proof}

In general a fixed prime end does not necessarily correspond to a fixed point in the boundary of $U$. However, the following result due to Cartwright and Littlewood says that this can only happen if the prime end is attracting or repelling (in the whole disk $\cPE(U)$).

\begin{lemma}[\cite{cartwright-littlewood}]\label{lem:C-L} If $U$ is an $f$-invariant open topological disk and $\pe p\in \bdPE(f)$ is a fixed prime end whose impression does not contain a fixed point of $f$, then $\pe p$ is either attracting or repelling in $\cPE(U)$.
\end{lemma}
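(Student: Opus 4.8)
This lemma is due to Cartwright and Littlewood; here is how one could approach it. We prove the contrapositive: assuming the fixed prime end $\pe p$ is \emph{neither attracting nor repelling} in $\cPE(U)\cong\ol\D$, we produce a fixed point of $f$ in the impression $\imPE(\pe p)$. Since $\hat f$ fixes $\pe p$, the continuum $K:=\imPE(\pe p)\subset\bd U$ is $f$-invariant, and we argue by contradiction assuming $K\cap\fix(f)=\emptyset$. A useful preliminary observation is that then no sequence of fixed points of $f|_U$ can converge to $\pe p$ in $\cPE(U)$, since any accumulation point of such a sequence in $\SS^2$ would lie in $\fix(f)\cap\imPE(\pe p)$; hence there is a neighbourhood $V_0$ of $\pe p$ in $\cPE(U)$ whose only fixed points of $\hat f$, if any, lie on the circle of prime ends $\bdPE(U)$. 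Fix also a neighbourhood $W$ of $K$ in $\SS^2$ with $\ol W\cap\fix(f)=\emptyset$; the goal is to find a fixed point of $f$ in $\ol W$, which is the sought contradiction (letting $W$ shrink to $K$ then even places a fixed point in $K$, but already the contradiction suffices).

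The core of the argument is that the failure of the attracting/repelling dichotomy forces a ``return'' of orbits near $\pe p$. Because $\pe p$ is not attracting, for every neighbourhood $V\subset V_0$ of $\pe p$ there are points arbitrarily close to $\pe p$ whose forward $\hat f$-orbit leaves $V_0$; dually for backward orbits since $\pe p$ is not repelling. Tracking first-exit and last-entry iterates and passing to the limit as the initial points approach $\pe p$ --- using that the only fixed points of $\hat f$ available near $\pe p$ are on $\bdPE(U)$ --- one builds an orbit segment $w,\hat f(w),\dots,\hat f^N(w)$ in $\cPE(U)$ that starts and ends in an arbitrarily small neighbourhood of $\pe p$ but leaves $V_0$. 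Closing this segment up by a short arc through $\pe p$ yields a Jordan curve in $\cPE(U)\cong\ol\D$ bounding a disk $\Delta$; a winding/index computation of Brouwer plane-translation type (handling the boundary fixed point $\pe p\in\bd\Delta$ by a small inward deformation, which is possible since $\hat f$ has no fixed points near $\pe p$ off $\bdPE(U)$) shows $i(\hat f,\Delta)\neq 0$, so $\hat f$ has a fixed point in $\Delta$. By the preliminary observation this fixed point lies in $\Delta\cap U$, hence is a fixed point $q$ of $f$.

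The remaining --- and, in my view, main --- difficulty is to guarantee that $q$ lies in $W$. This is genuinely delicate because a neighbourhood of $\pe p$ in $\cPE(U)$ need not be small in $\SS^2$: a cross-section accumulating at $\pe p$ may be a thin tentacle reaching deep into $U$, so that the disk $\Delta$ above, although ``small at $\pe p$'', projects to a large region of $\SS^2$. Resolving this requires running the construction of the previous paragraph inside the cross-section $D_{\gamma_n}$ cut off by a cross-cut $\gamma_n$ taken far enough along a representative chain of $\pe p$ with $\diam(\gamma_n)\to 0$, chosen so that $\gamma_n$, its images $f^{\pm 1}(\gamma_n)$, and the part of $U$ they bound near $\pe p$ all lie in $W$; one must then verify that the escaping orbits can be realized within that part, so that $\Delta\subset W$ and therefore $q\in W$. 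Controlling the combinatorics of how $\gamma_n$ meets its images --- which need not be disjoint, as $\hat f(\pe p)=\pe p$ only forces asymptotic nesting of cross-sections, not exact nesting --- is precisely where a naive flow-box argument \`a la Poincar\'e--Bendixson breaks down, and where the actual work of Cartwright and Littlewood is concentrated.
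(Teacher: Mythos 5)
Your proposal cannot be checked against an internal argument, because the paper does not prove this lemma: it is quoted from Cartwright--Littlewood (with \cite{matsumoto-nakayama} cited for a direct proof of the related Theorem \ref{th:matsu-naka}). So the sketch has to stand on its own as a proof, and it does not. Two steps are asserted rather than proved. First, ``$\pe p$ is not attracting'' only says that no neighbourhood of $\pe p$ in $\cPE(U)$ is uniformly attracted to $\pe p$; it does not yield points arbitrarily close to $\pe p$ whose forward $\hat f$-orbit leaves a fixed $V_0$. Your preliminary observation excludes fixed points of $\hat f$ in $V_0\cap U$, but fixed prime ends on $\bdPE(U)$ may accumulate on $\pe p$ (their impressions are allowed to contain fixed points of $f$ --- only the impression of $\pe p$ is assumed fixed-point free), and orbits starting near $\pe p$ could converge to such prime ends without ever leaving $V_0$; in that situation $\pe p$ is neither attracting nor repelling and yet no ``return'' of the kind your construction needs is produced. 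Second, the claim $i(\hat f,\Delta)\neq 0$ for the disk bounded by an orbit segment closed up through $\pe p$ is precisely the sort of statement that requires a genuine Brouwer-theoretic argument (compare Lemma \ref{lem:franks}, which extracts a fixed point from a periodic chain of free disks but gives no control whatsoever on where that fixed point sits); as written it is an announcement, not a proof.

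More importantly, you identify the decisive difficulty and then leave it open. The fixed point $q$ obtained in $\cPE(U)$ is merely a fixed point of $f$ somewhere in $U$, and $f$ is perfectly allowed to have fixed points in $U$ far from $\bd U$, so no contradiction arises unless $q$ is forced into the prescribed neighbourhood $W$ of $\imPE(\pe p)$ in $\SS^2$. Running the construction inside a cross-section cut off by a small cross-cut from a chain representing $\pe p$, and controlling how that cross-cut meets its images under $f^{\pm 1}$ (they need not be disjoint or nested), is exactly the content of Cartwright and Littlewood's argument; stating that this is ``where the actual work is concentrated'' concedes the gap rather than closing it. In sum, the proposal is a plausible strategy outline --- contrapositive, forced recurrence near $\pe p$, index argument, localization --- but the escape-and-return step, the index claim, and above all the localization of the fixed point near the impression all remain unproved, so the lemma is not established by this attempt.
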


As a consequence of this result one has the following (see \cite{matsumoto-nakayama} for a direct proof):
\begin{theorem}\label{th:matsu-naka} Suppose that $U$ is an open invariant topological disk, $\rho(f,U) = 0$, and $f$ has no fixed point in $\bd U$. Then $i(f, U) = -k < 0$, and the dynamics induced by $f$ on the circle of prime ends has exactly $2k+2$ fixed points, $k+1$ of which are attracting and $k+1$ of which are repelling. Moreover each attracting prime end is globally attracting in the prime ends compactification of $U$, and similarly for the repelling fixed points.
\end{theorem}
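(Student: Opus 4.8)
The plan is to transfer everything to the prime ends compactification. Identify $\cPE(U)$ with the closed unit disk by a homeomorphism carrying $\bdPE(U)$ to a round circle, and let $\hat f\colon \cPE(U)\to \cPE(U)$ be the induced homeomorphism. Since $\rho(f,U)=0$, the circle homeomorphism $\hat f|_{\bdPE(U)}$ has rotation number $0$, so it has at least one fixed prime end. By Proposition \ref{pro:pri-im} the impression of any fixed prime end is a subcontinuum of $\bd U$, hence contains no fixed point of $f$; therefore Lemma \ref{lem:C-L} applies to every fixed prime end and shows it is attracting or repelling in $\cPE(U)$ (in particular, attracting or repelling within $\bdPE(U)$). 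This rules out $\hat f|_{\bdPE(U)}=\id$ (not every point of a circle can be an attracting-or-repelling fixed point of a homeomorphism) and shows that each fixed prime end is an isolated point of $\fix(\hat f|_{\bdPE(U)})$; hence there are finitely many. On each complementary arc $\hat f$ moves every point monotonically toward one endpoint, so each endpoint is attracting or repelling from that side; combined with the fact (from Lemma \ref{lem:C-L}) that no fixed prime end is one-sided, consecutive fixed prime ends must be of opposite type. Thus their number is even, $2m$ with $m\geq 1$, arranged alternately, $m$ attracting and $m$ repelling.

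Next I would compute $i(f,U)$ by a doubling argument. Glue a mirror copy $\cPE(U)^\ast$ of $\cPE(U)$ to $\cPE(U)$ along $\bdPE(U)$, obtaining a sphere, and let $g$ be the homeomorphism of this sphere that equals $\hat f$ on $\cPE(U)$ and the mirror image of $\hat f$ on $\cPE(U)^\ast$; since $\hat f$ preserves orientation, so does $g$, whence $i(g,\SS^2)=2$ by Lefschetz--Hopf. The fixed point set of $g$ is two mirror copies of $\fix(f|_U)$, lying in $U$ and in its mirror, together with the $2m$ fixed prime ends on $\bdPE(U)$. As $g$ agrees with $f|_U$ on one copy of $U$ and with a conjugate of $f|_U$ on the other, additivity of the fixed point index gives $2 = 2\,i(f,U) + \sum_j i(g,p_j)$, the sum being over the fixed prime ends. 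Each $p_j$, being attracting (or repelling) in $\cPE(U)$ and hence in $\cPE(U)^\ast$ as well, attracts (resp. repels) a full neighborhood of $p_j$ in the sphere, so it is an attracting (resp. repelling) fixed point of $g$ and therefore has index $1$. Hence $2 = 2\,i(f,U) + 2m$, \ie $i(f,U) = 1-m$. Setting $k=m-1\geq 0$ yields $i(f,U)=-k$ and exactly $2k+2$ fixed prime ends, $k+1$ of each type; to obtain the strict inequality $k\geq 1$ one further excludes the degenerate case $m=1$.

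For the global attraction statement I would produce genuine trapping sets inside $\cPE(U)$: for an attracting prime end $\pe p$, since it is attracting in $\cPE(U)$, a cross-cut close to $\pe p$ bounds a small cross-section $D_{\pe p}$ with $\hat f(\cl D_{\pe p})\subset D_{\pe p}$; symmetrically, for a repelling prime end $\pe q$ one gets $D_{\pe q}$ with $\hat f^{-1}(\cl D_{\pe q})\subset D_{\pe q}$. These $2m$ cross-sections together with the transition regions running along $\bdPE(U)$ between consecutive ones (where the boundary dynamics is monotone) cover a neighborhood $V$ of $\bdPE(U)$ in $\cPE(U)$, and a compactness argument shows that every point of $V$ outside the repelling cross-sections is carried by some forward iterate into an attracting cross-section, hence converges to the corresponding attracting prime end, and dually backward. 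This yields the asserted global attraction of the attracting prime ends (and symmetrically for the repelling ones) near the circle of prime ends.

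The step I expect to be the main obstacle is the index computation, specifically making the doubling rigorous: the prime ends compactification is an abstract object, so gluing the two copies, extending $\hat f$ to an honest orientation-preserving homeomorphism of a sphere, identifying its index on each copy of $U$ with $i(f,U)$, and checking that each fixed prime end contributes local index $1$ all rest on the precise content of ``attracting/repelling in $\cPE(U)$'' provided by Lemma \ref{lem:C-L}. One could instead bypass the doubling and build directly, near each fixed prime end, a topological attractor or repellor of $f$ contained in $\bd U$ of index $1$, and then recover $i(f,U)$ by additivity; but this faces the same difficulty of controlling the interplay between the abstract circle of prime ends and honest subsets of $\SS^2$ (the relevant trapping sets are closures of cross-sections and need not be topological disks).
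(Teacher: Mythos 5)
Your argument is, in substance, the route the paper itself indicates: the paper does not prove this theorem from scratch but quotes Lemma \ref{lem:C-L} and \cite{cartwright-littlewood, matsumoto-nakayama} for the structure of the fixed prime ends, and remarks that the index claim follows by the doubling computation appearing in the proof of Lemma \ref{lem:index-LC} --- which is precisely your gluing of two copies of $\cPE(U)$ and the count $2 = 2\,i(f,U) + 2m$. Your derivation of ``finitely many fixed prime ends, alternating attracting/repelling'' from Lemma \ref{lem:C-L}, and the identification $i(f,U)=i(\hat f,\D)$ (legitimate because $\fix(f|_U)$ is compact and the index is a conjugacy invariant computed on disks inside $U$), are correct.

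Three remarks. First, the strict inequality $i(f,U)<0$ cannot be proved from these hypotheses, and you are right not to claim it: the case $m=1$ (one attracting and one repelling fixed prime end, index $0$) does occur --- it is the ``basic block'' situation of Figure \ref{fig:block} --- and the paper itself allows index $0$ wherever it uses the theorem (Lemma \ref{lem:flower}, Theorem \ref{th:disks}, Corollary \ref{coro:decomposable-index}); the ``$<0$'' in the statement should be read as ``$\leq 0$''. Second, the step ``attracting, hence local index $1$'' in the doubled sphere needs attraction in the strong sense of a basis of forward-trapping neighborhoods (equivalently, trapping cross-sections shrinking to the prime end): an isolated fixed point that merely attracts every point of some neighborhood can have index different from $1$ (for instance, the time-one map of a planar flow whose nearby orbits are all homoclinic loops forming two elliptic petals has a weakly attracting fixed point of index $2$). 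The Cartwright--Littlewood argument behind Lemma \ref{lem:C-L} does produce such trapping cross-cuts, and the paper makes exactly the same gloss in the proof of Lemma \ref{lem:index-LC}, but your write-up should invoke this stronger form explicitly; it is also what justifies your sets $D_{\pe p}$ with $\hat f(\cl D_{\pe p})\subset D_{\pe p}$ in the last part. Third, in the global-attraction argument the assertion that every point of $V$ outside the repelling cross-sections is eventually carried into an attracting cross-section is not correct as stated: a point lying in the backward basin of a repelling prime end but outside its cross-section may perfectly well escape into the interior of the disk and converge to an interior fixed point. The correct (and easier) statement is that the circle of prime ends is covered by the open forward basins of the attracting fixed prime ends together with the open backward basins of the repelling ones (every non-fixed circle point converges forward to the attracting endpoint of its arc), so by compactness a full neighborhood of $\bdPE(U)$ in $\cPE(U)$ is covered by these basins; this yields the ``global'' attraction/repulsion near the circle in the sense in which the paper uses it.
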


We remark that the claim about the index of $U$ is not explicitly stated in the aforementioned articles, but it follows from Lemma \ref{lem:index-LC} below.

From the previous theorem, and Lemma \ref{lem:rot-index} we have:

\begin{corollary}\label{coro:disk-index} If $U$ is an open invariant topological disk without fixed points in its boundary, then the fixed point index in $U$ is at most $1$. Moreover, the index is $1$ if and only if $\rho(f, U)\neq 0$.
\end{corollary}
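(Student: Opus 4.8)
The plan is to reduce directly to the two results just quoted, splitting according to whether the prime ends rotation number $\rho(f,U)$ vanishes. Before doing so, one should check that $i(f,U)$ is well defined in the sense of \S\ref{sec:index}, i.e.\ that $\fix(f|_U)$ is compact. This is immediate from the hypothesis: $\fix(f)\cap \cl U$ is a closed subset of the compact set $\cl U$, hence compact, and since by assumption $\fix(f)\cap \bd U=\emptyset$ it coincides with $\fix(f)\cap U = \fix(f|_U)$. In particular the index $i(f,U)$ makes sense.

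Next I would treat the case $\rho(f,U)\neq 0$. Here Lemma \ref{lem:rot-index} applies verbatim and gives $i(f,U)=1$, which is $\leq 1$ and establishes one direction of the stated equivalence.

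Then I would treat the case $\rho(f,U)=0$. Since in addition $f$ has no fixed point in $\bd U$, Theorem \ref{th:matsu-naka} applies and yields $i(f,U)=-k$ for some integer $k$ with $-k<0$; in particular $i(f,U)<0<1$. Combining the two cases shows that $i(f,U)\leq 1$ in all cases, and moreover that $i(f,U)=1$ forces $\rho(f,U)\neq 0$ while, conversely, $\rho(f,U)\neq 0$ forces $i(f,U)=1$; this is precisely the ``if and only if''.

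There is essentially no obstacle here: the corollary is a bookkeeping consequence of Lemma \ref{lem:rot-index} and Theorem \ref{th:matsu-naka}, the only minor point being the verification that the index is defined, which as noted follows at once from the absence of fixed points on $\bd U$. All the substance is of course contained in Theorem \ref{th:matsu-naka}, which in turn rests on the Cartwright--Littlewood lemma (Lemma \ref{lem:C-L}).
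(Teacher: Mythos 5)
Your proposal is correct and follows exactly the paper's route: the paper derives this corollary directly from Lemma \ref{lem:rot-index} (nonzero rotation number gives index $1$) and Theorem \ref{th:matsu-naka} (zero rotation number with no boundary fixed points gives nonpositive index), which is precisely your case split. The extra remark that $\fix(f|_U)$ is compact, so the index is well defined, is a harmless and correct addition.
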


\subsection{Attractor-Repellor graphs and index}

We first state a simple result which will often be used:
\begin{lemma}\label{lem:index-LC} Suppose $U$ is an open invariant topological disk whose boundary consists of a finite family $F$ of fixed points of $f$, each of which is either attracting or repelling in $\ol U$, together with a finite family $\mc F$ of (pairwise disjoint) translation lines, each connecting two elements of $F$. Assume further that $\bd U = \bd \SS^2\sm U$. Then $i(f,U) = 1-m/2 \leq 0$, where $m$ is the number of elements of $\mc F$ (which is necessarily even).
\end{lemma}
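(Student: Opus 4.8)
The plan is to reduce the computation of $i(f,U)$ to a model situation and use additivity of the index together with the Lefschetz–Hopf theorem. First I would analyze the complement $V = \SS^2\sm \ol U$. Since $\bd U = \bd(\SS^2\sm U)$, the set $V$ is open and its boundary is contained in $\bd U$, which consists only of the fixed points in $F$ and the translation lines in $\mc F$. The translation lines have no fixed points, so $\fix(f)\cap \bd U = F$, a finite set. I would first check that $\fix(f|_{\ol U})$ and $\fix(f|_{\ol V})$ are both finite (they are contained in $F$ together with the fixed points interior to $U$ and to $V$ respectively — but at this stage we only know the index is well defined because $\fix(f|_{\ol U})$ is compact, which it is since $\bd U$ contributes only the finitely many points of $F$). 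The key structural point is that, because each element of $F$ is attracting or repelling in $\ol U$, a small disk around such a point $p$ meets $\bd U$ in $p$ together with finitely many local branches of translation lines emanating from $p$; the number of such branches at $p$, call it $2d_p$, is even when $p$ is (say) attracting in $\ol U$, because the translation lines leaving $p$ alternate with the cross-sections of $U$ near $p$ and each line that leaves must come back (it is a translation line, so both of its ends go somewhere, and by the hypothesis every element of $\mc F$ connects two elements of $F$). Summing, $\sum_{p\in F} 2d_p = 2m$ since each of the $m$ translation lines has two endpoints in $F$.

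Next I would compute the local index $i(f,p)$ for each $p\in F$. For an attracting fixed point in $\ol U$ with $2d_p$ branches of translation lines leaving it: in a small neighborhood $W$ of $p$, the set $\ol U \cap W$ is a union of $d_p$ "sectors" on which $f$ acts as a local attractor toward $p$, while $V\cap W$ is a union of $d_p$ sectors; since $p$ is attracting in $\ol U$ and the translation lines are invariant, each "$U$-sector" contributes a hyperbolic-attracting-sector picture and each "$V$-sector" contributes a repelling-sector picture (or vice versa). Computing the degree of $(f-\id)|_{\bd W}$ by going around, each attracting sector contributes a half-turn of the vector field and each repelling sector contributes a half-turn in the opposite sense, from which one gets (this is essentially the Poincaré index formula for sectors) $i(f,p) = 1 - d_p$. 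The same formula holds for a repelling $p$ by applying the argument to $f^{-1}$, noting the index is preserved. Summing over $F$ and using additivity of the index on a neighborhood of $\bd U$: the fixed points lying in $\inter U$ contribute the "interior index" and those on $\bd U$ contribute $\sum_{p\in F}(1-d_p)$.

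The final step is to combine this with the Lefschetz–Hopf theorem $i(f,\SS^2)=2$. Writing $\SS^2 = U \sqcup \bd U \sqcup V$ and using additivity (after isolating $\fix(f)$ into the three pieces $\inter U$, $\bd U$, $\inter V$), we get $i(f,\SS^2) = i(f,U) + \big(\sum_{p\in F}(1-d_p)\big) + i(f,V)$, where here $i(f,U)$ denotes the contribution from the interior of $U$ and likewise for $V$; but $i(f,U)$ as defined in the statement is the index on the whole open set $U$, which is exactly the interior contribution since $\bd U\cap \fix(f)=F$ is disjoint from $U$. Wait — I need to be careful: the index $i(f,U)$ as defined uses only fixed points in $U$, so the $F$-contribution must be split. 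Here one shows, by choosing the small disks around each $p\in F$ and cutting them along $\bd U$, that exactly "half" of the local index $1-d_p$ is attributed to the $U$-side and half to the $V$-side; more precisely, a homotopy of $\bd U$ inside $U\sm\fix(f)$ to a curve that goes around $\inter U$ picks up, near each $p$, a contribution of $-d_p/2 + 1/2$ per... Actually the cleanest route, which I would take, is: apply Theorem~\ref{th:matsu-naka}-style reasoning directly — the circle of prime ends $\bd_I(U)$ carries the induced homeomorphism $\hat f$, whose fixed points correspond to the prime ends "landing" at the $p\in F$ and at the local branches of $\mc F$; $\hat f$ has exactly $m$ fixed points on $\bd_I(U)\simeq\SS^1$ (one per local branch of a translation line at an element of $F$, since $\sum 2d_p = 2m$), alternately attracting and repelling. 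Hence by Lemma~\ref{lem:rot-index}/Lemma~\ref{lem:index-LC}'s predecessors the index $i(f,U)$ equals the "interior index of a circle map with $m$ fixed points", which is $1 - m/2$. I would make this precise by the standard computation: extending $\hat f$ to $\ol\D$, the index of the extension on $\ol\D$ is $1$ (Brouwer), and the $m/2$ repelling prime ends each contribute index $1$ while... giving $i(f,U) = 1 - m/2$.

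The main obstacle will be the bookkeeping of how the local index at each boundary fixed point $p$ splits between the $U$-side and the $V$-side, i.e. making rigorous the "half the index goes to each side" claim — this requires a careful choice of the cutting disks and a Poincaré-index-by-sectors argument adapted to the fact that $f$ is only a homeomorphism (no vector field), so one must argue directly with the degree of $(f-\id)$ on arcs rather than with flow-box pictures. That $m$ is even follows immediately once one knows $\sum_{p\in F} 2d_p = 2m$, or alternatively from the fact that $1-m/2$ must be an integer.
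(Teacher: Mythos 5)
Your final route is the paper's route (pass to the prime ends compactification, count the fixed prime ends, compute $i(f,U)$ as the interior index of the induced disk map), but the two steps that carry the actual content of the lemma are not done. First, the count of fixed points of $\hat f$ on the circle of prime ends is asserted with a justification that does not work: ``one per local branch'' would give $2m$ points (each line has two ends), and the identity $\sum_p 2d_p = 2m$ neither gives the count nor the evenness of $m$ (it is even for any $m$). The correct statement is that, because $\bd U$ is locally connected, the Carath\'eodory map $\phi\colon \ol{\D}\to \ol{U}$ is a continuous surjection, and the hypothesis $\bd U=\bd(\SS^2\sm \ol U)$ guarantees that each translation line of $\mc F$ is accessible from $U$ on exactly one side, hence $\phi^{-1}(\Gamma)$ is a single open interval of $\SS^1$ for each $\Gamma\in\mc F$; the fixed prime ends are the $m$ gaps between these $m$ intervals (whose union is dense), and they alternate attracting/repelling, which is what forces $m$ to be even. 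Your sketch uses neither local connectivity nor the hypothesis $\bd U=\bd(\SS^2\sm\ol U)$, and without the latter the count (and the formula) would be wrong.

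Second, the index computation is left unfinished exactly at the delicate point: the sentence ``the $m/2$ repelling prime ends each contribute index $1$ while\dots'' stops where the bookkeeping for \emph{boundary} fixed points of a disk homeomorphism has to be made precise, and as stated the contribution is unjustified (with the usual retraction/ANR convention an attracting boundary fixed point of $\ol\D$ contributes $1$ and a repelling one contributes $0$, not the other way around; numerically either assignment gives $1-m/2$, which is why the gap is easy to overlook). One also has to check that each fixed prime end is attracting or repelling in all of $\ol\D$, not merely on $\SS^1$, which you do not address. The paper sidesteps all of this cleanly by doubling: glue two copies of $\ol\D$ along $\SS^1$ to get a sphere homeomorphism in which each of the $m$ boundary fixed points is a genuine attractor or repellor (index $1$), so Lefschetz--Hopf gives $2\,i(\hat f,\D)+m=2$, i.e.\ $i(f,U)=i(\hat f,\D)=1-m/2$. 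Your abandoned first attempt (splitting local indices at the points of $F$ between the $U$-side and the $V$-side) was rightly dropped, but what replaced it is a correct plan with its key steps missing rather than a proof.
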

\begin{proof}
Note that a point of $F$ may be the $\omega$-limit (or $\alpha$-limit) of more than two elements of $\mc F$ (as in the disk on the right side in Figure \ref{fig:collapse} ahead). However, due to the fact that $\bd U$ is locally connected, one may identify $\cPE(U)$ with the closed unit disk $\ol{\D}$ in such a way that there exists a continuous surjection $\phi \colon \ol{\D} \to \ol{U}$ such that every point of $\ol U$ has a unique preimage except perhaps the elements of $F$ (see for instance \cite[Proposition 2.5]{pommerenke}). Thus $W = \bigcup_{\Gamma\in \mc F} \phi^{-1}(\Gamma)$ is a finite union of $m$ open intervals in $\SS^1$, where $m$ denotes the number of elements of $\mc F$. Moreover, it is easy to see that $\hat F = \SS^1\sm W$ has empty interior (or see \cite{beurling}), so it must consist of finitely many points which are the endpoints of the intervals in $W$, and $\hat F$ has exactly $m$ elements as well.

If $\hat f$ denotes the map induced on $\ol \D$ by $f|_U$, one easily verifies that each element of $F$ is either an attractor or a repellor in $\ol{\D}$, and there are no other fixed points in $\SS^1$ (since each component of $W$ is a translation interval). Note that $m\geq 2$ and it must be even (since every attractor is followed by a repellor in the cyclic order of $\SS^1$). A computation shows that $i(f,U) = i(\hat f, \D) = 1-m/2 \leq 0$: one may glue two copies of $\ol{\D}$ by identifying their boundary circles to obtain a homeomorphism of the sphere, where each fixed point of identified circles is either atracting or repelling. Noting that attracting and repelling fixed points have index $1$, one sees that $2i(\hat{f},\D) + m = 2$.
\end{proof}

By a \emph{planar graph} $G$ in $\SS^2$ we mean a finite set of vertices $V(G)$ (points) and edges $E(G)$ (lines) each connecting two vertices, such that the edges are pairwise disjoint and $G=V\cup E$.

Let us say that a planar graph $G\subset \SS^2$ is an attractor-repellor graph for a homeomorphism $F\colon \SS^2\to \SS^2$ if every vertex of $G$ is an attracting or repelling fixed point of $F$, every edge is a translation line joining a repellor to an attractor, and in addition every vertex has even degree. 
Note that in particular a neighborhood of $G$ is contained in the union of the basins of the attracting and repelling fixed points.

\begin{lemma} If every vertex of a planar graph $G$ has even degree, then every edge belongs to two different faces and in particular every face $D$ satisfies $\bd D = \bd \SS^2\sm D$.
\end{lemma}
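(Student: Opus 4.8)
The plan is to convert the parity hypothesis into the purely combinatorial statement that $G$ has no bridge, and then to apply the Jordan curve theorem locally along each edge.

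\emph{Step 1 (no bridges).} First I would show that no edge of $G$ is a bridge. Indeed, if an edge $e=uw$ were a bridge, removing it would split the component of $G$ containing $e$ into two subgraphs $H_u\ni u$ and $H_w\ni w$; every vertex of $H_u$ keeps its (even) $G$-degree except $u$, whose degree becomes $\deg_G(u)-1$, odd. Hence the degree sum of $H_u$ would be odd, contradicting that it equals twice the number of edges of $H_u$. Consequently each edge $e$ lies in a simple path $P\subseteq G\sm e$ joining its endpoints, and since $P$ meets $\overline e$ only in $\{u,w\}$, the set $J_e:=P\cup\overline e\subseteq G$ is a Jordan curve containing $e$. (Alternatively, Veblen's theorem provides a decomposition of $E(G)$ into edge-disjoint simple cycles, each of which is such a Jordan curve.)

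\emph{Step 2 (two distinct faces per edge).} Fix $e$ and an interior point $x\in e$, and choose a small disk $B$ around $x$ with $B\cap G=B\cap e$, taken as a flow-box so that $B\cap e$ splits $B$ into half-disks $B^+,B^-$. By the Jordan curve theorem $\SS^2\sm J_e$ has two components $O_1,O_2$ with $\bd O_1=\bd O_2=J_e$. Each connected half-disk $B^\pm$ is disjoint from $J_e$, hence lies in one $O_i$, and they cannot lie in the same one: otherwise $B\sm J_e$ would avoid one $O_i$, contradicting $x\in\bd O_i$. Say $B^+\subseteq O_1$, $B^-\subseteq O_2$. The face $D^\pm$ of $G$ (component of $\SS^2\sm G$) containing $B^\pm$ is connected and disjoint from $G\supseteq J_e$, so $D^+\subseteq O_1$, $D^-\subseteq O_2$, whence $D^+\neq D^-$ and $x\in\bd D^+\cap\bd D^-$. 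Since the two sides of the arc $\inter e$ can be labelled consistently along all of $\inter e$, the face on each side of $e$ is constant, so $\overline e\subseteq\bd D^+\cap\bd D^-$; and the flow-box picture shows $D^+,D^-$ are the only faces whose boundary meets $e$. Thus every edge belongs to exactly two, distinct, faces.

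\emph{Step 3 (the boundary identity).} Finally I would deduce that each face $D$ satisfies $\bd D=\bd(\SS^2\sm\overline D)$, i.e.\ that $D=\inter(\overline D)$; the inclusion $\bd(\SS^2\sm\overline D)\subseteq\bd D$ is automatic since $\bd D\subseteq G$. Take $p\in\bd D\subseteq G$. If $p$ lies in the interior of an edge $e$, Step 2 gives a second face $D'\neq D$ bordering $e$ with $p\in\overline{D'}$. If $p$ is a vertex $v$ (of even, hence $\geq 2$, degree), some sector of a small disk at $v$ lies in $D$, and Step 2 applied to a bounding edge of that sector puts the adjacent sector in a face $D'\neq D$ with $v\in\overline{D'}$. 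In either case $D'$ is a face distinct from $D$, so $D'\cap\overline D=\emptyset$ (distinct faces are disjoint and $\bd D\subseteq G$ is disjoint from $D'$), hence $p\in\overline{D'}\subseteq\overline{\SS^2\sm\overline D}$; since $p\notin\SS^2\sm\overline D$, this yields $p\in\bd(\SS^2\sm\overline D)$, as required.

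I expect the conceptual core to be Step 1 — turning the parity hypothesis into the absence of bridges — together with the local use of the Jordan curve theorem in Step 2, the delicate point there being that the two local sides of an edge genuinely land in different complementary components of $J_e$, which rests on the fact that every point of a Jordan curve is approached from both sides; the rest is routine point-set topology of planar graphs. One minor caveat worth noting is that the hypothesis ``even degree'' formally permits isolated vertices, which should be excluded (or treated separately) for the last identity, since an isolated vertex lies in $\inter(\overline D)$ for the face $D$ surrounding it.
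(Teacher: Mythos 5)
Your proof is correct, and it takes a genuinely different route from the paper's. The paper reduces to a connected component and invokes Euler's theorem: all degrees even gives an Eulerian circuit, hence $G\sm E$ remains connected after deleting any edge $E$; the face of $G\sm E$ containing $E$ is then an open topological disk, and $E$ is a cross-cut of that disk, splitting it into exactly the two (distinct) faces of $G$ adjacent to $E$. You obtain the same connectivity input more economically -- a bridge would leave a component with odd degree sum, violating the handshake lemma -- and then separate the two sides of $E$ by applying the Jordan curve theorem to a cycle through $E$ plus a flow-box at an interior point, instead of the cross-cut argument. The combinatorial content is the same (``$G\sm E$ connected for every $E$'' is exactly ``no bridges''), but your Step 1 is more elementary than quoting Euler, and your Steps 2--3 make explicit the deduction of $\bd D=\bd(\SS^2\sm\ol D)$, which the paper compresses into ``in particular''; your reading of that identity as $D=\inter(\ol D)$ is indeed the one used later (Lemma \ref{lem:index-LC}). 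Two remarks. First, the vertex case of your Step 3 leans on a ``sector'' picture at $v$ that is not justified for merely topological edges (an arc may spiral into its endpoint, so a small disk at $v$ need not decompose into sectors bounded by edge-germs); but the case is immediate without it: take any edge $e$ incident to $v$ -- by Step 2 it lies on the boundary of two distinct faces, so some face $D'\neq D$ has $e\subset\bd D'$, hence $v\in\ol{e}\subset\ol{D'}$, and the rest of your argument applies verbatim. Second, your caveat about isolated vertices is well taken: the statement tacitly assumes every vertex has positive degree, which is harmless for the paper's application since every vertex of an attractor-repellor graph arising there is an endpoint of a translation-line edge.
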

\begin{proof} It suffices to prove the claim for each connected component of $G$, so we may assume that $G$ is connected. For a connected graph, by a well known theorem of Euler, every edge has even degree if and only if there is an Eulerian cycle (a loop in the graph going through all edges without repetition). The existence of such a cycle implies that for any edge $E$, the graph $G\sm E$ is connected (recalling that we do not include vertices in the edges). Thus the face of $G\sm E$ containing $E$ is an open topological disk $D$, and $E$ separates $D$ into two connected components $D_1, D_2$ which are two different faces of $G$ containing $E$. This proves the lemma.
\end{proof}

\begin{lemma}\label{lem:graph-index} Every face of an attractor-repellor graph $G$ has nonpositive fixed point index. More specifically, every face has index $2 -e/2 - c$ where $e$ is the number of edges of the face (necessarily even) and $c$ is the number of connected components of its boundary.
\end{lemma}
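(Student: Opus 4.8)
The plan is to deduce the index formula from Lemma~\ref{lem:index-LC} using additivity of the fixed point index, after first checking that the quantities involved make sense. Since $f$ fixes every vertex of $G$, preserves every edge together with its orientation (an edge being a translation line directed from a repellor to an attractor), and preserves the orientation of $\SS^2$, it fixes each of the two faces adjacent to any edge; as every face has at least one edge on its boundary (we may discard the trivial case where a component of $G$ is a single isolated vertex), every face $D$ is $f$-invariant. Moreover $\fix(f|_D)$ cannot accumulate on $\bd D\subset G$: vertices are attracting or repelling, hence isolated fixed points, and around an interior point of an edge (a non-fixed point) $f$ is fixed-point-free. Hence $\fix(f|_D)$ is compact and $i(f,D)$ is defined; the same observation applies to each closed set appearing below.

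If $c=1$, then $D$ is a complementary component of the connected graph $\bd D$, hence an open topological disk, and $\bd D$ consists of finitely many attracting/repelling fixed points joined by finitely many pairwise disjoint translation lines. By the preceding lemma every edge bounds two distinct faces, so $D$ has no ``slits'' and $\bd D=\bd(\SS^2\sm D)$; thus Lemma~\ref{lem:index-LC} applies and gives $i(f,D)=1-e/2=2-e/2-c$, with $e$ even.

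For $c>1$ I would analyse the closed set $\SS^2\sm D$. Its connected components $K_1,\dots,K_n$ are pairwise disjoint, and each $K_i$ is cellular: $\SS^2\sm K_i=D\cup\bigcup_{j\neq i}K_j$ is connected, since every $K_j$ has nonempty frontier contained in $\bd D\subset\ol D$ and hence meets $\ol D$. Consequently $\SS^2\sm K_i$ is simply connected, so $\bd K_i$ is connected; as $\bd K_i\subset\bd D$ is also open in $\bd D$ (it is the complement there of the closed set $\bigcup_{j\neq i}\bd K_j$) and $\bigcup_i\bd K_i=\bd D$, the sets $\bd K_i$ are exactly the connected components of $\bd D$, whence $n=c$. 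Write $B_i=\bd K_i$, a connected subgraph of $G$ (containing the endpoints of its edges) with $e_i$ edges, so $\sum_i e_i=e$. Since $D$ is invariant, $f$ permutes the $K_j$; as each $K_i$ contains a vertex of $G$ and that vertex belongs to no other component, $f(K_i)=K_i$. Thus $D_i:=\SS^2\sm K_i$ is an open invariant topological disk with connected boundary $B_i$, satisfying the hypotheses of Lemma~\ref{lem:index-LC} (with $m=e_i$ translation lines; each edge of $B_i$ borders $D_i$ on one side and a component of $\inter K_i$ on the other, so there are no slits). That lemma gives $i(f,D_i)=1-e_i/2$ with $e_i$ even, and additivity over $\SS^2=K_i\sqcup D_i$ together with $i(f,\SS^2)=2$ yields $i(f,K_i)=1+e_i/2$. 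Finally, additivity over the partition $\SS^2=D\sqcup K_1\sqcup\cdots\sqcup K_c$ gives
$$2=i(f,D)+\sum_{i=1}^{c}(1+e_i/2)=i(f,D)+c+e/2,$$
so $i(f,D)=2-e/2-c$. Each $e_i$ is even and positive, hence $\geq 2$, so $e\geq 2c\geq 2$ and $i(f,D)\leq 2-c-c=2-2c\leq 0$, proving both assertions.

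The step I expect to be the main obstacle is the topological analysis of $\SS^2\sm D$ when $c>1$: establishing that its components are cellular and that their frontiers are precisely the connected components of $\bd D$. The delicate ingredient there is that a simply connected open subset of $\SS^2$ (equivalently, a non-separating plane continuum) has connected boundary; this can be obtained via the prime ends compactification, using that $\bd D_i$ is the union of the impressions of the prime ends of $D_i$ over the connected circle of prime ends, and that these impressions vary upper-semicontinuously. Everything else is bookkeeping with the additivity of the fixed point index and an application of Lemma~\ref{lem:index-LC}.
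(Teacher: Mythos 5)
Your proof is correct, but it follows a genuinely different route from the paper's. The paper handles the case $c>1$ by induction on the number of boundary components: it splits $G$ into the connected component $G_1$ containing one boundary component of $D$ and the complementary subgraph $G\sm G_1$ (both again attractor-repellor graphs), applies the simply connected case to the face of $G_1$ containing $D$ and the inductive hypothesis to the face of $G\sm G_1$ containing $D$, and recovers $i(f,D)$ by writing $D$ as the intersection of those two faces. You instead work directly with the components $K_1,\dots,K_c$ of $\SS^2\sm D$: you show each is a cellular continuum whose boundary is a component of $\bd D$, apply Lemma \ref{lem:index-LC} to each disk $\SS^2\sm K_i$ (with the even-degree lemma supplying the no-slit hypothesis, exactly as you note, since the face of $G$ on the other side of an edge of $\bd K_i$ is distinct from $D$ and hence lies in $K_i$), and finish with additivity of the index and $i(f,\SS^2)=2$. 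Both arguments rest on the same two ingredients, Lemma \ref{lem:index-LC} and the even-degree lemma; yours trades the induction for a one-shot bookkeeping over the partition $\SS^2 = D\sqcup K_1\sqcup\dots\sqcup K_c$, at the cost of the point-set verifications you flag: that there are only finitely many components $K_i$ (each contains an entire connected component of $G$, since any vertex of $\bd K_i$ lies in a component of $G$ disjoint from $D$, so their number is bounded by the number of components of $G$), that $\bd K_i$ is connected (unicoherence of $\SS^2$ applied to $K_i\cup\ol{\SS^2\sm K_i}$ gives this at once, more cheaply than the prime-ends argument you sketch), and that $\bd D=\bigsqcup_i \bd K_i$. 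The paper's induction buys the convenience of only invoking Lemma \ref{lem:index-LC} for faces of connected graphs, where these verifications are immediate; your version is arguably more transparent about where the term $c$ in the formula comes from, and it also yields the invariance statements ($f(K_i)=K_i$, $f(D)=D$) explicitly, which the paper leaves tacit.
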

\begin{proof}
We first note that from the fact that every vertex has even degree, the previous Lemma implies that if $D$ is a face of $G$ then $\bd D = \bd \SS^2\sm D$.

For simply connected faces (\ie with $c=1$), Lemma \ref{lem:index-LC} implies that the fixed point index is $1-e/2$ and the claim follows. The case of multiple connected components is shown by induction: Assume the face $D$ has $n+1$ boundary components, and let $G_1$ be the connected component of $G$ containing one of the boundary components of $D$ and $G_n = G\sm G_1$. Then $G_1$ and $G_n$ are also attractor-repellor graphs. The face $D_1$ of $G_1$ containing $D$ has index $1-e_1/2$, where $e_1$ is the number of edges of $G_1$, which means that $\SS^2\sm D_1$ has index $2-(1-e_1/2) = 1+e_1/2$. On the other hand by induction, the face $D_n$ of $G_n$ containing $D$ has index $2-e_n/2 - n$. One easily verifies that $D = D_n\cap D_1 = D_n\sm (\SS^2\sm D_1)$, which implies that the index of $D$ is $2-e_n/2-n - (1+e_1/2) = 2 - e/2 - (n+1)$, proving the induction step.
\end{proof}

\subsection{Brouwer theory}\label{sec:brouwer}

We recall some classical results from Brouwer theory. 

\begin{lemma}\label{lem:free-homeo} Suppose $h\colon \R^2\to \R^2$ is an orientation-preserving homeomorphism without fixed points. If $K\subset \R^2$ is a compact connected set such that $f(K)\cap K=\emptyset$ then $f^n(K)\cap K=\emptyset$ for all $n\neq 0$.
\end{lemma}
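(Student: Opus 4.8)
The plan is to argue by contradiction, reducing the statement to the classical fact from Brouwer theory that a fixed-point-free orientation-preserving homeomorphism of $\R^2$ has no periodic free disk chain (Franks' lemma \cite{franks-gen}); recall that a \emph{periodic free disk chain} is a finite cyclic sequence of distinct open topological disks $E_1,\dots,E_p$, each disjoint from its image under $h$, such that for every $i$ there is an integer $m_i\geq 1$ with $h^{m_i}(E_i)\cap E_{i+1}\neq\emptyset$ (indices modulo $p$).

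First I would reduce. Since $h^{-n}(K)\cap K\neq\emptyset$ is equivalent to $h^{n}(K)\cap K\neq\emptyset$, and the case $n=1$ is the hypothesis, it suffices to derive a contradiction from the assumption that $h^{n}(K)\cap K\neq\emptyset$ for some $n\geq 2$; fix such an $n$ and a point $y\in K$ with $h^{n}(y)\in K$. Because $h$ has no fixed points, each $x\in\R^2$ has a small round open disk neighborhood on which $h$ acts freely, so by compactness $K$ is covered by finitely many round open disks $D_1,\dots,D_r$, each meeting $K$ and each satisfying $h(D_i)\cap D_i=\emptyset$. Since $K$ is connected and the sets $D_i\cap K$ form an open cover of it, the graph $\mathcal G$ on $\{1,\dots,r\}$ with an edge $ij$ whenever $D_i\cap D_j\neq\emptyset$ is connected. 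Now $y\in D_a$ and $h^{n}(y)\in D_b$ for some $a,b$, so $h^{n}(D_a)\cap D_b\neq\emptyset$; choosing a path $b=v_0,v_1,\dots,v_m=a$ in $\mathcal G$ of minimal length produces a cyclic arrangement $D_{v_0},D_{v_1},\dots,D_{v_m},D_{v_0}$ of free disks in which the last link is realized by the positive power $n$, while each remaining link is merely a nonempty overlap $D_{v_{i-1}}\cap D_{v_i}\neq\emptyset$.

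Finally I would turn the overlap links into admissible ones. Along a minimal path, non-consecutive disks are disjoint and (since the $D_i$ are round) consecutive ones meet in a lens, hence in a connected set; scanning the chain and repeatedly merging a running union $M$ with the next disk $D_{v_i}$ \emph{whenever} $M\cup D_{v_i}$ is again free (it is automatically an open topological disk, being a ``chain of beads''), one arrives at the following dichotomy at each step: either the merge succeeds, or $h(M\cup D_{v_i})$ meets $M\cup D_{v_i}$, which forces $h(M)\cap D_{v_i}\neq\emptyset$ or $h(D_{v_i})\cap M\neq\emptyset$ --- a link realized by the positive power $1$. This procedure replaces the cyclic arrangement by a genuine periodic free disk chain $E_1,\dots,E_p$ (with $E_1\supset D_b$ and $E_p\supset D_a$, so the closing link $h^{n}(E_p)\cap E_1\supset h^{n}(D_a)\cap D_b\neq\emptyset$ remains positive), contradicting Franks' lemma. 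The one point requiring care is exactly this passage from overlap links to positive-power links; it is handled by the merging dichotomy above. Alternatively, one may bypass the combinatorics altogether by invoking the form of the Brouwer lemma valid for free open \emph{connected} sets: as $K$ and $h(K)$ are disjoint compacta there is a connected open neighborhood $U$ of $K$ with $h(\overline U)\cap\overline U=\emptyset$, whence $h^{n}(K)\cap K\subset h^{n}(U)\cap U=\emptyset$ for every $n\neq 0$.
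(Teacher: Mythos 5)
Your overall strategy---cover $K$ by free disks, chain them using connectedness, and contradict Lemma \ref{lem:franks}---is the intended one (the paper only remarks that the lemma is a simple consequence of the disk-chain result). But the step that turns the overlap links into a ``genuine periodic free disk chain'' has a real gap, in fact two. First, your recollection of Franks' lemma drops a hypothesis: in Lemma \ref{lem:franks} the disks of the chain must be \emph{pairwise disjoint}, not merely distinct. Your merged pieces $E_1,\dots,E_p$ fail this: when a merge fails at $D_{v_i}$, the new block beginning with $D_{v_i}$ still meets the previous block in the lens $D_{v_{i-1}}\cap D_{v_i}\neq\emptyset$ (the merge failed because the union was not free, not because the sets were disjoint), so consecutive blocks overlap and the family is not an admissible chain. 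Second, the dichotomy at a failed merge gives $h(M)\cap D_{v_i}\neq\emptyset$ \emph{or} $h(D_{v_i})\cap M\neq\emptyset$, and only the first is a link in the direction your cycle needs; if some power-one links point backwards, the pieces need not carry any directed cycle at all (with three pieces and links $3\to 1$ via $h^{n}$, $3\to 2$ and $2\to 1$ via $h$, there is no periodic chain), so no contradiction with Lemma \ref{lem:franks} is reached. The closing ``alternative'' does not repair this: invoking the Brouwer lemma for free open connected sets is invoking (a strengthening of) the very statement being proved, which is not among the results the paper makes available.

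Both problems disappear if you choose the covering disks inside one uniformly free neighborhood, which is presumably the ``simple consequence'' the paper has in mind. Since $K$ and $h(K)$ are disjoint compacta, there is $\varepsilon>0$ such that the $\varepsilon$-neighborhood $V$ of $K$ satisfies $h(V)\cap V=\emptyset$; cover $K$ by round disks of radius less than $\varepsilon/2$ centered at points of $K$. Every union of such disks lies in $V$, hence is automatically free, so all your merges succeed and no dichotomy is needed. Taking a path of minimal length in the nerve from the disk containing $y$ to the disk containing $h^{n}(y)$, non-consecutive disks are disjoint, so the union $U$ of the disks along the path is an open, connected, simply connected---hence topological---disk, free, and containing both $y$ and $h^{n}(y)$. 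Then the single disk $U$ with $m_1=n$ is already a periodic free disk chain, and Lemma \ref{lem:franks} gives the contradiction directly, with no direction bookkeeping and no disjointness issue.
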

The previous lemma is a simple consequence of the next result about periodic disk chains. A set is called \emph{free} for a homeomorphism if it is disjoint from its image by the homeomorphism. A \emph{free disk chain} for an orientation-preserving planar homeomorphism $h\colon \R^2\to \R^2$ is a family $D_0,D_1,\dots, D_{n-1}$ of pairwise disjoint free topological disks such that for $0\leq i\leq n-1$ there exists $m_i>0$ with $f^{m_i}(D_i)\cap D_{i+1\, (\text{mod }n)}\neq \emptyset$.

\begin{lemma}[\cite{franks-gen}]\label{lem:franks} If an orientation-preserving planar homeomorphism has a free disk chain, then it has a fixed point.
\end{lemma}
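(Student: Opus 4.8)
The plan is to argue by contradiction and to extract from a free disk chain a configuration forbidden by Brouwer theory. Suppose $h\colon\R^2\to\R^2$ is an orientation-preserving homeomorphism admitting a free disk chain $D_0,\dots,D_{n-1}$ (so the $D_i$ are pairwise disjoint free topological disks and $h^{m_i}(D_i)\cap D_{i+1}\neq\emptyset$, indices mod $n$, with $m_i>0$) but $\fix(h)=\emptyset$. The only external input will be \emph{Brouwer's translation arc lemma}: if $g$ is an orientation-preserving fixed-point-free homeomorphism of $\R^2$ and $\alpha$ is a translation arc for $g$ (a simple arc from a point $z$ to $g(z)$ with $\alpha\cap g(\alpha)=\{g(z)\}$), then $g^{k}(\alpha)\cap\alpha=\emptyset$ whenever $\abss{k}\ge 2$. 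A standard consequence, used freely below, is that a fixed-point-free $g$ has no periodic points, so every iterate $h^{N}$ is again fixed-point-free. (One may equivalently invoke the Brouwer plane translation theorem.)

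First I would normalize the chain. Choose $x_i\in D_i$ with $h^{m_i}(x_i)\in D_{i+1}$ and put $y_{i+1}=h^{m_i}(x_i)$; since $D_{i+1}$ is arcwise connected there is a simple arc $\delta_{i+1}\subset D_{i+1}$ from $y_{i+1}$ to $x_{i+1}$. Replacing each $D_i$ by a thin disk neighborhood of $\delta_i$ inside $D_i$ — again a free topological disk, since a subset of a free set is free, still disjoint from the other members, and still carrying the chain condition with the same $m_i$ — we may assume each $D_i$ is a thin tube containing $\delta_i$; this is cosmetic but streamlines what follows. Next, fix for each $i$ a translation arc $\tau_i$ from $x_i$ to $h(x_i)$ (such arcs exist since $x_i$ is not fixed). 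By Brouwer's lemma the iterates $h^{l}(\tau_i)$, $0\le l\le m_i-1$, meet only in consecutive endpoints, so $\sigma_i:=\bigcup_{l=0}^{m_i-1}h^{l}(\tau_i)$ is a simple arc from $x_i$ to $y_{i+1}$ — a ``length-$m_i$ translation arc''. The concatenation
$$\Sigma:=\sigma_0\cdot\delta_1\cdot\sigma_1\cdot\delta_2\cdots\sigma_{n-1}\cdot\delta_0$$
is then a closed loop in $\R^2$: it runs with the dynamics from $x_0$ to $y_1$ along $\sigma_0$, corrects from $y_1$ to $x_1$ inside the free tube $D_1$ along $\delta_1$, runs again, and so on, closing up at $x_0$.

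The core of the argument is to eliminate the self-intersections of $\Sigma$. Using the Jordan curve theorem together with the ``innermost region / corner-cutting'' surgery standard in Brouwer theory, one extracts from $\Sigma$ a genuine translation arc $\alpha$ for $h$ — or for some iterate $h^{N}$, which is again fixed-point-free by the remark above — and an integer $k\ge 2$ with $h^{k}(\alpha)\cap\alpha\neq\emptyset$, contradicting Brouwer's lemma and forcing a fixed point of $h$. The extraction is possible because of the shape of $\Sigma$: each $\sigma_i$ moves strictly forward under the dynamics, whereas each $\delta_i$ is a \emph{free} move performed inside the free, pairwise disjoint tubes $D_i$; hence $\Sigma$ behaves, after surgery, like the orbit of a single translation arc that eventually re-enters a tube it already passed through, and the freeness of the tubes prevents the surgery from absorbing that return.

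I expect this surgery to be the only genuine obstacle: the normalization and the construction of $\Sigma$ are routine, and it is precisely here that orientation-preservation is needed. An alternative packaging of the same difficulty bypasses translation arcs: feed the chain into the Brouwer plane translation theorem to obtain a Brouwer line $\Lambda$ separating $D_0$ from $h(D_0)$; then $h^{a}(D_0)$ lies on the forward side of $\Lambda$ for every $a\ge1$, while $D_0$ lies strictly on the backward side, so for $n=1$ this already contradicts $h^{m_0}(D_0)\cap D_0\neq\emptyset$, and for $n>1$ one runs the argument through a Brouwer foliation adapted to the whole chain.
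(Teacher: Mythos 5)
The paper does not actually prove this lemma; it is quoted from \cite{franks-gen}, so there is no internal argument to compare against. Judged on its own terms, your proposal has a genuine gap: the whole content of the lemma is concentrated in the step where you ``extract from $\Sigma$ a genuine translation arc $\alpha$ \dots and an integer $k\ge 2$ with $h^{k}(\alpha)\cap\alpha\neq\emptyset$'' by ``innermost region / corner-cutting surgery.'' This is asserted, not performed, and it is not a routine application of a known surgery. The corner-cutting lemma in Brouwer theory is designed to simplify the orbit of a \emph{single} translation arc; your loop $\Sigma$ alternates dynamical pieces $\sigma_i$ with free correction arcs $\delta_i$ that are not translates of anything, and there is no standard procedure that converts such a mixed loop into a translation arc meeting one of its own iterates. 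Making that step precise is exactly the difficulty the lemma encodes. The closing ``alternative packaging'' is also unsound as stated: a Brouwer line through a point of $D_0$ need not separate $D_0$ from $h(D_0)$, since $D_0$ itself meets the line, so the claim that $D_0$ lies strictly on the backward side fails even in the case $n=1$; and ``a Brouwer foliation adapted to the whole chain'' invokes machinery far heavier than the lemma while still not supplying an argument.

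For comparison, Franks' proof takes a different and cleaner route that avoids arc surgery entirely: assuming $\fix(h)=\emptyset$, one composes $h$ with homeomorphisms supported in the free disks $D_i$, pushing the arrival point $h^{m_i}(x_i)\in D_{i+1}$ back to $x_{i+1}$ inside $D_{i+1}$. After a reduction handling the higher iterates $m_i$, the modified map $g$ has a periodic point, hence (by the Brouwer theory you already quote, namely that a fixed-point-free orientation-preserving plane homeomorphism has no nonwandering points) $g$ has a fixed point $z$. If the relevant image of $z$ lies outside $\bigcup_i D_i$ then $z$ is already fixed for $h$; if it lies in some $D_i$ then $z\in D_i$ as well, so $D_i$ meets its $h$-image, contradicting freeness. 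The surgery is performed on the map rather than on an arc, and that is where the argument actually closes. I would recommend either carrying out that construction in detail or simply citing \cite{franks-gen}, as the paper does.
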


As a well-known consequence of the previous result we have:
\begin{corollary}\label{coro:brouwer} If an orientation-preserving homeomorphism $h\colon\R^2\to \R^2$ has a nonwandering point, then it has a fixed point. In particular, if $h$ has no fixed points, then for any $z\in \R^2$ one has $||h^n(z)||\to \infty$ as $n\to \pm \infty$.
\end{corollary}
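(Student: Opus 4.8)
The plan is to reduce the whole statement to Lemma \ref{lem:franks} (the free disk chain lemma), since that is where all the real content lies; the rest is elementary bookkeeping.

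First I would establish the main assertion: a nonwandering point forces a fixed point. Arguing by contradiction, assume $h$ is fixed-point free and let $z$ be nonwandering. Since $h(z)\neq z$, continuity of $h$ yields a small closed topological disk $D$ with $z\in D$ and $h(D)\cap D=\emptyset$; thus $D$ is free. As $z$ is nonwandering and $D$ is a neighborhood of $z$, there is $n\geq 1$ with $h^n(D)\cap D\neq\emptyset$, and freeness of $D$ rules out $n=1$, so $n\geq 2>0$. Then the single disk $D$ is already a free disk chain (of length one, with $m_0=n$), so Lemma \ref{lem:franks} produces a fixed point, a contradiction. The only point that needs a glance is that the definition of a free disk chain permits chains consisting of a single disk, which it does.

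For the ``in particular'' clause I would first note that every point of $\omega(z,h)$ is nonwandering: given $w\in\omega(z,h)$ and a neighborhood $U$ of $w$, choose $N_1<N_2$ with $h^{N_1}(z),h^{N_2}(z)\in U$; then $h^{N_2}(z)\in h^{N_2-N_1}(U)\cap U$ with $N_2-N_1\geq 1$. Hence, if $h$ is fixed-point free, the first part gives $\omega(z,h)=\emptyset$ for every $z$. If $\|h^n(z)\|$ failed to tend to $\infty$ as $n\to+\infty$, there would be $n_k\to\infty$ along which $(h^{n_k}(z))$ stays bounded, hence admits a convergent subsequence whose limit lies in $\omega(z,h)$, a contradiction. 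So $\|h^n(z)\|\to\infty$ as $n\to+\infty$, and applying the same reasoning to $h^{-1}$ (again orientation-preserving and fixed-point free) gives the limit as $n\to-\infty$.

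I do not anticipate any genuine obstacle: once Lemma \ref{lem:franks} is available, the corollary is a short chain of elementary arguments. A self-contained proof would instead require reproving the free disk chain lemma, which is the hard part, but we are entitled to cite it.
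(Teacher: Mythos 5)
Your proof is correct and follows exactly the route the paper intends: the corollary is stated there as a direct consequence of Lemma \ref{lem:franks}, obtained by taking a small free disk around a nonwandering point as a period-one free disk chain, and your handling of the ``in particular'' clause via $\omega$-limit points being nonwandering (and passing to $h^{-1}$) is the standard completion.
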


Note that in particular if $h$ has a compact invariant set, then it has a nonwandering point and therefore by the previous corollary it has a fixed point.

The next lemma says that if $\Gamma$ is a translation line for an orientation-preserving plane homeomorphism without fixed points and $D$ is a free disk, every fundamental domain of $\Gamma$ between two points of $\Gamma\cap D$ must also intersect $D$.

\begin{lemma}\label{lem:trans-fix} Let $h\colon \R^2\to \R^2$ be an orientation-preserving homeomorphism without fixed points, and $\Gamma$ a translation line for $h$. Let $D$ be a closed disk such that $h(D)\cap D=\emptyset$. Let $\alpha$ be any compact subarc of $\Gamma$ joining two points of $D$ and only intersecting $D$ at its endpoints. Then $h(\alpha)\cap \alpha = \emptyset$.
\end{lemma}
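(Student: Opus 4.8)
The plan is to argue by contradiction and produce a free disk chain, contradicting the absence of fixed points. Parametrize $\Gamma$ by an injective continuous map $\gamma\colon\R\to\R^2$ with $h\circ\gamma(t)=\gamma(t+1)$, and write $\alpha=\gamma([a,b])$ with $a<b$; the hypotheses then read $\gamma(a),\gamma(b)\in D$ and $\gamma((a,b))\cap D=\emptyset$. Since $\gamma$ is injective and $h(\alpha)=\gamma([a+1,b+1])$, the conclusion $h(\alpha)\cap\alpha=\emptyset$ is equivalent to the purely numerical statement $b-a<1$. So I will assume $b-a\ge 1$ and deduce that $h$ has a fixed point.

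First I would dispose of the case $b-a\in\Z$: then $\gamma(b)=h^{b-a}(\gamma(a))$ lies in $h^{b-a}(D)$, while $D$ being free forces $h^{b-a}(D)\cap D=\emptyset$ by Lemma~\ref{lem:free-homeo}, contradicting $\gamma(b)\in D$ (in particular $b-a=1$ is impossible). So from now on $b-a=N+\phi$ with $N=\lfloor b-a\rfloor\ge 1$ and $\phi\in(0,1)$; note that $a+N=b-\phi$ and $b-1\in(a,b)$. The main step is then to build a free disk chain of length two. I take $E_0:=D$, which is free by hypothesis, and let $E_1$ be a sufficiently small closed tubular neighborhood of the compact arc $A:=\gamma([b-1,\,b-\phi])$. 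Because $[b-1,b-\phi]\subset(a,b)$, the arc $A$ is disjoint from $D$; and because the parameter intervals $[b-1,b-\phi]$ and $[b,b+1-\phi]$ are disjoint (this is where $\phi>0$ enters), $A$ is disjoint from $h(A)=\gamma([b,\,b+1-\phi])$. Hence, choosing $E_1$ small enough, it is a closed topological disk, disjoint from $E_0$, with $h(E_1)\cap E_1=\emptyset$; that is, it is free. Finally I would check the two transitions of the chain: since $a+N=b-\phi$, the point $\gamma(a+N)=h^N(\gamma(a))$ lies in $h^N(E_0)$ and is an endpoint of $A$, hence lies in $E_1$, so $h^N(E_0)\cap E_1\ne\emptyset$ with $N\ge 1$; and $h(\gamma(b-1))=\gamma(b)\in D=E_0$ while $\gamma(b-1)\in A\subset E_1$, so $h(E_1)\cap E_0\ne\emptyset$. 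Thus $(E_0,E_1)$ is a free disk chain, and Lemma~\ref{lem:franks} produces a fixed point of $h$, the desired contradiction.

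The only point that needs to be arranged with care, and hence the main obstacle, is the choice of the arc $A$: the disk $E_1$ must simultaneously be free, be disjoint from $D$, and contain the particular orbit point $\gamma(a+N)$ of $D$. This is exactly why $A$ is chosen as the subarc of the fundamental domain $\gamma([b-1,b])$ running from $\gamma(b-1)$ to $\gamma(a+N)=\gamma(b-\phi)$: it is a \emph{proper} subarc precisely because $\phi>0$, so it has ``length'' $1-\phi<1$ and is therefore disjoint from its $h$-image, which is what makes a small neighborhood of it a free disk. Beyond this, the argument uses nothing more than the injectivity and equivariance of $\gamma$ together with Lemmas~\ref{lem:free-homeo} and~\ref{lem:franks}; in particular no discussion of the complementary components of $\Gamma$ is needed.
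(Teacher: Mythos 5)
Your proof is correct and takes essentially the same route as the paper's: both arguments run by contradiction and build a period-$2$ free disk chain consisting of (a neighborhood of) $D$ together with a neighborhood of a proper subarc of a fundamental domain of $\Gamma$ disjoint from $D$, then invoke Lemma~\ref{lem:franks}. Your arc $A$ is just a forward iterate of the paper's arc $\sigma$, and the integer case you treat separately via Lemma~\ref{lem:free-homeo} is handled in the paper by allowing $\sigma$ to degenerate to the single point $h(x)$.
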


\begin{proof} Let $x,y\in D$ be the endpoints of $\alpha$, with $y$ after $x$ in the linear order of $\Gamma$. Suppose for a contradiction that $h(\alpha)\cap \alpha\neq \emptyset$, and let $\alpha_0$ be the fundamental domain between $x$ and $h(x)$. Note that $\alpha_0\subset \alpha$.  Let $k$ be the unique integer such that $h^{-k}(y)\in \alpha_0\sm\{x\}$. Since $\Gamma$ is a translation line, our choice of $x$ implies that $k\geq 0$, and since $h(x)\neq y$ (because $D$ is disjoint from its own image), we have $k\geq 1$. Let $\sigma$ be the compact subarc of $\alpha_0$ joining $h^{-k}(y)$ to $h(x)$ (allowing $\sigma =\{h(x)\}$ in the case that $h^{-k}(y)=h(x)$). Note that $\sigma$ is compact, disjoint from its own image, and also disjoint from $D$ since it does not contain $x$ or $y$. We may thus choose neighborhoods $V_0$ of $\sigma$ and $V_1$ of $D$ which are disjoint open topological disks such that $h(V_i)\cap V_i=\emptyset$ for $i\in \{0,1\}$ (see Figure \ref{fig:trans-brouwer}). 
\begin{figure}[ht!]
\includegraphics[height=5cm]{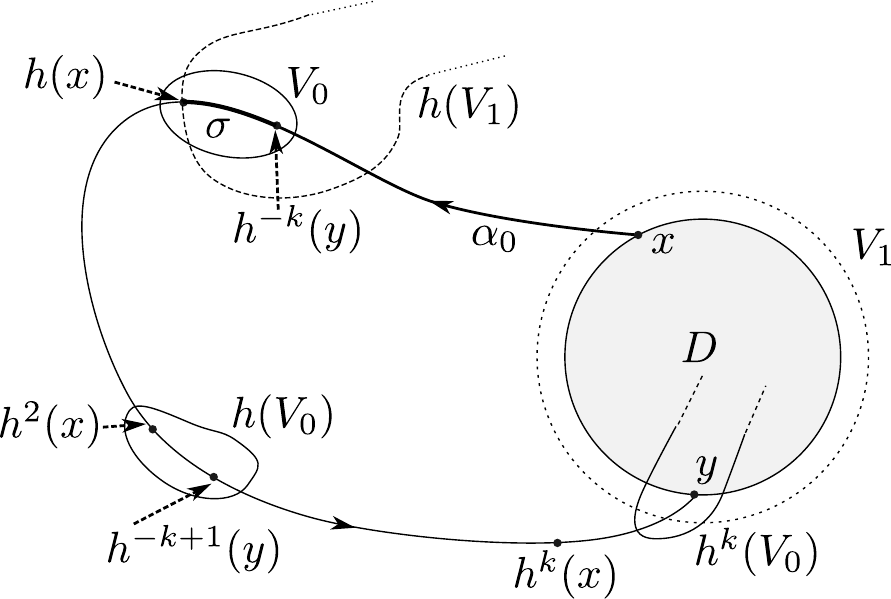}
\caption{The periodic disk chain in the proof of Lemma \ref{lem:trans-fix}}
\label{fig:trans-brouwer}
\end{figure}
Moreover, $h^k(V_0)\cap V_1\neq \emptyset$ and $h(V_1)\cap V_0\neq\emptyset$. This means that $V_0,V_1$ is a periodic disk chain (of period $2$), which by Lemma \ref{lem:franks} is not possible since $h$ has no fixed points. This contradiction proves the lemma.
\end{proof}

From the previous lemma one can deduce the following (see \cite[Proposition 3.6]{guillou}):
\begin{corollary}\label{coro:guillou} Let $h\colon \R^2\to \R^2$ be an orientation-preserving homeomorphism without fixed points. Then every translation line for $h$ is embedded.
\end{corollary}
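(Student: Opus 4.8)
The plan is to argue by contradiction, manufacturing from a non-embedded translation line a configuration forbidden by Lemma~\ref{lem:trans-fix}. Recall that an embedded line is exactly one disjoint from $\omega(\Gamma)\cup\alpha(\Gamma)$, and that reversing the orientation of $\Gamma$ turns it into a translation line for $h^{-1}$ (interchanging $\omega$ and $\alpha$) without changing whether it is embedded. So, if the corollary fails, we may assume $\Gamma\cap\omega(\Gamma)\neq\emptyset$ and fix $p=\gamma(t_0)\in\Gamma\cap\omega(\Gamma)$. Since $h$ has no fixed point, $p\neq h(p)$, so there is a free closed topological disk $D$ with $p\in\inter D$ and $h(D)\cap D=\emptyset$; by Lemma~\ref{lem:free-homeo} the translates $\{h^n(D):n\in\Z\}$ are pairwise disjoint, so $\gamma(t_0+n)=h^n(p)\notin D$ for all $n\neq 0$. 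Finally, $p\in\omega(\Gamma)$ means precisely that the ray $\gamma([t_0,\infty))$ meets $\inter D$ at arbitrarily large parameters.

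The heart of the argument is to produce, using these facts, a compact subarc $\alpha\subset\Gamma$ joining two points of $D$, meeting $D$ only at its two endpoints, and of parameter length at least $1$ (equivalently, containing a fundamental domain $\gamma([a,a+1])$). Granting this, write $\alpha=\gamma([a',c'])$ with $c'-a'\geq 1$; then $h(\alpha)=\gamma([a'+1,c'+1])$ and $\gamma([a'+1,c'])\subset\alpha\cap h(\alpha)$ is nonempty, so $h(\alpha)\cap\alpha\neq\emptyset$. But $D$ is free and $\alpha$ is a subarc of the translation line $\Gamma$ joining two points of $D$ and meeting $D$ only at its endpoints, so Lemma~\ref{lem:trans-fix} gives $h(\alpha)\cap\alpha=\emptyset$. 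This contradiction proves the corollary.

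To build $\alpha$ I would follow $\Gamma$ forward from $p$. It leaves $D$ for the first time at some parameter $a<t_0+1$, and returns to $D$ for the first time at some parameter $c>a$; then $\sigma=\gamma([a,c])$ is a genuine excursion: $\gamma(a),\gamma(c)\in\partial D$ and $\gamma((a,c))\cap D=\emptyset$. If $c\geq a+1$ we take $\alpha=\sigma$. Otherwise $\sigma$ is short; and a sojourn of $\Gamma$ inside $D$ cannot have parameter length $\geq 1$ either, since that would force $h(z)\in D$ for some $z\in D$, contradicting $h(D)\cap D=\emptyset$. Hence, if no long excursion is ever produced, $\Gamma$ crosses $\partial D$ infinitely often, with consecutive crossing parameters at distance $<1$; and these parameters are necessarily unbounded, since otherwise $\Gamma$ would eventually stay inside $D$ (impossible, being a sojourn of length $\geq 1$) or eventually stay outside $\inter D$ (impossible, since $\Gamma$ returns to $\inter D$ at arbitrarily large parameters). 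The corresponding crossing points then accumulate on the circle $\partial D$ at some point $z$, which is not fixed and also lies in $\omega(\Gamma)$. From this one should derive a contradiction through Brouwer theory: using the freeness of small disks around $z$ and around suitable iterates, together with the fact that $\Gamma$ visits every neighbourhood of $z$ at arbitrarily large parameters, one manufactures a periodic free disk chain forbidden by Lemma~\ref{lem:franks} (equivalently, a nonwandering point, forbidden by Corollary~\ref{coro:brouwer}).

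The main obstacle is exactly this last step --- ruling out that $\Gamma$ performs only ``short'' excursions from $D$. Lemma~\ref{lem:trans-fix} is, after all, precisely the statement that every clean excursion of a translation line from a free disk is short, so the whole point is to force a long clean excursion to exist; this cannot be done by formal topology and requires a genuine use of the fixed-point-free Brouwer theory of \S\ref{sec:brouwer}.
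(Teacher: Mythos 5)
Your reduction (passing to $h^{-1}$ if necessary, so that some $p=\gamma(t_0)\in\Gamma\cap\omega(\Gamma)$) and your reading of Lemma~\ref{lem:trans-fix} are both correct: for a free closed disk $D$ containing $p$, the lemma says precisely that every clean excursion of $\Gamma$ out of $D$ has parameter length $<1$, so it suffices to exhibit one of length $\geq 1$. But your argument never exhibits it: the third paragraph ends with ``one should derive a contradiction through Brouwer theory'', and your last paragraph concedes that ruling out the ``only short excursions'' scenario is exactly what is missing. This is a genuine gap, and the fallback you sketch is unlikely to close it as stated. The accumulation point $z\in\bd D$ lies in $\omega(\Gamma)$ but need not lie on $\Gamma$, so you cannot restart your own argument at $z$; and the bare fact that $\Gamma$ enters every neighbourhood of $z$ at arbitrarily large parameters does not produce a periodic free disk chain: points $z_n\to z$ whose large iterates hit a fixed compact set already occur for fixed point free homeomorphisms (e.g.\ the time-one map of a Reeb flow), so Lemma~\ref{lem:franks} and Corollary~\ref{coro:brouwer} cannot be invoked without an explicit chain, which you do not construct.

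The missing idea is to use the freedom in the choice of $D$, shrinking it against a fixed compact piece of the line so that short excursions near $p$ become impossible. Normalize $h(\gamma(t))=\gamma(t+1)$ and $t_0=0$, and fix $\delta\in(0,1)$. By injectivity of $\gamma$, the compact arc $\gamma([\delta,2])$ does not contain $p$, so you may choose the free closed disk $D$ around $p$ small enough to be disjoint from it. Let $T=\{t\geq 0:\gamma(t)\in D\}$; it is closed, contains $0$, misses $[\delta,2]$, and meets $(2,\infty)$ because $p\in\omega(\Gamma)$. Set $c=\min\bigl(T\cap(1,\infty)\bigr)$, which is $>2$, and $a=\max\bigl(T\cap[0,c)\bigr)$, which is $<\delta$ since $T\cap(1,c)=\emptyset$ and $T\cap[\delta,1]=\emptyset$. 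Then $\alpha=\gamma([a,c])$ is a compact subarc of $\Gamma$ joining two points of $D$ and meeting $D$ only at its endpoints, of parameter length $c-a>2-\delta>1$; hence $\gamma(a+1)=h(\gamma(a))\in h(\alpha)\cap\alpha$, contradicting Lemma~\ref{lem:trans-fix}. The case $\Gamma\cap\alpha(\Gamma)\neq\emptyset$ follows by the same argument applied to $h^{-1}$, as you note. (The paper only cites Guillou for this deduction, so the comparison is with this intended one-step use of Lemma~\ref{lem:trans-fix}; the point your attempt misses is that shrinking $D$ relative to two fundamental domains of $\Gamma$ eliminates the short-excursion scenario outright, with no further Brouwer-theoretic input.)
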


\subsection{Continua, attractors}\label{sec:continua-attractors}

A \emph{continuum} is a compact connected set. A \emph{cellular} continuum is a proper subcontinuum of $\SS^2$ which is non-separating in $\SS^2$, which is equivalent to saying that its complement is an open topological disk. As an alternative (but equivalent) definition, a cellular continuum is the intersection of a decreasing chain of closed topological disks $(D_i)_{i\in \N}$ such that $D_{i+1}\subset \inter D_i$ for all $i\in \N$. By a theorem of Cartwright and Littlewood, every $f$-invariant cellular continuum has a fixed point \cite{cartwright-littlewood}. 

If $K\subset \SS^2$ is an $f$-invariant cellular continuum, $\SS^2\sm K$ is an invariant open topological disk. We define the \emph{external prime ends rotation number} of $K$ as $\rho_e(f,K) = -\rho(f,\SS^2\sm K)$ (the chain of sign is to preserve the notion that a positive rotation number corresponds to a counter-clockwise rotation). To avoid confusion, note that the change of sign is not the antipodal map, but the inversion in the lie group $\T^1$; so if $\rho(f,\SS^2\sm K) = r+\Z\in \T^1$ then $\rho_e(f,K) = -r+\Z$.

A \emph{trapping region} is an open set $V$ such that $f(\ol{V})\subset V$. The maximal invariant set $A$ in a trapping region $V$ is called an \emph{attractor}. Note that $A=\bigcap_{n\in \N} f^n(V)$, and $A$ is necessarily compact. Note that given any neighborhood $V_0$ of the attractor $A$, we can always find a trapping region for $A$ contained in $V_0$. 
The \emph{basin} of the attractor $A$ is the set $\bigcup_{n\in \N} f^{-n}(V)$, which is the set of all $x\in \SS^2$ such that $\omega(x,f)\subset A$. 

A \emph{repellor} $R$ is an attractor for $f^{-1}$, and the basin of a repellor is defined as the basin of $R$ as an attractor of $f^{-1}$, \ie the set of all $x\in \SS^2$ such that $\alpha(x,f)\subset R$. 

If $K\subset \SS^2$ is a cellular continuum, we say that $K$ is a \emph{rotational attractor} (or repellor) if it is an attractor (or repellor) and its external prime ends rotation number $\rho_e(f,K)$ is nonzero.

The following lemma is useful to find attractors; see \cite[Lemma 2.9]{shub}:
\begin{lemma}\label{lem:shub} If $K$ is a compact invariant set such that $K = \bigcap_{k\geq 0} f^k(\ol{U})$ for some open neighborhood $U$ of $K$, then $K$ is an attractor of $f$.
\end{lemma}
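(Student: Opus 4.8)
The plan is to reduce the claim to the existence of a trapping region for $K$ contained in $U$: if $V$ is open with $f(\ol V)\subseteq V\subseteq\ol V\subseteq U$, then $f^{n+1}(\ol V)\subseteq f^n(V)$ gives $\bigcap_{n\ge0}f^n(V)=\bigcap_{n\ge0}f^n(\ol V)\subseteq\bigcap_{k\ge0}f^k(\ol U)=K$, while $K=f^n(K)\subseteq f^n(V)$ for all $n$ since $K\subseteq V$; hence $\bigcap_{n\ge0}f^n(V)=K$ and $K$ is the attractor associated with $V$.

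To produce $V$, I first use compactness: the sets $A_n:=\bigcap_{j=0}^n f^j(\ol U)$ are compact, decrease with $n$, and $\bigcap_n A_n=\bigcap_{k\ge0}f^k(\ol U)=K\subseteq U$, so $A_N\subseteq U$ for some $N$; moreover $f^j(K)=K\subseteq U$ gives $K\subseteq\bigcap_{j=0}^N f^j(U)\subseteq\inter(A_N)$ and $\ol{\inter(A_N)}\subseteq A_N\subseteq U$. Thus $\inter(A_N)$ is an open neighborhood of $K$ with closure in $U$; the issue is that it need not be a trapping region.

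The key step makes $A_N$ positively invariant for large $N$. I claim $f(A_N)\subseteq U$ once $N$ is large. If not, for every $N$ pick $x_N\in f(A_N)\setminus U$; then $f^{-1}(x_N)\in A_N$, so $f^{-i}(x_N)\in\ol U$ for $i=1,\dots,N+1$. Passing to a subsequence with $x_N\to x\in\SS^2\setminus U$ (compactness of $\SS^2$), we get $f^{-i}(x)\in\ol U$ for all $i\ge1$, so $f^{-1}(x)$ has its entire backward orbit in $\ol U$ and therefore $f^{-1}(x)\in\bigcap_{k\ge0}f^k(\ol U)=K$; then $x=f(f^{-1}(x))\in f(K)=K\subseteq U$, a contradiction. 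As $A_n$ is decreasing, $f(A_n)\subseteq U$ for all large $n$, so $f(A_n)=\ol U\cap f(A_n)=A_{n+1}\subseteq A_n$, and iterating, $f^m(A_N)=A_{N+m}$ for all $m$: a decreasing sequence of compact positively invariant neighborhoods of $K$ with intersection $K$.

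It remains to upgrade $A_N$ to a trapping region. Since $f^m(A_N)=A_{N+m}$ decreases to $K\subseteq\inter(A_N)$, some iterate $f^{m_0}(A_N)$ lies in $\inter(A_N)$, so $A_N$ is a trapping neighborhood in Conley's sense; a standard argument — a continuous Lyapunov function vanishing exactly on $K$, positive on $A_N\setminus K$, and strictly decreasing along orbits not yet in $K$, together with a regular sublevel set of it — then yields the desired open $V$ with $K\subseteq V\subseteq\ol V\subseteq\inter(A_N)\subseteq U$ and $f(\ol V)\subseteq V$. The main obstacle is precisely this last step: a compact positively invariant neighborhood need not itself be a trapping region (its image can meet its topological boundary), so one genuinely needs the Lyapunov-function (equivalently Conley-theoretic) construction rather than just intersecting forward iterates; the earlier steps, in particular the positive-invariance claim, are elementary and use only the description of $K$ as the set of points whose full backward orbit remains in $\ol U$.
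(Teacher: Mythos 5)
The paper offers no internal proof of this lemma: it is quoted verbatim from \cite[Lemma 2.9]{shub}, so there is nothing to compare your argument with line by line, and I judge it on its own. The steps you actually carry out are correct. The reduction is fine: if $K\subseteq V\subseteq\ol{V}\subseteq U$ and $f(\ol{V})\subseteq V$, then $\bigcap_{n\geq 0}f^n(V)=\bigcap_{n\geq 0}f^n(\ol V)\subseteq\bigcap_{k\geq 0}f^k(\ol U)=K\subseteq\bigcap_{n\geq 0}f^n(V)$, so $K$ is the maximal invariant set of the trapping region $V$. The compactness step giving $A_N\subseteq U$ and $K\subseteq\inter(A_N)$ is correct, and your positive-invariance claim is the right key observation: since $\bigcap_{k\geq 0}f^k(\ol U)$ is exactly the set of points whose full backward orbit lies in $\ol U$, the limit point argument forces $f^{-1}(x)\in K$ and hence the contradiction $x\in K\subseteq U$; from this $f(A_n)=\ol U\cap f(A_n)=A_{n+1}\subseteq A_n$ and $f^m(A_N)=A_{N+m}\downarrow K$, with $f^{m_0}(A_N)\subseteq\inter(A_N)$ for some $m_0$, all check out.

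The one step you do not prove is the upgrade from the compact positively invariant neighborhood $A_N$ to an open $V$ with $f(\ol V)\subseteq V$, and you are right both that this is the crux (none of the naive intersection/iteration tricks work, because images may touch topological boundaries) and that the standard Lyapunov construction settles it; it is non-circular here precisely because you have already established uniform attraction $f^n(A_N)\to K$. To make it self-contained takes only a few lines: put $W=\inter(A_N)$, $\ell(y)=d(y,K)/\bigl(d(y,K)+d(y,\SS^2\sm W)\bigr)$, and $k(x)=\sup_{n\geq 0}\ell(f^n(x))$ for $x\in A_N$. Since $f^n(A_N)=A_{N+n}\to K$ uniformly and $f^n(A_N)\subseteq A_{N+m_0}\subseteq W$ for $n\geq m_0$, one has $\ell(f^n(x))\to 0$ uniformly on $A_N$, so $k$ is a uniform limit of the continuous functions $\max_{n\leq n_1}\ell\circ f^n$, hence continuous, with $k^{-1}(0)=K$ and $k\circ f\leq k$. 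Then $\Phi=\sum_{n\geq 0}2^{-n-1}\,k\circ f^n$ is continuous, vanishes exactly on $K$, and is strictly decreasing along orbits of $A_N\sm K$ (equality in $\Phi\circ f\leq\Phi$ forces $k(f^n(x))$ constant, hence $0$). Taking $c>0$ smaller than $\min\Phi$ on the compact set $A_N\sm W$ and $V=\{x\in W:\Phi(x)<c\}$, one gets $K\subseteq V$, $\ol V\subseteq\{x\in A_N:\Phi(x)\leq c\}\subseteq W\subseteq U$, and $f(\ol V)\subseteq V$, which together with your reduction finishes the proof. With this paragraph added, your argument is a complete and elementary proof of the cited lemma.
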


The following simple facts will be useful in our proofs:

\begin{lemma}\label{lem:attractor-intersect} Let $\Gamma$ be a translation line. If $\omega(\Gamma)\cap R\neq \emptyset$ for some repellor $R$, then $\Gamma\cap R\neq \emptyset$. Similarly if $\alpha(\Gamma)\cap A\neq \emptyset$ for some attractor $A$, then $\Gamma\cap A\neq \emptyset$.
\end{lemma}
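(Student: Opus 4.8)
The plan is to prove the first assertion only; the second follows immediately by applying it to $f^{-1}$, for which $\Gamma$ is again a translation line, $\alpha(\Gamma)$ plays the role of $\omega(\Gamma)$, and the attractor $A$ of $f$ is a repellor of $f^{-1}$.

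First I would fix a suitable trapping region for $f^{-1}$ around $R$: since $R$ is a repellor there is an open set $V\supset R$ with $f^{-1}(\overline V)\subset V$. As is standard, this forces the sets $f^{-n}(V)$ ($n\geq 0$) to be nested decreasing and gives $R=\bigcap_{n\geq 0}f^{-n}(V)=\bigcap_{n\geq 0}f^{-n}(\overline V)$. I would also fix a fundamental domain $\Gamma_0=\gamma([0,1])$ of $\Gamma$ (taking $\gamma(0)$ as reference point, so that $f(\gamma(t))=\gamma(t+1)$), so that the positive translation ray $\Gamma^+=\gamma([0,\infty))$ equals $\bigcup_{n\geq 0}f^n(\Gamma_0)$ and $\omega(\Gamma)=\omega(\Gamma^+)$ is the limit set of the $f$-orbit of the compact arc $\Gamma_0$.

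The key idea is then to \emph{pull back along the fundamental domain} rather than follow $\Gamma$ forward; the latter is awkward precisely because a fundamental domain of $\Gamma$ may be a very long arc, so points of $\Gamma$ close to $R$ need not have nearby preimages on $\Gamma_0$ in any naive sense, whereas pulling back behaves well. Concretely: since $\omega(\Gamma)\cap R\neq\emptyset$, choose $z$ in this intersection and a sequence $s_j\to\infty$ with $\gamma(s_j)\to z$. Writing $s_j=k_j+u_j$ with $k_j=\lfloor s_j\rfloor\to\infty$ and $u_j\in[0,1)$, and setting $q_j=\gamma(u_j)\in\Gamma_0$, one has $f^{k_j}(q_j)=\gamma(s_j)\to z\in R$. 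By compactness of $\Gamma_0$, after passing to a subsequence we may assume $q_j\to q\in\Gamma_0$. For $j$ large, $f^{k_j}(q_j)\in V$, i.e.\ $q_j\in f^{-k_j}(V)$; since the sets $f^{-n}(V)$ are decreasing, for every fixed $m$ we get $q_j\in f^{-m}(\overline V)$ once $k_j\geq m$, and as $f^{-m}(\overline V)$ is closed this yields $q\in f^{-m}(\overline V)$. Letting $m\to\infty$ gives $q\in\bigcap_{m\geq 0}f^{-m}(\overline V)=R$, and since $q\in\Gamma_0\subset\Gamma$ we conclude $\Gamma\cap R\neq\emptyset$.

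I do not expect a serious obstacle: the argument is elementary once the right viewpoint is adopted. The only point needing a little care is the standard fact that a repellor is the intersection of the backward iterates of (the closure of) an appropriate trapping region and that these iterates are nested — this is exactly where the choice $f^{-1}(\overline V)\subset V$ is used; everything else is compactness and continuity of $\gamma$.
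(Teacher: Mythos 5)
Your proof is correct and uses essentially the same mechanism as the paper's: both exploit the nestedness of the backward iterates $f^{-n}(V)$ of a trapping region for $R$ together with the compactness of a fundamental domain $\Gamma_0$. The paper argues by contradiction after shrinking the trapping region so that $V\cap\Gamma_0=\emptyset$, whereas you keep an arbitrary trapping region and extract a limit point of $\Gamma_0$ lying in $R$; this is only a cosmetic difference (your variant even yields the marginally stronger conclusion that $R$ meets the closed fundamental domain itself).
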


\begin{proof} It suffices to prove the first claim. Suppose for a contradiction that there is a repellor $R$ such that $\omega(\Gamma)\cap R\neq \emptyset$ and $\Gamma\cap R=\emptyset$. Given a fundamental domain $\Gamma_0$ of $\Gamma$, since $\Gamma_0$ is compact and disjoint from $R$ we may choose a trapping region $V\subset \SS^2$ for $R$ (\ie $f^{-1}(\ol{V})\subset V$ and $R=\bigcap_{n\geq 0} f^{-n}(V)$) such that $V\cap \Gamma_0= \emptyset$. But since $\omega(\Gamma)$ intersects $R$, there exists $n>0$ such that $f^n(\Gamma_0)\cap V\neq \emptyset$. This implies that $\emptyset \neq \Gamma_0\cap f^{-n}(V) \subset \Gamma_0\cap V$, contradicting our choice of $V$.
\end{proof}

\section{Preliminary lemmas}

In this section we prove some general lemmas which are helpful in the proof of the main theorems. We fix, as before, an orientation-preserving homeomorphism $f\colon \SS^2\to \SS^2$.

\subsection{4-Branches lemma}

If the $\omega$-limit (or $\alpha$-limit) set of a translation line $\Gamma$ consists of a single (fixed) point $p$, we say that $\Gamma$ is a stable (or unstable) branch of $p$.  If $\Gamma_1,\dots, \Gamma_k$ are disjoint stable branches of a fixed point $p$, there is a well-defined cyclic order defined by choosing a positively oriented simple loop $\alpha$ around $p$ intersecting all branches and considering considering the first intersection of each branch with $\alpha$ (starting from $p$). This is independent of the choice of $\alpha$. 

\begin{lemma}[4-branches Lemma]\label{lem:4branch}
Suppose that $\Gamma_1^s, \Gamma_2^s$ are two stable branches of a fixed point $p$, $\Gamma_1^u, \Gamma_2^u$ are two unstable branches of $p$, the four branches are pairwise disjoint and they alternate in the cyclic order around $p$ (in the sense that no two stable or unstable branches are consecutive). Then any open topological disk intersecting all four branches intersects its own image by $f$.
\end{lemma}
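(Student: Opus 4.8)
The plan is to reduce the statement to an application of the periodic disk chain lemma (Lemma~\ref{lem:franks}) after passing to a suitable free homeomorphism. Suppose for a contradiction that $D$ is an open topological disk intersecting all four branches $\Gamma_1^s,\Gamma_2^s,\Gamma_1^u,\Gamma_2^u$ of the fixed point $p$, and that $D$ is free, i.e.\ $f(D)\cap D=\emptyset$. The point $p$ itself cannot lie in $D$ (since $p$ is a fixed point and $D$ is free), so we may remove $p$ and work in $\SS^2\sm\{p\}\simeq \R^2$; call the resulting homeomorphism $h$. It is orientation-preserving and has no fixed points in $\R^2$ other than possibly others, but for the Brouwer-theoretic argument what matters is that $D$ is free for $h$. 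The four sets $\Gamma_i^s\sm\{p\}$ and $\Gamma_i^u\sm\{p\}$ are translation lines (or translation rays) for $h$, embedded by Corollary~\ref{coro:guillou}.

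First I would extract the key geometric feature. Because the branches alternate in cyclic order around $p$, a positively oriented small loop $\alpha$ around $p$ meets them in the cyclic order $\Gamma_1^s, \Gamma_1^u, \Gamma_2^s, \Gamma_2^u$ (up to relabeling). The two stable branches $\Gamma_1^s\cup\{p\}\cup\Gamma_2^s$ form a line through $p$ which, together with the fact that $p$ is a fixed point, allows one to ``trap'' orbits: points on $\Gamma_i^s$ converge to $p$ under forward iteration, points on $\Gamma_i^u$ converge to $p$ under backward iteration. The idea is to build a periodic disk chain cycling through free neighborhoods of small subarcs of the four branches near $p$, using $D$ as the connector that bridges them: since $D$ meets all four branches, and each branch is a translation line along which one can push a point arbitrarily close to $p$, we can travel from a neighborhood of a point on $\Gamma_1^s$ to a neighborhood of a point on $\Gamma_1^u$ (via the dynamics near $p$, which reverses the roles of stable/unstable), then use $D$ to jump to $\Gamma_2^s$, push toward $p$, cross to $\Gamma_2^u$, and use $D$ again to return. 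The alternation hypothesis is exactly what guarantees this cyclic routing is consistent with planarity, i.e.\ that the required disks can be chosen pairwise disjoint.

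More concretely, the main construction I anticipate is the following. Fix points $a_i\in D\cap\Gamma_i^s$ and $b_i\in D\cap\Gamma_i^u$ for $i=1,2$. Using Lemma~\ref{lem:trans-fix} (the translation-line analogue of Lemma~\ref{lem:free-homeo}), forward iterates of $a_i$ march monotonically along $\Gamma_i^s$ toward $p$ and backward iterates of $b_i$ march along $\Gamma_i^u$ toward $p$; since $\Gamma_i^s\cup\{p\}\cup\Gamma_i^u$ is a continuous line through the fixed point, a high forward iterate $f^{N}(a_i)$ and a low iterate $f^{-M}(b_i)$ lie in a common small free disk near $p$ (here one uses that near $p$ the branches can be made as close as one likes, though $f$ need not be linearizable — this requires a careful topological argument, not a linearization). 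Then the four free disks near $p$ (two of them shrunk neighborhoods of endpoints on the $\Gamma_i^s$ side, two on the $\Gamma_i^u$ side) together with $D$ must be organized into a single periodic disk chain; the disjointness of the branches plus the alternation ensures that the neighborhoods near $p$ can be chosen pairwise disjoint and disjoint from $D$ except where they are supposed to overlap.

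The hard part will be the local analysis near $p$: turning ``the stable and unstable branches both limit on $p$'' into an actual free disk that is hit by a forward iterate of a point on $\Gamma_i^s$ and whose image is hit again after crossing through a neighborhood of $p$ onto $\Gamma_i^u$ — all without assuming any smoothness or normal form at $p$, since $p$ is merely a topological fixed point with four branches. I expect this to be handled by choosing the loop $\alpha$ around $p$ small enough that the four first-crossing arcs of the branches inside $\alpha$ are pairwise disjoint and cut the disk bounded by $\alpha$ into four ``sectors'', and then arguing combinatorially about which sectors $f$ and $f^{-1}$ move points between, exploiting orientation-preservation. Once the periodic disk chain $D_0,\dots,D_{n-1}$ is in place, Lemma~\ref{lem:franks} yields a fixed point of $h$ in $\R^2$, which is harmless in itself — so in fact the contradiction must be arranged so that the disk chain is free for a homeomorphism \emph{known} to have no fixed points, which means the argument should be run not on $\SS^2\sm\{p\}$ directly but after first assuming $f$ has no fixed point other than $p$ in the relevant region (a reduction one obtains by restricting attention to a small invariant neighborhood, or by a perturbation/blow-up argument). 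That bookkeeping — isolating $p$ as the only fixed point while keeping $D$ free — is the second subtle point of the proof.
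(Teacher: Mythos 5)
Your strategy has a genuine gap that I do not think can be repaired as described. The whole plan is to reach a contradiction from Lemma~\ref{lem:franks} (the free disk chain lemma), but that lemma only produces \emph{some} fixed point of the planar homeomorphism, and the 4-branches Lemma makes no assumption that $f$ is fixed point free away from $p$: the map may have many other fixed points in $\SS^2\sm\{p\}$, so exhibiting a periodic disk chain for $h$ yields no contradiction. You notice this yourself, but the proposed fix --- ``restrict attention to a small invariant neighborhood, or a perturbation/blow-up argument'' --- is not available: the disk $D$ and the branches can be far from $p$ and there is no reason any relevant region is invariant, while perturbing $f$ destroys both the hypothesis (the four branches) and the freeness of $D$. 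A second, independent problem is the chain construction near $p$: a free disk cannot contain the fixed point $p$, and at a purely topological fixed point there is no control of the local dynamics that would let you route a forward tail of $\Gamma_i^s$ into a backward tail of $\Gamma_i^u$ through free disks; this ``hard part'' is exactly what your sketch leaves open, and the alternation hypothesis enters only vaguely. So as written the argument neither closes the local step nor would it give a contradiction if it did.

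For comparison, the paper's proof needs no Brouwer theory at all; it is a one-paragraph separation argument, and the alternation hypothesis is used through the Jordan curve theorem rather than through disk chains. Assuming $p\notin D$ (otherwise $f(D)\cap D\ni p$ trivially), let $\gamma_i^s$, $\gamma_i^u$ be the initial arcs of the branches from $p$ to their \emph{first} intersection with $D$, and let $\sigma\subset D$ join the endpoints of $\gamma_1^s$ and $\gamma_2^s$ inside $D$. The loop $\alpha=\gamma_1^s\cup\sigma\cup\gamma_2^s\cup\{p\}$ separates $\gamma_1^u$ from $\gamma_2^u$ precisely because the branches alternate around $p$. Now $f^{-1}(D)$ contains the $f^{-1}$-images of the endpoints of $\gamma_1^u$ and $\gamma_2^u$, which lie on those arcs (backward iteration moves points of an unstable branch toward $p$), so the connected set $f^{-1}(D)$ meets both components of $\SS^2\sm\alpha$ and hence meets $\alpha$. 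It cannot meet $\gamma_1^s$ or $\gamma_2^s$ (forward iteration moves these arcs toward $p$, which would force $D$ to hit the branch before its first intersection point), and it cannot contain $p$; therefore it meets $\sigma\subset D$, i.e.\ $f(D)\cap D\neq\emptyset$. If you want to salvage your approach you would need a localized fixed-point-forcing statement much stronger than Lemma~\ref{lem:franks}; the direct separation argument avoids this entirely.
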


\begin{corollary}\label{coro:4branch} Under the hypothesis of the previous lemma, any point accumulated by all four branches is fixed by $f$.
\end{corollary}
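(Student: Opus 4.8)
The plan is to argue by contradiction, using Lemma \ref{lem:4branch} essentially as a black box. Suppose $z$ is accumulated by all four branches $\Gamma_1^s, \Gamma_2^s, \Gamma_1^u, \Gamma_2^u$, but $f(z)\neq z$. Since $\SS^2$ is Hausdorff and $f$ is continuous, I would first choose disjoint open sets $W_0\ni z$ and $W_1\ni f(z)$ and set $V = W_0\cap f^{-1}(W_1)$. This is an open neighborhood of $z$, and $f(V)\subset W_1$ while $V\subset W_0$, so $f(V)\cap V=\emptyset$. Then I shrink $V$ to a smaller open topological disk $D$ containing $z$ — every point of a surface has a neighborhood basis of open topological disks — so that still $f(D)\cap D=\emptyset$, i.e.\ $D$ is a free open topological disk.

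Now, since $D$ is a neighborhood of $z$ and $z$ is an accumulation point of each of the four branches, $D$ intersects all of $\Gamma_1^s, \Gamma_2^s, \Gamma_1^u, \Gamma_2^u$. The standing hypotheses of Lemma \ref{lem:4branch} — the four branches are pairwise disjoint and alternate in the cyclic order around $p$ — are in force by assumption. Hence Lemma \ref{lem:4branch} applies to $D$ and gives $D\cap f(D)\neq\emptyset$, contradicting the choice of $D$. Therefore $f(z)=z$, as claimed.

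There is no real obstacle here: all the work is hidden in Lemma \ref{lem:4branch}, and the corollary merely records that a point which is not fixed has arbitrarily small free open disks around it, so it cannot be simultaneously accumulated by all four branches. The only minor point to verify is the elementary topological fact that $\SS^2$ has a neighborhood basis of open topological disks at each point, which is standard.
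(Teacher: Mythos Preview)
Your argument is correct and is exactly the intended derivation: the paper does not spell out a separate proof of the corollary, treating it as an immediate consequence of Lemma \ref{lem:4branch}, and the contradiction you describe (a non-fixed point admits an arbitrarily small free open disk, which would then violate the lemma) is precisely that immediate consequence.
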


\begin{proof}[Proof of Lemma \ref{lem:4branch}]
Suppose that $D$ is a closed topological disk intersecting all four branches. If $D$ contains $p$ then the result holds trivially, so we may assume that $p\notin D$. Let $\gamma_1^s$ and $\gamma_2^s$ be the sub-arcs of $\Gamma_1$, $\Gamma_2$ from $p$ to the first intersection of the corresponding branch with $D$, and define $\gamma_1^u, \gamma_2^u$ similarly. Let $\sigma\subset D$ be an arc joining the endpoint of $\gamma_1^s$ to the endpoint of $\gamma_2^s$ and otherwise contained in the interior of $D$. %Choosing $\sigma$ adequately we may assume that it also intersects both $f(\gamma_1^u)$ and $f(\gamma_2^u)$. 
\begin{figure}[ht]
\includegraphics[height=4.5cm]{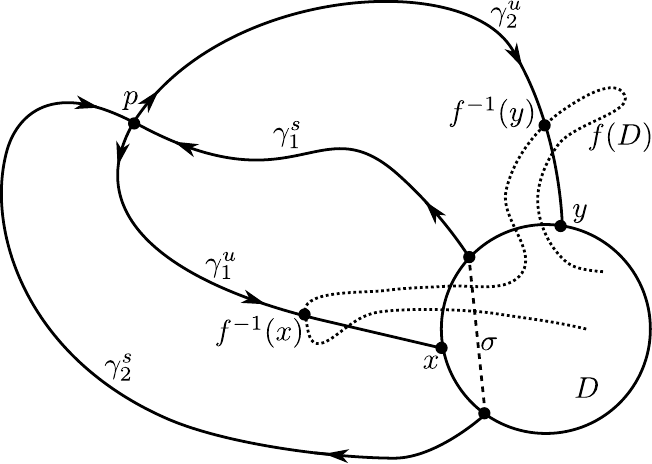}
\caption{Proof of Lemma \ref{lem:4branch}}
\label{fig:4branches}
\end{figure}
Because of the alternating cyclic order of the branches, the loop $\alpha$ formed by $\gamma_1^s \cup \sigma \cup \gamma_2^s\cup\{p\}$ separates $\gamma_1^u$ from $\gamma_2^u$ (see Figure \ref{fig:4branches}. This implies that $f^{-1}(D)$ intersects the two connected components of $\SS^2\sm \alpha$, and so it intersects $\alpha$. On the other hand $f^{-1}(D)$ is disjoint from $\gamma_1^s$ and $\gamma_2^s$ (otherwise one of these arcs would intersect $D$ other than at its endpoint). Thus $f^{-1}(D)$ intersects $\sigma$, and it follows that $f(D)$ intersects $D$. 
\end{proof}

\subsection{Stable and unstable branches of disks}\label{sec:stable-rays}

Suppose that $U$ is an open invariant topological disk of nonpositive index and $f|_{\bd U}$ has no fixed points. We know from Corollary \ref{coro:disk-index} that $\rho(f,U)=0$ and therefore by Theorem \ref{th:matsu-naka} the extension of $f|_U$ to $\cPE(U)$ has a Morse-Smale dynamics in a neighborhood of $\bdPE(U)$, with exactly $k$ attractors and $k$ repellors alternating in the cyclic order of $\bdPE(U)$ (where $1-k$ is the index of $U$).

We say that $\Gamma$ is an \emph{unstable branch} of $U$ (for the map $f$) if $\Gamma$ is a translation line for $f$ contained in $U$ and $\omega(\Gamma)\cap U=\emptyset$. %The latter is equivalent to saying that $\Gamma\subset U$ and $\omega(\Gamma)\subset \bd U$. 
If $I$ denotes the $\omega$-limit in $\cPE(U)$ of $\Gamma$, then $I$ consists of a single attracting prime end. Indeed, $I$ is a connected subset of $\bdPE(U)$ and cannot contain a point in the basin of a repelling fixed prime end. This is because any point of $U$ close enough to a point of $\bdPE(U)$ in the basin of a repelling fixed prime end has a pre-orbit contained in an arbitrarily small neighborhood of $\bdPE(U)$, whereas any point of $\Gamma$ has a pre-orbit in some fundamental domain $\Gamma_0\subset U$ of $\Gamma$. 
Thus $\Gamma$ has a unique $\omega$-limit point in $\cPE(U)$ which is some attracting prime end. Thus an unstable branch of $U$ can be seen as a stable branch of some attracting fixed prime end of $U$. See Figure \ref{fig:branches}.

\begin{figure}[ht!]
\includegraphics[height=3cm]{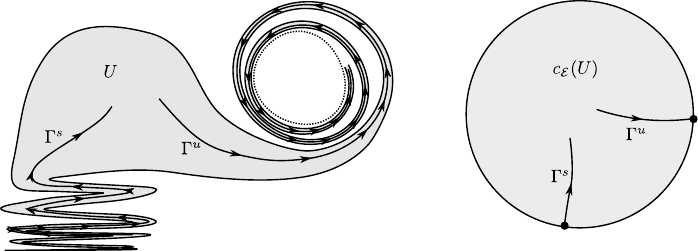}
\caption{A stable and unstable branch of $U$}
\label{fig:branches}
\end{figure}

A \emph{stable branch} $\Gamma$ of $U$ is an unstable branch of $U$ for the map $f^{-1}$, or equivalently $\Gamma$ is a translation line for $f$ in $U$ with its $\alpha$-limit set disjoint form $U$. This implies that the $\alpha$-limit of $\Gamma$ in $\cPE(U)$ consists of a single repelling fixed prime end, and we may regard $\Gamma$ as an unstable branch for this prime end. 

We have therefore:
\begin{proposition} If $\Gamma$ is a stable/unstable branch of an open invariant topological disk $U$ of nonpositive index without fixed points in $\bd U$, then $\Gamma$ is an unstable/stable branch of some repelling/attracting fixed prime end of $U$.
\end{proposition}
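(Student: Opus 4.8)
The plan is to prove the claim for unstable branches and deduce the stable case by applying the result to $f^{-1}$, using that a stable branch of $U$ is by definition an unstable branch of $U$ for $f^{-1}$ and that the attracting (resp.\ repelling) fixed prime ends of $f^{-1}$ are exactly the repelling (resp.\ attracting) ones of $f$. So let $\Gamma\subset U$ be an unstable branch, \ie a translation line for $f$ contained in $U$ with $\omega(\Gamma)\cap U=\emptyset$, and let $\hat f\colon\cPE(U)\to\cPE(U)$ be the extension of $f|_U$. Since $U$ has nonpositive index and $f$ has no fixed point in $\bd U$, Corollary \ref{coro:disk-index} gives $\rho(f,U)=0$, so Theorem \ref{th:matsu-naka} applies: on $\bdPE(U)\cong\SS^1$ the map $\hat f$ is an orientation-preserving circle homeomorphism with finitely many fixed points, alternating attracting/repelling in the cyclic order, and (Morse--Smale near $\bdPE(U)$) each repelling fixed prime end admits arbitrarily small trapping regions in $\cPE(U)$ for $\hat f^{-1}$.

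Next I would study $I:=\omega(\Gamma)$ \emph{computed inside $\cPE(U)$} — that is, the $\omega$-limit of the positive translation ray of $\Gamma$ in the compact space $\cPE(U)$ — which is therefore a nonempty continuum, $\hat f$-invariant because the ray is forward invariant. Since the subspace topology on $U$ coming from $\cPE(U)$ agrees with the one from $\SS^2$, any point of $I\cap U$ would belong to $\omega(\Gamma)$ computed in $\SS^2$, which is disjoint from $U$ by hypothesis; hence $I\subset\bdPE(U)$, so $I$ is a connected closed $\hat f$-invariant subset of the circle $\SS^1=\bdPE(U)$.

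The crux — and the step I expect to need the most care — is to rule out that $I$ meets a repelling fixed prime end. This is the pre-orbit argument sketched just before the statement; I would make it precise as follows. Suppose $r\in I$ is a repelling fixed prime end. Fix a fundamental domain $\Gamma_0\subset U$ of $\Gamma$ (a compact subset of $U$), choose an open neighborhood $W'$ of $\bdPE(U)$ in $\cPE(U)$ with $W'\cap\Gamma_0=\emptyset$, and then a trapping region $W\subset W'$ for $\hat f^{-1}$ with $r\in W$, so that $\hat f^{-n}(W)\subset W$ for all $n\ge0$. As $r\in\omega(\Gamma)$, there is a point $w=\gamma(t)$ with $t$ large lying in $W$; parametrizing so that $\gamma(s+1)=f(\gamma(s))$ and $\Gamma_0=\gamma([0,1])$, the integer $k=\lfloor t\rfloor$ is $\ge0$, the backward orbit $f^{-j}(w)=\gamma(t-j)$ for $0\le j\le k$ stays in $\Gamma\subset U$ so $\hat f$ and $f$ agree along it, and hence $\hat f^{-k}(w)=\gamma(t-k)\in\Gamma_0$. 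But $\hat f^{-k}(w)\in W\subset W'$, contradicting $W'\cap\Gamma_0=\emptyset$.

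Finally I would conclude by elementary circle topology. An orientation-preserving homeomorphism of $\SS^1$ maps each endpoint of an invariant closed arc to itself, so $I$ — being connected, closed, invariant, and (by the previous step, since $\SS^1$ contains repelling fixed prime ends) proper in $\SS^1$ — is a closed arc $[\pe p,\pe q]$ with $\pe p,\pe q$ fixed. If $\pe p\ne\pe q$ then both endpoints are attracting fixed prime ends, so — these alternating with repelling ones around $\SS^1$ — the arc $I$ would contain a repelling fixed prime end, contradicting the previous step. Hence $I=\{\pe p\}$ is a single fixed prime end that is not repelling, \ie an attracting fixed prime end; equivalently, the $\omega$-limit of $\Gamma$ in $\cPE(U)$ is this attracting prime end, which says precisely that $\Gamma$ is a stable branch of it. Replacing $f$ by $f^{-1}$ gives the statement for stable branches.
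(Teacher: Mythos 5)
Your proof is correct and follows essentially the same route as the paper: compute the $\omega$-limit $I$ of $\Gamma$ in $\cPE(U)$, invoke Corollary \ref{coro:disk-index} and Theorem \ref{th:matsu-naka}, and rule out repelling behaviour by playing a backward-invariant neighborhood near $\bdPE(U)$ against the fact that every point of $\Gamma$ has a pre-orbit passing through the fundamental domain $\Gamma_0\subset U$. The only (harmless) difference is the endgame: the paper excludes from $I$ every point lying in the basin of a repelling fixed prime end, so connectedness immediately forces $I$ to be a single attracting prime end, whereas you exclude only the repelling fixed prime ends themselves and then conclude via the invariant-arc-with-fixed-endpoints and attractor/repellor alternation argument on $\bdPE(U)$.
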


\subsection{A simplification for invariant disks}

In the setting of Theorem \ref{th:matsu-naka}, it will be useful to simplify the dynamics in $U$ by modifying our map in a compact subset of $U$ (thus leaving the boundary dynamics unmodified). This will help us, for instance, to apply Lemma \ref{lem:4branch} to topological disks. For this, we first introduce some ``model'' maps.

Fix an orientation-preserving homeomorphism $G_0\colon \ol{\D}\to \ol{\D}$ with exactly two fixed points, both in $\SS^1$, one of which is attracting and the other repelling, and such that every other point is in the basin of both the attractor and the repellor. For instance $G_0$ could be chosen to be a hyperbolic map of the Poincar\'e disk. Clearly any pair of homeomorphisms of this type are topologically conjugate.

For each $k\geq 1$, we fix a``model'' orientation-preserving homeomorphism $G_k\colon \ol{\D}\to \ol{\D}$ with the following properties:
\begin{itemize}
\item $G_k|_{\SS^1}$ has exactly $2k+2$ fixed points, $a_0,\dots, a_k$ and $r_0,\dots, r_k$, where each $a_i$ is an attractor in $\ol \D$ and $r_i$ is a repellor in $\ol \D$ (and they alternate in the cyclic ordering of $\SS^1$);
\item $G_k|_{\D}$ has exactly one fixed point $p$ with exactly $k+1$ stable branches, $\Gamma^s_0,\dots, \Gamma^s_k$ and  $k+1$ unstable branches $\Gamma^u_0,\dots, \Gamma^u_k$, and all these branches are pairwise disjoint;
\item $\omega(\Gamma^s_i)=\alpha(\Gamma^u_i) = \{p\}$, $\alpha(\Gamma^s_i)=\{r_i\}$, and $\omega(\Gamma^u_i)=a_i$;
\item every connected component $S$ of $\D\sm (\bigcup_{i=0}^k \Gamma_i^s\cup \Gamma_i^u)$ is contained in the basins of both the attractor and the repellor that lie in the boundary of $S$.
\end{itemize}
Such a map can be chosen as the time-1 map of a flow. See Figure \ref{fig:model}.

\begin{figure}[ht!]
\includegraphics[height=3cm]{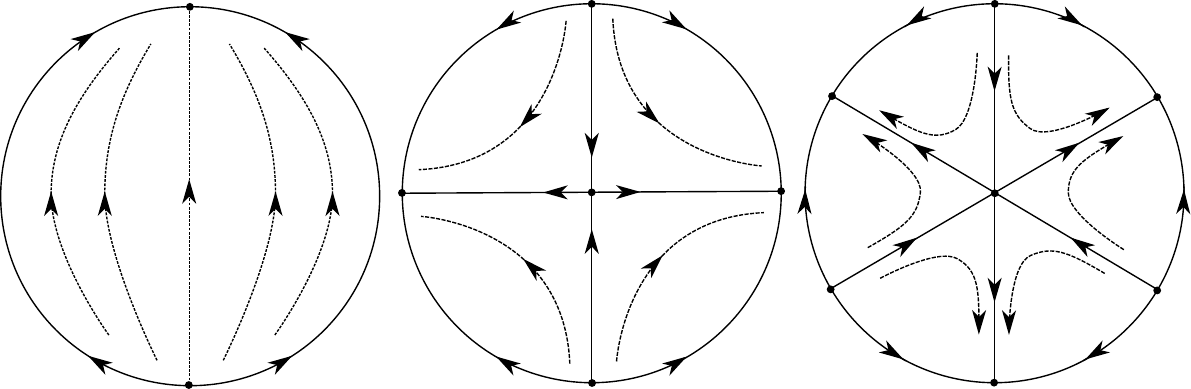}
\caption{The maps $G_k$ for $k=0,1,2$.}
\label{fig:model}
\end{figure}

We have the following:
\begin{lemma}\label{lem:model} Let $g\colon \ol{\D}\to \ol{\D}$ be an orientation-preserving homeomorphism such that $g|_{\bd \D}$ has exactly $2k$ fixed points, $k$ of which are attracting in $\ol{\D}$ and $k$ repelling in $\ol{\D}$, where $k\geq 1$. Then there exists a homeomorphism $g'\colon \ol{\D}\to \ol{\D}$ which coincides with $g$ in a neighborhood of $\bd \ol{\D}$ such that $g'$ is topologically conjugate to $G_{k-1}$.
\end{lemma}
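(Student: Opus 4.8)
The plan is to concentrate all of the difficulty in a description of the \emph{germ} of $g$ along the boundary circle $\SS^1=\bd\ol\D$, and then to obtain $g'$ simply by transporting $G_{k-1}$ through a conjugacy realizing that germ. The first step is the following reduction. It suffices to produce a closed collar $C$ of $\SS^1$ in $\ol\D$, a closed collar $C_G$ of $\SS^1$ in the disk on which $G_{k-1}$ acts, and a homeomorphism $H\colon C\to C_G$ such that $H\circ g=G_{k-1}\circ H$ on $C\cap g^{-1}(C)$ (a set that contains a collar of $\SS^1$, since $g(\SS^1)=\SS^1$ and $g$ is uniformly continuous), and such that $H|_{\SS^1}$ carries the attracting (resp.\ repelling) fixed points of $g|_{\SS^1}$ to $a_0,\dots,a_{k-1}$ (resp.\ $r_0,\dots,r_{k-1}$). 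Granting this, one extends $H$ to an arbitrary homeomorphism $\mc{H}\colon\ol\D\to\ol\D$ and sets $g':=\mc{H}^{-1}\circ G_{k-1}\circ\mc{H}$. Then $g'$ is conjugate to $G_{k-1}$ by construction, and on a sufficiently thin collar $C'\subset C\cap g^{-1}(C)$ one has $\mc{H}=H$, and also $\mc{H}^{-1}=H^{-1}$ on $G_{k-1}(H(C'))=H(g(C'))\subset H(C)$; hence $g'|_{C'}=H^{-1}G_{k-1}H|_{C'}=g|_{C'}$, so $g'$ agrees with $g$ on a neighborhood of $\bd\ol\D$. Note that in this way the interior dynamics of $g$ is simply discarded.

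To construct $H$, I would first record that $g$ has no fixed points in a suitable collar of $\SS^1$ other than the $2k$ boundary ones: a fixed point of $g$ cannot accumulate on a non-fixed point of $\SS^1$ (continuity of $g$), and each boundary fixed point, being attracting or repelling in $\ol\D$, has a trapping neighborhood meeting $\fix(g)$ only at itself, so a compactness argument yields the collar. Since $g|_{\SS^1}$ is monotone on each arc between consecutive fixed points, these alternate between attracting and repelling, matching the cyclic arrangement of the $a_i$ and $r_i$ for $G_{k-1}$. Now $H$ would be assembled from local normal forms. Near each attracting boundary fixed point the germ of $g$ is conjugate, by the topological classification of local attractors of surface homeomorphisms applied to a half-disk, to the linear contraction $(x,y)\mapsto(x/2,y/2)$ on $\{x^2+y^2<1,\ y\ge 0\}$, hence to the germ of $G_{k-1}$ at the corresponding $a_i$; symmetrically at the repelling fixed points, using $g^{-1}$. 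Over each of the $2k$ open arcs $J$ of $\SS^1$, in a thin collar $g$ is fixed-point free and acts as a translation on $J$, and classical Brouwer-theoretic arguments (in the spirit of Lemma~\ref{lem:trans-fix} and Corollary~\ref{coro:guillou}) provide flow-box coordinates identifying the germ of $g$ over $J$ with that of $G_{k-1}$ over the corresponding arc. Finally these local conjugacies must be patched: over each arc $J$ the already-fixed linear models at its two endpoints determine, through the translation action, a preferred matching on one fundamental segment of $J$; choosing the flow-box coordinates over $J$ to realize this matching there forces it all along $J$, and the pieces then glue to a global $H$ on a (possibly smaller) collar.

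The main obstacle is precisely this last patching. Because the collar is not $g$-invariant, one cannot spread the local conjugacies along orbits; instead they have to be organized along the fundamental segments over the $2k$ arcs, and one must verify that the finitely many endpoint-matchings can be made mutually compatible and then propagated consistently — this is the only genuinely nontrivial point, the rest being standard local normal forms. A secondary technical point, should a convenient reference be unavailable, is the topological linearization at a boundary fixed point; this can be carried out directly by analysing the quotient of the punctured basin of the local attractor (an annulus, since $g$ preserves orientation), which pins down the germ up to conjugacy.
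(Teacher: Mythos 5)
Your reduction (it suffices to build a conjugacy between collar germs of $g$ and of $G_{k-1}$ along $\SS^1$ and then extend arbitrarily and transport $G_{k-1}$ back) is sound and is essentially the same first step as in the paper, and your inventory of building blocks (normal forms at the $2k$ boundary fixed points, translation/flow-box structure over the $2k$ open arcs) is also the right raw material. The genuine gap is exactly at the point you flag but do not resolve: the patching. A conjugacy $H_J$ over an arc $J$ satisfying $H_J\circ g=G_{k-1}\circ H_J$ is completely rigid once it is prescribed on one fundamental domain: if you arrange it to coincide with the chosen linear chart $H_r$ on an open piece near the repelling end of $J$, equivariance propagates that choice along the whole strip, and near the attracting end the propagated map differs from the independently chosen chart $H_a$ by a self-conjugacy of the contraction germ of $G_{k-1}$ at $a_i$ (an element of its centralizer) which is in general nontrivial; there is no remaining freedom in $H_J$ to kill it. In particular there is no single ``preferred matching on one fundamental segment of $J$ determined by the two endpoint models'': each endpoint transports its own matching to that segment and the two transports generally disagree, so the charts do not glue on open overlaps. (A secondary weak point: the half-disk linearization at a boundary attracting fixed point is not covered by the usual Ker\'ekj\'art\'o-type classification of planar attractors, and ``attracting in $\ol{\D}$'' must first be upgraded to the existence of trapping cross-cut neighborhoods; this can be done by the same fundamental-domain argument, but it is not a quotable black box -- and, as it turns out, it is not needed at all.)

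The paper's proof avoids the conflict by doing the two steps in the opposite order, and by tiling rather than overlapping. It first conjugates $g$ so as to coincide with $G_{k-1}$ on genuinely invariant strips $D_i$ adjacent to the open arcs: each $D_i=\bigcup_{n\in\Z}g^n(R_i)$ is generated by a fundamental rectangle $R_i$ (chosen inside the intersection of the basins of the two endpoint fixed points), the conjugacy is defined on $R_i$ by Sch\"onflies compatibly with equivariance on its two short sides, and then spread by $h=G_0^n h g^{-n}$. Only afterwards does it treat each boundary fixed point, by a further conjugation $h_1$ which is the \emph{identity} on the strips and is supported in a small neighborhood of that point; its fundamental domain is a rectangle bounded by a cross-arc $\gamma_0$, its image $g(\gamma_0)$, and subarcs of the already-normalized strip boundaries $\Sigma_1,\Sigma_2$, so agreement with the strip normalization holds along those edges by construction and nothing ever has to be reconciled on an open overlap. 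If you reorganize your argument this way -- normalize over the arcs first, then build the fixed-point charts \emph{relative to} the normalized strips instead of fixing linear models in advance -- your proof becomes correct, and in fact becomes the paper's proof; as written, the compatibility step fails.
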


\begin{proof}
We only sketch the proof since it uses routine arguments. First note that it suffices to find a homeomorphism which is topologically conjugate to $g$ and which coincides with $G_{k-1}$ in a neighborhood of $\bd \ol\D$. Indeed, if $h\colon \ol\D\to \ol \D$ is a homeomorphism and $hgh^{-1}$ coincides with $G_{k-1}$ in a neighborhood of $\bd \ol\D$, then $g' = h^{-1}G_{k-1}h$ has the required properties.

The general idea of the proof is to first make a conjugation of $g$ to obtain a map which coincides with $G_{k-1}$ in a neighborhood $E$ of $\SS^1 \sm \fix(g|_{\SS^1})$ bounded by translation lines which connect repellors to attractors cyclically (as the greyed out area in Figure \ref{fig:flower}). The region $E$ is not yet a neighborhood of $\bd \ol \D$ as it misses the fixed points, but then we make additional conjugations supported in a neighborhood of each fixed point if $g|_{\SS^1}$ to make $g$ coincide with $G_{k-1}$ (while keeping $g$ equal to $G_{k-1}$ in $E$). With this argument we obtain a (orientation-preserving) homeomorphism $h\colon \ol{\D}\to\ol{\D}$ such that $hgh^{-1}$ coincides with $G_{k-1}$ in a neighborhood of $\bd \ol{\D}$ as required.

We provide the details for the case $k=1$ to avoid cumbersome notation, but the general case follows the same steps. Let $p^+$ and $p^-$ denote the attracting and repelling fixed points in $\SS^1$. One may choose a neighborhood $V^-$ of $p^-$ in $\ol{\D}$ contained in the basin of $p^-$, bounded by a simple arc $\gamma^-$ (which joins two different points of $\SS^1\sm \{p^-, p^+\}$) and such that the closure of $g^{-1}(V^-)$ is contained in $V^-$. Note that $\SS^1\sm \{p^+, p^-\}$ consists of two lines $L_1$ and $L_2$ contained in the intersection of the basins of $p^+$ and $p^-$. Let $x_i$ be the endpoint of $\gamma^-$ in $L_i$, and let $I_i$ be the compact subarc of $L_i$ joining $x_i$ to $g(x_i)$. Since $I_i$ is in the basins of $p^\pm$, so is some neighborhood $W$ of $I_i$ (in $\ol{\D}$). One may choose an arc $\alpha_i$ contained in $g(V^-)\sm \ol{V^-}$ except for its endpoints $y_i \in \gamma_-$ and $g(y_i)\in g(\gamma^-)$.  Choosing $\alpha_i$ close enough to $I_i$ we may guarantee that, if $\sigma_i$ denotes the subarc of $\gamma_-$ from $x_i$ to $y_i$, the compact region $R_i$ bounded by $I_i$, $\alpha_i$, $\sigma^-_i$ and $h(\sigma^-_i)$ is entirely contained in $W$. The set $D_i = \bigcup_{n\in \Z} g^n(R_i)$ is thus a strip bounded by the translation line $\Sigma_i = \bigcup_{n\in \Z} g^n(\alpha_i)$ and by $L_i$. See Figure \ref{fig:Gk}.
We may assume that $\alpha_1$ is disjoint from $\alpha_2$, and by our construction this implies $\Sigma_1\cap \Sigma_2=\emptyset$.

\begin{figure}[ht!]
\includegraphics[height=6cm]{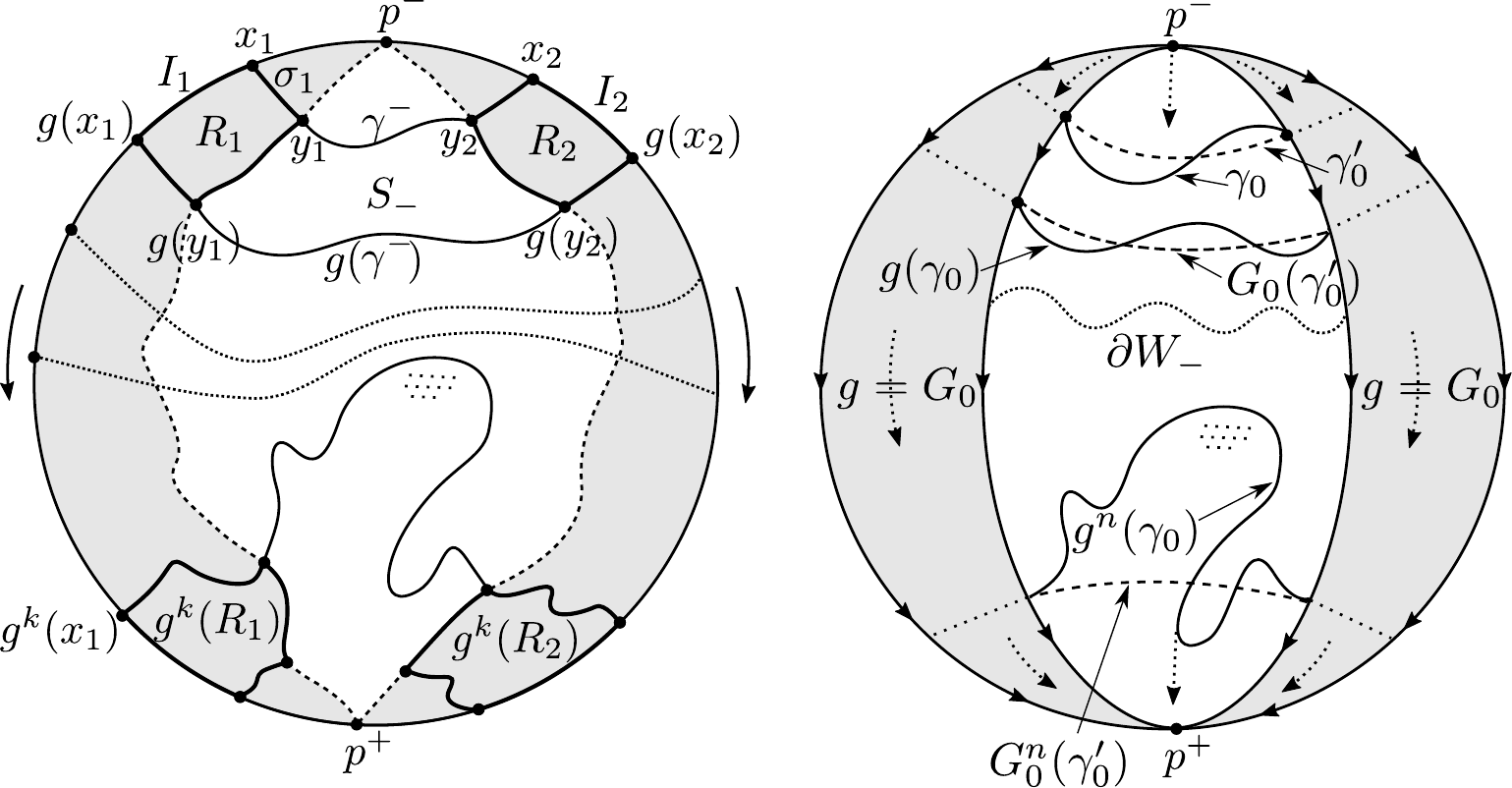}
\caption{Construction in the proof of Lemma \ref{lem:model}}
\label{fig:Gk}
\end{figure}

The previous construction may be repeated identically with the map $G_0$, obtaining sets $R_1'$, $R_2'$, $D_1'$ and $D_2'$ analogous to the previous ones. This allows us to define a map $h_0\colon D_1\cup D_2\to D_1'\cup D_2'$ which conjugates $g|_{D_1\cup D_2}$ to $G_0|_{D_1'\cup D_2'}$. Indeed, one first defines $h_0\colon \bd R_i\to \bd R_i'$ such that $h_0$ maps $\sigma_i^-$, $I_i$ and $\alpha_i$ to the corresponding arcs $\sigma_i'^-$, $I_i'$ and $\alpha_i'$ arbitrarily (but with the adequate orientation), and $h_0|_{g(\sigma_i^-)} = G_0h_0|_{\sigma_i^-}$. By Sch\"onflies' theorem, $h_0$ can be extended to a homeomorphism $R_i\to R_i'$. Then we may extend $h_0$ to $D_i$, for any $n\in \Z$, using the equation $h=G_0^nh_0g^{-n}$. The initial choice of $h_0$ guarantees that this is again a homeomorphism from $D_i$ to $D_i'$ (and since $D_1$ and $D_2$ are disjoint, may define $h_0$ in $D_1\cup D_2$ in the obvious way.

The map $h_0$ can be extended arbitrarily to a homeomorphism from $\ol\D$ to $\ol\D$ fixing $p^+$ and $p^-$. 
Thus, the map $g_0 = h_0gh_0^{-1}$ is topologically conjugate to $g$ and coincides with $G_0$ in $D_1'\cup D_2'$.
From now on we will assume that $g = G_0$ in $D_1\cup D_2$, since we can replace $g$ with $g_0$.

We now claim that given any neighborhood $W_-$ of $p^-$ in $\ol\D$ there exists a homeomorphism $h_1\colon \ol \D\to \ol \D$ and a neighborhood $W_-'\subset W_-$ of $p^-$ such that $h_1$ is the identity in $D_1\cup D_2 \cup (\ol\D\sm W_-)$, and $h_1gh_1^{-1}$ coincides with $G_0$ in $D_1\cup D_2 \cup W_-'$. In other words, we may find a homeomorphism which is topologically conjugate to $g$ and which coincides with $g$ outside a small neighborhood of $p_-$ and with $G_0$ inside a smaller neighborhood of $p_-$ and in $D_1\cup D_2$).

To prove our claim, note that if $U = \ol{\D}\sm (D_1\cup D_2\cup\{p^-, p^+\})$, then $\ol{U}$ is a closed topological disk such that $g = G_0$ in $\bd \ol{U}$ and its boundary has exactly two fixed points $p_-, p_+$, one attracting and one repelling in $\ol U$. As before we may choose a neighborhood $V_0\subset W_-$ of $p_-$ in $\ol{U}$, contained in the basin of $p_-$, such that its boundary in $U$ is an arc $\gamma_0$ joining a point of $\Sigma_1$ to a point of $\Sigma_2$ and the closure of $g^{-1}(V_0)$ is contained in $V_0$. If $S$ denotes the closure of $g(V_0)\sm V_0$, then $S$ is bounded by $\gamma_0$, $g(\gamma_0)$, a subarc $\sigma_1'$ of $\Sigma_1$ and a subarc $\sigma_2'$ of $\Sigma_2$. We may then choose an arc $\gamma_0'$ in $U$ joining the two endpoints of $\gamma_0$ such that $G_0(\gamma_0')$ is disjoint from $\gamma_0'$, and define $h_1$ on $\bd S$ so that it is the identity on $\sigma_1'\cup\sigma_2'$, $h_1(\gamma_0) = \gamma_0'$ and $h_1|_{g(\gamma_0)} = G_0h_1|_{\gamma_0}$. By Schonflies' theorem we may extend $h_1$ to a homeomorphism mapping $S$ to the region $S'$ in $\ol{U}$ bounded by $\gamma_0'$, $G_0(\gamma_0')$, $\sigma_1'$ and $\sigma_2'$. Finally we may extend $h_1$ to $V_0$ by the equation $h_1 = G_0^{-n}h_1g^n$ for $n\geq 0$. Note that the map thus defined maps $V_0$ to a subset $V_0'$ of $\ol{U}$ which is a neighborhood of $p_-$ in $G_0$ and whose boundary in $U$ is the arc $\gamma_0'$ (minus its endpoints). We also define $h_1$ as the identity in $D_1\cup D_2$, so we have $h_1$ defined in $V_0\cup D_1\cup D_2$  If $V_0$ was chosen small enough, we have that the closures of $g(V_0)$ and $G_0(V_0')$ are contained in $W_-$, and we may then extend $h_1$ to $\ol{\D}$ in such a way that $h_1$ is the identity outside $W_-$ (this can be done using Schonflies' theorem again).

Note that $g_1 = h_1gh_1^{-1}$ coincides with $G_0$ in $V_0\cup D_1\cup D_2$, which is a neighborhood of $p_-$ in $\ol{\D}$. Thus we may choose a neighborhood $W_-'\subset W_-$ of $p_-$ in $\ol{\D}$ such that $g_1 = G_0$ in $W_-'$. Moreover, $g_1 = g$ outside $W_-$, so the required properties hold.

An analogous argument can be done with $g_1$ using the attracting fixed point $p_+$ instead of $p_-$, to obtain a homeomorphism $h_2\colon \ol{\D}\to \ol{\D}$ such that $g_2 = h_2g_1h_2^{-1}$ coincides with $g_1$ outside a small neighborhood of $p_+$ and with $G_0$ in a smaller neighborhood of $p_+$ and in $D_1\cup D_2$. In particular, $g_2$ coincides with $G_0$ in a neighborhood of $\SS^1$ in $\ol{\D}$ and is topologically conjugate to $g$ as we wanted.

\end{proof}

The following is a direct consequence of the previous lemma and Theorem \ref{th:matsu-naka}:
\begin{lemma}\label{lem:flower} Suppose that $U$ is an open invariant topological disk of index $-k \leq 0$ and $f|_{\bd U}$ has no fixed points. Then there exists a map $f'$ which coincides with $f$ outside a compact subset of $U$, such that the map induced by $f'$ on the prime ends compactification $\cPE(U)$ is topologically conjugate to $G_k$. In particular, $f$ is locally conjugate to $G_k$ in a neighborhood of the circle of prime ends, and there exist $2k$ pairwise disjoint translation lines $\Gamma_0, \Gamma_1, \dots, \Gamma_{2k-1}$ embedded in $U$ such that:
\begin{itemize}
\item The $\omega$-limit of $\Gamma_i$ in $\cPE(U)$ is an attracting prime end $a_i$, the $\alpha$-limit is a repelling prime end, and they are consecutive in the cyclic ordering of $\bdPE(U)$; 
\item For each $i$, $\Gamma_i$ and $\Gamma_{i+1}$ have the same $\omega$-limit in $\bdPE(U)$ if $i$ is even and the same $\alpha$-limit if $i$ is odd (where the subindices are modulo $2k$);
\item $\bigcup_{i=0}^{2k-1} \Gamma_i$ bounds an open topological disk $U_0\subset U$ such that each component of $U\sm \ol{U_0}$ is foliated by translation lines (joining the same two prime ends);
\end{itemize}
\end{lemma}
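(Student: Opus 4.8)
The plan is to transfer the whole picture to the model map $G_k$ through the prime ends compactification and a surgery supported in a compact core of $U$, and then to read the three conclusions directly off the explicit structure of $G_k$.

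First, since $i(f,U)=-k\neq 1$, Corollary \ref{coro:disk-index} gives $\rho(f,U)=0$, so Theorem \ref{th:matsu-naka} applies: identifying $\cPE(U)$ with $\ol{\D}$, the induced homeomorphism $\hat f$ restricts on $\bdPE(U)$ to a circle homeomorphism with exactly $2k+2$ fixed points, $k+1$ of them attracting and $k+1$ repelling in $\ol{\D}$, alternating in cyclic order and each globally attracting or repelling in $\cPE(U)$. Thus $\hat f$, viewed as a homeomorphism of $\ol{\D}$, satisfies the hypotheses of Lemma \ref{lem:model} with the parameter there equal to $k+1$, and that lemma produces a homeomorphism $g'$ of $\ol{\D}$, topologically conjugate to $G_{(k+1)-1}=G_k$, which coincides with $\hat f$ on a neighborhood of $\bd\ol{\D}$; in particular $g'$ differs from $\hat f$ only on a compact subset of the open disk, corresponding to a compact set $K_0\subset U$. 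I would then put $f'=f$ on $\SS^2\sm U$ and let $f'|_U$ be the homeomorphism of $U$ corresponding to $g'|_{\D}$ under the identification. Since $f'$ agrees with $f$ on a neighborhood of $\bd U$ in $\ol U$ (and on $\SS^2\sm U$), a routine gluing argument shows that $f'$ is an orientation-preserving homeomorphism of $\SS^2$ coinciding with $f$ off $K_0$; and because $f'|_U$ determines the extension to $\cPE(U)$, the induced map $\hat{f'}$ is conjugate to $g'$, hence to $G_k$. This gives the first assertion, and restricting the conjugacy $\hat{f'}\simeq G_k$ to a neighborhood of $\bdPE(U)$ on which $\hat{f'}=\hat f$ yields the local conjugacy of $f$ to $G_k$ near the circle of prime ends.

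For the translation lines I would fix a conjugacy $\Psi\colon\cPE(U)\to\ol{\D}$ with $\Psi\hat{f'}\Psi^{-1}=G_k$, which restricts to a conjugacy between $f'|_U$ and $G_k|_{\D}$. Taking $G_k$ to be the time-$1$ map of a flow, its interior fixed point $p$ has $2k+2$ pairwise disjoint branches, alternately stable and unstable, cutting $\D$ into hyperbolic sectors, each foliated by flow lines running from the repelling corner to the attracting one; each such flow line is an embedded translation line for $G_k$ joining a repelling prime end to an adjacent attracting one. Pulling this structure back by $\Psi^{-1}$, and using that $f=f'$ on $U\sm K_0$ so that the part of the picture lying there consists of genuine $f$-invariant data, I would choose the $2k$ lines $\Gamma_i$ among these pulled-back flow lines, taken close enough to $\bdPE(U)$ to lie inside $U\sm K_0$. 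Each $\Gamma_i$ is then a translation line for $f$, automatically embedded, with $\alpha$- and $\omega$-limits in $\cPE(U)$ equal to the two consecutive prime ends bounding its sector. Letting $U_0$ be the component of $U\sm\bigcup_i\Gamma_i$ containing $\Psi^{-1}(p)$, inspection of the model shows that $U_0$ is an open topological disk whose boundary inside $U$ is exactly $\bigcup_i\Gamma_i$, that the remaining components of $U\sm\ol{U_0}$ are the outer parts of the sectors disjoint from $U_0$, each foliated by translation lines joining its two prime ends, and that consecutive $\Gamma_i$ share their $\omega$-limit, respectively $\alpha$-limit, in $\bdPE(U)$ with the stated parity; these are the three bullet points.

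The bookkeeping of the surgery (checking, as in the proof of Lemma \ref{lem:model}, that altering $f|_U$ on a compact set produces a sphere homeomorphism with unchanged behavior on $\bd U$ and on $\bdPE(U)$) is routine. The step needing genuine care is the last one: one must be sure that the model's sector/flow-line structure can be transported so that the chosen $\Gamma_i$ and the complementary strips become honest $f$-invariant objects, i.e.\ that they can be pushed entirely into the region $U\sm K_0$ where $f$ and $f'$ coincide. This is possible precisely because, by Theorem \ref{th:matsu-naka}, the Morse-Smale boundary dynamics already lives in a neighborhood of $\bd U$, while the surgery only disturbs a compact core of $U$.
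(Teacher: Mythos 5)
Your argument is exactly the paper's intended one: the paper gives no separate proof of this lemma, presenting it as a direct consequence of Theorem \ref{th:matsu-naka} and Lemma \ref{lem:model}, which is precisely your route of passing to the prime ends compactification, performing the compactly supported surgery of Lemma \ref{lem:model} (with parameter $k+1$) to obtain a map conjugate to $G_k$, and then reading the translation lines, the disk $U_0$, and the foliated outer sectors off the model, choosing them close enough to the circle of prime ends to lie in the region where $f=f'$. The only caveat is inherited from the statement itself: taking one flow line per sector of $G_k$ yields $2k+2$ lines (the count the paper actually uses later in the proof of Theorem \ref{th:disks}), so the ``$2k$'' in the statement is a typo rather than a defect of your construction.
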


\begin{figure}[ht!]
\includegraphics[height=5cm]{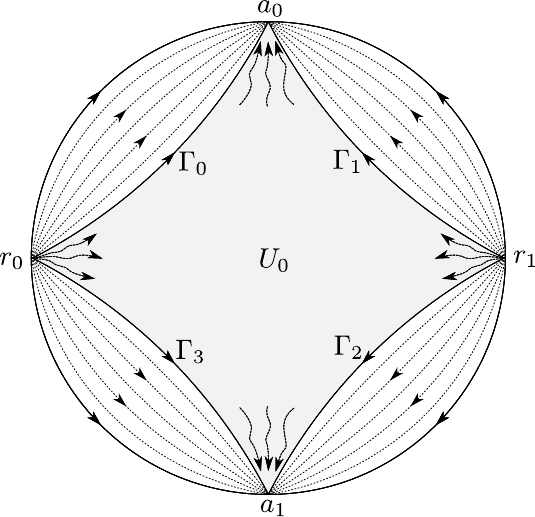}
\caption{The situation described in Lemma \ref{lem:flower}}
\label{fig:flower}
\end{figure}

\begin{remark}\label{rem:stable-branch-exist}
Note from Lemma \ref{lem:flower} that every attracting/repelling fixed prime end has some stable/unstable branch (in fact, the lines $\Gamma_i$ in Lemma \ref{lem:flower} are at the same time stable and unstable branches of $U$).
\end{remark}

The following corollary of Lemma \ref{lem:4branch} will be useful in the proof of Theorem \ref{th:pb} (but it will turn out to be vacuously true as a consequence of the theorem).
\begin{corollary}\label{coro:4branch-disk} If $U$ is an $f$-invariant open topological disk without fixed points in its boundary, with non-positive index, and with the property that $\omega(\Gamma^u) = \bd U$ for every unstable branch $\Gamma^u$ of $U$ and $\alpha(\Gamma^s) = \bd U$ for every stable branch $\Gamma^s$ of $U$, then the index of $U$ is $0$.
\end{corollary}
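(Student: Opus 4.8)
The plan is to argue by contradiction: assuming the index of $U$ is $-k$ with $k\geq 1$, I will produce a fixed point of $f$ in $\bd U$, contradicting the hypothesis. Since the index is assumed nonpositive, this forces $k=0$, i.e. index $0$, as claimed. The first move is to replace $f$ by a model near the boundary. Applying Lemma \ref{lem:flower} we obtain a map $f'$ coinciding with $f$ outside a compact set $C\subset U$ — and I will take $C$ contained in the interior of the disk $U_0$ given by that lemma, so that the flower lines $\Gamma_0,\Gamma_1,\dots$ produced there lie outside $C$. Then each $\Gamma_i$ is a translation line for both $f$ and $f'$ (it is $f'$-invariant and $f=f'$ on it), and by Remark \ref{rem:stable-branch-exist} it is simultaneously a stable and an unstable branch of $U$. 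Moreover the prime-ends dynamics of $f'$ is topologically conjugate to $G_k$, so $f'$ has a single fixed point $p\in U$ with $k+1\geq 2$ pairwise disjoint stable branches $S_0,\dots,S_k$ and $k+1\geq 2$ pairwise disjoint unstable branches $T_0,\dots,T_k$, all alternating in the cyclic order around $p$.

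Next I record what the hypothesis gives. Since each $\Gamma_i$ is $f$-invariant and disjoint from $C$, the full $f$-orbit and $f'$-orbit of every point of $\Gamma_i$ coincide, so $\omega(\Gamma_i,f')=\omega(\Gamma_i,f)$ and $\alpha(\Gamma_i,f')=\alpha(\Gamma_i,f)$. As the $\Gamma_i$ are unstable (resp.\ stable) branches of $U$ for $f$, the hypothesis yields $\omega(\Gamma_i,f')=\alpha(\Gamma_i,f')=\bd U$ for all $i$. Since the positive and negative rays of $\Gamma_i$ converge in $\cPE(U)$ to single fixed prime ends, Proposition \ref{pro:pri-im} then forces $\imPE(\pe a)=\bd U$ for every attracting and every repelling fixed prime end $\pe a$ of $U$ (the prime-ends dynamics, and hence the fixed prime ends and their impressions, are the same for $f$ and $f'$ because the two maps agree near $\bd U$).

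The heart of the argument — and the step I expect to be the main obstacle — is to upgrade ``$\imPE(\pe a)=\bd U$'' to the statement that every stable/unstable branch of $p$ accumulates on \emph{all} of $\bd U$. Consider an unstable branch $T=T_j$ with $\omega$-limit in $\cPE(U)$ equal to the attracting fixed prime end $a$; its positive ray $T^+$ converges to $a$ in $\cPE(U)$, so $\omega(T^+,f')\subseteq\imPE(a)=\bd U$ by Proposition \ref{pro:pri-im}. For the reverse inclusion one uses that, by the flower structure, $T^+$ is trapped near $a$ between the two flower rays converging to $a$, each of which already accumulates on all of $\bd U$: cutting off a small cusp-shaped cross-section $D$ of $U$ at $a$ bounded by a tail of $T^+$ and a tail of such a flower ray, one has $\overline D\cap\bd U=\imPE(a)=\bd U$, and the delicate point is to deduce from this (using that $T^+$ is a free boundary arc of the disk $D$ on the side opposite to that flower ray, together with the fact that the flower rays on both sides of $T^+$ realize the impression) that $T^+$ itself must realize the impression, i.e.\ $\omega(T^+,f')=\bd U$. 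The symmetric statement for $f'^{-1}$ gives $\alpha(S_i,f')=\bd U$ for every stable branch of $p$. (One could alternatively try to by-pass this by showing directly that the principal set of every fixed prime end equals $\bd U$; either way, this is where the real work lies.)

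Granting that, the conclusion is immediate. Pick stable branches $S_0,S_1$ and unstable branches $T_0,T_1$ of $p$ which alternate in the cyclic order around $p$ (possible since there are at least two of each and they alternate). By the previous paragraph
$$\alpha(S_0,f')\cap\alpha(S_1,f')\cap\omega(T_0,f')\cap\omega(T_1,f')=\bd U\neq\emptyset,$$
so every point of $\bd U$ is accumulated by all four branches. Corollary \ref{coro:4branch}, applied to $f'$ and its fixed point $p$, then shows every point of $\bd U$ is fixed by $f'$; since $f'=f$ on a neighborhood of $\bd U$, this gives $\bd U\subseteq\fix(f)$, contradicting the assumption that $f$ has no fixed point in $\bd U$. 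Hence $k=0$ and the index of $U$ is $0$.
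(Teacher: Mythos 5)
Your overall strategy (argue by contradiction, modify $f$ via Lemma \ref{lem:flower} to get a model map $f'$ with a fixed point $p\in U$ having $\geq 2$ alternating stable and unstable branches, and then invoke Corollary \ref{coro:4branch}) is exactly the paper's, but the central step is left with a genuine gap, and the route you sketch for it does not work. You try to deduce that the branches of $p$ accumulate on all of $\bd U$ from the equality $\imPE(\pe a)=\bd U$, obtained by applying the hypothesis only to the flower lines $\Gamma_i$. But Proposition \ref{pro:pri-im} gives only $\priPE(\pe a)\subset \omega(T^+)\subset \imPE(\pe a)$ for a ray $T^+$ converging to $\pe a$ in $\cPE(U)$: distinct rays converging to the same prime end can have any $\omega$-limit between the principal set and the impression, and both extremes are realized. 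Knowing the impression is all of $\bd U$ therefore puts no lower constraint on $\omega(T^+)$ beyond the principal set, and the ``cusp-trapping'' claim is not valid in general: a ray squeezed between two rays that each accumulate on all of $\bd U$ (and converge to the same prime end) may still, say, land at a single accessible principal point. So as written the ``heart of the argument'' is an unproven claim, and I do not see how to complete it along the lines you indicate (your fallback, proving the principal sets equal $\bd U$, would suffice but is not established either).

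The gap is unnecessary, and this is where your proof diverges from the paper's: the hypothesis of the corollary quantifies over \emph{all} stable/unstable branches of $U$, not just the flower lines, and the branches of $p$ are themselves branches of $U$. Indeed, if $T$ is an unstable branch of $p$ for $f'$, it is a translation line for $f'$ contained in $U$ whose $\omega$-limit lies in $\bd U$; its forward tail eventually leaves the compact set where $f$ was modified (points of $U$ converging to $\bdPE(U)$ in $\cPE(U)$ eventually lie in any neighborhood of $\bd U$ in $\SS^2$), so that tail is a positive translation ray for $f$, and its saturation by backward $f$-iterates is an unstable branch of $U$ for $f$ with the same $\omega$-limit. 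The hypothesis then gives $\omega(T)=\bd U$ directly, and symmetrically $\alpha(S)=\bd U$ for every stable branch $S$ of $p$; your final application of Corollary \ref{coro:4branch} then goes through verbatim. So the missing idea is simply to apply the hypothesis to the branches of $p$ themselves rather than to manufacture the accumulation statement from prime-end impressions.
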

\begin{proof}
Suppose on the contrary that the index of $U$ is negative. Then by Lemma \ref{lem:flower}, modifying $f$ in a compact subset of $U$ we may assume that the map induced by $f$ in the prime ends compactification $\cPE(U)$ is topologically conjugate to $G_k$ for some $k\geq 1$. In particular there is a fixed point $p$ in $U$ with at least two stable and two unstable branches which converge in $\cPE(U)$ different repelling/attracting prime ends alternating in the cyclic ordering. These branches are also stable/unstable branches of $U$, and so by our hypothesis they accumulate in all of $\bd U$, which has no fixed points. This contradicts Corollary \ref{coro:4branch}.
\end{proof}

Finally, we state a special case of Lemma \ref{lem:model} for the case where there are no fixed points.

\begin{lemma}\label{lem:nofix-trans} Suppose that $g\colon \ol\D\to \ol\D$ is an orientation-preserving homeomorphism with exactly two fixed points $p^+$ and $p^-$, both in $\bd \ol \D$, such that $p^+$ is attracting and $p^-$ is repelling. Then $g$ is topologically conjugate to $G_0$.
\end{lemma}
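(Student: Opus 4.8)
The plan is to verify that $g$ is a homeomorphism ``of the same type as $G_0$'' --- i.e.\ an orientation-preserving homeomorphism of $\ol\D$ whose only fixed points are $p^+\in\SS^1$ (attracting) and $p^-\in\SS^1$ (repelling), and for which every other point of $\ol\D$ lies in the basin of both $p^+$ and $p^-$ --- and then invoke the observation (made when $G_0$ was introduced) that any two such maps are topologically conjugate.

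First I would describe the boundary dynamics. The restriction $g|_{\SS^1}$ is an orientation-preserving circle homeomorphism with exactly two fixed points, so $\SS^1\sm\{p^+,p^-\}$ has two components $L_1,L_2$ and $g$ moves points monotonically along each. Since $p^+$ is attracting there is a $g$-trapping region $V^+\subset\ol\D$ with $\bigcap_{n\ge0}g^n(V^+)=\{p^+\}$; points of $L_i$ close enough to $p^+$ lie in $V^+$ and hence have $\omega$-limit $\{p^+\}$, and by monotonicity this gives $\omega(x,g)=\{p^+\}$ and $\alpha(x,g)=\{p^-\}$ for every $x\in L_1\cup L_2$. A short argument (an invariant closed arc has its endpoints individually fixed because $g$ preserves orientation) then shows that the only nonempty compact connected $g$-invariant subsets of $\SS^1$ are $\{p^+\}$, $\{p^-\}$, $\ol{L_1}$, $\ol{L_2}$ and $\SS^1$.

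Next I would control the interior orbits. Since $g$ has no fixed point in $\D\simeq\R^2$, Corollary \ref{coro:brouwer} gives that $g^n(z)$ leaves every compact subset of $\D$ as $n\to\pm\infty$, so $\omega(z,g)\cup\alpha(z,g)\subset\SS^1$ for every $z\in\D$. Let $B^+$ denote the basin of the attractor $\{p^+\}$, an open invariant set. I claim $\D\subset B^+$. If not, pick $z\in\D\sm B^+$; then $\omega(z,g)$ is a nonempty compact connected $g$-invariant subset of $\SS^1$ disjoint from $B^+$, hence avoiding $p^+$, so by the classification above $\omega(z,g)=\{p^-\}$ and thus $g^n(z)\to p^-$. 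But choosing a trapping region $V^-$ for $g^{-1}$ around $p^-$ (so $\bigcap_{n\ge0}g^{-n}(V^-)=\{p^-\}$), the forward orbit of $z$ eventually stays in $V^-$, which forces some iterate of $z$ to equal $p^-$ and contradicts $z\in\D$. Hence $\D\subset B^+$, and together with $L_1\cup L_2\subset B^+$ we obtain $\ol\D\sm\{p^-\}\subset B^+$. Running the same argument for $g^{-1}$ (for which $p^-$ is attracting) yields $\ol\D\sm\{p^+\}\subset B^-$, where $B^-$ is the basin of the repellor $\{p^-\}$.

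Combining these facts, $g$ has exactly the fixed points $p^\pm\in\SS^1$ with $p^+$ attracting and $p^-$ repelling, and every other point of $\ol\D$ lies in both basins; that is, $g$ is of the same type as $G_0$, and therefore topologically conjugate to $G_0$. The only step requiring real care is the escape/trapping argument ruling out $\omega(z,g)=\{p^-\}$ for interior points $z$; everything else is a routine combination of the boundary classification with the Brouwer-theoretic fact that a fixed-point-free orientation-preserving homeomorphism of the plane has no orbit staying in a compact set.
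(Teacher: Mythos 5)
Your argument is correct, and it leans on the same essential input as the paper --- Corollary \ref{coro:brouwer} is what kills the interior dynamics --- but the bookkeeping is organized differently. The paper's proof introduces the global sets $K^\pm=\bigcap_{n\in\Z}g^n(\ol\D\sm V^\pm)$ (the complements of the two basins), shows that if $K^-\neq\{p^+\}$ then $K^+\cap K^-$ would be a compact invariant set inside $\D$, contradicting Brouwer, and then builds the conjugacy explicitly, using the region between $\gamma^-$ and $g(\gamma^-)$ as a fundamental domain and the Sch\"onflies--equivariant-extension construction from the end of the proof of Lemma \ref{lem:model}. You instead argue orbit by orbit: the escape form of Corollary \ref{coro:brouwer} pushes every interior $\omega$-limit into $\SS^1$, the classification of invariant continua on the circle forces a non-attracted orbit to converge to $p^-$, and the repelling trapping region $V^-$ turns that into the contradiction $g^N(z)=p^-$; this establishes exactly the same intermediate fact as the paper's claim $K^-=\{p^+\}$ (namely that the two basins cover $\ol\D\sm\{p^\mp\}$), and is if anything a bit more elementary. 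The one place where you get something "for free" is the final step: you verify that $g$ satisfies the defining properties of the model and then invoke the paper's remark, made when $G_0$ was introduced, that any two homeomorphisms of that type are topologically conjugate. That remark is asserted without proof, and its justification is precisely the fundamental-domain construction the paper carries out (by reference to Lemma \ref{lem:model}) in its own proof of this lemma; so your proof is complete modulo the same sketched construction the paper itself defers to, and it would be slightly more self-contained if you noted that, once the basin of $p^-$ is all of $\ol\D\sm\{p^+\}$, the strip between a cross-cut $\gamma^-$ bounding $V^-$ and its image is a fundamental domain from which the conjugacy is built.
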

\begin{proof}
As in the proof of Lemma \ref{lem:model}, we may choose disjoint neighborhoods $V^\pm$ of $p^\pm$ contained in the basin of $p^\pm$ and bounded by simple arcs $\gamma^\pm$ joining two points of $\bd \D$, such that $g^{\pm 1}(\ol{V}^{\pm})\subset V^\pm$. 

Let $K^\pm = \bigcap_{n\in \Z} g^n(\ol{\D}\sm V^\pm)$. Note that $K^\pm$ is a decreasing intersection of compact connected sets, hence compact and connected, and $K^\pm\cap \bd\ol{\D} = \{p^\mp\}$. We claim that $K^- = \{p^+\}$. Indeed, suppose on the contrary that $K^-\neq \{p^+\}$; then $K^-$ cannot be contained in $V^+$ (since $K$ is invariant and $V^+$ is in the basin of $p^+$), and since it is compact and invariant $K^-$ is not contained in $\bigcup_{n\in \Z} g^n(V^+)$. This implies that $K^-$ intersects $K^+$, and since $K^+$ and $K^-$ are both compact and invariant and $K^\pm\cap \bd \D = \{p^\mp\}$, it follows that $K=K^+\cap K^-$ is compact, invariant and contained in $\D$. Since $g$ has no fixed points in $\D$, this contradicts Corollary \ref{coro:brouwer}. Thus $K^- = \{p^+\}$. This implies $\bigcup_{n\in \Z} g^n(V^-) = \ol{\D}\sm \{p^+\}$, so that the region $D$ bounded by $\gamma^-$ and $g(\gamma^-)$ satisfies $\bigcup_{n\in \Z} g^n(D) = \ol{\D}\sm\{p^-, p^+\}$. Using $D$ as a fundamental domain, one may construct a topological conjugation between $g$ and $G_0$ as in the end of the proof of Lemma \ref{lem:model}.
\end{proof}

\subsection{Rotational attractor lemma}
Most of the difficulty in the proof of Theorem \ref{th:pb} is related to the possibility of the $\omega$-limit of a translation line $\Gamma$ to be topologically complicated. However, when $\Gamma$ is embedded, the very particular case where the $\omega$-limit is a circle is easier, since one may essentially replicate the proof used for flows. 

\begin{lemma}[Rotational Attractor Lemma]\label{lem:rotational} Let $\Gamma$ be a positively embedded translation line such that $\omega(\Gamma)$ has more than one point and $\rho_e(f,\Fomega(\Gamma))$ is nonzero. Then $\Fomega(\Gamma)$ is a rotational attractor. Similarly if $\Gamma$ is negatively embedded and $\rho_e(f, \Falpha(\Gamma))$ is nonzero, then $\Falpha(\Gamma)$ is a rotational repellor. 
\end{lemma}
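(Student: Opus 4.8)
The plan is to reduce the statement to showing that $K:=\Fomega(\Gamma)$ is a topological attractor (the word ``rotational'' being then immediate from the hypothesis $\rho_e(f,K)\neq 0$), and to mimic the Jordan-curve argument used in the flow version of Poincar\'e--Bendixson, with the circle of prime ends of $U:=\SS^2\sm K$ playing the role of the periodic orbit. Note that $K$ is a cellular continuum, so $U$ is an invariant open topological disk and $\rho(f,U)=-\rho_e(f,K)\neq 0$; hence by Lemma \ref{lem:rot-index} the index of $U$ is $1$, so $f$ has a fixed point $p^*\in U$, and the extension $\hat f$ of $f|_U$ to $\cPE(U)\simeq\ol\D$ satisfies $\rho(\hat f|_{\bdPE(U)})\neq 0$. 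In particular $\hat f$ has no fixed point on $\bdPE(U)$, hence none on a closed collar of $\bdPE(U)$.

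\emph{First step: the positive translation ray $\Gamma^+\subset U$ (a half of $\Gamma$, with $\omega(\Gamma^+)=\omega(\Gamma)=\bd K$) spirals towards $\bdPE(U)$, hence towards $K$ in the sense of \S\ref{sec:spiral}.} Every point of $\Gamma^+$ has its forward orbit in $\Gamma^+\subset U$, so any accumulation point of $\Gamma^+$ in $\cPE(U)$ lying in $U$ would be an $\omega$-limit point of a point of $U$, contradicting $\omega(\Gamma^+)=\bd K$; thus in $\cPE(U)$ the ray $\Gamma^+$ accumulates only on $\bdPE(U)$, i.e.\ its radial coordinate tends to $1$. For the angular divergence, work in the invariant annulus $\cPE(U)\sm\{p^*\}$ and its universal cover, and choose the lift $\tilde F$ of $\hat f$ compatible with a lift of $\Gamma^+$. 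Its restriction to the lifted boundary circle is a lift of $\hat f|_{\bdPE(U)}$ with non-integer rotation number, so its displacement never vanishes, hence is of constant sign and bounded away from $0$; by continuity the same holds on a collar. Since $\hat f$ acts on $\Gamma^+$ as a translation and $\Gamma^+$ eventually stays in that collar, the angular coordinate along $\Gamma^+$ diverges monotonically. This shows $\Gamma^+$ spirals towards $\bdPE(U)$; in particular $\Gamma$ is an embedded line.

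\emph{Second step: $K$ is an attractor.} Using that $\hat f$ acts on the spiral $\Gamma^+$ as a translation, cut out one full turn $\sigma$ of $\Gamma^+$ lying deep in a thin collar of $\bdPE(U)$ and close it up with a short radial arc to obtain an essential simple closed curve $J\subset\cPE(U)$, separating $\bdPE(U)$ from $p^*$, with $\hat f(J)$ contained in the region $V$ between $J$ and $\bdPE(U)$; the curves $\hat f^n(J)$ ($n\geq 0$) are then nested essential curves. The heart of the matter is the identity $\bigcap_{n\geq 0}\hat f^n(\ol V)=\bdPE(U)$: if some $z_*\in\D\cap\bigcap_n\hat f^n(\ol V)$ existed, then, using that $\Gamma^+$ is forward invariant and (by spiraling) accumulates on all of $\bdPE(U)$, one produces a turn of $\Gamma^+$ whose closed-up essential curve separates $z_*$ from $\bdPE(U)$; a forward iterate of that curve separates the whole forward-invariant tail of $\Gamma^+$ from $\bdPE(U)$, and eventually separates $z_*$ as well, contradicting $z_*\in\hat f^n(\ol V)$ for all $n$ (alternatively, since $\hat f$ is fixed-point free on the collar, such a $z_*$ yields a periodic disk chain, contradicting Lemma \ref{lem:franks}). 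Transferring back to $\SS^2$, the set $N:=(V\cap U)\cup K$ is an open neighborhood of $K$ (a point of $U$ close to $K$ in $\SS^2$ lies close to $\bdPE(U)$ in $\cPE(U)$, so $\cPE(U)\sm V$, being compact and disjoint from $\bdPE(U)$, cannot accumulate on $K$), and $\bigcap_{n\geq 0}f^n(\ol N)=K$; by Lemma \ref{lem:shub}, $K$ is an attractor, hence a rotational attractor since $\rho_e(f,K)\neq 0$. The statement for negatively embedded $\Gamma$ follows by applying the above to $f^{-1}$, using $\rho_e(f^{-1},\cdot)=-\rho_e(f,\cdot)$.

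The main obstacle is the second step, specifically the identification $\bigcap_{n\geq 0}\hat f^n(\ol V)=\bdPE(U)$ — that is, ruling out recurrence trapped between the spiral and the circle of prime ends — since the non-vanishing of the prime ends rotation number by itself only produces the fixed-point-free collar structure near $\bdPE(U)$, and it is the spiral, through its forward invariance and the fact that it accumulates on all of $\bdPE(U)$, that forces the attraction.
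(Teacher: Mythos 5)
Your overall route is the same as the paper's: pass to $U=\SS^2\sm\Fomega(\Gamma)$, use $\rho(f,U)\neq 0$ to produce a fixed‑point‑free collar of $\bdPE(U)$ on which a suitable lift has horizontal displacement bounded below by some $c>0$, deduce that the forward ray spirals, build a one‑sided trapping region bounded by a piece of the spiral plus a transversal arc, and finish with Lemma \ref{lem:shub}. Your first step is essentially correct (except that ``diverges monotonically'' should be just ``diverges'': the displacement bound controls the orbit of a point, not the curve between consecutive orbit points, so the angular and radial coordinates along $\Gamma^+$ may oscillate). The genuine gaps are in the second step. First, the existence of your curve $J$ is precisely the delicate point, not a routine construction: because the coordinates along the spiral are not monotone, a ``full turn'' $\sigma$ may meet the chosen radial segment at interior points, and $\hat f(\sigma)$ and $\hat f(\tau)$ may cross $\tau$, so neither ``$J$ is a simple essential curve'' nor ``$\hat f(J)\subset V$'' is automatic. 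The paper's proof handles exactly this by working in the universal cover, taking the \emph{last} intersection $z_0$ of the lifted tail with a vertical segment $\alpha$ and proving $F(D)\cap\bd D=\emptyset$ (and $F(D)\cap D\neq\emptyset$) by a disjointness analysis; you assert the analogous property without argument.

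Second, your treatment of what you correctly identify as the heart of the matter, $\bigcap_{n\geq 0}\hat f^n(\ol V)=\bdPE(U)$, does not work as written. No simple closed curve in $\cPE(U)$ can ``separate the whole forward-invariant tail of $\Gamma^+$ from $\bdPE(U)$'': such a curve is a compact subset of $U$, the closure of the inner complementary region misses $\bdPE(U)$, and the tail accumulates on every point of $\bdPE(U)$, so the intermediate claim is impossible and the contradiction evaporates. The parenthetical alternative via Lemma \ref{lem:franks} is also only gestured at: $\hat f$ is not fixed point free on $\cPE(U)$ (it has interior fixed points, e.g.\ of index $1$), so Corollary \ref{coro:brouwer} does not apply directly, and an actual free disk chain (or an appropriate fixed‑point‑free lift on which to run it) is never exhibited. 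The paper avoids this entirely by a direct geometric squeeze: $F^n(D)\subset[a+nc,\infty)\times(-\epsilon,0)$ by the uniform displacement, and for points far enough to the right the vertical segment below them first meets $\bd D$ on the far tail of the lifted spiral, which lies within $\delta$ of the boundary line, forcing $F^n(D)\subset\R\times(-\delta,0]$; this is what makes $\bigcap_n f^n(\ol W)=\Fomega(\Gamma)$ and lets Lemma \ref{lem:shub} apply. Your transfer of the conclusion back to $\SS^2$ is fine, but until the construction of $J$ with $\hat f(J)\subset V$ and a correct proof of the shrinking of $\hat f^n(\ol V)$ are supplied, the proof is incomplete at its central step.
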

\begin{proof}
Suppose $\Gamma$ is a positively  embedded translation line, and let $U = \SS^2\sm \Fomega(\Gamma)$. The map $f|_U$ extends to a homeomorphism $\hat{f}$ of the closed disk $\cPE(U)\simeq \ol{\D}$ such that the rotation number of $\hat f|_{\bdPE(U)}$ is nonzero. This implies that $f$ has some fixed point $p_0\in U$. Let $H = \{(x,y)\in \R^2: y\leq 0\}$, and let $\pi \colon H\to \bdPE(U)\sm \{p_0\}\simeq \D\sm \{(0,0)\}$ be the universal covering map such that $T:(x,y)\mapsto (x+1,y)$ is a generator of the group of deck transformations. Let $\Sigma\subset H$ be a lift of $\Gamma$, and choose a lift $F\colon H\to H$ of $f$ such that $F(\Sigma)\subset \Sigma$, so $\Sigma$ is a translation line for $H$. 
Since $F$ has no fixed points in the line $\bd H = \R\times \{0\}$, the first coordinate $(F(x,0)-(x,0))_1$ is different from $0$ for all $x\in \R$. We may assume without loss of generality that $(F(x,0)-(x,0))_1 > 0$ for all $x\in \R$. Since the latter map is periodic (because $F$ commutes with $T$) there exists $c>0$ such that $(F(x,0)-(x,0))_1 > c$ for all $x\in \R$. Moreover, if $\epsilon>0$ is chosen small enough, we have $(F(z)-z)_1>c$ for all $z\in \R\times [-\epsilon,0]$. Let $\eta\colon \R\to H$ be a parametrization of $\Sigma$, and denote $\Sigma_t = \eta([t,\infty))$. Note that for all $\delta>0$ there exists $t$ such that $\Sigma_t\subset \R\times [-\delta,0)$. Let $t_0$ be such that $\Sigma_{t_0}\subset \R\times (-\epsilon,0)$, and let $\sigma = \Sigma_{t_0}\sm F(\Sigma_{t_0})$, so $\Sigma_{t_0} = \bigcup_{n\geq 0} F^{n}(\sigma)$. It follows that $\Sigma$ is a closed subset of $H$. 
In fact, for each $M>0$ there exists $m$ such that $F^k(\sigma)\subset [M,\infty)\times (-\epsilon,0)$ for all $k\geq m$, and this implies that $\Sigma_t\subset [M,\infty)\times (-\epsilon,0)$ if $t$ is large enough.

Fix $M>0$ such that $\ol{\sigma} \cap [M,\infty)\times (-\epsilon,0)=\emptyset$, and let $t_1>t_0$ be such that $\Sigma_{t_1}\subset [M,\infty)\times (-\epsilon,0)$. Let $(a, b_0)$ be the initial point of $\Sigma_{t_1}$, and let $z_0=(a,b)$ be the last intersection point of $\Sigma_{t_0}$ with the segment $\{a\}\times (-\epsilon, 0)$. Let $t_2$ be such that $z_0$ is the initial point of $\Sigma_{t_2}$. Note that $z_0\notin \ol{\sigma}$, so $\Sigma_{t_2}$ is disjoint from $\sigma$. Denote by $\alpha$ the line segment $\{a\}\times (b,0)$. Then $\alpha\cup \Sigma_{t_2}\cup [a,\infty)\times \{0\}$ is a simple arc bounding an open simply connected set $D$ in $H$ (which is the connected component of $\inter H \sm (\alpha \cup \Sigma_{t_2})$ bounded to the left). Since $F(\alpha)\in (a+c,\infty)\times (-\epsilon,0)$, it follows that $F(\alpha)\cap \alpha=\emptyset$. Moreover, $F(\alpha)$ intersects $D$ (because one of its endpoints is contained in $(a+c,\infty)\times \{0\}$).  See Figure \ref{fig:ral}.
\begin{figure}[ht]
\includegraphics[width=\linewidth]{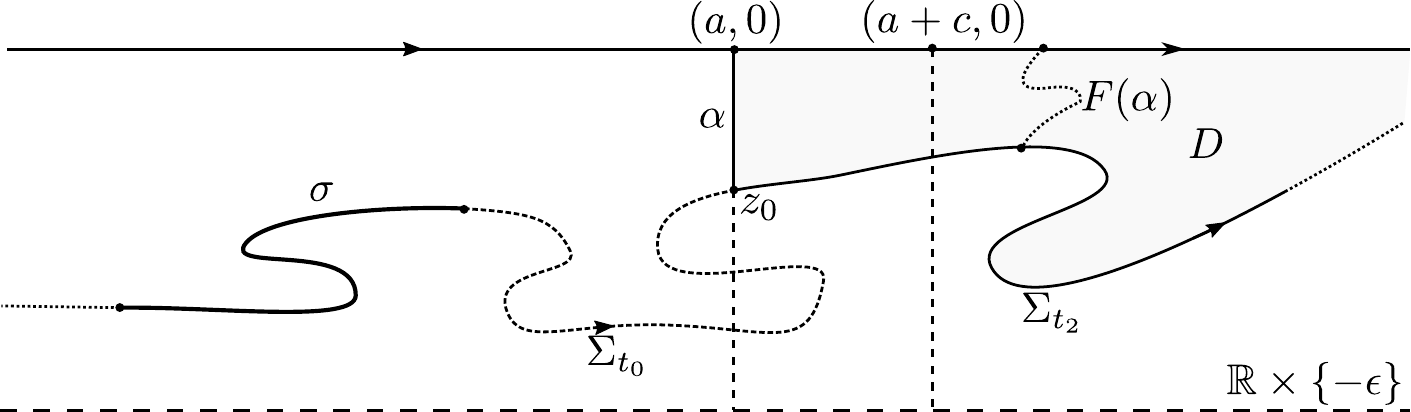}
\caption{Proof of Lemma \ref{lem:rotational}}
\label{fig:ral}
\end{figure}

Note that $\sigma$ is disjoint from $D$. Indeed, $\bd D \subset [M,\infty)\times (-\epsilon,0)$ and $D$ is bounded to the left, while $\sigma \subset (-\infty, M)\times (-\epsilon,0)$. 
We now claim that $F(D)\cap \bd D=\emptyset$. If $F(D)\cap \bd D\neq \emptyset$ then $F(D)$ intersects either $\Sigma_{t_2}$ or $\alpha$. Suppose first that $F(D)\cap \alpha\neq \emptyset$. Note that $\ol{F(D)}$ is disjoint from $(-\infty,a]\times \{0\}$, so $F(D)$ does not contain $\alpha$ entirely. Thus $\bd F(D)\cap \alpha\neq \emptyset$. But $F(\alpha)\cap \alpha=\emptyset$, so $F(\Sigma_{t_2})$ intersects $\alpha$. This is impossible since $F(\Sigma_{t_2})\subset \Sigma_{t_2}\subset \Sigma_{t_0}$ which is disjoint from $\alpha$ by construction.
Now assume that $F(D)\cap \Sigma_{t_2}\neq \emptyset$. This implies that $D\cap F^{-1}(\Sigma_{t_2})\neq \emptyset$. Since $D$ is disjoint from $\Sigma_{t_2}$, it follows that $D$ intersects $F^{-1}(\Sigma_{t_2})\sm \Sigma_{t_2}$. The latter set is contained in $\Sigma_{t_0}\sm \Sigma_{t_2}$, which is an arc containing $\sigma$. Since $\sigma\cap D=\emptyset$, it follows that $\bd D$ intersects $\Sigma_{t_0}\sm \Sigma_{t_2}$. But since $\alpha$ is disjoint from $\Sigma_{t_0}$, this is impossible. 

Thus, $F(D)\cap \bd D=\emptyset$, and since $F(D)$ intersects $D$, we conclude that $F(D)\subset D$. To finish the proof, let us show that for each $\delta>0$ there exists $n_0$ such that $F^n(D) \subset \R\times (-\delta,0]$ whenever $n>n_0$. To show this, we may assume that $\delta<\epsilon$. Note that $F^k(D)\subset [a+kc, \infty)\times (-\epsilon,0)$. Let $t_3>0$ be such that $\Sigma_{t_3}\subset \R\times (-\delta,0)$. Let $n_0$ be such that $\Sigma_{t_0}\sm \Sigma_{t_3}$ is disjoint from $[a+n_0c,\infty)\times (-\epsilon,0)$. If $n>n_0$ and $(x,y)\in F^n(D)$, then $-\epsilon<y<0$ and $x\geq a+n_0c$. The line segment joining $(x,-\epsilon)$ to $(x,y)$ must have a first intersection point $(x,y')$ with $\bd D$, and clearly $(x,y')\in \Sigma_{t_2}$. Moreover, since $\Sigma_{t_0}\sm \Sigma_{t_3}$ is disjoint from $[a+n_0c,\infty)\times (-\epsilon,0)$ and $x\geq a+n_0c$, it follows that $(x,y')\in \Sigma_{t_3}$. Thus $(x,y')\in \R\times (-\delta, 0]$, and since $y'\geq y$ we also have $(x,y)\in \R\times (-\delta,0]$, as claimed.

Finally, note that $W=\pi(D)\cup \Fomega(\Gamma)$ is a neighborhood of $\Fomega(\Gamma)$, and from the previous paragraph we have that $f(W) \subset W$ and for each $\delta>0$ there is $n_0$ such that $f^n(W)$ is contained in the $\delta$-neighborhood of $\Fomega(\Gamma)$ whenever $n>n_0$. This implies that $\Fomega(\Gamma) = \bigcap_{n\geq 0} f^n(\ol{W})$, hence we conclude from Lemma \ref{lem:shub} that $\Fomega(\Gamma)$ is an attractor, as we wanted.
\end{proof}

The next fact is essentially contained in the proof of Lemma \ref{lem:rotational} (combining it with Lemma \ref{lem:ac-PE-bd}).
\begin{lemma}\label{lem:rotational-spiral}
If $\Gamma$ is a translation ray in the basin of a rotational attractor $A$ and $\Gamma\cap A = \emptyset$ then $\Fomega(\Gamma) = A$ and $\Gamma$ spirals towards $A$.
\end{lemma}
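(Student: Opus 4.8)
The plan is to first identify $\omega(\Gamma)$ using that $\Gamma$ lies in the basin of $A$, then use the nonzero prime ends rotation number to force $\Gamma$ to accumulate on the whole circle of prime ends of $U:=\SS^2\sm A$, and finally extract the spiraling from the lifting argument used in the proof of Lemma~\ref{lem:rotational}. For the first step, fix a compact subarc $\Gamma_0$ with $\Gamma=\bigcup_{n\ge 0}f^n(\Gamma_0)$. The arc $\Gamma_0$ is compact, disjoint from $A$, and contained in the basin of $A$, so $f^M(\Gamma_0)\subseteq V$ for some $M$ and some trapping region $V$ of $A$; since $f^j(\ol V)$ decreases to $A$, the arcs $f^n(\Gamma_0)$ enter and remain in every neighbourhood of $A$. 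As each tail $\gamma([t,\infty))$ is contained in $\bigcup_{n\ge m(t)}f^n(\Gamma_0)$ with $m(t)\to\infty$, this gives $\omega(\Gamma)\subseteq A$. In particular $\Gamma$ is disjoint from $\omega(\Gamma)$, hence is an embedded ray, $\Fomega(\Gamma)$ is defined, and since $\SS^2\sm A$ is connected, disjoint from $\omega(\Gamma)$ and contains $\Gamma$, it lies in the component of $\SS^2\sm\omega(\Gamma)$ containing $\Gamma$; therefore $\Fomega(\Gamma)\subseteq A$. Note also that $\Gamma$ leaves every compact subset of $U$.

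Second, I would show that $\Gamma$ accumulates on every prime end of $U$. Identify $\cPE(U)$ with $\ol{\D}$. Because $\Gamma$ leaves every compact subset of $U$, its $\omega$-limit $\omega'$ in $\cPE(U)$ is contained in $\bdPE(U)\simeq\SS^1$; it is a continuum, being a nested intersection of continua, and it satisfies $\hat f(\omega')=\omega'$ because $f(\Gamma)\subseteq\Gamma$ (here $\hat f$ is the induced homeomorphism of $\cPE(U)$). Now $\rho(f,U)=-\rho_e(f,A)\neq 0$, so $\hat f|_{\SS^1}$ has no fixed points; hence it leaves invariant neither a single point nor a proper closed sub-arc of $\SS^1$ (an invariant arc would contain a fixed point, by considering $\hat f$ restricted to it). Since a connected subset of $\SS^1$ is a point, an arc, or all of $\SS^1$, we conclude $\omega'=\SS^1$, \ie $\Gamma$ accumulates on every prime end of $U$. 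By Lemma~\ref{lem:ac-PE-bd}, $\Gamma$ then accumulates on every point of $\partial U=\partial A$, so $\partial A\subseteq\omega(\Gamma)\subseteq A$. From this $\Fomega(\Gamma)=A$: indeed $\SS^2\sm\omega(\Gamma)\subseteq\SS^2\sm\partial A=U\sqcup\inter A$, and the component of $\SS^2\sm\omega(\Gamma)$ containing $\Gamma$ is squeezed between the connected set $U$ and $U\sqcup\inter A$, hence equals $U$; therefore $\Fomega(\Gamma)=\SS^2\sm U=A$, and $\omega(\Gamma)=\partial\Fomega(\Gamma)=\partial A$.

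Third, for the spiraling: since $\rho(f,U)\neq 0$ we have $i(f,U)=1$ by Lemma~\ref{lem:rot-index}, so $f$ has a fixed point $p_0\in U$, which lies off $\Gamma$. The construction in the proof of Lemma~\ref{lem:rotational} now applies with $U=\SS^2\sm\Fomega(\Gamma)$ and the ray $\Gamma$ playing the role of the positive ray there: one lifts $\hat f$ restricted to $\cPE(U)\sm\{p_0\}$ to a homeomorphism $F$ of the half-plane $H=\R\times(-\infty,0]$, lifts $\Gamma$ to a translation ray $\Sigma$ for $F$ with $F(\Sigma)\subseteq\Sigma$, and, using that $\hat f|_{\SS^1}$ has no fixed points, obtains $c>0$ and $\epsilon>0$ with $(F(z)-z)_1>c$ for all $z\in\R\times[-\epsilon,0]$. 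Since $\Gamma$ leaves every compact subset of $U$, the successive images of a fundamental domain of $\Sigma$ eventually lie in the strip $\R\times(-\epsilon,0)$, whence their first coordinates tend to $+\infty$ (or $-\infty$). This says precisely that the polar angle of $\Gamma$ in $\cPE(U)\simeq\ol{\D}$ tends to $\pm\infty$, while its radius tends to $1$ because $\Gamma$ leaves every compact subset of $U$; thus $\Gamma$ spirals towards $\partial U$, and since the component of $\SS^2\sm\partial A$ containing $\Gamma$ is again $U$, this is exactly the assertion that $\Gamma$ spirals towards $A$.

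I expect the last step to be the main obstacle: one must ensure that near $\partial H$ the lifted ray $\Sigma$ is genuinely displaced in a fixed direction by $F$ — this is where the nonzero prime ends rotation number is essential — and that $\Sigma$ actually converges to $\partial H$, which is the content of the first step. The remaining points are routine manipulations with prime ends, limit sets, and plane topology.
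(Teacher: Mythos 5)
Your proposal is correct and takes essentially the approach the paper intends: the paper's proof is just the remark that the statement is contained in the proof of Lemma \ref{lem:rotational} combined with Lemma \ref{lem:ac-PE-bd}, and your three steps (the trapping-region argument giving $\omega(\Gamma)\subseteq A$ and escape from compact subsets of $U=\SS^2\sm A$, accumulation on all prime ends plus Lemma \ref{lem:ac-PE-bd} giving $\Fomega(\Gamma)=A$, and the half-plane lifting with uniform displacement near $\bd H$ giving the spiraling) are exactly that argument with the details filled in. The only cosmetic difference is that you derive accumulation on the whole circle of prime ends from the fixed-point-free circle dynamics before lifting, whereas the paper would read it off the spiraling itself; both are fine.
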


\section{A Poincar\'e-Bendixson theorem for translation lines}

This section is devoted to the proof of Theorem \ref{th:pb}. For the remainder of this section $f\colon \SS^2\to \SS^2$ will denote a homeomorphism homotopic to the identity.
We recall that if a translation line is disjoint from a rotational attractor $A$ but contained in its basin, then it spirals towards $A$ (see Lemma \ref{lem:rotational-spiral}).

\subsection{A result in the fixed point free case in the annulus}

Denote by $\A=\T^1\times \R$ the open annulus and by $\pi\colon \R^2\to \A$ its universal covering map such that $T\colon (x,y)\mapsto (x+1,y)$ generates its group of deck transformations.

\begin{lemma} \label{lem:oneside} Let $\Sigma$ be an arc that projects injectively into $\A$, and let $B\subset \R^2$ be a closed topological disk also projecting injectively into $\A$ (so $B\cap T^iB = \emptyset$ and $\Sigma \cap T^i\Sigma=\emptyset$ for all $i\neq 0$). If $\Sigma$ intersects both $T^iB$ and $T^jB$ for $i,j\in \Z$, then $\Sigma$ also intersects $T^kB$ whenever $i\leq k\leq j$.
\end{lemma}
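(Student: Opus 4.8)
The plan is to argue by contradiction, producing a simple closed curve in $\A$ with a forbidden homotopy type. After relabeling we may assume $i<j$, and we suppose $\Sigma$ meets $T^iB$ and $T^jB$ but $\Sigma\cap T^kB=\emptyset$ for some $k$ with $i<k<j$. Since $\Sigma$ is an arc, fix $P\in\Sigma\cap T^iB$ and $Q\in\Sigma\cap T^jB$ and let $\beta\subseteq\Sigma$ be the compact subarc joining $P$ to $Q$. As $B$ is compact, the family $\{T^mB\}_{m\in\Z}$ is locally finite, so $\beta$ meets only finitely many of its members; in particular $\beta\cap\bigcup_{m<k}T^mB$ and $\beta\cap\bigcup_{m>k}T^mB$ are compact. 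Let $P'$ be the last point of $\beta$, in the order running from $P$ to $Q$, lying in $\bigcup_{m<k}T^mB$, and let $Q'$ be the first point of $\beta$ after $P'$ lying in $\bigcup_{m>k}T^mB$ (both exist, since $P\in T^iB$, $Q\in T^jB$ and $i<k<j$). Put $\beta''=[P',Q']\subseteq\beta$. Then $P'\in T^{i'}B$ and $Q'\in T^{j'}B$ with $i'<k<j'$, so $j'-i'\ge 2$; moreover the interior of $\beta''$ is disjoint from $\bigcup_{m\in\Z}T^mB=\pi^{-1}(\pi(B))$: it avoids $\bigcup_{m<k}T^mB$ and $\bigcup_{m>k}T^mB$ by the choices of $P'$ and $Q'$, and it avoids $T^kB$ because all of $\Sigma$ does. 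Since $\beta''$ is a non-degenerate arc (as $T^{i'}B\cap T^{j'}B=\emptyset$) whose interior misses these two disks, $P'\in\partial(T^{i'}B)$ and $Q'\in\partial(T^{j'}B)$.

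Next I would project everything to $\A$. Write $b=\pi(B)$, a closed topological disk, and $\bar\sigma=\pi(\beta'')$. Since $\pi|_\Sigma$ is injective and $\beta''$ is a compact arc, $\bar\sigma$ is an embedded arc in $\A$; its endpoints $\pi(P'),\pi(Q')$ lie in $\partial b$ and are distinct (distinctness uses $P'\ne Q'$ together with injectivity of $\pi|_\Sigma$), while its interior lies in $\A\sm b$. Choose a simple arc $\tau\subseteq b$ joining $\pi(Q')$ to $\pi(P')$ with interior contained in $\inter b$; then $c=\bar\sigma\cup\tau$ is a simple closed curve in $\A$.

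The contradiction comes from computing the homotopy class of $c$. Lifting $c$ based at $P'$: the piece $\bar\sigma$ lifts back to $\beta''$ and terminates at $Q'$, while $\tau$, being a path in $b$ starting at $Q'\in T^{j'}B$, lifts through the homeomorphism $\pi|_{T^{j'}B}\colon T^{j'}B\to b$ to an arc in $T^{j'}B$ terminating at $T^{j'-i'}P'$. Hence the lift of $c$ starting at $P'$ ends at $T^{j'-i'}P'$, so under the identification $\pi_1(\A)\cong\Z$ (with $T$ corresponding to the generator) the class of $c$ equals $j'-i'$. As $|j'-i'|\ge 2$, this contradicts the classical fact that a simple closed curve in the open annulus $\A$ is either inessential or isotopic to the core circle, hence represents $0$ or $\pm1$. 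This contradiction proves the lemma.

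I expect the only delicate point to be the step ensuring $\pi(P')\ne\pi(Q')$, which is exactly where the hypothesis that $\Sigma$ projects injectively is used: it is what makes $c$ a genuine simple closed curve rather than a loop winding $j'-i'$ times around $\A$, and a self-intersecting loop of winding number $\ge 2$ carries no contradiction — indeed, without that hypothesis a connected set can wrap around $\A$ and jump over $T^kB$. Everything else is routine: the reduction to $\beta''$ uses only local finiteness of $\{T^mB\}$, and the rest is standard covering-space bookkeeping.
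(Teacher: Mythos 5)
Your proof is correct, but it takes a different route from the paper's. Both arguments begin the same way, by extracting the ``last exit / first entrance'' subarc (your $\beta''$, the paper's $\gamma$) whose interior avoids every translate $T^mB$ and whose endpoints lie in $T^{i'}B$ and $T^{j'}B$ with $j'-i'\geq 2$. From there the paper stays in Brouwer theory: it forms the compact connected set $K=\gamma\cup T^{j'}B$, checks $TK\cap K=\emptyset$ using the injectivity hypotheses, and invokes Lemma \ref{lem:free-homeo} (a free connected set for the fixed-point-free homeomorphism $T$ is free under all iterates) to contradict $T^{j'-i'}K\cap K\neq\emptyset$. You instead project, close up $\pi(\beta'')$ through the disk $b=\pi(B)$ to obtain a simple closed curve in $\A$, and compute via the lift that its class in $\pi_1(\A)\cong\Z$ is $j'-i'$, contradicting the classical fact (a consequence of the Jordan curve theorem) that a simple closed curve in the annulus represents $0$ or $\pm 1$. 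Your bookkeeping is sound: local finiteness of $\{T^mB\}$ gives the existence of $P'$ and $Q'$, injectivity of $\pi$ on $\Sigma$ and on $B$ gives simplicity of the curve, and the lift of $\tau$ through $\pi|_{T^{j'}B}$ correctly lands at $T^{j'-i'}P'$. What each approach buys: the paper's is shorter given that Lemma \ref{lem:free-homeo} is already in its toolkit, while yours is self-contained planar topology and in effect reproves the special case of that lemma for the translation $T$ directly; it also makes transparent exactly where the injectivity of $\pi|_\Sigma$ enters (without it the closed-up loop need not be simple and no contradiction arises), which is the same place the paper's verification of $TK\cap K=\emptyset$ would break down.
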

\begin{proof}
We may assume that $\Sigma$ is compact, replacing it by a sufficiently large subarc.
Suppose for a contradiction that there exists $k$ with $i<k<j$ such that $\Sigma$ is disjoint from $T^kB$. By 
translating everything with $T^{-k}$ we may assume that $i<0<j$ and $\Sigma$ is disjoint from $B$. 

Let $\gamma$ be the subarc of $\Sigma$ joining the last intersection of $\Sigma$ with $\bigcup_{l<0} T^l B$ to the first intersection of $\Sigma$ with $\bigcup_{l>0} T^lB$. Let $x_0 \in T^aB$ and $x_1\in T^bB$ be the two endpoints of $\gamma$, so $a< 0$ and $b> 0$. Then $K = \gamma\cup T^bB$ is a compact connected set, and using the fact that $\gamma \cap T^lB = \emptyset$ for $a<l<b$ and the hypotheses on $\Sigma$ and $B$ one easily verifies that $TK\cap K=\emptyset$. By Lemma \ref{lem:free-homeo} applied to $h=T$, it follows that $T^iK\cap K=\emptyset$ for all $i\neq 0$. Since $T^{b-a}K\cap K \neq \emptyset$ and $b-a\geq 2$, we have the required contradiction.
\end{proof}

\begin{lemma}\label{lem:omegama}  Let $h\colon \A\to \A$ be a homeomorphism isotopic to the identity without fixed points and $\Gamma\subset \A$ a translation line such that $\ol{\Gamma}$ is compact. Then $\alpha(\Gamma)\cap \omega(\Gamma)=\emptyset$.
\end{lemma}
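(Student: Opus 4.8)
The plan is to argue by contradiction: suppose $\alpha(\Gamma)\cap\omega(\Gamma)\neq\emptyset$, pick a point $p$ in this intersection, and show that $h$ has a periodic point, contradicting the fixed-point-free hypothesis via Corollary \ref{coro:brouwer} (a fixed-point-free orientation-preserving plane homeomorphism is nonwandering-free, hence in the annulus any homeomorphism isotopic to the identity with a nonwandering point has a fixed point on some lift; more directly, one uses that $h$ has no nonwandering points since it is isotopic to the identity and fixed-point free — I would invoke the version of Corollary \ref{coro:brouwer} on $\A$). Concretely, lift everything: let $\pi\colon\R^2\to\A$ be the universal cover, $T$ the generating deck transformation, $\Sigma$ a lift of $\Gamma$, and $H\colon\R^2\to\R^2$ a lift of $h$ with $H(\Sigma)\subset\Sigma$ (so $\Sigma$ is a translation line for $H$). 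Since $h$ is fixed-point free and isotopic to the identity, $H\circ T^{-n}$ is fixed-point free for all $n$ after possibly adjusting — actually the key point is that $H$ itself has no fixed point and neither does $H T^n$ for any $n$ unless the rotation number forces it; I would instead work directly with the fact that $\ol\Gamma$ is compact, so $\Gamma$ has a well-defined rotation number along it and both $\alpha(\Gamma)$ and $\omega(\Gamma)$ lie in a compact part of $\A$.

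The heart of the argument: since $\ol\Gamma$ is compact and $p\in\alpha(\Gamma)\cap\omega(\Gamma)$, take a small closed disk $B_0\ni p$ in $\A$ that is free for $h$ (possible since $p$ is not a fixed point and we may shrink; if $h(B_0)\cap B_0\neq\emptyset$ we shrink $B_0$ using continuity and freeness of $p$), and lift it to a disk $B\subset\R^2$ projecting injectively. Because $p\in\omega(\Gamma)$, the forward part of $\Sigma$ meets $T^{j}B$ for arbitrarily large (in absolute value) integers $j$ — more precisely, since $\Gamma$ returns to $B_0$ infinitely often in forward time and $\Sigma$ is a translation line, the sequence of translates $T^{j}B$ hit by $\Sigma$ going forward is unbounded above or below; similarly $p\in\alpha(\Gamma)$ forces the backward part of $\Sigma$ to meet $T^{i}B$ for an unbounded sequence in the other direction (or: a fundamental-domain analysis shows that if the rotation number were zero the whole translation line would project to a set returning to $B_0$ both forward and backward, forcing, by Lemma \ref{lem:oneside}, $\Sigma$ to meet a long run $T^iB,\dots,T^jB$ of consecutive translates). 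Then I would produce a free disk chain: using Lemma \ref{lem:oneside}, $\Sigma$ meets $T^kB$ for all $k$ in some interval $[i,j]$ with $j-i$ arbitrarily large; combining this with Lemma \ref{lem:trans-fix} (fundamental domains of $\Sigma$ between two points of a free disk must re-enter the disk) and the dynamics $H(\Sigma)\subset\Sigma$, one finds indices $m<n$ and a single translate $T^mB$ such that a forward iterate of $T^mB$ under $H$ meets $T^nB$ while a backward chain closes up — set $D_0=T^mB$, and since $H$ maps $\Sigma$ forward, the portion of $\Sigma$ between the point in $T^mB$ and the point in $T^nB$ together with the deck translation $T^{n-m}$ gives a return, producing a periodic disk chain for the plane homeomorphism $H$, contradicting Lemma \ref{lem:franks}.

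Actually the cleanest route, which I would adopt, avoids building an explicit chain: $p\in\alpha(\Gamma)\cap\omega(\Gamma)$ means that in $\A$ the point $p$ is a nonwandering point of $h$ (a neighborhood $B_0$ of $p$ satisfies $h^N(B_0)\cap B_0\neq\emptyset$ for some $N>0$, since $\Gamma$ passes near $p$ both "early" via $\alpha$ and "late" via $\omega$ — one picks a point $x\in\Gamma$ near $p$ with $\gamma^{-1}(x)$ very negative, and another point $x'\in\Gamma$ near $p$ with $\gamma^{-1}(x')$ very positive, and since $f|_\Gamma$ is conjugate to $z\mapsto z+1$ there is $N>0$ with $h^N$ mapping a neighborhood of $x$ to a neighborhood of $x'$, both contained in $B_0$). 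By the annulus version of Corollary \ref{coro:brouwer} — a fixed-point-free homeomorphism of $\A$ isotopic to the identity can still have nonwandering points (rigid rotations!), so this does NOT immediately contradict anything. This is exactly the main obstacle: the naive "nonwandering implies fixed point" fails on the annulus. The resolution, and the real content, is that a translation line with $\ol\Gamma$ compact cannot have a nonzero rotation number (it would have to wind around $\A$ infinitely, but compactness of $\ol\Gamma$ plus being a single injective line forces the winding to be bounded in a way incompatible with both $\alpha$ and $\omega$ meeting the same point) — so the rotation number along $\Gamma$ is $0$, and then one lifts to $\R^2$ where $\Sigma$ stays within a bounded horizontal strip modulo $T$, so that $p$ lifts to a genuine nonwandering point of the lift $H$ (not merely a $T$-periodic point), giving a fixed point of $H$ by Corollary \ref{coro:brouwer} and hence of $h$, the desired contradiction. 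So the plan's hard step is establishing that the rotation number must vanish; I expect this to follow from Lemma \ref{lem:oneside} applied to the free disk $B$ around $p$: if the rotation number were nonzero then the lift $\Sigma$ would meet $T^kB$ for all sufficiently large $k>0$ and (going backward, by the $\omega$/$\alpha$ symmetry swapped under $T^{\pm}$) this forces $\Sigma$ to meet $T^iB$ and $T^jB$ with $i<0<j$ both of large absolute value, whence by Lemma \ref{lem:oneside} it meets all intermediate translates and in particular some $T^kB$ with $k\neq 0$ is met by both $\Sigma$ and $T^{k}$-translated copies of a fundamental domain, and a free-disk-chain argument as in the proof of Lemma \ref{lem:trans-fix} yields a fixed point of the lift — contradiction.
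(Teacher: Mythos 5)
You have correctly located the main obstacle (nonwandering does not imply fixed point on the annulus, because of rotations) and you have assembled the right tools (a free disk around a recurrent point, Lemma \ref{lem:oneside}, Lemma \ref{lem:trans-fix}, Lemma \ref{lem:franks}), but the proposal does not close the argument; the two steps on which your ``cleanest route'' rests are exactly the content of the lemma and are left unproved. First, the claim that ``the rotation number along $\Gamma$ must vanish'' is never given a precise meaning or a proof: the sketch (``if the rotation number were nonzero then $\Sigma$ would meet $T^kB$ for all large $k$\dots and a free-disk-chain argument yields a fixed point'') is not an argument, and your earlier ``concrete'' construction is invalid as stated, since a periodic disk chain must be closed up by \emph{iterates of $H$}, not by the deck translation $T^{n-m}$, which is not an iterate of $H$; moreover the assertion that the set of translates $T^jB$ met by the forward half of $\Sigma$ is unbounded is unjustified (nothing prevents it from being a finite interval of integers). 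Second, and more seriously, even granting a vanishing rotation number, the transfer ``recurrence of $p$ in $\A$ $\Rightarrow$ some lift of $p$ is nonwandering for $H$'' is asserted, not proved: a return $h^N(B_0)\cap B_0\neq\emptyset$ lifts only to $H^N(\cB)\cap T^m\cB\neq\emptyset$ for \emph{some} $m\in\Z$, and ruling out $m\neq 0$ is precisely the difficulty; an averaged rotation number equal to $0$ does not by itself force $m=0$ for some return. (Your justification that $p$ is even nonwandering is also off: from $x,x'\in\Gamma\cap B_0$ with very negative/positive parameters you cannot conclude $h^N(x)\in B_0$; the correct move, as in the paper, is to pick a bi-recurrent point of the compact invariant set $\alpha(\Gamma)\cap\omega(\Gamma)$.)

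For comparison, the paper never introduces a rotation number and never tries to lift nonwandering behaviour directly. It takes a bi-recurrent point $z\in\alpha(\Gamma)\cap\omega(\Gamma)$, a free disk $B\ni z$ disjoint from a fundamental domain $\Gamma_0$, a lift $\cB$, and studies the integer sets $I^{\pm}(\cB)=\{i:\Sigma^{\pm}\cap T^i\cB\neq\emptyset\}$, where $\Sigma^{\pm}$ are the forward/backward halves of the lifted line. Lemma \ref{lem:oneside} makes each of $I^+(\cB)$, $I^-(\cB)$ and their union an interval of integers, and Lemma \ref{lem:trans-fix} shows $I^+(\cB)\cap I^-(\cB)=\emptyset$ (otherwise the subarc of $\Sigma$ between the two visits would be free, contradicting that it contains $\Sigma_0$). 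Hence, up to symmetry, $I^-(\cB)\subset(-\infty,k]$ and $I^+(\cB)\subset[k+1,\infty)$ with $T^k\til z\in\ol{\Sigma}^-$ and $T^{k+1}\til z\in\ol{\Sigma}^+$; then bi-recurrence of $z$ together with equivariance ($H$ commutes with $T$, $H$ preserves $\alpha(\Sigma)$ and $\omega(\Sigma)$, and $H^{\pm n}$ maps $\ol{\Sigma}^{\pm}$ into itself) and Lemma \ref{lem:free-homeo} produces indices $k+j\in I^-(\cB)$, $k+j+1\in I^+(\cB)$ with $j\neq 0$, contradicting the interval separation. This bookkeeping is the missing core of your proposal; without it (or an equivalent substitute) the proof is not complete.
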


\begin{proof}
Let $\Sigma$ be a lift of $\Gamma$ to $\R^2$, and let $H\colon \R^2\to \R^2$ be a lift of $h$ such that $H(\Sigma)=\Sigma$, so $\Sigma$ is a translation line of $H$. Note that, since $h$ is isotopic to the identity, $TH=HT$. 
Fix $x_0\in \Sigma$ and let $\Sigma_0$ be the compact subarc of $\Sigma$ from $x_0$ to $H(x_0)$. Let $\Sigma^+ = \bigcup_{n>0} H^n(\Sigma_0)$ and $\Sigma^-=\bigcup_{n<0} H^n(\Sigma_0)$, so $\Sigma = \Sigma^-\cup \Sigma_0\cup \Sigma^+$. 

Suppose that $\omega(\Gamma)\cap \alpha(\Gamma)$ is nonempty. Then it is a compact invariant set, so it contains some bi-recurrent point $z$ (\ie both the forward and backwards orbit of $z$ accumulate in $z$). We may assume $z\notin \Gamma_0:=\pi(\Sigma_0)$ (replacing it by some iteration by $h$ if necessary). Let $B\in \A$ be a small enough closed topological disk containing $z$ in its interior, disjoint from $\Gamma_0$ and such that $h(B)\cap B=\emptyset$. Let $\cB\subset \R^2$ be a lift of $B$, and $\til z$ the element of $\pi^{-1}(z)$ in $\cB$. Let $$I^\pm(\cB) = \{i\in \Z : \Sigma^\pm\cap T^i\cB\neq \emptyset\}.$$

Since $\pi(\cB)$ intersects both $\omega(\Gamma)$ and  $\alpha(\Gamma)$, the two sets $I^+(\cB)$ and $I^-(\cB)$ are nonempty.
Note that Lemma \ref{lem:oneside} applied to $\Sigma^\pm$ implies that $I^\pm(\cB)$ is an interval of integers (\ie if $a,b \in I^+(\cB)$ then $c\in I^+(\cB)$ whenever $a\leq c\leq b$, and similarly for $I^-(\cB)$). 

We claim that $I^+(\cB)\cap I^-(\cB)=\emptyset$. Suppose on the contrary that there exists $i$ such that both $\Sigma^+$ and $\Sigma^-$ intersect $T^i\cB$. Let $\alpha$ be the subarc of $\Sigma$ joining the last point of $\Sigma^-\cap T^i\cB$ to the first point of $\Sigma^+\cap T^i\cB$ (in the linear order of $\Sigma$). Then $\alpha$ intersects $T^i\cB$ only at its endpoints. Since $T^i\cB$ is disjoint from $H(T^i\cB)$ and $H$ has no fixed points, Lemma \ref{lem:trans-fix} implies that $H(\alpha)\cap \alpha=\emptyset$. But this contradicts the fact that $\Sigma_0\subset \alpha$. Thus $I^-(\cB)\cap I^+(\cB)=\emptyset$.

In addition, note that Lemma \ref{lem:oneside} applied to $\Sigma$ implies that $I^+(\cB)\cup I^-(\cB)$ is also an interval of integers. Choose $i^\pm\in I^\pm(\cB)$ and assume without loss of generality that $i^-<i^+$ (the case $i^->i^+$ is analogous). Then, from the previous facts we deduce that $k := \max I^-(\cB)$ is finite, $k+1\in I^+(\cB)$, and $I^-(\cB)\subset (-\infty, k]$ while $I^+\subset [k+1, \infty)$.

If $B_0\subset B$ is a smaller closed topological disk containing $z$ in its interior and $\cB_0$ is the lift of $B_0$ in $\cB$, then again $I^\pm(\cB_0)$ is nonempty and $I^+(\cB_0)\cup I^-(\cB_0)$ is an interval of integers. Moreover, it is clear that $I^\pm(\cB_0)\subset I^\pm(\cB)$.  From these facts we deduce that $k\in I^-(\cB_0)$ and $k+1\in I^+(\cB_0)$. 
In other words, $\Sigma^-$ intersects $T^k\cB_0$ and $\Sigma^+$ intersects $T^{k+1}\cB_0$. Since $\cB_0$ can be chosen arbitrarily small, we deduce that $T^k\til z\in \ol{\Sigma}^-$ and $T^{k+1}\til z\in \ol{\Sigma}^+$.

On the other hand, recall that $z$ is a bi-recurrent point, so we can choose $n$ such that $h^n(z)\in B$, and we may choose $n$ to be positive or negative as we wish. Note that $z$ could belong to $\pi(\Sigma^+)$ or $\pi(\Sigma^-)$, but it is always the case that either $z\notin \pi(\Sigma^-)$ or $z\notin \pi(\Sigma^+)$.

Assume first that $z\notin \pi(\Sigma^-)$, so $T^k\til{z}\notin \Sigma^-$. Since $T^k\til z$ is accumulated by $\Sigma^-$, this means that $T^k\til{z}\in \alpha(\Sigma)$. Since $H(\Sigma^+)\subset \Sigma^+$, by choosing $n$ positive we may guarantee that $H^n(\ol{\Sigma}^+)\subset \ol{\Sigma}^+$. 
Note that $H^n(\til z)\in T^j\cB$ for some $j$, and since $H(\cB)\cap \cB=\emptyset$ and $H$ has no fixed points, Lemma \ref{lem:free-homeo} implies that $j\neq 0$. Moreover, since $H^n(\alpha(\Sigma))=\alpha(\Sigma)$ and $H^n(\ol{\Sigma}^+)\subset \ol{\Sigma}^+$, we deduce that $T^{k+j}\til z \in \alpha(\Sigma)$ and $T^{k+1+j}\til z\in \ol{\Sigma}^+$. This implies that $k+j\in I^-(\cB)$ and $k+j+1\in I^+(\cB)$. But this is a contradiction since $j\neq 0$, so either $k+j>k$ or $k+j+1\leq k$ (whereas $I^-(\cB)\subset (-\infty, k]$ and $I^+(\cB)\subset [k+1, \infty)$).

Similarly, if one assumes that $z\notin \pi(\Sigma^+)$, one has $T^{k+1}\til z \in \omega(\Sigma)$ while $T^k\til z\in \ol{\Sigma}^-$. Using the fact that $\omega(\Sigma)$ and $\ol{\Sigma}^-$ are forward invariant by $h^{-1}$, one may repeat the argument from the previous paragraph using $n<0$, which leads to a similar contradiction. This completes the proof.
\end{proof}

\subsection{A special case}

\begin{lemma}\label{lem:nonegindex} Suppose that $\Gamma$ is a translation line, $\omega(\Gamma)$ has no fixed points, and every invariant connected component of $\SS^2\sm \omega(\Gamma)$ has nonnegative index. Then $\Fomega(\Gamma)$ is a rotational attractor.
\end{lemma}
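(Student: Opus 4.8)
The plan is to realize $\Fomega(\Gamma)$ as the complement of an \emph{invariant} open topological disk $U$, and then let the index hypothesis force the external prime ends rotation number of $\Fomega(\Gamma)$ to be nonzero, at which point the Rotational Attractor Lemma finishes the argument. Note first that, since the statement refers to $\Fomega(\Gamma)$, we are implicitly in the case where $\Gamma$ is positively embedded (which, in the situations where this lemma is applied, is already known; it can also be obtained from the absence of fixed points in $\omega(\Gamma)$ together with the Brouwer-theoretic material of \S\ref{sec:brouwer}). Set $K=\omega(\Gamma)$. Since $K$ is a compact invariant set with no fixed point, it cannot be a single point, so $K$ has more than one point. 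Let $U=\SS^2\sm\Fomega(\Gamma)$, i.e.\ the connected component of $\SS^2\sm K$ containing $\Gamma^+$; it is simply connected, hence an open topological disk, and $\Fomega(\Gamma)=\SS^2\sm U$ is a cellular continuum with more than one point. Since $f(K)=K$ and $f(\Gamma^+)\subset\Gamma^+\subset U$, the set $f(U)$ is a component of $\SS^2\sm K$ meeting $U$, so $f(U)=U$; thus $U$, and hence $\Fomega(\Gamma)$, is invariant. Finally $\bd U=\bd\Fomega(\Gamma)=K$ contains no fixed point, so the fixed points of $f$ lying in $U$ cannot accumulate on $\bd U$, whence $\fix(f|_U)$ is compact and $i(f,U)$ is defined; by hypothesis $i(f,U)\ge 0$.

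Next I would run the index dichotomy on $U$. By Corollary~\ref{coro:disk-index}, since $U$ is an invariant open topological disk without fixed points in its boundary, $i(f,U)\le 1$, with equality if and only if $\rho(f,U)\ne 0$. On the other hand, if $\rho(f,U)=0$ then Theorem~\ref{th:matsu-naka} applies to $U$ and gives $i(f,U)=-k<0$. Hence $i(f,U)$ equals $1$ when $\rho(f,U)\ne 0$ and is strictly negative when $\rho(f,U)=0$; in particular it is never $0$. Combined with $i(f,U)\ge 0$, this forces $i(f,U)=1$ and therefore $\rho(f,U)\ne 0$.

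It remains to conclude. We have $\rho_e(f,\Fomega(\Gamma))=-\rho(f,\SS^2\sm\Fomega(\Gamma))=-\rho(f,U)\ne 0$. Since $\Gamma$ is a positively embedded translation line, $\omega(\Gamma)=K$ has more than one point, and $\rho_e(f,\Fomega(\Gamma))$ is nonzero, the Rotational Attractor Lemma (Lemma~\ref{lem:rotational}) yields that $\Fomega(\Gamma)$ is a rotational attractor, which is the desired conclusion.

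As for where the difficulty lies: once Theorem~\ref{th:matsu-naka} (itself resting on Cartwright--Littlewood) and the Rotational Attractor Lemma are available, this lemma is essentially bookkeeping. The only points requiring genuine care are verifying that the relevant complementary component $U$ is invariant and that $\fix(f|_U)$ is compact, so that the index hypothesis actually applies to $U$, and making sure at the outset that $\Gamma$ is positively embedded so that $\Fomega(\Gamma)$ is even defined. The substantive dynamical work — replaying the Poincar\'e--Bendixson mechanism to produce a bona fide attractor out of a rotation number condition — has been isolated into Lemma~\ref{lem:rotational}, so it is not re-encountered here.
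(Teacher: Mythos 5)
Your reduction is too quick, and the two places where it breaks are exactly where the paper has to work. First, you assume at the outset that $\Gamma$ is positively embedded, i.e.\ disjoint from $\omega(\Gamma)$, so that $\Fomega(\Gamma)$ is even defined. But this lemma is invoked in the proof of the general case of Theorem~\ref{th:pb} for a translation line that is \emph{not} known to be embedded; establishing (a version of) embeddedness is part of the content of the lemma, not a standing hypothesis. Your fallback, deriving it ``from the Brouwer-theoretic material of \S\ref{sec:brouwer}'', does not work: Lemma~\ref{lem:trans-fix} and Corollary~\ref{coro:guillou} require an orientation-preserving homeomorphism of the plane \emph{without fixed points}, whereas here $f$ has fixed points in $\SS^2$ (total index $2$ by Lefschetz--Hopf) --- they are merely absent from $\omega(\Gamma)$. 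The paper's proof manufactures the needed fixed-point-free annulus: using the hypothesis on \emph{all} invariant components of $\SS^2\sm\omega(\Gamma)$ together with Lefschetz, exactly two components have index $1$ and the rest index $0$; one then modifies $f$ away from $\omega(\Gamma)$ (via Lemma~\ref{lem:model} and an invariant-disk construction) to a map with only two fixed points $\pm\infty$, replaces $\Gamma$ by a translation line $\Gamma'$ of the new map with the same $\omega$-limit, and only then obtains $\alpha(\Gamma')\cap\omega(\Gamma')=\emptyset$, hence embeddedness, from the annulus result Lemma~\ref{lem:omegama}. Note that your argument never uses the hypothesis on the components other than the one containing $\Gamma$, which is a sign that the hard part has been assumed away.

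Second, even granting embeddedness, your index dichotomy is wrong: it is not true that $i(f,U)$ can never be $0$. Theorem~\ref{th:matsu-naka} must be read with $k\geq 0$ (as Corollary~\ref{coro:disk-index} and Lemma~\ref{lem:index-LC} indicate): an invariant open disk with fixed-point-free boundary and vanishing prime-ends rotation number can have index $0$, with exactly one attracting and one repelling fixed prime end --- for instance the interior of a translation strip spiraling from a rotational repellor to a rotational attractor, or the index-$0$ components that appear in the paper's own proof of this very lemma. So from $i(f,U)\geq 0$ you may only conclude $i(f,U)\in\{0,1\}$, and in the case $i(f,U)=0$ one has $\rho(f,U)=0$, so Lemma~\ref{lem:rotational} is unavailable; this is precisely the difficult case. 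Excluding it is the real work: in the paper's argument, after the reduction above, $\Fomega(\Gamma')$ and $\Falpha(\Gamma')$ are disjoint invariant cellular continua, each contains a fixed point by Cartwright--Littlewood, so they capture $\infty$ and $-\infty$; hence the disk $\SS^2\sm\Fomega(\Gamma')$ has index exactly $1$, its rotation number is nonzero by Corollary~\ref{coro:disk-index}, and only then does Lemma~\ref{lem:rotational} finish. Your proposal contains no substitute for this step, so as written it proves the lemma only under the additional assumptions that $\Gamma$ is embedded and that the component containing it has index $1$, which together trivialize the statement.
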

\begin{proof}
By Corollary \ref{coro:disk-index} we know that every invariant connected component of $\SS^2\sm \omega(\Gamma)$ has index at most $1$.
Note that there are finitely many such components containing fixed points, because $\fix(f)\cap (\SS^2\sm \omega(\Gamma))$ is compact. Moreover, since the sum of their indices is $2$, two of these components are invariant disks of index $1$, and all other components are invariant disks of index $0$. Thus using Lemma \ref{lem:model} on each component of index $0$ we may modify $f$ outside a neighborhood of $\omega(\Gamma)$, to obtain a map $h$ having exactly two fixed points, both of index $1$. We denote the two fixed points by $\infty$ and $-\infty$. Thus there are no fixed points in the annulus $\A=\SS^2\sm \{-\infty, \infty\}$. 
Let $U_\infty$ and $U_{-\infty}$ be the connected components of $\SS^2\sm \omega(\Gamma)$ containing $
\infty$ and $-\infty$, respectively. Since the prime ends rotation numbers of $f$ (hence of $g$) in $U_\infty$ and $U_{-\infty}$ are nonzero (by Lemma \ref{lem:rot-index}), by an additional modification of $h$ (which we still denote $h$) outside a neighborhood of $\omega(\Gamma)$ we may assume that there exist two invariant closed topological disks $D^+\subset U_\infty$ and $D^-\subset U_{-\infty}$ such that $\pm \infty \in D^{\pm}$ (see \cite[Proposition 5.1]{franks-lecalvez}). 

Note that the line $\Gamma$ may fail to be $h$-invariant, but since $f$ coincides with $h$ in a neighborhood of $\omega(\Gamma)$,  there exists $z\in \Gamma$ such that the ray $\Gamma_z^+$ starting at $z$ and following $\Gamma$ positively is a positive translation ray for $h$ and is disjoint from $D^+\cup D^-$. Thus $\Gamma' = \bigcup_{n\geq 0} h^{-n}(\Gamma^+_z)$ is a translation line for $h$ with the same $\omega$-limit as $\Gamma$. Furthermore, since the annulus $\SS^2\sm (D^+\cup D^-)$ is invariant, $\Gamma'\subset \SS^2\sm (D^+\cup D^-)$ and in particular $\ol{\Gamma'} \subset A:= \SS^2\sm \{\infty, -\infty\}$.
If we prove that $\Fomega(\Gamma')$ is a rotational attractor for $h$, it follows immediately that $\Fomega(\Gamma)$ is a rotational attractor and we are done.

By Lemma \ref{lem:omegama} applied to the annulus $A\simeq \A$ we know that $\omega(\Gamma')$ is disjoint from $\alpha(\Gamma')$. This implies that $\Gamma'$ is an embedded line. Moreover, $\Fomega(\Gamma')$ and $\Falpha(\Gamma')$ are two disjoint cellular continua in $\SS^2$, and so each must contain a fixed point. Since the only fixed points are $\pm \infty$, one of these sets contains $\infty$ and the other contains $-\infty$. Since each of these points has index $1$, we deduce that the invariant disk $\SS^2\sm\Fomega(\Gamma')$ has index $1$. Thus by Corollary \ref{coro:disk-index} $\rho(h, \Fomega)$ is nonzero, and by Lemma \ref{lem:rotational} we deduce that $\Fomega(\Gamma')$ a rotational attractor, as we wanted.
\end{proof}

\subsection{The embedded case}
\begin{lemma}\label{lem:embedded} The conclusion of Theorem \ref{th:pb} holds if $\Gamma$ is an embedded line.
\end{lemma}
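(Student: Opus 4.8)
The plan is to reduce the statement to a computation of the fixed point index of $U:=\SS^2\sm\Fomega(\Gamma)$. If $\omega(\Gamma)$ contains a fixed point there is nothing to prove, so assume it does not. Since $\Gamma$ is embedded, $U$ is an $f$-invariant open topological disk with $\bd U=\omega(\Gamma)$, and $\bd U$ has more than one point (otherwise it would be a single fixed point). By Corollary \ref{coro:disk-index} together with Theorem \ref{th:matsu-naka}, $i(f,U)$ is either $1$ or at most $-1$. In the first case $\rho(f,U)\neq 0$, hence $\rho_e(f,\Fomega(\Gamma))=-\rho(f,U)\neq 0$; as $\Gamma$ is positively embedded and $\omega(\Gamma)$ has more than one point, the Rotational Attractor Lemma (Lemma \ref{lem:rotational}) gives that $\Fomega(\Gamma)$ is a rotational attractor, disjoint from $\Gamma$ because $\Gamma\subset U$, which is precisely the conclusion. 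Thus the entire task is to rule out $i(f,U)=-k$ with $k\geq 1$. (Equivalently one could aim to verify the hypotheses of Lemma \ref{lem:nonegindex} for every invariant complementary component of $\omega(\Gamma)$; the decisive point is the same, so I concentrate on $U$.)

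So assume $i(f,U)=-k\leq -1$; then $\rho(f,U)=0$ and the prime ends dynamics of $U$ is Morse--Smale by Theorem \ref{th:matsu-naka}. By Lemma \ref{lem:flower} I may modify $f$ inside a compact subset of $U$ to obtain a homeomorphism $f'$, coinciding with $f$ on a neighborhood of $\bd U$, whose induced map on $\cPE(U)$ is conjugate to $G_k$ and with $i(f',U)=i(f,U)=-k$; in particular $f'$ has a fixed point $p\in U$ admitting $k+1\geq 2$ pairwise disjoint stable branches $\Gamma^s_0,\dots,\Gamma^s_k$ and $k+1\geq 2$ pairwise disjoint unstable branches $\Gamma^u_0,\dots,\Gamma^u_k$ that alternate around $p$, with $\Gamma^s_i$ converging in $\cPE(U)$ to a repelling prime end $r_i$ and $\Gamma^u_i$ to an attracting prime end $a_i$; each $\Gamma^s_i$ is in particular a stable branch of $U$ and each $\Gamma^u_i$ an unstable branch of $U$. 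Since $f'=f$ near $\bd U$ and $\Gamma^+_x$ converges to $\omega(\Gamma)=\bd U$, a suitable tail of $\Gamma^+_x$ is a positive translation ray for $f'$, and the line $\Gamma':=\bigcup_{n\geq 0}(f')^{-n}(\text{this tail})$ is an $f'$-translation line contained in $U$ with $\omega(\Gamma',f')=\bd U$. Being an unstable branch of $U$ for $f'$, it converges in $\cPE(U)$ to a single attracting prime end, which we label $a_0$, so that $\imPE(a_0)=\bd U$.

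The main obstacle is the remaining step: showing that \emph{every} unstable branch of $U$ for $f'$ has $\omega$-limit equal to $\bd U$ and \emph{every} stable branch has $\alpha$-limit equal to $\bd U$. Granting this, Corollary \ref{coro:4branch-disk} forces $i(f',U)=0$, contradicting $i(f',U)=-k<0$, which completes the proof. To carry out this step I would exploit three ingredients: each component of $U\sm\ol{U_0}$ (where $U_0$ is the flower core supplied by Lemma \ref{lem:flower}) is foliated by translation lines sharing a common pair of limit prime ends; the line $\Gamma'$, having $\omega$-limit all of $\bd U$, meets every sufficiently small cross-cut of $U$; and the $\omega$-limit of any branch is trapped between the principal set and the impression of the prime end it converges to (Proposition \ref{pro:pri-im}). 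The strategy is to argue that a branch of $p$ whose limit were a proper subset of $\bd U$ would be separated inside $U$ from $\Gamma'$ in a controlled way, using the Brouwer-theoretic and annular lemmas (Lemmas \ref{lem:trans-fix}, \ref{lem:oneside}, \ref{lem:omegama}) together with the spiraling lemmas, producing four alternating branches of $p$ accumulating on a common point of $\bd U$; by the 4-Branches Lemma (Corollary \ref{coro:4branch}) that point would be a fixed point of $f'$, hence of $f$, lying in $\omega(\Gamma)$, which is impossible. Propagating the ``full limit'' property of $\Gamma'$ to all branches of $p$ in this manner is the technical heart of the argument.
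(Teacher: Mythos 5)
Your reduction and your use of Lemma \ref{lem:rotational} in the index-$1$ case are fine, but the proof has two genuine gaps. The first is the one you acknowledge: the claim that \emph{every} stable/unstable branch of $U=\SS^2\sm\Fomega(\Gamma)$ has full limit $\bd U$ is precisely where all the work lies, and your paragraph about it is a strategy, not an argument. Worse, there is a structural reason your plan is hard to carry out for $U$ itself: if $\Gamma^s$ is a stable branch of $U$, one can show (as the paper does, via Lemma \ref{lem:rotational} and Lemma \ref{lem:attractor-intersect}, using that $\Gamma$ accumulates on all of $\bd U$) that $\rho_e(f,\Falpha(\Gamma^s))=0$, but nothing then forces $\Falpha(\Gamma^s)$ to equal $\SS^2\sm U$; a priori $\alpha(\Gamma^s)$ could be a proper subcontinuum of $\bd U$ and $\SS^2\sm\Falpha(\Gamma^s)$ a strictly larger nonpositive-index disk. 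The paper resolves exactly this point by replacing $U$ with a Zorn-maximal invariant disk $U'\supset U$ with $\bd U'\subset\omega(\Gamma)$ and nonpositive index, so that maximality pins down $\Falpha(\Gamma^s)$ and $\Fomega(\Gamma^u)$; this maximal-disk device (and the subsequent analysis of the \emph{other} invariant complementary components of $\bd U'$) is absent from your proposal, and the sketched ``separate the branch from $\Gamma'$ and invoke the 4-branches lemma'' does not substitute for it.

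The second gap is your dismissal of the case $i(f,U)=0$ by reading Theorem \ref{th:matsu-naka} with a strict inequality. That reading cannot be right (it is a slip in the statement): the paper itself allows index $0$ throughout -- Theorem \ref{th:disks} has $k\geq 0$, Corollary \ref{coro:4branch-disk} \emph{concludes} that the index is $0$, and the ``basic block'' picture gives an explicit example: if $A$ and $R$ are a rotational attractor and repellor with no boundary fixed points and $\Gamma_0$ is a line spiraling from $R$ to $A$, then $U=\SS^2\sm(A\cup R\cup\ol{\Gamma_0})$ is an invariant disk with $\rho(f,U)=0$, no fixed points in $\bd U$, and index $0$. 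So the index-$0$ case must be handled, and your mechanism cannot handle it: Corollary \ref{coro:4branch-disk} can only ever output ``index $0$,'' which is no contradiction there. In the paper this case is killed only at the end, by applying Lemma \ref{lem:nonegindex} (whose proof rests on the fixed-point-free annulus result, Lemma \ref{lem:omegama}) to an unstable branch of the maximal disk, after showing all invariant complementary components of its boundary have index $0$ or $1$. In short: the negative-index skeleton of your plan would close up \emph{if} the full-limit property for branches of $U$ were established, but that property is the technical heart and requires the maximality argument, and the index-$0$ case requires Lemma \ref{lem:nonegindex}; neither is supplied.
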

\begin{proof}
Assume for a contradiction that $\omega(\Gamma)$ has no fixed points and $\Fomega(\Gamma)$ is not a rotational attractor.
By Lemma \ref{lem:rotational}, we have $\rho_e(f,\Fomega(\Gamma))=0$. Letting $U_0 = \SS^2\sm \Fomega(\Gamma)$, we thus have $\rho(f,U_0)=0$. By Corollary \ref{coro:disk-index}, it follows that $U_0$ has nonpositive index. Note that $\bd U_0 = \bd \Fomega(\Gamma) =\omega(\Gamma)$.

There exists an open topological disk $U$ which is maximal with respect to the following properties:
\begin{itemize}
\item $U_0\subset U$;
\item $U$ is invariant and has nonpositive index;
\item $\bd U\subset \bd U_0$.
\end{itemize}
The existence of $U$ follows from a standard Zorn's Lemma argument: if $\mc{C}$ is a chain (with respect to inclusion) of invariant disks satisfying the three conditions above, then $V' = \bigcup_{V\in \mc C} V$ is an open invariant topological disk containing $U_0$ and $\bd V'\subset \bd U_0$. Since there are no fixed points in $\bd U_0$, the set of fixed points in $V'$ is compact and therefore it is contained in some $V\in \mc C$. Since $V$ has nonpositive index, it follows that $V'$ has nonpositive index. This guarantees that we can apply Zorn's Lemma.

Let $K=\bd U$. We claim that if $\Gamma^s$ is any stable branch of $U$, then $\alpha(\Gamma^s)=K$. 
First let us show that $\rho_e(f, \Falpha(\Gamma^s))=0$. Indeed, if this is not the case then Lemma \ref{lem:rotational} implies that $\Falpha(\Gamma^s)$ is a rotational repellor. However, $\alpha(\Gamma^s)\subset K\subset \bd U_0  = \omega(\Gamma)$, which implies that the $\omega$-limit of the translation line $\Gamma$ intersects the repellor $\Falpha(\Gamma^s)$. By Lemma \ref{lem:attractor-intersect} this means that $\Gamma\cap \Falpha(\Gamma^s)\neq \emptyset$, contradicting the fact that $\Gamma$ is disjoint from $K$.
This shows that $\rho_e(f, \Falpha(\Gamma^s))=0$. But then $U'=\SS^2\sm \Falpha(\Gamma^s)$ is an open invariant topological disk of nonpositive index (due to Corollary \ref{coro:disk-index}) containing $U$, and from the maximality of $U$ we deduce that $U'=U$, so $\alpha(\Gamma^s) = \bd U' = \bd U = K$ as claimed.

Now we claim that any unstable branch $\Gamma^u$ of $U$ also satisfies $\Fomega(\Gamma^u)=K$. The proof is similar, except that to conclude $\rho_e(f, \Fomega(\Gamma^u))=0$ we use the fact that there exists an stable branch $\Gamma^s$ of $U$ such that $\Falpha(\Gamma^s)=K$ (by the previous paragraph and Remark \ref{rem:stable-branch-exist}) 
which implies that $\Fomega(\Gamma^u)$ cannot be a rotational attractor (using Lemma \ref{lem:attractor-intersect} as in the previous paragraph). 

From the previous claims and Corollary \ref{coro:4branch-disk} we conclude that the index of $U$ is $0$.
Finally, we will show that if $V$ is any invariant connected component of $\SS^2\sm K$ of nonpositive index then $V$ has the property that the $\omega$-limit of every unstable branch and the $\alpha$-limit of every stable branch of $V$ are equal to $\bd V$. Note that this will also imply that $V$ has index $0$. 

Let $\Gamma^u$ be an unstable branch of $V$. We claim that $U\subset \Fomega(\Gamma^u)$.
Note that since $U$ is disjoint from $\omega(\Gamma^u)=\bd \Fomega(\Gamma^u)\subset \bd V\subset K$, it follows that either $U\subset \Fomega(\Gamma^u)$ or $U\subset \SS^2\sm \Fomega(\Gamma^u)$. 
Assume for a contradiction that $U\subset \SS^2\sm \Fomega(\Gamma^u)$ and suppose first that $\rho_e(f,\Fomega(\Gamma^u))\neq 0$. Then Lemma \ref{lem:rotational} implies that $\Fomega(\Gamma^u)\subset \bd V$ is a rotational attractor. But since $U$ contains stable branch which accumulates in all of $K \supset \bd \Fomega(\Gamma^u)$, the $\alpha$-limit of such a stable branch intersects the attractor $\Fomega(\Gamma^u)$ which is not possible by Lemma \ref{lem:attractor-intersect}. Now suppose that $\rho_e(f, \Fomega(\Gamma^u))=0$. Then $U'=\SS^2\sm \Fomega(\Gamma^u)$ is an open invariant topological disk strictly containing $U$, and it has nonpositive index due to Corollary \ref{coro:disk-index}. This contradicts the maximality of $U$, completing the proof that $U\subset \Fomega(\Gamma^u)$.

We deduce that $K=\bd U \subset \Fomega(\Gamma^u)$ as well, which implies that $\bd V \subset \Fomega(\Gamma^u)$. Since $\Gamma^u\subset V$, this means that $\bd V = \omega(\Gamma^u)$. An analogous argument shows that $\bd V = \alpha(\Gamma^s)$ for any stable branch $\Gamma^s$ of $V$. Thus every unstable branch of $V$ has an $\omega$-limit equal to $\bd V$ and similarly every stable branch has $\alpha$-limit $\bd V$, as we wanted to show.

Summarizing, we have found an invariant disk $U$ of index $0$, and a continuum $K=\bd U$ such that every invariant component of $\SS^2\sm K$ has index $0$ or $1$ (since the components of positive index must have index $1$ by Corollary \ref{coro:disk-index}). This implies that there are exactly two components of index $1$. Furthermore, we have shown that every stable or unstable branch of $U$ accumulates in all of $K$. In particular, an unstable branch $\Sigma'$ of $U$ is an embedded translation line such that $\omega(\Gamma')=\bd U = K$, which has no fixed points. Clearly $\Fomega(\Gamma') = \SS^2\sm U$ is not a rotational attractor (otherwise $U$ would have index $1$), which contradicts Lemma \ref{lem:nonegindex}. This completes the proof of the lemma.
\end{proof}

\subsection{The general case} 
To complete the proof of Theorem \ref{th:pb}, we need to consider the case where $\Gamma$ is not necessarily embedded. 
Assume for a contradiction that $\omega(\Gamma)$ is not a rotational attractor and it contains no fixed points. In view of Lemma \ref{lem:nonegindex}, we may also assume that there exists some invariant connected component $U_0$ of $\SS^2\sm \omega(\Gamma)$ of negative index. 

First note that by Lemma \ref{lem:flower} we may choose an embedded translation line $\Gamma'\subset U_0$ such that, in the prime ends compactification of $U_0$, the $\omega$-limit of $\Gamma'$ is an attracting fixed prime end and its $\alpha$-limit is a repelling fixed prime end. Note that $\alpha(\Gamma')\subset \bd U_0 \subset \omega(\Gamma)$, which contains no fixed points of $f$. By Lemma \ref{lem:embedded} applied to $\Gamma'$ (using $f^{-1}$ instead of $f$) we deduce that $K:=\Falpha(\Gamma')$ is a rotational repellor.

By Lemma \ref{lem:attractor-intersect},  $\Gamma$ intersects $K$. Clearly $\omega(\Gamma)$ is not contained in $K$, because  $\omega(\Gamma')$ is disjoint from $K$ and contained in $\bd U_0\subset \omega(\Gamma)$. Thus, for any $z\in \Gamma$, the subarc $\Gamma^+_z$ starting at $z$ and moving forward through $\Gamma$ contains points in $K$ and points outside of $K$. Fix $z'\in \Gamma'$ and let $\Gamma'^-_{z'}$ be the ray starting at $z'$ and following $\Gamma'$ negatively. Then $\Gamma'^-_{z'}$ spirals towards $K$, so by Lemma \ref{lem:spiral} we have that $\Gamma^+_z$ intersects $\Gamma'^-_{z'}$. Since $\Gamma^+_z$ is forward invariant, it follows that $\Gamma^+_z$ intersects the fundamental domain $\Gamma'_0$ defined between $z'$ and $f^{-1}(z')$ in $\Gamma'^-_{z'}$.  This holds for all $z\in \Gamma$, so we conclude that $\omega(\Gamma)$ intersects $\Gamma'_0$. But this is a contradiction, since $\Gamma'_0\subset U_0$ which is disjoint from $\omega(\Gamma)$. This completes the proof of the theorem. \qed

\section{Prime ends and fixed points: proof of Theorem \ref{th:disks}}

\subsection{Spiraling lines and principal sets}
We begin with a general topological result. As before, $f\colon \SS^2\to \SS^2$ will always denote an orientation-preserving homeomorphism.

\begin{lemma}\label{lem:im} If $U$ is an open topological disk and $\Gamma\subset U$ is an embedded line which converges towards a prime end $\pe p$ in $\cPE(U)$ and spirals towards $\omega(\Gamma)$ in $\SS^2$. Then $\omega(\Gamma)$ coincides with the principal set of $\pe p$.
\end{lemma}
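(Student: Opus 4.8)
The plan is to reduce the statement to one about the forward ray of $\Gamma$ and then combine Proposition~\ref{pro:pri-im} with Lemma~\ref{lem:spiral-omega}. Write $\Gamma^+$ for the forward ray of $\Gamma$, so that $\Gamma^+\to\pe p$ in $\cPE(U)$, $\omega(\Gamma)=\omega(\Gamma^+)$, and $\Gamma^+$ spirals towards $\omega(\Gamma^+)$. First I would apply Proposition~\ref{pro:pri-im} to $\Gamma^+$, obtaining
\[
\priPE(\pe p)\subset\omega(\Gamma^+)\subset\imPE(\pe p);
\]
in particular $\priPE(\pe p)$ is a nonempty subcontinuum of $\omega(\Gamma)$, so only the reverse inclusion $\omega(\Gamma)\subset\priPE(\pe p)$ remains.

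Next I would set $U'=\SS^2\sm\Fomega(\Gamma^+)$. Since $\Gamma^+$ spirals towards $\omega(\Gamma^+)$ we have $\omega(\Gamma^+)=\bd U'$, and since $U$ is connected, contains $\Gamma^+$, and is disjoint from $\omega(\Gamma^+)\subset\bd U$, it follows that $U\subset U'$; in particular every ray contained in $U$ is disjoint from $\Fomega(\Gamma^+)$. By Proposition~\ref{pro:pri-im} there is a ray $\lambda\subset U$ with $\lambda\to\pe p$ in $\cPE(U)$ and $\omega(\lambda)=\priPE(\pe p)$. The crucial point is that $\lambda$ may be chosen disjoint from $\Gamma^+$; granting this, $\lambda$ is a ray disjoint from $\Gamma^+\cup\Fomega(\Gamma^+)$ with $\omega(\lambda)\cap\omega(\Gamma^+)=\priPE(\pe p)\neq\emptyset$, so Lemma~\ref{lem:spiral-omega} applied to $\Gamma^+$ and $\lambda$ gives $\omega(\Gamma^+)\subset\omega(\lambda)=\priPE(\pe p)$. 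Combined with the previous paragraph this yields $\omega(\Gamma)=\priPE(\pe p)$, as claimed.

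The main obstacle is therefore the disjointness of $\lambda$ from $\Gamma^+$, and I would establish it by working in $\cPE(U)\simeq\ol{\D}$, where $\Gamma^+\cup\{\pe p\}$ is a compact arc ending at $\pe p\in\bd\ol\D$. Choosing a small closed topological disk $N\subset\ol\D$ which is a neighborhood of $\pe p$ and meets this arc in a single spanning subarc ending at $\pe p$, the set $N\sm(\Gamma^+\cup\{\pe p\})$ has exactly two components. The ray furnished by Proposition~\ref{pro:pri-im} is eventually contained in $N$. If some tail of it lies in one of these two components, that tail is disjoint from $\Gamma^+$ and still has $\omega$-limit $\priPE(\pe p)$, so it serves as $\lambda$. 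Otherwise the ray meets $\Gamma^+$ infinitely often near $\pe p$, and I would instead reconstruct $\lambda$ directly from the construction in Proposition~\ref{pro:pri-im}: the minimal ray is built from a chain representing $\pe p$ whose topological upper limit is $\priPE(\pe p)$, joined through suitably chosen points, and there is enough freedom in that construction to keep the connecting arcs inside one of the two components of $N\sm(\Gamma^+\cup\{\pe p\})$ while preserving $\omega(\lambda)=\priPE(\pe p)$. I expect this rerouting step to be the only genuinely delicate point of the argument.
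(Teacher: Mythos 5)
Your reduction is sound as far as it goes: once you have a ray $\lambda\subset U$ disjoint from $\Gamma^+$ with $\emptyset\neq\omega(\lambda)\subset\priPE(\pe p)$, Proposition~\ref{pro:pri-im} plus Lemma~\ref{lem:spiral-omega} do give $\omega(\Gamma)=\priPE(\pe p)$ (you only need the inclusion $\omega(\lambda)\subset\priPE(\pe p)$, not equality). The trouble is that the existence of such a $\lambda$ is precisely the hard content of the lemma, and your sketch does not establish it. In the second branch of your dichotomy you propose to reroute the connecting arcs of the minimal ray inside one component of $N\sm(\Gamma^+\cup\{\pe p\})$, where $N$ is a small neighborhood of $\pe p$ in $\cPE(U)$. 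But ``small in $\cPE(U)$'' gives no control in $\SS^2$: a prime-end neighborhood of $\pe p$ corresponds to a cross-section of $U$ whose diameter in $\SS^2$ is bounded only by the impression $\imPE(\pe p)$, which a priori is much larger than $\priPE(\pe p)$ (that is the whole point of the lemma, since all one knows at the outset is $\priPE(\pe p)\subset\omega(\Gamma)\subset\imPE(\pe p)$). The ray $\Gamma^+$ crosses every sufficiently small cross-cut of any chain representing $\pe p$, possibly in a complicated set, so rerouting around it can force detours of uniformly large $\SS^2$-diameter; the rerouted ray may then accumulate on impression points outside $\priPE(\pe p)$, and Lemma~\ref{lem:spiral-omega} only yields $\omega(\Gamma)\subset\imPE(\pe p)$, which is not the statement. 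So ``there is enough freedom in that construction'' is exactly the assertion that needs proof, and nothing in your argument supplies it; note also that your disjointness claim makes no use of the spiraling hypothesis, whereas any correct proof must use it somewhere at this stage.

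For comparison, the paper's proof is built to avoid ever separating $\Gamma'$ (the minimal ray) from $\Gamma$ inside $U$. It works in $\cPE(U')$ with $U'=\SS^2\sm\Fomega(\Gamma)$, punctures the disk at a point outside $U$ (possible because $U\subsetneq U'$), and passes to the universal cover of the punctured disk. There disjointness comes for free: both $\Gamma$ and $\Gamma'$ lie in the simply connected set $U$, so $U$ lifts to pairwise disjoint copies and at most one deck translate of the lift of $\Gamma'$ can meet the chosen tail of the lift of $\Gamma$; picking another translate, the spiraling tail of $\Gamma$ bounds a region $D$ in the cover that absorbs all sufficiently far translates, so the lift of $\Gamma'$ is trapped in $D$ and forced to spiral as well, giving $\omega(\Gamma')=\bd U'=\omega(\Gamma)$. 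If you want to keep your reduction, this covering-space argument is the missing ingredient that replaces your disjointness claim.
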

\begin{proof}
Let $U' = \SS^2\sm \Fomega(\Gamma)$, so that $\bd U' = \omega(\Gamma)$. 
We first remark that $U\subsetneq U'$, since otherwise $\Gamma$ would not spiral toward $\omega(\Gamma)$ (recall the definition from Section \ref{sec:spiral}). 

By Proposition \ref{pro:pri-im}, the principal set $\priPE(\pe p)$ is contained in $\omega(\Gamma)$. Moreover, by the same proposition we may assume there exists a ray $\Gamma'$ in $U$ converging towards $\pe p$ in $\cPE(U)$ such that $\omega(\Gamma') = \priPE(\pe p) \subset \omega(\Gamma)$.  To complete the proof it suffices to show that $\omega(\Gamma) = \omega(\Gamma')$.

We identify $\cPE(U')$ with $\ol{\D}$ in a way that the origin does not belong to $U$ (which is possible since $U\subsetneq U'$). Let $A = \ol{\D}\sm \{(0,0)\}$ and let $\til{A}$ denote its universal covering, which we may identify with $\{(x,y)\in \R^2 : y\leq 0\}$, with the translation $T\colon(x,y)\mapsto  (x+1, y)$ generating the group of deck transformations. 
Let $\til \Gamma$ be a lift of $\Gamma$ to $\til{A}$, and fix $y_0<0$ such that $\til \Gamma$ intersects the line  $L_0 = \{(x,y_0):x\in \R\}$. Note that if $\gamma\colon [0,\infty)\to \til A$ is a parametrization of $\til \Gamma$, denoting by $\pr_i$ the projection onto the $i$-th coordinate we have $\pr_2(\gamma(t))\to 0$ and $\pr_1(\gamma(t))\to \infty$ or $-\infty$ as $t\to \infty$ (because $\Gamma$ spirals towards $\SS^1$ in $\ol{\D}$).

Thus there exists a last intersection of $\til{\Gamma}$ with $L_0$, \ie there exists $t_0$ such that the image $\til \Gamma_{t_0}$ of $\gamma|_{[t_0, \infty)}$ intersects $L_0$ only at its initial point $\gamma(t_0)$. Consider the region $D$ bounded by $\til \Gamma_{t_0}$ and the half-line $\{(x, y_0) : x \geq \pr_1(\gamma(t_0))\}$, which is a topological disk whose projection $\pr_1(D)$ is bounded from below (see Figure \ref{fig:lem-im}). The set $D$ has the property that for every $z\in \til A$ with $y_0<\pr_1(z)<0$ there exists $i_0$ such that $T^i z\in D$ for all $i\geq i_0$.

\begin{figure}[ht!]
\includegraphics[height=2.5cm]{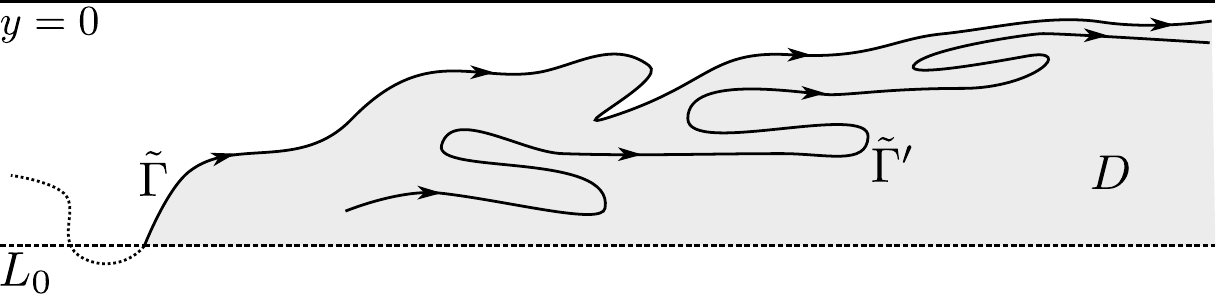}
\caption{Proof of Lemma \ref{lem:im}}
\label{fig:lem-im}
\end{figure}

We claim that $\Gamma'$ also spirals towards $\SS^1$ in $\ol{\D}$.
Recall that $\Gamma'$ was chosen as a ray in $U$ such that $\Fomega(\Gamma')=\priPE(\pe p)\subset \Fomega(\Gamma)$ in $\SS^2$. This implies that in $\ol{\D}$, the $\omega$-limit of $\Gamma'$ is contained in the boundary circle. In particular, removing an initial segment of $\Gamma'$ if necessary, we may assume that any lift $\Gamma'$ is disjoint from the projection of $L_0$ to $\ol{\D}$ (which is a circle). Let $\til \Gamma'$ be a lift of $\Gamma'$ to $\til A$, so by our assumption $\pr_2(\til\Gamma')\subset (y_0, 0)$. We may choose the lift so that $\til \Gamma'$ intersects $D$ (replacing $\til \Gamma'$ with $T^i(\til \Gamma')$ for a large enough $i$). In addition, we may assume that $\til \Gamma'$ is disjoint from $\til \Gamma_{t_0}$, since $T^i\til\Gamma'$ intersects $\til \Gamma_{t_0}$ for at most one value of $i$ (as both sets project into the simply connected subset $U$ of $\ol{\D}\sm \{(0,0)\}$).

Thus $\til \Gamma'$ intersects $D$ but not $\bd D$, which implies that $\til \Gamma'\subset D$. Moreover, if $\gamma'$ is a parametrization of $\til \Gamma'$, one has $\pr_2(\gamma'(t))\to 0$ as $t\to\infty$. This, together with the fact that $\pr_1(\gamma(t))\to\infty$ as $t\to\infty$, easily implies that $\pr_1(\gamma'(t))\to \infty$. Thus $\Gamma'$ spirals towards $\SS^1$ in $\ol{\D}$, which means that, seen in $\SS^2$, the ray $\Gamma'$ spirals towards $\bd U'$, and in particular $\omega(\Gamma') = \bd U' = \omega(\Gamma)$ as we wanted to show.
\end{proof}

\subsection{Attractors and repellors of fixed prime ends}

The previous lemma and Theorem \ref{th:pb} lead to the following:

\begin{theorem}\label{th:prime-end} Suppose $U$ is an $f$-invariant open topological disk and $\pe p$ is a fixed prime end. If the impression of $\pe p$ has no fixed points, then:
\begin{itemize}
\item[(1)] $\pe p$ is an attracting (or repelling) prime end;
\item[(2)] The principal set of $\pe p$ is the boundary of a rotational attractor (or repellor) $K$ disjoint from $U$;
\item[(3)] Every translation ray (or negative translation ray) $\Gamma$  in $U$ converging towards $\pe p$ in $\cPE(U)$ spirals towards $K$ in $\SS^2$, and there exist such rays;
\item[(4)] The interior of $K$ has a unique invariant connected component, of fixed point index $1$.
\end{itemize}
\end{theorem}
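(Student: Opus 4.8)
The plan is to reduce parts (1)--(3) to the Poincar\'e--Bendixson theorem (Theorem \ref{th:pb}) applied to a translation line manufactured from $\pe p$, and to obtain (4) from an index computation together with a rigidity argument excluding degenerate components of $\inter K$. I treat the case where $\pe p$ is attracting; the repelling case follows by applying the same argument to $f^{-1}$. Since $\imPE(\pe p)$ contains no fixed point, Lemma \ref{lem:C-L} gives that $\pe p$ is attracting or repelling in $\cPE(U)$, which is (1); assume it attracting. Working in $\cPE(U)\simeq\ol{\D}$, a standard free-arc construction near $\pe p$ (a crosscut near $\pe p$, a free arc joining it to its image, and iteration, using that a small neighborhood of $\pe p$ contains no fixed point of $f$) produces a translation ray $\Gamma\subset U$ converging to $\pe p$, which also settles the existence claim in (3). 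By Proposition \ref{pro:pri-im}, $\omega(\Gamma)\subset\imPE(\pe p)$, so $\omega(\Gamma)$ has no fixed points. I complete $\Gamma$ to a translation line $\Gamma'=\bigcup_{n\ge0}f^{-n}(\Gamma)$: since $U$ is invariant, $\Gamma'\subset U$; its forward ray is $\Gamma$, so $\omega(\Gamma')=\omega(\Gamma)$; and $\Gamma'$ is a genuine line (reversing the dynamics collapses it back onto $\Gamma$). Applying Theorem \ref{th:pb} to $\Gamma'$, whose $\omega$-limit is fixed-point-free, shows $\Gamma'$ is embedded and $K:=\Fomega(\Gamma')$ is a rotational attractor with $\bd K=\omega(\Gamma')$; moreover $K\cap U=\emptyset$ because $U$ is connected, contains $\Gamma'$, and is disjoint from $\omega(\Gamma')\subset\bd U$.

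For (2) and (3) I identify $\bd K$ with $\priPE(\pe p)$. The ray $\Gamma$ is embedded, disjoint from $K$, and lies in the basin of $K$ (each of its points has $\omega$-limit inside $\omega(\Gamma)=\bd K$), so Lemma \ref{lem:rotational-spiral} gives that $\Gamma$ spirals towards $K$, hence $\Gamma'$ spirals towards $\omega(\Gamma')$. Lemma \ref{lem:im} then gives $\omega(\Gamma')=\priPE(\pe p)$, so $\bd K=\priPE(\pe p)$. Since $\SS^2\sm K$ is a connected open set containing $U$ with boundary $\priPE(\pe p)$, an elementary argument shows it equals the complementary component of $\priPE(\pe p)$ containing $U$; thus $K$ is determined by $\pe p$ alone, which is (2). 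Running the same construction on an arbitrary translation ray $\Gamma$ in $U$ converging to $\pe p$ produces a rotational attractor with boundary $\priPE(\pe p)$, hence with complement that same component, hence equal to $K$, and $\Gamma$ spirals towards it by Lemma \ref{lem:rotational-spiral}; this is (3).

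For (4), additivity of the index and the Lefschetz--Hopf theorem give $i(f,K)=i(f,\SS^2)-i(f,\SS^2\sm K)=2-1=1$, using Lemma \ref{lem:rot-index} and $\rho(f,\SS^2\sm K)=-\rho_e(f,K)\neq0$. All fixed points of $f$ in $K$ lie in $\inter K$, and $\bd(\inter K)\subset\bd K$ carries no fixed point; each component $V$ of $\inter K$ is an open disk (its complement is connected, every piece being attached to $\overline{\SS^2\sm K}$ along $\bd K$) with $f|_{\bd V}$ fixed-point-free, and non-invariant components contain no fixed points, so $i(f,\inter K)=i(f,K)=\sum_{V\ \mathrm{invariant}}i(f,V)=1$. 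By Corollary \ref{coro:disk-index} each invariant $V$ has $i(f,V)\le1$, with equality iff $\rho(f,V)\neq0$, so (4) will follow once one knows $\rho(f,V)\neq0$ for every invariant component $V$: then all summands are $1$, there is exactly one invariant component, and its index is $1$. Suppose $\rho(f,V)=0$. Then $V$ has a stable branch $\Gamma^s$ (Remark \ref{rem:stable-branch-exist}), a translation line with $\alpha(\Gamma^s)\subset\bd V\subset\bd K$, a fixed-point-free continuum with more than one point; applying Theorem \ref{th:pb} to $f^{-1}$ produces a rotational repellor $R=\Falpha(\Gamma^s)$, disjoint from $V$, with $\bd R=\alpha(\Gamma^s)$. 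Since $\omega(\Gamma')=\bd K\supset\bd R$ meets $R$, Lemma \ref{lem:attractor-intersect} forces $\Gamma'$ to meet $R$; as $\Gamma'\subset\SS^2\sm K$ and $\alpha(\Gamma^s)\subset\bd K$ misses the connected set $\SS^2\sm K$, that set must lie in $R$, whence $\SS^2\sm R$ is a connected subset of $\inter K$ containing $V$, so $\SS^2\sm R=V$ and $R=\SS^2\sm V$. But then $\rho_e(f,R)=-\rho(f,V)=0$, contradicting that $R$ is a \emph{rotational} repellor. This completes (4).

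The main obstacle is precisely this last step of (4): the index count by itself permits several invariant components of $\inter K$ whose indices cancel to $1$, and excluding this needs the interplay between the absence of fixed points on $\bd K=\priPE(\pe p)$ and the nonzero prime-ends rotation number of $K$ --- realized above by exhibiting the complement of a hypothetical degenerate component as a rotational repellor and contradicting its rotation number. The remaining ingredients (the free-arc construction of $\Gamma$, the verification that $\Gamma'$ is a line, and the planar-topology facts about complementary components of $\priPE(\pe p)$) are routine.
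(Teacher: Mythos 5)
Your argument is correct, and for items (1)--(3) it is the same route the paper takes: Lemma \ref{lem:C-L} for (1), a translation ray in $U$ converging to $\pe p$ (asserted at the same level of detail as in the paper), extension to a translation line, Theorem \ref{th:pb} to get the rotational attractor $K=\Fomega(\Gamma')$ disjoint from $U$, Lemma \ref{lem:rotational-spiral} for the spiraling, and Lemma \ref{lem:im} to identify $\bd K$ with $\priPE(\pe p)$; your extra observation that $K$ is determined as the complement of the component of $\SS^2\sm\priPE(\pe p)$ containing $U$ is exactly what makes (3) follow for an arbitrary ray, as in the paper.

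The only genuine divergence is the endgame of (4). The paper also reduces to showing that every invariant component of $\inter K$ has index $1$, and, assuming a bad component $U'$, it invokes items (1)--(3) for $U'$ to produce \emph{both} a rotational attractor $A$ and a rotational repellor $R$ attached to fixed prime ends of $U'$; it then applies Lemma \ref{lem:attractor-intersect} twice, first to the outer ray (giving $\bd K\subset R$) and then to a ray in $U'$ spiraling to $A$, whose $\omega$-limit lies in $R$ while the ray itself cannot meet $R$ --- contradiction. You instead use a single stable branch of the bad component $V$, get the rotational repellor $R=\Falpha(\Gamma^s)$ from Theorem \ref{th:pb} applied to $f^{-1}$, and apply Lemma \ref{lem:attractor-intersect} only once to the outer line, upgrading the conclusion to $\SS^2\sm K\subset R$ by connectedness of $\SS^2\sm K$; since $\SS^2\sm R$ is a connected open subset of $\inter K$ containing $V$, you obtain $R=\SS^2\sm V$ and contradict $\rho_e(f,R)\neq 0$ because $\rho(f,V)=0$. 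Both arguments rest on the same machinery (Theorem \ref{th:matsu-naka} via Remark \ref{rem:stable-branch-exist}/Lemma \ref{lem:flower}, Corollary \ref{coro:disk-index}, Lemma \ref{lem:rot-index}); yours is marginally more economical (one branch instead of an attractor--repellor pair, one application of Lemma \ref{lem:attractor-intersect} instead of two) and has the pleasant by-product of identifying the hypothetical repellor explicitly as $\SS^2\sm V$, whereas the paper's version avoids any appeal to the rotation number of $V$ beyond the initial dichotomy. I found no gap; the small assertions you leave implicit (that a translation ray extends to a translation line, that components of $\inter K$ are open disks, and the compactness needed for index additivity across $\bd K$) are true and are used at the same level of implicitness in the paper's own proof.
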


\begin{proof} By Lemma \ref{lem:C-L} we have that $\pe p$ is either attracting or repelling in $\cPE(p)$, so (1) holds. Assume that $\pe p$ is attracting (the repelling case is similar). Then there exists an embedded translation ray $\Gamma$ in $U$ whose $\omega$-limit in $\cPE(p)$ is $\pe p$. By Theorem \ref{th:pb}, either $\omega(\Gamma)$ contains a fixed point or $\Fomega(\Gamma)$ is a rotational attractor. Since $\omega(\Gamma)$ is contained in the impression of $\pe p$ (see Section \ref{sec:prime-ends}), by our assumption it has no fixed points, so $K= \Fomega(\Gamma)$ must be a rotational attractor and $\Gamma$ spirals towards $K$. Since $\omega(\Gamma)\subset \bd U$ and $\Gamma\subset U$, the definition of filled $\omega$-limit implies that $K$ is disjoint from $U$. From the previous lemma, we have that $\omega(\Gamma) = \bd K$ is the principal set of $\pe p$, proving (2).
Since the same argument applies to any translation ray $\Gamma\subset U$ converging towards $\pe p$ in $\cPE(U)$, item (3) follows.

For the last item, first note that since $K$ is a rotational attractor, $\rho(f, \SS^2\sm K)\neq 0$ and so by Lemma \ref{lem:rot-index} the fixed point index of the disk $\SS^2\sm K$ is $1$. Since $\bd K\subset \bd U$ has no fixed points, the sum of the indices of all invariant connected components of $\SS^2\sm \bd K$ must be equal to $2$. One of these components is $\SS^2\sm K$, which has index $1$, and all other components are components of the interior of $K$. Therefore the sum of the indices of invariant connected components of $\inter(K)$ must be equal to $1$.
Hence to prove (4) it suffices to show that all invariant components of $\inter(K)$ have index equal to $1$.

Let $U'$ be any invariant connected component of the interior of $K$, and assume for a contraction that its fixed point index is not $1$.  Since $\bd U'$ has no fixed points, its index is at most $1$ by Corollary \ref{coro:disk-index}, so it must be nonpositive. By Lemma \ref{lem:rot-index} one has $\rho(f, U') = 0$, and by Theorem \ref{th:matsu-naka} there exists at least one attracting and one repelling fixed prime end of $U'$. By the previous items applied to $U'$, there is at least one rotational attractor $A$ and a rotational repellor $R$, both disjoint from $U'$, such that $\bd A$ and $\bd R$ are the principal sets of a fixed attracting and repelling prime end of $U'$, respectively. Moreover, as in (2) there exists a translation ray $\Gamma$ in $U$ spiraling towards $K = \Fomega(\Gamma)$, and there exists a translation ray $\Gamma'$ in $U'$ spiraling towards $A = \Fomega(\Gamma')$. 
Since $\bd R\subset \bd K = \omega(\Gamma)$, by Lemma \ref{lem:attractor-intersect} we have that $\Gamma$ intersects the repellor $R$. Since $\Gamma$ is disjoint from $\bd R\subset \bd U$, it follows that $\Gamma\subset R$, thus $\bd K = \omega(\Gamma)\subset R$. But then $\omega(\Gamma') \subset \bd U'\subset \bd K \subset R$, so again by Lemma \ref{lem:attractor-intersect} we have conclude that $\Gamma'\cap R\neq \emptyset$. This is a contradiction, since $\Gamma'\subset U'$ which is disjoint from $R$. This contradiction shows that the fixed point index of $U'$ is $1$ for every invariant connected component of the interior of $K$, as we wanted.
\end{proof}

We note that item (4) only uses the fact that $K$ is a rotational attractor or repellor and has no fixed points. Thus we have the following:
\begin{corollary}\label{coro:rotational-index} If $K$ is a rotational attractor or repellor of $f$ such that $\bd K$ has no fixed points, then the interior of $K$ has a unique invariant connected component, which has fixed point index $1$.
\end{corollary}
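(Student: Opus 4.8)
The plan is to transcribe the proof of item~(4) of Theorem~\ref{th:prime-end}, the only genuinely new point being that here $K$ is not handed to us as the filled $\omega$-limit of a spiraling translation ray, so such a ray must be produced by hand. Replacing $f$ by $f^{-1}$, we may assume $K$ is a rotational attractor.

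First I would do the index bookkeeping. Since $\rho_e(f,K)\ne 0$ we have $\rho(f,\SS^2\sm K)\ne 0$, so $i(f,\SS^2\sm K)=1$ by Lemma~\ref{lem:rot-index}. As $\bd K$ has no fixed points, $\fix(f)$ is a compact subset of the open set $\SS^2\sm\bd K$, hence meets only finitely many of its components, and those are invariant; every other component is either invariant with index $0$ or lies in a cycle of length $\ge 2$, which is an invariant open set without fixed points and so of index $0$. By the Lefschetz--Hopf theorem and additivity, the indices of the invariant components of $\SS^2\sm\bd K$ sum to $2$; one of them is $\SS^2\sm K$, of index $1$, and the rest are exactly the invariant connected components of $\inter K$, so the indices of the latter sum to $1$. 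Hence it suffices to prove that every invariant component $U'$ of $\inter K$ has index $1$, for then there must be exactly one such component.

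So suppose $U'$ is an invariant component of $\inter K$ with $i(f,U')\ne 1$. Since $\bd U'\subset\bd K$ has no fixed point, Corollary~\ref{coro:disk-index} gives $\rho(f,U')=0$ and Theorem~\ref{th:matsu-naka} gives $i(f,U')=-k'$ with $k'\ge 1$ together with an attracting and a repelling fixed prime end of $U'$. The impression of the repelling one lies in $\bd U'\subset\bd K$ and contains no fixed point, so Theorem~\ref{th:prime-end} applied to $U'$ produces a rotational repellor $R$, disjoint from $U'$, with $\bd R\subset\bd U'$; and by Lemma~\ref{lem:flower} there is a translation line $\Gamma'\subset U'$ which is an unstable branch of $U'$, hence by Theorem~\ref{th:prime-end}(3) spirals towards a rotational attractor $A$ with $\bd A\subset\bd U'$, so $\omega(\Gamma')=\bd A\subset\bd U'$. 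On the other hand, since $K$ is an attractor and $\SS^2\sm K$ is connected, $K$ has a trapping region that is an open topological disk $V$ with $f(\ol V)\subset V$ (fill in the complementary components of an arbitrary trapping region). Then $K\subset f(V)\subset V$, the set $\ol V\sm f(V)$ is a closed annulus disjoint from $K$ and containing no fixed point, and choosing an arc $\alpha$ crossing it from a point $z_0\in\bd V$ to $f(z_0)$ one gets a translation line $\Gamma=\bigcup_{n\in\Z}f^n(\alpha)$ lying in the basin of $K$ and disjoint from $K$; by Lemma~\ref{lem:rotational-spiral}, $\Gamma$ spirals towards $K$, so $\omega(\Gamma)=\bd K$.

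Finally I would close the loop exactly as in Theorem~\ref{th:prime-end}(4): since $\bd R\subset\bd K=\omega(\Gamma)$, Lemma~\ref{lem:attractor-intersect} gives $\Gamma\cap R\ne\emptyset$; being connected, meeting $R$ and disjoint from $\bd R\subset\bd K\subset K$, the line $\Gamma$ lies in $\inter R$, whence $\bd K=\omega(\Gamma)\subset\ol\Gamma\subset R$. Then $\omega(\Gamma')=\bd A\subset\bd U'\subset\bd K\subset R$, so Lemma~\ref{lem:attractor-intersect} applied to $\Gamma'$ forces $\Gamma'\cap R\ne\emptyset$, contradicting $\Gamma'\subset U'$ and $U'\cap R=\emptyset$. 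This proves the claim, and with it the corollary. The main obstacle, and the only step not already present in Theorem~\ref{th:prime-end}, is the construction of the translation line $\Gamma$ spiraling towards $K$: the points to check are that the trapping region may be taken to be a disk, and that $\bigcup_n f^n(\alpha)$ is an injective, fixed-point-free line, which follows from $K\subset f(V)\subset V$ and the nesting $f^{\,m}(\ol V)\subset f^{\,m-1}(V)$.
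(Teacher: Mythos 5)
Your overall route is the same as the paper's: the paper obtains this corollary precisely by observing that the proof of Theorem \ref{th:prime-end}(4) only uses that $K$ is a rotational attractor (or repellor) with $\fix(f)\cap\bd K=\emptyset$, and your transcription of that argument (the index bookkeeping for the invariant components of $\SS^2\sm\bd K$, the production of $A$, $R$ and $\Gamma'$ inside a putative component $U'$ of $\inter K$ with index $\neq 1$, and the final contradiction via Lemma \ref{lem:attractor-intersect}) is faithful and correct. You also correctly identify the one ingredient that is not literally available in this setting, namely a translation ray in $\SS^2\sm K$ spiraling towards $K$, which the paper leaves implicit (Lemma \ref{lem:rotational-spiral} presupposes such a ray rather than constructing it).

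The gap is in your construction of that ray. ``Fill in the complementary components of an arbitrary trapping region'' is too loose on exactly the points you flag. First, the filled components are not part of the trapping region, so they need not lie in the basin of $K$: the maximal invariant set of the filled region can be strictly larger than $K$ (e.g.\ a periodic orbit distributed among several holes), and then the claim that $\Gamma$ lies in the basin of $K$ --- which is precisely what Lemma \ref{lem:rotational-spiral} needs to give $\omega(\Gamma)=\bd K$ --- is unjustified. Second, $f(\ol V)\subset V$ is not automatic for the filled set: one only knows that $f^{-1}$ sends the complement of $V_1$ into the complement of $\ol{V_1}$, and with infinitely many complementary components (or a bad choice of which one to keep) the closure of the filled region need not be mapped into it. Third, $\ol V\sm f(V)$ is a closed annulus only if $\bd V$ is a Jordan curve; for an arbitrary trapping region, even after filling, $\bd V$ can be a complicated continuum, and then neither the annulus structure nor the existence (and accessibility) of the crossing arc $\alpha$ is clear. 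All of this is repairable by a routine refinement: take a closed disk neighborhood $D$ of $K$ with $D\subset\mathrm{basin}(K)$ and $D\sm K$ fixed-point free, cover $f(\ol{V_0})$ (for a trapping region $V_0\subset\inter D$) by finitely many round open disks with closures in $V_0$, let $W$ be their union (a trapping region with finitely many, tame complementary components), take the component $W_1\supset K$, and remove only the complementary component $C_0$ containing $\SS^2\sm D$; since $i(f,\SS^2\sm K)=1$ there is a fixed point in $\SS^2\sm K$, necessarily outside the basin and hence in $C_0$, which forces $f^{-1}(C_0)\subset C_0$ and then $f(\ol V)\subset V$ for $V=\SS^2\sm C_0$. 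This $V$ is an open disk with $\ol V\subset D\subset\mathrm{basin}(K)$ and nice boundary, and with it your arc construction and the rest of your argument go through verbatim.
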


\subsection{The rotational attractors and repellors of a disk}\label{sec:A-R}
Given an invariant open topological disk $U$ without fixed points in its boundary and with a vanishing prime end rotation number (or equivalently, with nonpositive fixed point index), Theorem \ref{th:prime-end} allows us to define the families $\mc A(U)$ and $\mc R(U)$ of all rotational attractors and repellors (respectively) associated to fixed prime ends. Every element of $\mc A(U)$ (or $\mc R(U)$) is disjoint from $U$ and its boundary is the principal set of an attracting (repelling) prime end, so by Theorem \ref{th:matsu-naka} if the fixed point index in $U$ is $-k$, there are at most $k + 1$ elements in each set.
We remark that by Theorem \ref{th:prime-end}(4), the interior of each element of $\mc A(U)\cup \mc R(U)$ has exactly one invariant connected component, of index $1$.

\begin{lemma}\label{lem:A-R-disjoint} The elements of $\mc A(U) \cup \mc R(U)$ are pairwise disjoint cellular continua.
\end{lemma}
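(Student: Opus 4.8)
The plan is to handle separately three kinds of pairs: two rotational attractors, two rotational repellors, and one of each. The cellular-continuum assertion is immediate, since by definition every rotational attractor or repellor is a cellular continuum. For the disjointness I will use, for each fixed prime end $\pe p$ of $U$ (all of which satisfy the hypothesis of Theorem~\ref{th:prime-end}, as $\bd U$ has no fixed point), the following data supplied by that theorem: if $\pe p$ is attracting there is a rotational attractor $K_{\pe p}$ disjoint from $U$ with $\bd K_{\pe p}=\priPE(\pe p)$, together with a translation ray $\Gamma_{\pe p}\subset U$ converging to $\pe p$ in $\cPE(U)$ and spiraling towards $K_{\pe p}$ (hence $\Gamma_{\pe p}$ is an embedded ray with $\Fomega(\Gamma_{\pe p})=K_{\pe p}$ and $\omega(\Gamma_{\pe p})=\bd K_{\pe p}$), and symmetrically, if $\pe p$ is repelling, a rotational repellor $K_{\pe p}$ and a negative translation ray $\Sigma_{\pe p}\subset U$ spiraling towards $K_{\pe p}$. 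Since a cellular continuum disjoint from $U$ with boundary $\priPE(\pe p)$ is forced to equal the complement of the component of $\SS^2\sm\priPE(\pe p)$ containing $U$, the set $K_{\pe p}$ is determined by $\pe p$, and choosing for each element of $\mc A(U)$ (resp.\ $\mc R(U)$) one prime end realizing it yields an injection into the attracting (resp.\ repelling) fixed prime ends. Finally, replacing each $\Gamma_{\pe p}$ or $\Sigma_{\pe p}$ by a sufficiently far subray — which is again a (negative) translation ray, converges to the same prime end, and spirals towards the same continuum — and using that the finitely many fixed prime ends occupy distinct points of the circle $\bdPE(U)$, I may and will assume the chosen rays are pairwise disjoint.

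For two distinct $A,A'\in\mc A(U)$ with associated disjoint embedded rays $\Gamma,\Gamma'$ spiraling towards $A=\Fomega(\Gamma)$ and $A'=\Fomega(\Gamma')$, I will apply the trichotomy of Lemma~\ref{lem:spiral}(2) to $\Gamma$ and $\Gamma'$. Alternative (a) would give $\Gamma'\subset A$ or $\Gamma\subset A'$, impossible since the rays lie in $U$ while $A,A'$ are disjoint from $U$. If alternative (c) holds then $A\subseteq A'$, and applying the lemma once more with the roles exchanged — legitimate because $\Gamma'$ also spirals towards its own $\omega$-limit — again rules out (a) and yields either $A\cap A'=\emptyset$ or $A'\subseteq A$, the latter forcing $A=A'$, contrary to assumption. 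Hence the only surviving possibility is (b), i.e.\ $A\cap A'=\emptyset$. The case of two distinct elements of $\mc R(U)$ is identical after replacing $f$ by $f^{-1}$.

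The mixed case is where a different tool is needed. Suppose $A\in\mc A(U)$, $R\in\mc R(U)$ and $A\cap R\neq\emptyset$; I claim this leads to a contradiction. First, one of $\bd A\cap R$ and $\bd R\cap A$ is nonempty: otherwise $R$, being connected, disjoint from $\bd A$ and meeting $A$, would lie in $\inter A$, so $\bd R\subset A$ (as $A$ is closed), contradicting $\bd R\cap A=\emptyset$ (and $\bd R\neq\emptyset$, since $R$ is a proper subcontinuum of $\SS^2$). If $\bd A\cap R\neq\emptyset$, write $A=K_{\pe p}$ for an attracting prime end $\pe p$ and take $\Gamma=\Gamma_{\pe p}\subset U$; by Proposition~\ref{pro:pri-im}, $\omega(\Gamma)\supseteq\priPE(\pe p)=\bd A$, so $\omega(\Gamma)\cap R\neq\emptyset$, and Lemma~\ref{lem:attractor-intersect} (which also holds for translation rays, with the same proof) gives $\Gamma\cap R\neq\emptyset$ — absurd, as $\Gamma\subset U$ is disjoint from $R$. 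If instead $\bd R\cap A\neq\emptyset$, the same argument applied to $f^{-1}$ — for which $R$ is a rotational attractor, $A$ a rotational repellor, and the prime end with $R=K_{\pe q}$ is attracting — produces the same contradiction. Therefore $A\cap R=\emptyset$, completing the proof.

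The step I expect to demand the most care is precisely this mixed case: the two-attractor and two-repellor cases are a clean application of the spiraling trichotomy, but for a rotational attractor paired with a rotational repellor that trichotomy only excludes all possibilities but ``$A=R$'', which is awkward to rule out directly; the remedy is to feed the identification $\bd K_{\pe p}=\priPE(\pe p)$ into Lemma~\ref{lem:attractor-intersect} instead, which then requires tracking the correct ($f$ versus $f^{-1}$) version and verifying it applies to rays and not only to full translation lines. The preliminary reduction to pairwise disjoint rays via passage to subrays is the other place where a small verification is needed.
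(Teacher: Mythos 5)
Your proof is correct, and for two attractors (or two repellors) it is exactly the paper's argument: disjoint spiraling rays plus the trichotomy of Lemma~\ref{lem:spiral}(2), with alternative (a) excluded because the rays lie in $U$, and the double inclusion from (c) forcing equality. Where you diverge is the mixed attractor--repellor pair: the paper runs the \emph{same} trichotomy there and treats the outcome $K=K'$ as an immediate contradiction with the two sets being distinct elements of $\mc A(U)\cup\mc R(U)$ --- no separate case is made. Your alternative route (if $A\cap R\neq\emptyset$ then one of $\bd A\cap R$, $\bd R\cap A$ is nonempty, and then Lemma~\ref{lem:attractor-intersect}, in its translation-ray version, forces a ray in $U$ to meet $R$ or $A$) is also valid; the ray version of that lemma is indeed proved by the identical fundamental-domain argument, and the paper itself already applies it to rays (e.g.\ in the proofs of Theorem~\ref{th:prime-end} and Lemma~\ref{lem:A-R-same}). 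What your detour buys is the explicit extra fact that an element of $\mc A(U)$ can never \emph{coincide} with an element of $\mc R(U)$, a point the paper's phrasing leaves implicit (and which is anyway automatic, since a proper cellular continuum cannot be simultaneously an attractor and a repellor); what it costs is an unnecessary case split, since the trichotomy argument needs no information about which family each continuum belongs to. Your preliminary reductions --- truncating to far subrays to make the rays pairwise disjoint, and noting that far subrays are still translation rays converging to the same prime end --- are correct and in fact supply a small verification the paper glosses over.
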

\begin{proof}
Let $K$ and $K'$ be two different elements of $\mc A(U) \cup \mc R(U)$. Since they are principal sets of different prime ends, by Theorem \ref{th:prime-end} there exist disjoint embedded rays $\Gamma$ and $\Gamma'$ such that $\Gamma$ spirals towards $\Fomega(\Gamma) = K$ and $\Gamma'$ spirals towards $\Fomega(\Gamma') = K'$ (and each is either a positive or a negative translation ray).
Assume for a contradiction that $K\cap K'\neq \emptyset$. Since $U$ is disjoint form $K$ and $K'$, it cannot be the case that $\Gamma\subset K'$ or $\Gamma'\subset K$, so only the last item of Lemma \ref{lem:spiral}(2) may hold. Thus $K\subset K'$, but interchanging $\Gamma$ and $\Gamma'$ we also conclude $K'\subset K$, a contradiction.
\end{proof}

\begin{lemma}\label{lem:A-R-same} Suppose that $U$ and $U'$ are disjoint invariant open topological disks without fixed points in their boundaries and with vanishing prime ends rotation number. If the basin of $A\in \mc A(U)$ intersects $U'$ then either $U'\subset A$, or $A\in \mc{A}(U')$. A similar statement holds replacing $\mc{A}$ by $\mc{R}$.
\end{lemma}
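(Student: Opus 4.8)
The plan is to split according to whether $A$ meets $U'$. Recall from Theorem \ref{th:prime-end} (applied to $U$ and the attracting fixed prime end $\pe p$ of $U$ with $\priPE(\pe p)=\bd A$) that there is a translation ray $\Gamma\subset U$ converging to $\pe p$ in $\cPE(U)$ and spiraling towards $A=\Fomega(\Gamma)$, so that $\bd A=\omega(\Gamma)\subset\imPE(\pe p)\subset\bd U$; since $U$ and $U'$ are disjoint open sets, $\bd U\cap U'=\emptyset$, hence $\bd A\cap U'=\emptyset$. If $A\cap U'\neq\emptyset$, then $U'$ is a connected set disjoint from $\bd A$, so it lies in $\inter A$ or in $\SS^2\sm A$; since it meets $A$ it lies in (a component of) $\inter A$, giving $U'\subset A$, the first alternative. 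From now on I assume $A\cap U'=\emptyset$ and aim to show $A\in\mc A(U')$.

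Since $A$ is already a rotational attractor disjoint from $U'$, it suffices to exhibit a fixed attracting prime end $\pe q$ of $U'$ with $\priPE(\pe q)=\bd A$. Pick $z\in U'$ with $\omega(z,f)\subset A$ (possible by hypothesis). The orbit of $z$ stays in $U'$ (invariant), so $\emptyset\neq\omega(z,f)\subset A\cap\ol{U'}=A\cap\bd U'\subset\bd U'$; hence, in $\cPE(U')\simeq\ol\D$, the orbit of $z$ accumulates only on $\bdPE(U')$. As $\rho(f,U')=0$, the induced circle map has finitely many fixed points (Theorem \ref{th:matsu-naka}) and the dynamics near $\bdPE(U')$ is Morse--Smale (Lemma \ref{lem:flower}), so a standard argument shows the orbit of $z$ converges in $\cPE(U')$ to a single fixed prime end $\pe q$, which is attracting. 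Its impression lies in $\bd U'$ and so contains no fixed point; applying Theorem \ref{th:prime-end} to $(U',\pe q)$ yields a rotational attractor $K\in\mc A(U')$, disjoint from $U'$, with $\bd K=\priPE(\pe q)$, together with a translation ray $\Gamma''\subset U'$ converging to $\pe q$ and spiraling towards $K=\Fomega(\Gamma'')$.

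The crux is to prove $\bd K\subset A$. For this I would build from the forward orbit of $z$ an embedded ray $\Sigma\subset U'$ contained in the basin of $A$ (i.e.\ $\omega(w,f)\subset A$ for all $w\in\Sigma$), with $\omega(\Sigma)\subset A$, and still converging to $\pe q$ in $\cPE(U')$: one joins $f^n(z)$ to $f^{n+1}(z)$ by arcs that, for $n$ large, stay inside the iterates $f^n(V)$ of a connected trapping region $V$ of $A$ (so $\omega(\Sigma)\subset\bigcap_n f^n(\ol V)=A$) while remaining small in $\cPE(U')$ (so $\Sigma$ inherits the convergence of the orbit of $z$ to $\pe q$). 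Then Proposition \ref{pro:pri-im} gives $\bd K=\priPE(\pe q)\subset\omega(\Sigma)\subset A$. Once $\bd K\subset A$, the connected set $\SS^2\sm A$ is disjoint from $\bd K$ and cannot lie in $\inter K$ (else $A\cup K=\SS^2$, impossible since the nonempty $U'$ is disjoint from both $A$ and $K$), so $\SS^2\sm A\subset\SS^2\sm K$, that is $K\subset A$. Finally, $\Gamma$ and $\Gamma''$ are disjoint embedded rays (lying in $U$ and $U'$) each spiraling towards its own $\omega$-limit, so Lemma \ref{lem:spiral}(2) applies: the alternatives $\Gamma''\subset\Fomega(\Gamma)=A$ (impossible: $\Gamma''\subset U'$ disjoint from $A$), $\Gamma\subset\Fomega(\Gamma'')=K$ (impossible: $\Gamma\subset U\subset\SS^2\sm A\subset\SS^2\sm K$) and $\Fomega(\Gamma)\cap\Fomega(\Gamma'')=A\cap K=\emptyset$ (impossible: $\emptyset\neq K\subset A$) are all excluded, leaving $A=\Fomega(\Gamma)\subset\Fomega(\Gamma'')=K$; with $K\subset A$ this gives $A=K\in\mc A(U')$. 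The statement for $\mc R$ follows by applying this to $f^{-1}$, which interchanges attractors with repellors and $\mc A$ with $\mc R$ while preserving the hypotheses.

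I expect the main obstacle to be the construction of the ray $\Sigma$, since the connecting arcs must simultaneously lie in $U'$, be close to $A$ in $\SS^2$, and be small in $\cPE(U')$, whereas the relevant set $(\text{small }\cPE(U')\text{-neighbourhood of }\pe q)\cap(\text{small }\SS^2\text{-neighbourhood of }A)\cap U'$ need not be connected, so the trapping region of $A$ and the defining chain of $\pe q$ have to be coordinated carefully. A secondary technical point is the convergence of the orbit of $z$ to a single prime end (rather than to a nondegenerate arc of $\bdPE(U')$), which again rests on the Morse--Smale description of the dynamics near $\bdPE(U')$ given by Lemma \ref{lem:flower}.
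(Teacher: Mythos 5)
Your reduction to the case $A\cap U'=\emptyset$, and the final application of Lemma \ref{lem:spiral}(2) to get $A\subset K$, are fine and close to what the paper does. The genuine gap is exactly the step you flag yourself: the inclusion $\bd K=\priPE(\pe q)\subset A$. As written you only know that the orbit of $z$ converges to $\pe q$ in $\cPE(U')$ and that $\omega(z,f)\subset \bd A$, which gives $\omega(z,f)\subset \imPE(\pe q)\cap\bd A$; since the impression may be strictly larger than the principal set, this does not even yield $\bd K\cap\bd A\neq\emptyset$, let alone $\bd K\subset A$. The proposed construction of the ray $\Sigma$ (joining $f^n(z)$ to $f^{n+1}(z)$ inside iterates of a trapping region of $A$ while keeping the arcs small in $\cPE(U')$) is precisely the hard point: the sets $f^n(V)\cap U'$ need not be connected, and even when the endpoints can be joined inside them, nothing controls the size of the connecting arcs in the prime ends compactification, so the $\omega$-limit of $\Sigma$ in $\cPE(U')$ could be a nondegenerate subcontinuum of $\bdPE(U')$, and Proposition \ref{pro:pri-im} would not apply. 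So the crux of the lemma is left unproved.

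The paper avoids this difficulty by a different device. It chooses a point $x\in U'$ in the basin of $A$ which also lies in the basin of an attracting prime end of $U'$, and then takes a translation ray $\Gamma'\subset U'$ \emph{through $x$ itself}; since $x\in\Gamma'$, one automatically has $\omega(x,f)\subset\omega(\Gamma')=\bd A'$ where $A'=\Fomega(\Gamma')\in\mc A(U')$, while $\omega(x,f)\subset\bd A$ because the forward orbit of $x$ stays in $U'\subset\SS^2\sm A$. Hence $\bd A\cap\bd A'\neq\emptyset$ with no delicate construction, and Lemma \ref{lem:spiral-omega} gives $A\subset A'$ (the opposite inclusion to the one you try to prove first). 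The reverse inclusion is then obtained by showing $U\cap A'=\emptyset$ — this uses Theorem \ref{th:prime-end}(4) (uniqueness of the invariant component of $\inter A$, which is also the unique invariant component of $\inter A'$ once $A\subset A'$ and $\bd A\subset\bd A'$) — and repeating the argument with the roles of $U,A$ and $U',A'$ interchanged. If you want to salvage your outline, replace the $\Sigma$ construction by this ``ray through the point'' trick and the symmetric argument; as it stands, the key inclusion $K\subset A$ rests on an unproved and genuinely problematic construction.
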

\begin{proof}
Let $\Gamma$ be a translation ray in $U$ spiraling towards $A$, which implies that $\Fomega(\Gamma)=A$. 
Assume $U'$ is not contained in $A$. Then $U'$ must be disjoint from $A$, since otherwise it would contain an arc joining a point of $\SS^2\sm A$ to a point of $A$, and therefore $U'$ would intersect $\Gamma$ (hence $U$) by Lemma \ref{lem:attractor-intersect}. 
Since $U'$ intersects the basin of $A$ and so does $U$, we may find a point $x\in U'$ arbitrarily close to $\bd U'$ in the basin of $A$. By Theorem \ref{th:matsu-naka}, such $x$ may be chosen in the basin of some attracting prime end $\pe p$ of $U'$, and therefore to some translation ray $\Gamma'$ in $U'$ converging towards $\pe p$ in $\cPE(U')$. By Theorem \ref{th:prime-end}, the set $A' = \Fomega(\Gamma')$ is an element of $\mc A(U')$ and $\Gamma'$ spirals towards $A'$. Note that since $x$ belongs to the basin of $A$ (but not to $A$), one has $\bd A\cap \bd A'\neq \emptyset$. Thus $\omega(\Gamma)$ intersects $\omega(\Gamma')$, and noting that $\Gamma'$ is disjoint from $\Fomega(\Gamma)\cup \Gamma$ we conclude from Lemma \ref{lem:spiral} that $\omega(\Gamma)\subset \omega(\Gamma')$ and $A = \Fomega(\Gamma)\subset \Fomega(\Gamma') = A'$.

We claim that $U$ cannot be contained in $A'$. Indeed, by Theorem \ref{th:prime-end}(4), the interior of any element of $\mc A(U)$ or $\mc A(U')$ has a unique invariant connected component. Since $A\cap U = \emptyset$, the unique invariant connected component $V$ of the interior of $A$ is disjoint from $U$. Moreover, one has $V\subset A'$ and $\bd V\subset \bd A'$, since $A\subset A'$ and $\bd A = \omega(\Gamma)\subset \omega(\Gamma')=\bd A'$.  These facts imply that $V$ is also a connected component of the interior of $A'$, and therefore it must be the unique invariant connected component of the interior of $A'$. But if $U\subset A'$, then $U$ is contained in some invariant connected component of the interior of $A'$, which means that $U\subset V$, contradicting the fact that $V$ is disjoint from $U$. Thus $U\not\subset A'$.

Hence, as in the beginning of the proof, Lemma \ref{lem:attractor-intersect} implies that $U$ is disjoint from $A'$, and we may repeat the previous argument interchanging $U$ with $U'$ and $A$ with $A'$ to conlcude that $A'\subset A$. Hence $A=A'\in \mc A(U')$.
\end{proof}

\subsection{Proof of Theorem \ref{th:disks}}

Suppose that $U\subset \SS^2$ is an $f$-invariant open topological disk such that $\rho(f, U) = 0$ but there are no fixed points in $\bd U$. As explained in Section \ref{sec:A-R}, there exist families $\mc{A}= \mc{A}(U)$ and $\mc{R} =  \mc{R}(U)$ of rotational attractors and repellors disjoint from $U$, each having at least one and at most $k+1$ elements (where $-k$ is the fixed point index of $U$), and elements of $\mc{A}\cup \mc{R}$ are pairwise disjoint. Moreover the boundary of any such element is the principal set of a fixed prime end of $U$.

Let $U_0, U_1,\dots, U_m$ be the connected components of $\SS^2\sm \bd U$ which have nonpositive index and contain a fixed point, where $U_0 = U$. Note that the nonpositive index implies that each $U_i$ has vanishing prime ends rotation number, so we also have families $\mc{A}(U_i), \mc{R}(U_i)$ of the corresponding rotational attractors and repellors of $U_i$. Since $\bd U_i\subset \bd U$, we see that $U$ intersects the basin of every element of $\mc A(U_i)$, so Lemma \ref{lem:A-R-same} implies that for any such element $A$, either $U\subset A$ or $A\in \mc A = \mc A(U)$. However $U\subset A$ is not possible, since it would imply that $U$ is an invariant connected component of the interior of $A$ with nonpositive index, contradicting Theorem \ref{th:prime-end}(4). Thus we have $\mc A(U_i)\subset \mc A$, and a similar argument shows $\mc R(U_i)\subset \mc R$.

Let $-k_i\leq 0$ denote the fixed point index of $U_i$. By Lemma \ref{lem:flower}, for each $i$ there exists an open topological disk $U_i'\subset U_i$ whose boundary in $\cPE(U_i)$ is as in Figure \ref{fig:flower}, \ie it consists of $2k_i+2$ fixed prime ends, $k_i+1$ repelling and $k_i+1$ attracting, together with $2k_i+2$ embedded translation lines, each connecting a repelling prime end to an attracting prime end, and such that $U'\sm U$ has no fixed points.
Let $\mc{F}$ denote the family of all such translation lines, for all $i\in \{0,\dots, m\}$.
Note that all elements of $\mc{F}$ are pairwise disjoint and disjoint from elements of $\mc{A}\cup \mc{R}$.
Moreover, by Theorem \ref{th:prime-end} every line in $\mc{F}$ spirals from an element of $\mc{R}$ to an element of $\mc{A}$.

Since rotational attractors and repellors are cellular continua, we may collapse each element of $\mc{A}\cup \mc{R}$ to a point; \ie letting $K$ be the union of all elements of $\mc{A}\cup \mc{R}$, we may regard the set $\SS^2\sm K$ as $\til{S}\sm \til K$, where $\til S$ is a sphere and $\til{K}$ is a finite set of punctures, each induced from a corresponding element of $\mc{A}\cup \mc{R}$ ($\til S$ is the Freudenthal compactification of $\SS^2\sm K$). The map $f$ induces a homeomorphism $\til f$ of $\til S$, which coincides with $f$ on $\til S\sm \til K = \SS^2\sm K$ and fixes elements of $\til K$ pointwise. Moreover, letting $\til K_A$ and $\til K_R$ denote the elements of $\til K$ obtained from collapsing elements of $\til A$ or $\til R$, accordingly, each element of $\til K_A$ is an attracting fixed point and each element of $\til K_R$ is a repelling fixed point for $\til f$. Each line $\Gamma\in \mc{F}$, seen as a subset of $\til S$, connects a repelling fixed point from $\til K_R$ to an attracting fixed point from $\til K_A$. Consider the set $\Sigma = \bigcup_{\Gamma\in \mc F} \Gamma$, and note that $G = \til K\cup \Sigma$ is a bipartite planar graph in $\til S$ with vertices in $\til K$ and edges in $\mc F$, each joining a vertex in $\til K_R$ to a vertex in $\til K_A$. See Figure \ref{fig:collapse}. Note that $\til S\sm G = \SS^2\sm (\Sigma \cup K)$, so each connected component of $\til S\sm G$ is a connected component of $\SS^2\sm (\Sigma \cup K)$.

\begin{figure}[ht!]
\includegraphics[height=4cm]{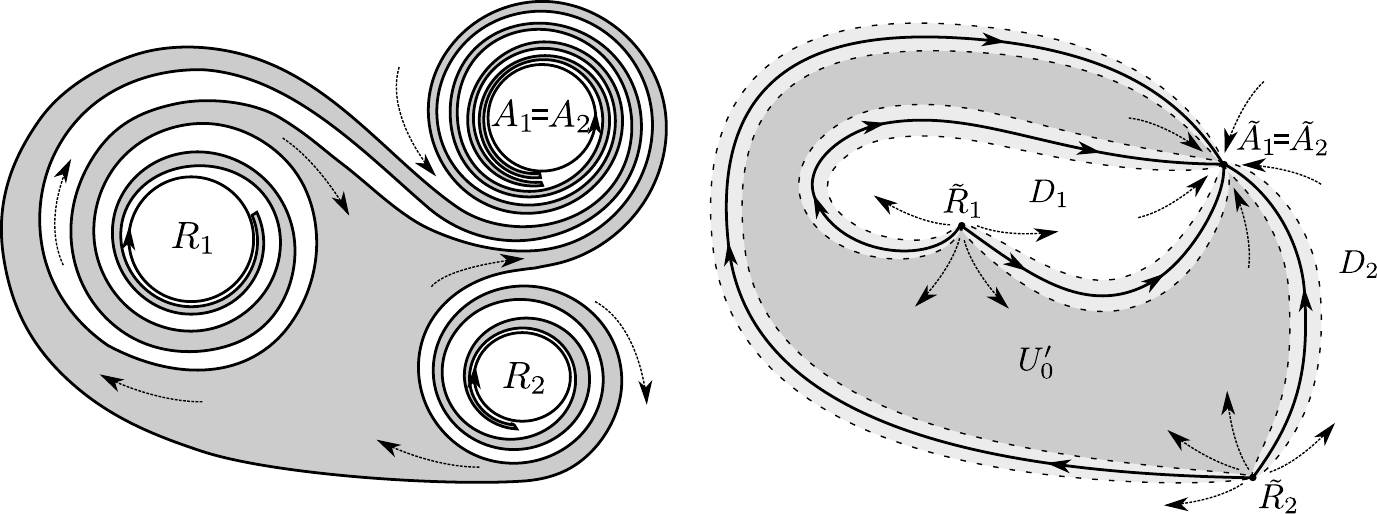}
\caption{An example before and after collapsing. The dotted lines are the elements of $\mc F$.}
\label{fig:collapse}
\end{figure}

Note that every connected component of $\til S \sm G$ is invariant, and moreover Lemma \ref{lem:index-LC} implies that any such component has nonpositive index. 

We claim that the only connected components of $\til S\sm G$ which contain a fixed point are the sets $U_i'$, for $i\in\{0,\dots, m\}$. Indeed, if $D$ is such a component containing a fixed point, regarding $D$ as a subset of $\SS^2$, we have that $\fix(\til f)\cap D$ is compact (otherwise there would exist a sequence of fixed points in $\SS^2\sm K$ converging in $\SS^2$ to a point of $\bd K$, contradicting the fact that $\bd K$ has no fixed points). Hence $\fix(f)\cap D$ must be contained in a finite union of invariant connected components of $\SS^2\sm \bd U$, each containing at least one fixed point. Since the fixed point index in $D$ is nonpositive, some such component must have nonpositive index, which means that it is equal to $U_j$ for some $j$. Thus $U_i\cap D$ contains some fixed point $p$ of $f$. Moreover, $U_i'\subset U_i$ is a connected component of $\til S\sm G$, and $p\in U_i'\cap D$ (since $U_i\sm U_i'$ has no fixed points).  Since $D$ is also a connected component of $\til S\sm G$ and intersects $U_i'$, we conclude that $D = U_i'$, as we wanted.

For example, in Figure \ref{fig:collapse}, the components of the complement of $\Sigma \cup K$ are $U_0'$, together with four index $0$ disks intersecting the boundary of $U$ and two additional disks $D_1, D_2$ disjoint from $U$ (also of index $0$ in this case). As we just showed, the only components that could have fixed points are $U_0'$ and the components disjoint from $U$ (\ie $D_1$ and $D_2$). 

As we have shown, the only connected components of $\til{S}\sm G$ having fixed points are $U_0',\dots U_m'$. Note that by definition none of the sets $U_i'$ intersects $\bd U$. Thus every connected component $D$ of $\til{S}\sm G = \SS^2\sm (K\cup \Sigma)$ intersecting $\bd U\sm K$ is fixed point free, and therefore has index $0$. By Lemma \ref{lem:index-LC}, the boundary of $D$ in $\til S$ consists of exactly two elements of $\mc F$ and two elements $\til K$ (one in $\til K_A$ and one in $\til K_R$).  Denoting by $T$ the closure of $D$ in $\til S$ with the two fixed points removed, one deduces from Lemma \ref{lem:nofix-trans} that $T$ is an embedded translation strip for $\til f$. But since $T$ is also a subset of $\SS^2$, we see that $T$ is an embedded translation strip for $f$ as well. Moreover, the boundary lines of $T$ both spiral from some $R\in \mc R$ to some $A\in \mc A$, which implies that $T$ has the same property. From these facts one concludes that the filled $\omega$-limit of $T$ is $A$ and its filled $\alpha$-limit is $R$. 

Thus, denoting by $\mc{T}$ the family of all such translation strips (so that the interior of every $T\in \mc T$ is a connected component of $\til S\sm G$ intersecting $\bd U\sm K$), we have that the interiors of elements of $\mc T$ cover $\bd U\sm K$, and in $\SS^2$ each $T\in \mc T$ spirals from an element of $\mc R$ to an element of $\mc A$. 
We claim that one of the boundary lines of $T$ is contained $U$. To see this, note that the interior of $T$ intersects $U$ (because it intersects $\bd U \sm K$), and $U$ cannot be entirely contained in $\inter T$ (since $\inter T$ is disjoint from the elements of $\mc F$, while $U=U_0$ contains $2+2k_0\geq 2$ elements of $\mc F$). Thus $U$ intersects $\bd T$, which consists of the two boundary lines and a subset of $K$. Since $U$ is disjoint from $K$, it follows that $U$ intersects some boundary line of $T$, which implies that this line is contained in $U$ (since every element of $\mc F$ is contained in some $U_i$).
We further claim that an element of $\mc F$ contained in $U$ cannot be the boundary line of two different strips $T, T'\in \mc T$. Indeed, if this is the case then $T\cup T'$ is a new strip, and again its interior cannot contain $U$ entirely since it only contains one element of $\mc F$, whereas $U$ contains at least $2+2k_0 \geq 2$. 
Thus the number of elements of $\mc T$ is at most the number of elements of $\mc F$ contained in $U$, which is $2+2k_0$. Some elements of $\mc T$ could be bounded by two elements of $\mc F$ contained in $U$ (for instance if it contains the disk $D_1$ in Figure \ref{fig:collapse}), but we may also conclude that $\mc T$ has at least $1+k_0$ elements.

Finally, to see that every element $\mc{A}\cup \mc R$ is accumulated by some strip $T\in \mc T$, let $A\in \mc A$ and  note that there exists $x\in \bd U \sm K$ arbitrarily close to $A$, in particular in the basin of $A$. If $T\in \mc T$ is the element containing $x$, since $T$ must be entirely contained in the basin of a unique element of $\mc{A}$ (which is the filled $\omega$-limit of $T$), it follows that $A$ is this element and $T$ spirals towards $A$. A similar argument applies to elements of $\mc R$.

This completes the proof of Theorem \ref{th:disks}. \qed

\begin{remark}\label{rem:separa} One has the additional property that for every $T\in \mc T$, the boundary lines of $T$ are separated by $\bd U\cap T$ in $T$. Indeed, if this is not the case then the two boundary lines belong to the same connected component of $\SS^2\sm \bd U$, which must be $U=U_0$ since we already know that one of the two lines lies in $U$. In the prime ends compactification of $U$, these two lines connect the same two fixed prime ends, so by the description from Lemma \ref{lem:model} the interior of $T$ must be $U_0'$, which is not possible since the interior of $T$ intersects $\bd U$. 
\end{remark}

\subsection{A monotone semiconjugation to a planar graph}\label{sec:graph}
Recall that by a \emph{planar graph} $G$ in $\SS^2$ we mean a finite set of vertices $V(G)$ (points) and edges $E(G)$ (lines) each connecting two vertices, such that the edges are pairwise disjoint and $G=V\cup E$.

A planar graph $G\subset \SS^2$ is an attractor-repellor graph for a homeomorphism $F\colon \SS^2\to \SS^2$ if every vertex of $G$ is an attracting or repelling fixed point of $F$, every edge is a translation line joining a repellor to an attractor, and in addition every vertex has even degree. 
Note that in particular a neighborhood of $G$ is contained in the union of the basins of the attracting and repelling fixed points.

Let us briefly explain how one may use the reduction from the previous proof to obtain a monotone map $h\colon \SS^2\to \SS^2$ which maps $\bd U$ to an attractor-repellor graph $G$ of a homeomorphism $F$ such that $hf = Fh$.  

To see this, let us first define a map $h_0\colon \SS^2\to \til{S}$ which collapses the elements of $\mc{A}\cup \mc{R}$ as done in the previous section. We identify $\til{S}$ with $\SS^2$ (by composing $h_0$ with a homeomorphism). If $\til K$ denotes the union of the images by $h_0$ of the attractors and repellors (which are points), as we have seen, $f$ induces a map $f_0$ such that each element of $\til K$ is an attracting or repelling fixed point. Moreover, $h_0(U)$ is an invariant disk and the boundary of $h_0(U)$ is covered by the interiors of a finite family of translation strips $\mc T$, each connecting a repelling element of $\til K$ to an attracting one. For each $T\in \mc T$, there is a natural way of defining a monotone map that collapses it into a line: one may find an invariant foliation of $T$ by ``transverse'' arcs (by foliating a fundamental domain first and then extending in the obvious way), and since any such arc converges to a fixed point when iterated by $f_0$ (both in the future and in the past), one may easily see that the map which collapses these arcs to points maps $T$ to a line connecting two points, which is a translation line for the map induced by $f_0$. Since the elements of $\mc T$ are pairwise disjoint, we may do this for each $T\in \mc T$ obtaining a monotone map $h_1\colon \SS^2\to \SS^2$ which maps each translation strip to a translation line for an induced map $F\colon \SS^2\to \SS^2$, satisfying $h_1f_0 = Fh_1$. Finally, letting $h = h_1h_0$ we see that $h$ and $F$ have the required properties, where $E = \{h(T): T\in \mc T\}$, $V = \{h(C) : C\in \mc{A}\cup \mc{R}\}$ and $G = E\cup V$.

Thus we have the following
\begin{theorem}\label{th:graph} Under the hypotheses of Theorem \ref{th:disks}, there exists a monotone surjection $h\colon \SS^2\to \SS^2$, and an orientation-preserving homeomorphism $F\colon \SS^2\to \SS^2$ such that:
\begin{itemize}
\item $hf = Fh$;
\item $h$ is injective outside a neighborhood of $\bd U$;
\item $G = h(\bd U)$ is an attractor-repellor graph for $F$.
\end{itemize}
\end{theorem}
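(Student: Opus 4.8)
The plan is to build $h$ as a composition $h=h_1\circ h_0$ of two monotone quotient maps and to take $F$ to be the homeomorphism induced by $f$ on the final quotient, making precise the reduction indicated above. I keep the notation of the proof of Theorem~\ref{th:disks}: $\mc A=\mc A(U)$, $\mc R=\mc R(U)$ and $\mc T$ are the rotational attractors, rotational repellors and translation strips constructed there, so that the elements of $\mc A\cup\mc R$ are pairwise disjoint cellular continua disjoint from $U$; the elements of $\mc T$ have pairwise disjoint interiors, disjoint from $\bigcup(\mc A\cup\mc R)$; each $T\in\mc T$ satisfies $\Falpha(T)=R$ and $\Fomega(T)=A$ and spirals from $R$ to $A$ for some $R\in\mc R$, $A\in\mc A$; each $T\in\mc T$ has exactly one boundary line contained in $U$; and the interiors of the elements of $\mc T$ cover $\bd U\setminus\bigcup(\mc A\cup\mc R)$. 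For $h_0$ I take the monotone map from the proof of Theorem~\ref{th:disks} collapsing each element of $\mc A\cup\mc R$ to a point; since these are finitely many pairwise disjoint cellular continua, $\til S$ is again a $2$-sphere, and identifying it with $\SS^2$ the map $f$ descends to an orientation-preserving homeomorphism $f_0$ with $h_0 f=f_0 h_0$ that agrees with $f$ off the finite set $\til K=h_0(\bigcup(\mc A\cup\mc R))$. Since the elements of $\mc A$ are attractors and those of $\mc R$ repellors, each point of $\til K$ is an attracting or repelling fixed point of $f_0$; and each $T\in\mc T$, read in $\til S$, is a translation strip for $f_0$ whose closure meets $\til K$ in exactly the two points coming from $\Falpha(T)$ and $\Fomega(T)$ (one repelling, one attracting), the strips still having pairwise disjoint interiors.

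Next I would collapse each strip to a translation line. For $T\in\mc T$ with boundary lines $\Gamma,\Gamma'$, take a fundamental domain $T_0$ --- a closed disk bounded by subarcs of $\Gamma$ and $\Gamma'$ together with a transverse arc $\tau$ and its image $f_0(\tau)$ --- foliate $T_0$ by arcs from $\Gamma$ to $\Gamma'$ interpolating continuously between $\tau$ and $f_0(\tau)$, and spread this foliation over $T$ by the iterates of $f_0$; this yields an $f_0$-invariant foliation of $T$ by transverse arcs, each converging to the attracting endpoint of $T$ in forward time and to the repelling one in backward time. Collapsing each leaf to a point is a monotone quotient of $T$ onto an arc; performing this simultaneously on the (disjointly embedded) strips produces a monotone map $h_1\colon\SS^2\to\SS^2$, injective outside the strips, sending each $T$ to a line which is a translation line for an induced orientation-preserving homeomorphism $F$ with $h_1 f_0=F h_1$, joining a repelling fixed point of $F$ to an attracting one.

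It then remains to set $h=h_1\circ h_0$ and verify the three assertions. The relation $hf=Fh$ is immediate, and $h$ is monotone since every point-preimage is a point, an element of $\mc A\cup\mc R$, or a transverse leaf of some strip --- all non-separating continua. Every identification made by $h$ occurs inside $Z=\bigcup(\mc A\cup\mc R)\cup\bigcup_{T\in\mc T}T$, which is compact and contains $\bd U$, so $h$ is injective outside any open neighborhood of $Z$, and in particular outside a neighborhood of $\bd U$. Finally $G:=h(\bd U)=V\cup E$, where $V=\{h(C):C\in\mc A\cup\mc R\}$ consists of attracting and repelling fixed points of $F$ and $E=\{h(T):T\in\mc T\}$ of pairwise disjoint translation lines, each joining a repellor vertex to an attractor vertex; here Remark~\ref{rem:separa} is used, as $\bd U\cap T$ separates the two boundary lines of $T$ and hence meets every transverse leaf, so that $h(\bd U\cap T)$ is all of the line $h(T)$ and every edge indeed lies in $h(\bd U)$. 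To conclude that $G$ is an attractor-repellor graph it remains to see that every vertex has even degree: the edges incident to $h(C)$ are the images of the strips of $\mc T$ spiralling towards $C$, and by the construction in the proof of Theorem~\ref{th:disks}, $T\mapsto(\text{the boundary line of }T\text{ contained in }U)$ is a bijection from that family of strips onto the set of flower translation lines of $U$ (from Lemma~\ref{lem:flower}) whose associated fixed prime end has principal set $\bd C$; this set has even cardinality because those flower lines occur in pairs sharing a fixed prime end. Hence $\deg h(C)$ is even for every vertex, and $G$ is an attractor-repellor graph for $F$.

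The main obstacle I expect is the strip-collapsing step: the $f_0$-invariant transverse foliations must be built carefully enough that the simultaneous leafwise collapse is a genuinely continuous monotone self-map of $\SS^2$ and that the induced map $F$ is an actual homeomorphism rather than merely a continuous bijection. A second delicate point is the even-degree verification, which rests on the precise matching (established in the proof of Theorem~\ref{th:disks}) between the boundary lines of the flowers of Lemma~\ref{lem:flower} and the strips of $\mc T$, in particular on each strip having exactly one boundary line in $U$.
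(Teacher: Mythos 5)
Your construction is exactly the one the paper uses: first collapse the elements of $\mc A\cup\mc R$ to points, then collapse the leaves of an $f_0$-invariant transverse foliation of each strip of $\mc T$, and let $F$ be the induced map. The continuity and homeomorphism issues you flag in the strip-collapsing step are handled, at the same level of detail as the paper, by observing that the resulting decomposition of $\SS^2$ (leaves, elements of $\mc A\cup\mc R$, singletons) is upper semicontinuous with non-separating fibers, so a Moore-type quotient argument applies; and your observation that Remark \ref{rem:separa} guarantees $h(\bd U\cap T)$ is the whole edge $h(T)$ is a useful point the paper leaves implicit.

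The genuine gap is in your verification of the even-degree condition. You assert that each $T\in\mc T$ has exactly one boundary line contained in $U$ and that $T\mapsto(\text{its boundary line in }U)$ is a bijection onto the set of flower lines of $U$ whose fixed prime end has principal set $\bd C$. Neither claim is established by the proof of Theorem \ref{th:disks}, and the first is explicitly put in doubt there: the proof only shows that \emph{at least} one boundary line of each strip lies in $U$, and it expressly allows a strip to be bounded by \emph{two} elements of $\mc F$ contained in $U$ --- this is precisely why it concludes only $k+1\le\#\mc T\le 2k+2$ rather than $\#\mc T=2k+2$. (Remark \ref{rem:separa} only says that $\bd U\cap T$ separates the two boundary lines inside $T$; it does not place them in different components of $\SS^2\sm\bd U$.) Surjectivity is also unproved: for a flower line of $U$ landing at a prime end with principal set $\bd C$, the face of the flower-line graph on its far side is an element of $\mc T$ only if it meets $\bd U\sm K$, which is not argued. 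Consequently the parity of $\deg h(C)$ does not follow from the pairing of flower lines at fixed prime ends: a strip with both boundary lines in $U$ uses up two flower lines but contributes one edge-end, and flower lines bounding no element of $\mc T$ contribute none, so the count you give can be off by an unknown amount. To repair this you would need either to prove that each strip has exactly one boundary line in $U$ and that every flower line of $U$ does bound a strip of $\mc T$, or to give a different parity argument (for instance a local analysis around each collapsed attractor/repellor). To be fair, the paper's own proof of Theorem \ref{th:graph} is a sketch that does not address the even-degree condition explicitly either; but since you offer a specific argument for it, you should be aware that, as stated, it does not go through.
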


Recall that each element of $T$ spirals from some $R_T\in \mc R$ to some $A_T\in \mc A$. Let us define
$$K_T = (T\cap \bd U) \cup \bd A_T \cup \bd R_T.$$
We claim that $K_T$ is a continuum. Indeed, $T$ is clearly compact, since $T$ accumulates only on $\bd R_T$ and $\bd A_T$. If $K_T$ is not connected, then we may write $K_T = C_1 \cup C_2$ for two nonempty compact proper subsets $C_1, C_2$ of $K_T$ such that $C_1\cap C_2=\emptyset$. But since $T\cap \bd U$ separates the two boundary components of $T$ (see Remark \ref{rem:separa}, some connected component $C$ of $T\cap \bd U$ must also separate the two boundary components (see \cite[Theorem 14.3]{newman}). The connected set $C$ must be contained in $C_1$ or $C_2$. Assume without loss of generality $C\subset C_1$. Then the closure of $C_1$ intersects both $\bd A_T$ and $\bd R_T$, and since the latter sets are connected, $\bd A_T\cup \bd R_T\subset C_1$. Hence $C_2$ is disjoint from $\bd A_T\cup \bd R_T$ (and therefore from $A_T\cup R_T$). But then $C' = C_1\cup \bd U \sm \inter T$ is a compact nonempty proper subset of $\bd U$ such that $C' \cup C_2 = \bd U$, contradicting the fact that $\bd U$ is connected. 

We define a \emph{basic block} for $f$ any continuum $K$ which consists of the disjoint union of the boundaries of a rotational repellor $R(K)$, a rotational attractor $A(K)$, such that $K\sm (R(K)\cup A(K))$ is contained in some translation line spiraling from $R(K)$ to $A(K)$ (see Figure \ref{fig:block}). We call $A(K)$ and $R(K)$ the attracting and repelling nodes of $K$.
\begin{figure}[ht!]
\includegraphics[height=4cm]{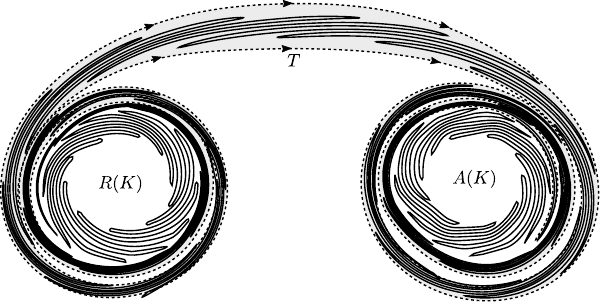} 
\caption{A basic block.}
\label{fig:block}
\end{figure}

Note that, by construction of $h$, if $\Gamma$ is an edge of $G$ then $h^{-1}(\ol{\Gamma})\cap \bd U$ is a basic block. Hence we have:

\begin{theorem}\label{th:blocks} Under the hypotheses of Theorem \ref{th:disks}, $\bd U$ is the union of at least $k+1$ and at most $2k+2$ basic blocks, where $-k = i(f, U)$, such that any two basic blocks intersect at most at their attracting or repelling nodes (in which case the corresponding nodes coincide). Moreover, the map $h$ from Theorem \ref{th:disks} maps each basic block to a different edge of the graph $G$ (including endpoints).
\end{theorem}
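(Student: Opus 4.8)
The plan is to read the statement off the constructions in the proofs of Theorem~\ref{th:disks} and Theorem~\ref{th:graph}, the only genuinely new point being that the pieces produced there are basic blocks. Recall that the proof of Theorem~\ref{th:disks} yields pairwise disjoint families $\mc A=\mc A(U)$, $\mc R=\mc R(U)$, $\mc T$ of rotational attractors, rotational repellors and translation strips, with $k+1\le|\mc T|\le 2k+2$ for $-k=i(f,U)$, and for each $T\in\mc T$ distinguished nodes $R_T\in\mc R$, $A_T\in\mc A$ with $T$ spiraling from $R_T$ to $A_T$; the interiors of the strips cover $\bd U\sm\bigcup(\mc A\cup\mc R)$, and every element of $\mc A\cup\mc R$ is a node of some strip. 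Theorem~\ref{th:graph} provides the monotone map $h$ and the attractor--repellor graph $G=h(\bd U)$, collapsing each element of $\mc A\cup\mc R$ to a vertex and each $T\in\mc T$ onto a distinct open edge $\Gamma_T=h(T)$ with endpoints $h(R_T),h(A_T)$. For $T\in\mc T$ I put $K_T:=h^{-1}(\ol{\Gamma_T})\cap\bd U$; since $h$ is injective off a neighbourhood of $\bd U$ and collapses $R_T$, $A_T$ to the endpoints of $\Gamma_T$, one gets $K_T=(\bd U\cap T)\cup\bd R_T\cup\bd A_T$, which is exactly the continuum discussed immediately before the statement.

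The combinatorial assertions are then formal. Each $K_T\subseteq\bd U$, since $\bd C\subseteq\bd U$ for $C\in\mc A\cup\mc R$; conversely a point of $\bd U$ lies either in some $\bd C$, $C\in\mc A\cup\mc R$ (hence in $K_T$ for a strip $T$ having $C$ as a node), or in $\bd U\sm\bigcup(\mc A\cup\mc R)$ (hence in $\inter T\subseteq K_T$ for some $T$); thus $\bd U=\bigcup_{T\in\mc T}K_T$, a union of $|\mc T|\in[k+1,2k+2]$ such sets. The boundary lines of a strip lie in complementary components of $\SS^2\sm\bd U$ and so miss $\bd U$, whence $\bd U\cap T=\bd U\cap\inter T$; as $\inter T$ is a connected component of $\SS^2\sm(\bigcup(\mc A\cup\mc R)\cup\Sigma)$, for $T\ne T'$ the sets $\bd U\cap T$, $\bd U\cap T'$ are disjoint and each disjoint from $\bd R_{T'}\cup\bd A_{T'}$, so $K_T\cap K_{T'}=(\bd R_T\cup\bd A_T)\cap(\bd R_{T'}\cup\bd A_{T'})$, which by Lemma~\ref{lem:A-R-disjoint} is empty unless a node of $T$ equals a node of $T'$, in which case it is exactly the boundary of that common node. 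Finally, by Remark~\ref{rem:separa}, $\bd U\cap T$ separates the two boundary lines of $T$ inside $T$, so every fibre $h^{-1}(z)$, $z\in\Gamma_T$ --- a transverse arc of $T$ joining those two lines --- meets $\bd U\cap T$; hence $h(\bd U\cap T)=\Gamma_T$ and $h(K_T)=\ol{\Gamma_T}$, a distinct edge with endpoints for each $T$.

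The substantive point is that $K_T$ is a basic block, i.e.\ (the nodes being in place and $K_T$ already known to be a continuum) that $\bd U\cap T$ is contained in a single translation line spiraling from $R_T$ to $A_T$. The plan is: first show that $T\sm\bd U$ has exactly two connected components $V_1$, $V_2$, where $V_1$ contains the boundary line of $T$ lying in $U$ (so $V_1\subseteq U$); then identify $V_1$, via Lemma~\ref{lem:flower}, with a petal of $U$ together with its bounding translation line, foliated by translation lines joining a repelling and an attracting fixed prime end of $U$ whose principal sets are $\bd R_T$ and $\bd A_T$; and finally choose the transverse foliation used to build $h_1$ in Theorem~\ref{th:graph} so that on $V_1$ it restricts to this petal foliation. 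Then $h_1$ maps $\bd U\cap T$ (the common frontier of $V_1$ and $V_2$) homeomorphically onto $\Gamma_T$, so $\bd U\cap T$ is itself an $f$-invariant line; since $h_1$ conjugates $f|_T$ to the translation on $\Gamma_T$, which spirals from $h(R_T)$ to $h(A_T)$, this line spirals from $R_T$ to $A_T$, and $K_T=\bd R_T\sqcup(\bd U\cap T)\sqcup\bd A_T$ is a basic block with nodes $R_T,A_T$. I expect the main obstacle to be the first step --- ruling out extra \emph{island} components of $\SS^2\sm\bd U$ sitting strictly inside $\inter T$: such a component is an invariant complementary component of index at most $1$ (Corollary~\ref{coro:disk-index}), and must be excluded --- if it has nonpositive index and a fixed point it is one of the $U_i'$, which are excluded from strip interiors by construction, while a fixed-point-free or index-one island has to be absorbed by a crossing argument together with Lemma~\ref{lem:attractor-intersect}, noting that an arc from $V_1$ into such an island must cross a boundary line of $T$ and hence force the island to touch $R_T$ or $A_T$. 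Once this separation statement is established, identifying $V_1$ with a petal and carrying the foliation through $h$ is routine, and the last clause (that $h$ maps each basic block to a different edge including its endpoints) is exactly the computation $h(K_T)=\ol{\Gamma_T}$ from the second paragraph.
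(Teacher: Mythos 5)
Your combinatorial paragraph is essentially correct and is what the paper itself reads off from Theorems~\ref{th:disks} and~\ref{th:graph}: the identification $K_T=h^{-1}(\ol{\Gamma_T})\cap\bd U=(T\cap\bd U)\cup\bd A_T\cup\bd R_T$, the covering of $\bd U$ by the $K_T$ with $|\mc T|$ between $k+1$ and $2k+2$, the intersections only along common nodes via Lemma~\ref{lem:A-R-disjoint}, and $h(K_T)=\ol{\Gamma_T}$ via Remark~\ref{rem:separa}. The genuine gap is exactly the ``substantive point'' you isolate and then leave unresolved. Your plan requires that $T\sm\bd U$ have exactly two connected components, and ultimately that $\bd U\cap T$ \emph{be} an $f$-invariant embedded line mapped homeomorphically onto $\Gamma_T$. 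Neither is proved, and the sketch you give for excluding ``island'' components is flawed on two counts: a component of $\SS^2\sm\bd U$ contained in $\inter T$ need not be invariant --- since $\inter T$ is fixed point free and $f$ acts on it as a translation, such a component is typically a \emph{wandering} bi-infinite orbit of small disks drifting from $R_T$ to $A_T$, so Corollary~\ref{coro:disk-index} and your dichotomy ``one of the $U_i'$ or an index-one disk'' simply do not apply to it; and an arc from $V_1$ into such an island can stay entirely inside $\inter T$, crossing only $\bd U$ and never a boundary line of $T$, so the proposed crossing argument with Lemma~\ref{lem:attractor-intersect} has nothing to act on. Moreover the step ``choose the transverse foliation used to build $h_1$ so that on $V_1$ it restricts to the petal foliation'' is incoherent: the petal foliation runs \emph{along} the strip, whereas $h_1$ collapses arcs \emph{transverse} to it; and even with a compatible choice, the fact that every fibre meets $\bd U\cap T$ only gives surjectivity of $h$ onto the edge, not injectivity, so it cannot yield that $\bd U\cap T$ is an embedded line.

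The paper does not take this route. Its proof consists of two things: first, the observation made just before the statement that $K_T$ is a continuum, proved from Remark~\ref{rem:separa} together with a separation theorem (a connected piece of $T\cap\bd U$ separating the two boundary lines of $T$ must have closure meeting both $\bd A_T$ and $\bd R_T$, and connectedness of $\bd U$ then forbids a splitting of $K_T$); second, reading the block structure, the count, the node intersections and the $h$-images directly off the strip decomposition of Theorem~\ref{th:disks} and the construction of $h$ in Theorem~\ref{th:graph}. In particular the paper never asserts, and does not use, that $T\sm\bd U$ has exactly two components or that $\bd U\cap T$ is itself a line; that stronger claim is precisely what cannot be forced, because wandering bubble components of $\SS^2\sm\bd U$ sitting inside $\inter T$ are compatible with everything established up to this point. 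So the part of your write-up that overlaps the paper is fine, but the core step of your argument is missing, and the way you propose to fill it would fail.
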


An application is the following result about indecomposable boundaries:
\begin{corollary}\label{coro:decomposable-index} If $\bd U$ is an indecomposable continuum without fixed points, then the fixed point index in $U$ is either $0$ or $1$.
\end{corollary}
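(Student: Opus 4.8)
The plan is to split on whether the prime ends rotation number $\rho(f,U)$ vanishes. If $\rho(f,U)\neq 0$, then by Corollary \ref{coro:disk-index} the fixed point index of $f$ in $U$ equals $1$ and there is nothing to prove. So assume $\rho(f,U)=0$; then the hypotheses of Theorem \ref{th:disks} are satisfied (recall $\bd U$ has no fixed points), and by that theorem the index of $U$ is $-k$ for some $k\geq 0$. I want to show $k=0$.

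Suppose for a contradiction that $k\geq 1$. By Theorem \ref{th:blocks}, $\bd U$ is then the union of $m$ basic blocks $K_1,\dots,K_m$ with $k+1\leq m\leq 2k+2$, in particular $m\geq 2$, and any two of them meet at most in a common attracting or repelling node. The crucial point is that each $K_i$ is a \emph{proper} subcontinuum of $\bd U$. To see this, fix some $j\neq i$. Since $K_j$ is a continuum containing the two disjoint nonempty closed sets $\bd A(K_j)$ and $\bd R(K_j)$, it cannot be contained in $A(K_j)\cup R(K_j)$ (otherwise it would be disconnected, being the disjoint union of its intersections with the two disjoint sets $A(K_j)$ and $R(K_j)$); hence there is a point $q\in K_j\setminus(A(K_j)\cup R(K_j))$. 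On the other hand $K_i\cap K_j$ is contained in the common nodes of $K_i$ and $K_j$, and therefore in $A(K_j)\cup R(K_j)$, so $q\notin K_i$. This shows $K_i\subsetneq\bd U$.

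Now I invoke the classical fact that an indecomposable continuum cannot be written as the union of finitely many proper subcontinua: each proper subcontinuum of an indecomposable continuum is nowhere dense, so a finite union of them is again nowhere dense by the Baire category theorem, hence cannot equal the whole space. Since $\bd U=\bigcup_{i=1}^m K_i$ with each $K_i$ a proper subcontinuum and $m\geq 2$, this contradicts the indecomposability of $\bd U$. Therefore $k=0$, i.e.\ the fixed point index of $f$ in $U$ equals $0$, which together with the case $\rho(f,U)\neq 0$ completes the proof.

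The main (essentially the only) obstacle is verifying that the basic blocks furnished by Theorem \ref{th:blocks} are proper subcontinua, which uses the precise structure of a basic block — its two nodes are disjoint and it necessarily contains points outside both nodes; the remaining ingredients are bookkeeping together with the standard continuum-theoretic fact above. An alternative route would be to use the monotone semiconjugation of Theorem \ref{th:graph}, splitting the attractor-repellor graph $G=h(\bd U)$ along a single edge and pulling the resulting decomposition back through the monotone map $h$ (using that preimages of subcontinua under a monotone surjection are subcontinua); but the direct argument with basic blocks is shorter.
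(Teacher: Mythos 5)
Your proof is correct and follows the same route as the paper: rule out index $1$ via Corollary \ref{coro:disk-index}, then use the basic-block decomposition of $\bd U$ from Theorem \ref{th:blocks} to contradict indecomposability when $i(f,U)<0$. The only difference is that you spell out the step the paper dismisses as ``follows easily'' --- that the blocks are proper subcontinua and that an indecomposable continuum, all of whose proper subcontinua are nowhere dense, cannot be a finite union of them --- which is a correct and welcome elaboration.
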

\begin{proof} If the index is not $1$, then by Corollary \ref{coro:disk-index} the prime ends rotation number in $\bd U$ is $0$ and the fixed point index in $U$ is nonpositive. By the previous theorem, if $i(f,U)<0$ then there are at least $2$ basic blocks in $\bd U$, from which the decomposability follows easily.
\end{proof}

Note that in the previous corollary, if $U$ has index $0$ the only possibility is that $\bd U$ consists of a single basic block, as in Figure \ref{fig:block}.

\subsection{Circloids: proof of Theorem \ref{th:circloid} and Corollary \ref{coro:circloid-decomposable}} \label{proof-circloids}

Recall that a continuum $C\subset \A$ is \emph{essential} if its complement in $\A$ has two unbounded connected components, and \emph{annular} if these are the only connected components of its complement. Moreover, an annular continuum $C$ is a \emph{circloid} if no proper essential subcontinuum is annular.
It is easy to verify that the latter condition is equivalent to saying that $\bd C$ coincides with the boundary of each of the two (unbounded) components of $\A\sm C$.

If $C\subset \A$ is an essential circloid invariant by a homeomorphism $h\colon \A\to \A$, we may compactify $\A$ with two points to obtain a homeomorphism $f\colon \SS^2 \to \SS^2$ which leaves $C$ invariant and has two invariant disks $U_-$ and $U_+$ such that $\bd U_- = \bd U_+ = \bd C$.

Assume as in the statement of Theorem \ref{th:circloid} that $C$ contains a fixed point but $\bd C$ does not. Then some connected component of $\SS^2\sm \bd C$ other than $U_-$ and $U_+$ is invariant and contains a fixed point. The number of components of $\SS^2\sm C$ containing fixed points must be finite, otherwise there would exist a sequence of fixed points accumulating in $\bd C$. 
Moreover, by Corollary \ref{coro:disk-index} the index of any such component is at most $1$. Since the sum of their indices is $2$ and there are more than two of them, at least one of these components has nonpositive index.
Let $\mc U$ be the set of all connected components of $\SS^2\sm \bd C$ which have nonpositive index and contain a fixed point of $f$. For any $U\in \mc U$, there are no fixed points in $\bd U$ and since the fixed point index is nonpositive, Corollary \ref{coro:disk-index} implies that $\rho(f,U)=0$. Thus, as in Section \ref{sec:A-R} we may define the sets $\mc{A}(U)$ and $\mc{R}(U)$ for each $U\in \mc U$. 
Let $\mc{A} = \bigcup_{U\in \mc{U}} \mc{A}(U)$ and $\mc{R} = \bigcup_{U\in \mc{U}} \mc{R}(U)$. 
From Theorem \ref{th:prime-end} we know that the interior of each element of $\mc{A}\cup \mc{R}$ has a unique invariant connected component, of index $1$. This implies that elements of $\mc{U}$ are disjoint from elements of $\mc{A}\cup \mc{R}$, since elements of the latter set are bounded by $\bd C$, so any $U\in \mc U$ intersecting some $A\in \mc{A}$ would have to be entirely contained in $A$, and moreover it would be a connected component of the interior of $A$ (and since $U$ has nonpositive index, this is not possible). 

In addition, if $U, U'$ are two different elements of $\mc U$, then given any $A\in \mc A(U)$ and $A'\in \mc A(U')$ one has that either $A=A'$ or $A\cap A'=\emptyset$. This follows from Lemma \ref{lem:A-R-same}, since as we just saw no element of $\mc{U}$ can be contained in an element of $\mc{A}\cup \mc{R}$. Thus $\mc{A} \cup \mc{R}$ is a pairwise disjoint family and its elements are  disjoint from all elements of $\mc{U}$. 

Moreover, $U_-$ and $U_+$ are also disjoint from elements of $\mc A \cup \mc R$. Indeed, as in the previous paragraph if for instance $U_-$ intersects some $A\in \mc A$, then $U_-\subset A$; but since $\bd U_- = \bd C$ it follows that $\bd C\subset A$, and so the only connected component of $\SS^2\sm \bd C$ disjoint from $A$ has to be $\SS^2\sm A$. But since there exists an element $U\in \mc U$ such that $A\in \mc A(U)$, and for such $U$ there exists at least one element $R\in \mc{R}(U)$ which is disjoint from $A$ (by Lemma \ref{lem:A-R-disjoint}), we arrive to a contradiction.

As in the proof of Theorem \ref{th:disks}, we may choose for each $U\in \mc{U}$ an open topological disk which we denote by $U^*$, such that $U\sm U^*$ has no fixed points and $U^*$ is bounded in $U$ by translation lines, each spiraling from an element of $\mc{R}(U)$ to an element of $\mc{A}(U)$ (as in Lemma \ref{lem:model}); the number of such lines is $2-2k$, where $k$ is the index of $U$. Let $\mc{U}^*$ be the family of all such disks $U^*$, and $\mc{F}$ the family of all boundary lines of such disks. Considering the sphere $\til S$ obtained by collapsing elements of $\mc A$ and $\mc R$ to points, we have that the elements of $\mc{F}$ are the edges of a planar graph $G$ in $\til S$ with vertices in the set $\til K = \til K_A\cup \til K_R$, where $\til K_A$ and $\til K_R$ are the points obtained from collapsing elements of $\mc A$ and $\mc R$, which are attracting and repelling fixed points for the induced dynamics $\til f$, respectively. The graph $G$ is a (not necessarily connected) attractor-repellor graph for $\til f$. Note that all elements of $\mc U^*$ are faces of $G$, and each element of $\mc U$ contains one (and only one) such face. 

Let $\til{C}\subset \til S$ be the set obtained from $C$ after collapsing the elements of $\mc A\cup \mc R$. Note that $C \sm \bigcup_{A\in \mc A\cup \mc R} A = \til C \sm \til K$, and the connected components of $\SS^2\sm C$ not contained in elements of $\mc A\cup \mc R$ are the connected components of $\til S\sm \til C$, so elements of $\mc U$ can be regarded as open topological disks both in $\SS^2$ and in $\til S$. Note that the elements of $\mc U^*$ are faces of $G$ (each contained in some element of $\mc U$).

\begin{claim}\label{claim:circloid-collapse} $\bd_{\til S} \til C$ is the common boundary of $U_-$ and $U_+$ in $\til S$, \ie $$\bd_{\til S} U_- = \bd_{\til S} C = \bd_{\til S} U_+.$$
\end{claim}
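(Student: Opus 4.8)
The plan is to transport everything through the collapsing map. Write $h_0\colon \SS^2\to\til S$ for the monotone surjection that collapses each element of $\mc A\cup\mc R$ to a point; since $\SS^2$ is compact and $\til S$ Hausdorff, $h_0$ is closed (hence a quotient map), so $h_0(\cl X)=\cl_{\til S}h_0(X)$ for every $X\subset\SS^2$ and $h_0$ carries saturated open sets to open sets. Two preliminary observations will drive the argument. First, every collapsed point lies in $h_0(\bd C)$: if $A\in\mc A(U)\cup\mc R(U)$ for some $U\in\mc U$, then $\bd A$ is the principal set of a fixed prime end of $U$, so $\emptyset\neq\bd A\subset\bd U\subset\bd C$; consequently $h_0^{-1}(h_0(\bd C))=\bd C\cup\bigcup_{\mc A\cup\mc R}A$ and, similarly, $h_0^{-1}(\til C)=C\cup\bigcup_{\mc A\cup\mc R}A$. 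Second, $U_-$ and $U_+$ are disjoint from $C$ and from every element of $\mc A\cup\mc R$ (already established above), so they are saturated open subsets of $\SS^2$ disjoint from $h_0^{-1}(\til C)$; hence $h_0$ embeds each of them, $U_\pm\subset\til S\sm\til C$, and $\cl_{\til S}U_\pm=h_0(\cl_{\SS^2}U_\pm)=U_\pm\cup h_0(\bd_{\SS^2}U_\pm)=U_\pm\cup h_0(\bd C)$. In particular $\bd_{\til S}U_-=\bd_{\til S}U_+=h_0(\bd C)$, which already settles one of the two equalities in the claim.

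It then remains to prove $\bd_{\til S}\til C=h_0(\bd C)$. For the inclusion $h_0(\bd C)\subset\bd_{\til S}\til C$ I would use the fact, recorded just above in the proof, that $U_-$ is a connected component of $\til S\sm\til C$ (it is a component of $\SS^2\sm C$ contained in no element of $\mc A\cup\mc R$); since the boundary of a component of the complement of a closed set is contained in the boundary of that set, $h_0(\bd C)=\bd_{\til S}U_-\subset\bd_{\til S}\til C$.

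For the reverse inclusion I would show $\til C\sm h_0(\bd C)\subset\inter_{\til S}\til C$. A point $\til x\in\til C\sm h_0(\bd C)$ is not a collapsed point, by the first observation, so $\til x=h_0(x)$ for a unique $x\in\SS^2\sm\bigcup_{\mc A\cup\mc R}A$; using $h_0^{-1}(\til C)=C\cup\bigcup_{\mc A\cup\mc R}A$ and $h_0^{-1}(h_0(\bd C))=\bd C\cup\bigcup_{\mc A\cup\mc R}A$ we obtain $x\in C\sm\bd C=\inter_{\SS^2}C$. This is the one step that needs care: because $\mc A\cup\mc R$ is a \emph{finite} family, $\bigcup_{\mc A\cup\mc R}A$ is closed, so $N:=\inter_{\SS^2}C\sm\bigcup_{\mc A\cup\mc R}A$ is an open neighbourhood of $x$ that is saturated for $h_0$; then $h_0(N)$ is open in $\til S$, is contained in $h_0(C)\subset\til C$, and contains $\til x$, so $\til x\in\inter_{\til S}\til C$. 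Combining the two inclusions gives $\bd_{\til S}\til C=h_0(\bd C)=\bd_{\til S}U_-=\bd_{\til S}U_+$, which is the claim.

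The only genuine obstacle I anticipate is this last step: one must make sure the collapse does not create new boundary points of $\til C$, i.e. that an interior point of $C$ lying off all the collapsed continua stays interior in $\til S$. That is precisely where the finiteness of $\mc A\cup\mc R$ is used, to exhibit a saturated open neighbourhood; everything else is routine bookkeeping with the quotient map together with the structural facts about $U_\pm$ and the sets $\bd A$ already in hand.
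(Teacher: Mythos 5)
Your argument is correct and follows essentially the same route as the paper: both transport the circloid identity $\bd_{\SS^2} C=\bd_{\SS^2}U_-=\bd_{\SS^2}U_+$ through the collapsing map, using that $\bd A\subset \bd U\subset \bd C$ for every collapsed element of $\mc A\cup\mc R$ and that the collapse is injective on $U_-$ and $U_+$. The only difference is organizational: the paper pulls each point of $\bd_{\til S}\til C$ back to a point of $\bd C$ and pushes it forward again, leaving the easy inclusion $\bd_{\til S}U_\pm\subset\bd_{\til S}\til C$ implicit, whereas you verify both inclusions explicitly, your saturated-neighbourhood step (showing the collapse creates no new boundary points of $\til C$) being the counterpart of that implicit inclusion.
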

\begin{proof}
Letting $h\colon \SS^2\to \til S$ be the (continuous) map collapsing elements of $\mc A\cup \mc R$, the preimage by $h$ of a point $x$ in the boundary of $\til C$ always contains a point $x'$ in the boundary of $C$ (since either $h^{-1}(x)$ contains an element of $\mc{A}\cup \mc{R}$, which intersects $\bd C$, or it does not, in which case $h^{-1}(x)$ is a single point and $h|_{h^{-1}(x)}$ is a local homeomorphism).
Since $x'$ must belong to the boundaries of $U_-$ and $U_+$ in $\SS^2$, and $h$ is injective on $U_-$ and $U_+$, it follows that $x = h(x')$ belongs to the boundary of $U_-$ and $U_+$ in $\til S$ as well, proving our claim that $\bd_{\til S} \til C$ is the common boundary of $U_-$ and $U_+$ in $\til S$.
\end{proof}

\begin{claim}\label{claim:face-contained} Every connected component of $\til S\sm \bd_{\til S} \til C$ that is not an element of $\mc{U}$ is entirely contained in some face of $G$. 
\end{claim}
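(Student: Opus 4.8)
The plan is to prove the inclusion
$$G\ \subseteq\ \bd_{\til S}\til C\ \cup\ \bigcup_{U\in\mc U} U ,$$
after which the claim is a purely formal consequence. Indeed, suppose $V$ is a connected component of $\til S\sm \bd_{\til S}\til C$ that is not an element of $\mc U$. By definition $V$ is disjoint from $\bd_{\til S}\til C$, and since every element of $\mc U$ is itself a connected component of $\til S\sm \bd_{\til S}\til C$ (as recorded just before Claim \ref{claim:circloid-collapse}, each $U\in\mc U$ is disjoint from $\mc A\cup\mc R$ and is an open disk both in $\SS^2$ and in $\til S$), the component $V$ is disjoint from every $U\in\mc U$ as well. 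Hence the displayed inclusion gives $V\cap G=\emptyset$, i.e.\ $V\subseteq \til S\sm G$; as $V$ is connected and $\til S\sm G$ is open, $V$ lies in a single connected component of $\til S\sm G$, that is, in a single face of $G$.

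To establish the inclusion I would handle vertices and edges of $G$ separately. A vertex $v$ is the image $h(A)$ of some $A\in\mc A\cup\mc R$ under the collapsing map $h\colon\SS^2\to\til S$. Since $\bd A\subseteq \bd C=\bd U_-$ and $U_-$ is disjoint from all elements of $\mc A\cup\mc R$ (so $h$ restricts to a homeomorphism of $U_-$ onto $U_-\subseteq\til S$), choosing $x\in\bd A$ we see that $x$ is a limit of points of $U_-$, hence $v=h(x)$ is a limit of points of $U_-$ in $\til S$ while $v\notin U_-$; therefore $v\in\bd_{\til S}U_-=\bd_{\til S}\til C$ by Claim \ref{claim:circloid-collapse}. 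An edge of $G$ is some $\Gamma\in\mc F$, which by construction is the boundary line in $\SS^2$ of an element $U^*\in\mc U^*$ contained in its associated $U\in\mc U$; thus $\Gamma\subseteq U$ in $\SS^2$, and since $U$ is disjoint from $\mc A\cup\mc R$ this inclusion is unaffected by $h$, so $\Gamma\subseteq U\subseteq\bigcup_{U\in\mc U}U$ in $\til S$.

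The only delicate point is the bookkeeping: checking that each $U\in\mc U$, viewed in $\til S$, really is a connected component of $\til S\sm\bd_{\til S}\til C$ (which follows from $h|_U$ being a homeomorphism onto $U\subseteq\til S$ together with $h(\bd_{\SS^2}U)\subseteq\bd_{\til S}\til C$, argued exactly as for the vertices, using $\bd_{\SS^2}U\subseteq\bd C$), and that the lines of $\mc F$ indeed land inside elements of $\mc U$ under the collapse. These are verifications about how $h$ interacts with the decomposition of $\SS^2\sm\bd C$, and all the needed facts have already been assembled in this subsection; no new idea is required. I expect the whole content to reduce to the identification ``vertices of $G$ lie in $\bd_{\til S}\til C$, edges of $G$ lie in the components belonging to $\mc U$'', after which connectedness of $V$ finishes the proof.
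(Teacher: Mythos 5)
Your proof is correct and takes essentially the same route as the paper: the paper's own (terse) argument is exactly that edges of $G$ lie in elements of $\mc U$ and vertices lie in $\til C$ (in fact in $\bd_{\til S}\til C$), so a component not in $\mc U$ is disjoint from $G$ and therefore lies in a single face. Your additional verifications—that vertices land in $\bd_{\til S}\til C$ via Claim \ref{claim:circloid-collapse}, and that elements of $\mc U$, viewed in $\til S$, are genuine connected components of $\til S\sm\bd_{\til S}\til C$—merely make explicit what the paper leaves implicit.
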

\begin{proof} If $U$ is any such component, since $U\notin \mc{U}$ it does not intersect any edge of $G$. Since $U$ does not contain any vertex of $G$ either (as vertices belong to $\til C$) the claim follows.
\end{proof}

\begin{claim} Both sets $U_-$ and $U_+$ belong to $\mc{U}$
\end{claim}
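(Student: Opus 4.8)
The plan is to prove both memberships by a single symmetric argument: assume for a contradiction that $U_-\notin\mc U$ and derive a contradiction (the case $U_+\notin\mc U$ is identical). First I would check that $U_-$ contains a fixed point. The homeomorphism $f$ maps $\SS^2\sm\A=\{p_-,p_+\}$ to itself, so $f(p_-)\in\{p_-,p_+\}$; since $p_-\in U_-$ and $U_-$ is invariant while $p_+\in U_+$ is disjoint from $U_-$, we get $f(p_-)=p_-$, i.e. $p_-$ is a fixed point of $f$ in $U_-$. Hence $i(f,U_-)\neq 0$, and since $\bd U_-=\bd C$ is fixed-point free, Corollary~\ref{coro:disk-index} forces $i(f,U_-)=1$, which is consistent with $U_-\notin\mc U$.

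Next I would bring $U_-$ into the graph picture. As already observed, $U_-$ is disjoint from every element of $\mc A\cup\mc R$, and since $U_-\notin\mc U$ it is also disjoint from $\Sigma=\bigcup_{\Gamma\in\mc F}\Gamma$ (the edges of $G$ lie in $\bigcup_{V\in\mc U}V$). Together with Claim~\ref{claim:circloid-collapse} this shows that, viewed in $\til S$, $U_-$ is a connected component of $\til S\sm\bd_{\til S}\til C$ which is not an element of $\mc U$; hence by Claim~\ref{claim:face-contained} it is contained in a face $D$ of $G$. The key point is that $D$ then contains the fixed point $p_-$, while $D$ is a face of the attractor--repellor graph $G$, so $i(\til f,D)\leq 0$ by Lemma~\ref{lem:graph-index}; I will show this is incompatible with $U_-\subset D$.

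To exploit this I would analyze $\fix(\til f)\cap D$. The vertices of $G$ are isolated (attracting or repelling) fixed points and the edges carry no fixed points, so fixed points do not accumulate on $\bd D\subset G$ and $\fix(\til f)\cap D$ is compact; since $\til f=f$ on $D$, the set $\fix(f)\cap D$ is a compact subset of $\SS^2\sm\bd C$, hence contained in finitely many connected components $W_1,\dots,W_r$ of $\SS^2\sm\bd C$, each an invariant open topological disk of index at most $1$ (Corollary~\ref{coro:disk-index}), one of which is $U_-$. I would then show $W_i\subset D$ for each $i$: if $W_i$ met $\Sigma$ it would equal some $V\in\mc U$; if it met some $A\in\mc A\cup\mc R$ then, since $\bd A\subset\bd C$ gives $W_i\cap\bd A=\emptyset$, the connected set $W_i$ would lie in $\inter A$ and in fact be the unique invariant component of $\inter A$ (Theorem~\ref{th:prime-end}(4)), hence contained in $A$ and disjoint from the face $D$, contradicting $W_i\cap D\neq\emptyset$. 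So each $W_i$ is disjoint from $\mc A\cup\mc R$ and from $\Sigma$, hence from $G$, and being connected and meeting $D$ it lies in $D$. Therefore $\fix(f)\cap D=\bigsqcup_i(\fix(f)\cap W_i)$ and, by additivity of the index, $i(\til f,D)=\sum_i i(f,W_i)$. Now if some $W_i\in\mc U$, its associated face $W_i^{*}$ contains all fixed points of $W_i$ (as $W_i\sm W_i^{*}$ has none), in particular one in $D$, so $W_i^{*}$ meets $D$ and hence $W_i^{*}=D\subset W_i$; since $U_-\subset D\subset W_i$ and $U_-,W_i$ are both components of $\SS^2\sm\bd C$, this gives $U_-=W_i\in\mc U$, a contradiction. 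Otherwise every $W_i\notin\mc U$, so each $W_i$ has index exactly $1$, whence $i(\til f,D)=\sum_i i(f,W_i)=r\geq 1>0$, contradicting $i(\til f,D)\leq 0$. Either way we reach a contradiction, so $U_-\in\mc U$, and by symmetry $U_+\in\mc U$.

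I expect the delicate step to be the claim $W_i\subset D$: one must exclude that a component of $\SS^2\sm\bd C$ touching the face $D$ carries part of its fixed-point set inside one of the collapsed rotational continua. This is where the structural input is used all at once --- that elements of $\mc A\cup\mc R$ are cellular with boundary in $\bd C$, that their interiors have a single invariant component of index $1$ (Theorem~\ref{th:prime-end}(4)), and that they are disjoint from every element of $\mc U$ as well as from $U_\pm$.
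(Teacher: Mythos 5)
Your proof is correct and is essentially the paper's own argument: place $U_-$ in a face $D$ of the attractor--repellor graph, cover $\fix(f)\cap D$ by finitely many components of the complement of $\bd C$, rule out that any of them lies in $\mc U$ via the ``its associated face meets $D$, hence equals $D$, hence contains $U_-$'' trick, and then contradict Lemma~\ref{lem:graph-index} by positivity of the index of $D$. The only differences are cosmetic --- you work with components of $\SS^2\sm\bd C$ and so must dispose of the extra case of a component inside an element of $\mc A\cup\mc R$ via Theorem~\ref{th:prime-end}(4), where the paper works in $\til S$ and quotes Claim~\ref{claim:face-contained}, and your exclusion of $W_i\in\mc U$ should logically come before the assertion that each $W_i$ avoids $\Sigma$, which is harmless since that exclusion never uses $W_i\subset D$.
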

\begin{proof}
Suppose for instance that $U_-\notin \mc U$. Then by the previous claim we have that $U_-$ is contained in some face $D_-$ of $G$. The fixed point set in $D_-$ is nonempty since it contains $-\infty$, and it may be covered by finitely many connected components of $V_1,\dots, V_m$ of $\til{S}\sm \bd_{\til S}\til C$. None of the sets $V_i$ may belong to $\mc U$, since otherwise $V_i^*$ would be a face of $G$ intersecting $D_-$ (hence equal to $D_-$) which would imply that $U_-\subset V_i^*\subsetneq V_i$, a contradiction since both $V_i$ and $U_-$ are connected components of $\til{S}\sm \bd_{\til S}\til C$. 

Since each $V_i$ intersects $D_-$ and is not in $\mc U$, by the previous claim $V_i\subset D_-$ for $i\in \{1,\dots, m\}$. Note that $V_i$ may be regarded as a connected component of $\SS^2\sm \bd C$ as well, and since $V_i\notin \mc U$ and $V_i$ has a fixed point, the definition of $\mc U$ implies that the fixed point index of $V_i$ is positive. Thus the fixed point index of $D_-$ is at least $m$, in particular positive, contradicting Lemma \ref{lem:graph-index}.
\end{proof}

We have thus shown that $U_-$ and $U_+$ belong to $\mc U$, which means that they have nonpositive index. 
By Theorem \ref{th:disks} applied  to $U_-$ there is a finite family $\mc T$ of pairwise disjoint translation strips which, seen in $\til S$, each joins a point of $\til K_R$ to a point of $\til K_A$, and such that the interiors of elements of $\mc T$ cover $\bd_{\til S} U_-\sm \til K$. Each $T\in \mc T$ intersects $\bd_{\til S} U_- = \bd_{\til S} \til C = \bd_{\til S} U_+$, so it also intersects $U_+$. On the other hand the two boundary lines of $T$, together with two points of $\til K$, bound a loop, and one of the boundary lines of $T$ lies in $U_-$ which is disjoint from $U_+$. Thus $U_+$ must intersect the remaining boundary line of $T$, and since this line is disjoint from $\bd_{\til S} \til C$ it must be entirely contained in $U_+$. In other words, each element of $T$ has one boundary line in $U_-$ and the other in $U_+$. Moreover, the boundary lines in $U_-$ bound a region $D_-$ in $U_-$ similar to the set $U_-^*$ used before (\ie as in Lemma \ref{lem:model}); this is clear from the proof of Theorem \ref{th:disks}. In particular, there are $2k+2$ such lines in $U_-$, where $-k$ is the fixed point index in $U_-$, and $\mc T$ has $2k+2$ elements.

If $G'$ denotes the graph whose edges are boundary lines of elements of $\mc{T}$ (whose vertices are in $\til K$), then $D_-$ is a face of $G'$ contained in $U_-$. Moreover, there is a face $D_+$ of $G'$ contained in $U_+$ whose boundary edges are precisely the boundary lines of elements of $\mc T$ in $U_+$. For instance to see this note that for each $T\in \mc T$, the set $T\cap U_+$ is a cross-section of $U_+$, together with the cross-cut defining it, and any two such cross-sections are disjoint. The complement of all these cross-sections in $U_+$ is a topological disk $D_+$ which is also a face of $G'$ contained in $U_+$ (note that from these remarks one may deduce that the fixed point index of $U_+$ is also $-k$).

Hence $\til S\sm (D_-\cup D_+)$ is the union of all elements of $\mc T$ and $\til K$, which means that $\SS^2\sm (D_-\cup D_+)$ is the (disjoint) union of the elements of $\mc{A}$, $\mc{R}$ and $\mc{T}$ (note that this set contains $C$).
Moreover, the boundaries of $D_-$ and $D_+$ have $2k+2$ edges each, and $G'$ has $2k+4$ faces, so using Euler's formula we see that there are $2k+2$ vertices. Since the boundary of $D_-$ is a bipartite graph with $2k+2$ edges and each vertex of $G'$ belongs to some such edge, it follows that $\til{K}_A$ and $\til{K}_R$ have $k+1$ elements each and $\bd D_-$ is a simple loop. Thus there are $k+1$ elements in each set $\mc{A}$ and $\mc{R}$.

This proves the first part of Theorem \ref{th:circloid}. In fact, these arguments imply that the elements of $\mc T$ are cyclically ordered; more precisely, each element of $\mc T$ joins an element of $\mc{A}$ to an element of $\mc{R}$ and every element of $\mc{A}\cup\mc{R}$ is accumulated by exactly two elements of $\mc T$. From this observation and Theorem \ref{th:graph} we obtain the second part of Theorem \ref{th:circloid}; namely, the attractor-repellor graph from Theorem \ref{th:graph} must map $C$ to a cyclic graph, \ie a circle with Morse-Smale dynamics. 

Moreover, from Theorem \ref{th:blocks} we see that $C$ must is decomopsable, since there are at least two basic blocks, so Corollary \ref{coro:circloid-decomposable} is proved as well.
\qed

\def\MR#1{} % do not show mrnumber in bibliography
\bibliographystyle{koro} 
\bibliography{circloid2}

\end{document}